\newcommand{\dis}{\displaystyle}
\newcommand{\Red}{\textcolor{red}}
\newcommand{\R}{\mathbb{R}}
\newcommand{\T}{\mathbb{T}}
\theoremstyle{plain}
\newtheorem{theorem}{Theorem}[section]
\newtheorem{lemma}[theorem]{Lemma}
\newtheorem{proposition}[theorem]{Proposition}
\newtheorem{definition}[theorem]{Definition}
\theoremstyle{remark}
\newtheorem{remark}[theorem]{Remark}
\numberwithin{equation}{section}
\def\v{\varepsilon}
\def\k{\kappa}
\def\b{\beta}
\def\g{\gamma}
\def\d{\delta}
\def\s{\sigma}
\def\f{\frac}
\newcommand{\dd}{{\rm d}}
\renewcommand{\S}{\mathbb{S}}
\newcommand{\Fi}{\mathbf{1}}
\newcommand{\CD}{\mathcal {D}}
\newcommand{\CE}{\mathcal{E}}
\newcommand{\CI}{\mathcal{I}}
\newcommand{\CL}{\mathcal{L}}
\newcommand{\CM}{\mathcal{M}}
\newcommand{\CV}{\mathcal{V}}
\newcommand{\na}{\nabla}
\newcommand{\al}{\alpha}
\newcommand{\ga}{\gamma}
\newcommand{\om}{\omega}
\newcommand{\la}{\lambda}
\newcommand{\pa}{\partial}
\newcommand{\ka}{\kappa}
\newcommand{\eps}{\epsilon}
\newcommand{\Ga}{\Gamma}
\newcommand{\vep}{{\varepsilon}}
\begin{document}

\title[Long-time dynamics of rarefied gas]{Effects of soft interaction
and non-isothermal  boundary upon long-time dynamics of rarefied gas}

\author[R.-J. Duan]{Renjun Duan}
\address[R.-J. Duan]{Department of Mathematics, The Chinese University of Hong Kong, Hong Kong}
\email{rjduan@math.cuhk.edu.hk}

\author[F.M. Huang]{Feimin Huang}
\address[F. M. Huang]{Institute of Applied Mathematics, Academy of Mathematics and Systems Science, Chinese Academy of Sciences, Beijing 100190, China, and University of Chinese Academy of Sciences
}
\email{fhuang@amt.ac.cn}

\author[Y. Wang]{Yong Wang}
\address[Y. Wang]{Institute of Applied Mathematics, Academy of Mathematics and Systems Science, Chinese Academy of Sciences, Beijing 100190, China, and University of Chinese Academy of Sciences
}
\email{yongwang@amss.ac.cn}

\author[Z. Zhang]{Zhu Zhang}
\address[Z. Zhang]{Department of Mathematics, The Chinese University of Hong Kong, Hong Kong}
\email{zzhang@math.cuhk.edu.hk}

\begin{abstract}
In the paper, assuming that the motion of rarefied gases in a bounded domain is governed by the angular cutoff Boltzmann equation with diffuse reflection boundary, we study the effects of both soft intermolecular interaction and non-isothermal wall temperature upon the long-time dynamics of solutions to the corresponding initial boundary value problem. Specifically, we are devoted to proving the existence and dynamical stability of stationary solutions whenever the boundary temperature has  suitably small variations around a  positive constant. For the proof of existence, we introduce a new mild formulation of solutions to the steady boundary-value problem along the speeded backward bicharacteristic, and develop the uniform estimates on approximate solutions in both $L^2$ and $L^\infty$. Such mild formulation proves to be useful for treating the steady problem with soft potentials even over unbounded domains. In showing the dynamical stability, a new point is that we can obtain the sub-exponential time-decay rate in $L^\infty$ without losing any velocity weight, which is actually quite different from the classical results as in \cite{Caf2,SG} for the torus domain and essentially due to the diffuse reflection boundary and the boundedness of the domain.
\end{abstract}

\subjclass[2010]{35Q20, 35B20, 35B35, 35B45}
\keywords{Botlzmann equation, soft potentials, bounded domain, non-isothermal boundary, diffuse reflection, steady problem, dynamical stability, large-amplitude initial data, a priori estimate}
\date{\today}
\maketitle

\setcounter{tocdepth}{1}
\tableofcontents

\thispagestyle{empty}


\section{Introduction}

\subsection{Boltzmann equation}
Let a rarefied gas be contained in a bounded domain $\Omega$ in $\R^3$, and let $F=F(t,x,v)$ denote the density distribution function of gas particles with position $x\in\Omega$ and velocity $v\in\mathbb{R}^3$ at  time $t>0$. We assume that $F$ is governed by the Boltzmann equation
\begin{equation}\label{1.1}
\pa_tF+v\cdot\nabla_x F=Q(F,F).
\end{equation}
The Boltzmann collision term on the right-hand takes the non-symmetric bilinear form of
\begin{equation}\label{1.2}
Q(F_1,F_2)=\int_{\mathbb{R}^3}\int_{\mathbb{S}^2} B(|v-u|,\omega)[F_1(u')F_2(v')-F_1(u)F_2(v)]\,\dd\omega\dd u,
\\
\end{equation}
where the velocity pair $(v',u')$ is defined by the velocity pair $(v,u)$ as well as the parameter $\omega\in \S^2$ in terms of the relation
\begin{equation*}
v'=v-[(v-u)\cdot\omega]\omega,\quad u'=u+[(v-u)\cdot\omega]\omega,
\end{equation*}
according to conservation laws of momentum and energy
\begin{equation*}
v'+u'=v+u,\quad |v'|^2+|u'|^2=|v|^2+|u|^2,
\end{equation*}
due to the elastic collision of two particles. To the end the Boltzmann collision kernel $B(|v-u|,\omega)$, depending only on the relative velocity $|v-u|$ and $\cos \phi=\omega \cdot (v-u)/|v-u|$, is assumed to satisfy
\begin{equation}\label{1.4}
B(|v-u|,\omega)=|v-u|^{\kappa}b(\phi),
\end{equation}
with
\begin{equation}\label{1.4-1}
-3<\kappa<0,\quad 0\leq b(\phi)\leq C|\cos\phi|,
\end{equation}
for a generic constant $C>0$, namely, we consider in this paper  the full range of soft potentials under the Grad's angular cutoff assumption.

\subsection{Diffuse reflection boundary condition}

We assume that $\Omega=\{\xi(x)<0\}$ is connected and bounded with $\xi(x)$ being a smooth function in $\mathbb{R}^3$. At each boundary point with $\xi(x)=0$, we assume that $\nabla\xi(x)\neq 0$. The outward unit normal vector is therefore given by $n(x)=\nabla\xi(x)/|\nabla\xi(x)|$. We define that $\Omega$ is strictly convex if there is $c_{\xi}>0$ such that 
$\sum_{ij}\partial _{ij}\xi (x)\eta ^{i}\eta ^{j}\geq c_{\xi }|\eta |^{2}$
for all $x 
\in\bar{\Omega}$ and all $\eta \in \mathbb{R}^{3}$.

We denote the phase boundary of the phase space $\Omega \times \mathbb{R}^{3}$ as
$\gamma =\partial \Omega \times \mathbb{R}^{3}$, and split $\gamma$ into three disjoint parts, outgoing
boundary $\gamma _{+},$ the incoming boundary $\gamma _{-},$ and the
singular boundary $\gamma _{0}$ for grazing velocities$:$
\begin{align}
\gamma _{+} &=\{(x,v)\in \partial \Omega \times \mathbb{R}^{3}:
n(x)\cdot v>0\}, \nonumber\\
\gamma _{-} &=\{(x,v)\in \partial \Omega \times \mathbb{R}^{3}:
n(x)\cdot v<0\}, \nonumber\\
\gamma _{0} &=\{(x,v)\in \partial \Omega \times \mathbb{R}^{3}:
n(x)\cdot v=0\}.\nonumber
\end{align}
We supplement the Boltzmann equation \eqref{1.1} with the diffuse reflection boundary condition
\begin{align}\label{diffuseB.C}
F(t,x,v)|_{\gamma_-}=\mu_{\theta}(v) \int_{v'\cdot n(x)>0} F(t,x,v') \{v'\cdot n(x)\}\,\dd v',
\end{align}
where  $
\mu_{\theta}(v)$ is a local Maxwellian with a non-isothermal wall temperature $\theta=\theta(x)>0$:
\begin{equation*}
\mu_{\theta}(v)=\f{1}{2\pi \theta^2(x)} {e^{-\frac{|v|^2}{2\theta(x)}}}.
\end{equation*}
Throughout this paper, we assume that $\theta(x)$ has a small variation around a fixed postive temperature $\theta_0>0$.
Without loss of generality, we assume $\theta_0=1$, and for brevity we denote the global Maxwellian
\begin{equation}\label{GM}
\mu=\mu(v)\equiv \mu_{\theta_0}(v)=\f{1}{2\pi}{e^{-\f{|v|^2}{2}}}.
\end{equation}

\subsection{Main results}
Note that
\begin{equation}\label{S3.5}
\int_{v\cdot n(x)>0}\mu_{\theta}(v) \{v\cdot n(x)\}\, \dd v=1,
\end{equation}
for any $x\in \pa\Omega$, and hence $\mu_{\theta}(v)$  satisfies the boundary condition \eqref{diffuseB.C}. However,   it is straightforward to see that the stationary local Maxwellian $\mu_{\theta}(v)$  does not satisfy the Boltzmann equation \eqref{1.1} because of spatial variation unless $\theta(x)$ is constant on $\pa\Omega$. One may expect that the long-time behavior of solutions to  \eqref{1.1} and \eqref{diffuseB.C} could be determined by the time-independent steady equation with the same boundary condition. Thus the study of this paper includes two parts. In the first part we investigate the steady problem in order to obtain the existence of stationary solutions, and in the second part we are devoted to showing the dynamical stability of the obtained stationary solutions under small perturbations and further under a class of large perturbations in velocity weighted $L^\infty$ spaces.

In what follows we present the main results of this paper. The first one is to clarify the well-posedness of the boundary-value problem on the Boltzmann equation with diffuse reflection boundary condition
\begin{equation}
\label{S3.1}
\left\{\begin{aligned}
&v\cdot \nabla_x F=Q(F,F),\quad \   (x,v)\in \Omega\times\mathbb{R}^3,\\
&F(x,v)|_{\gamma_-}=\mu_{\theta}(v) \int_{v'\cdot n(x)>0} F(x,v') \{v'\cdot n(x)\}\,\dd v'.
\end{aligned}\right.
\end{equation}
We define a velocity weight function
\begin{equation}\label{WF}
w=w(v):= (1+|v|^2)^{\f\beta2} e^{\varpi |v|^\zeta},
\end{equation}
where  $\beta>0$ and $0<\zeta\leq 2$ are given constants, and $(\varpi,\zeta)$ belongs to
\begin{equation}
\label{index}
\{\zeta=2,\,0<\varpi <\f18\}\cup\{0<\zeta<2,\, \varpi>0\}.
\end{equation}
Here and in the sequel, for brevity we have omitted the explicit dependence of $w$ on all parameters $\beta$, $\varpi$ and $\zeta$.

\begin{theorem}
\label{thm1.1}
Let $-3<\kappa<0$, $\b>3+|\kappa|$, and $(\varpi,\zeta)$ belong to \eqref{index}. For given $M>0$, there exist $\delta_0>0$ and $C>0$ such that if 
\begin{equation}\label{S1.1}
\delta:=|\theta-\theta_0|_{L^\infty(\partial\Omega)}\leq  \delta_0,
\end{equation}
then there exists a  unique nonnegative solution
$F_*(x,v)=M\mu(v)+\mu^{\frac{1}{2}}(v)f_*(x,v)\geq0$
to the steady problem \eqref{S3.1}, satisfying the mass conservation 
\begin{equation*}
\int_\Omega\int_{\R^3} f_\ast (x,v)\mu^{\frac{1}{2}}(v)\,\dd v\dd x=0,
\end{equation*}
and the estimate
\begin{align}\label{S1.2}
\|wf_*\|_{L^\infty}+|wf_*|_{L^\infty{(\g)}}\leq C\d.
\end{align}
Moreover, if $\Omega$ is strictly convex and $\theta(x)$ is continuous on $\partial\Omega$, then $F_*$ is continuous on  $(x,v)\in\bar{\Omega}\times\mathbb{R}^3\backslash \gamma_0$.
\end{theorem}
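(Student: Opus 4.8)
The plan is to construct $F_*$ via a two-step scheme: first linearize around the global Maxwellian $\mu$ and solve the resulting linear steady boundary-value problem with a source coming from the boundary mismatch $\mu_\theta - \mu$, then close the nonlinear problem by a contraction/iteration argument controlled by $\delta$. Writing $F = M\mu + \mu^{1/2}f$, the steady equation becomes $v\cdot\nabla_x f + Lf = \Gamma(f,f)$ with a boundary condition of the form $f|_{\gamma_-} = P_\gamma f + r$, where $P_\gamma$ is the (near-) diffuse reflection operator onto $\mu^{1/2}$ and $r = r(\theta-\theta_0)$ is an inhomogeneous boundary term of size $O(\delta)$ coming from $\mu_\theta^{1/2}-\mu^{1/2}$ (and from the normalization constant $M$ making the mass conservation consistent). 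The key innovation flagged in the abstract is the \emph{speeded backward bicharacteristic} mild formulation: because $\kappa<0$, the collision frequency $\nu(v)\sim (1+|v|)^\kappa$ degenerates for large $|v|$, so one cannot afford to integrate the loss term along the straight characteristics at unit speed. Instead I would reparametrize the backward trajectory by a rescaled (velocity-dependent, "speeded") time so that the effective damping rate is uniformly bounded below, rewrite $f$ as an integral over this speeded trajectory plus boundary reflections, and iterate the boundary condition finitely many times (the standard Guo-type "stochastic cycle" expansion) to gain smallness from the measure of velocities that fail to leave $\Omega$ quickly.

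The estimates proceed in the usual $L^2$–$L^\infty$ two-tier scheme adapted to soft potentials. First I would prove the $L^2$ a priori bound: multiply the linearized equation by $f$, integrate over $\Omega\times\R^3$, use the coercivity of $L$ modulo its five-dimensional null space (spanned by $\mu^{1/2}, v_i\mu^{1/2}, |v|^2\mu^{1/2}$), and control the hydrodynamic part via the boundary terms and the mass-conservation constraint $\int f_*\mu^{1/2}=0$; the diffuse boundary contributes a nonnegative term and the inhomogeneity $r$ enters linearly, giving $\|f\|_{L^2_\nu}\lesssim \delta + \|\Gamma(f,f)\|$. Second, the $L^\infty$ bound: using the speeded mild formulation with the weight $w$ from \eqref{WF}, estimate $wf$ along trajectories. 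The crucial gain is that the stretched exponential $e^{\varpi|v|^\zeta}$ (with the constraint \eqref{index}, in particular $\varpi<1/8$ when $\zeta=2$) is subordinate to $\mu^{1/2}=\frac{1}{\sqrt{2\pi}}e^{-|v|^2/4}$ so that $w\Gamma$ and $wKf$ (the compact part of $wL w^{-1}$) map $L^\infty$ into itself with a small or controllable constant, while the velocity integral in the backward trajectory term is split into a large-velocity part (small because of $\nu$ integrability, $\kappa>-3$, against the Gaussian) and a bounded-velocity part handled by the $L^2$ estimate (Vidav-type iteration: one more substitution of the mild formula into itself produces a double velocity integral whose kernel is $L^2$ in the relevant variables). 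Putting these together yields $\|wf\|_{L^\infty}+|wf|_{L^\infty(\gamma)}\lesssim \delta + \|wf\|_{L^\infty}^2$, which for $\delta\le\delta_0$ small closes by a continuity/fixed-point argument and gives \eqref{S1.2}; uniqueness follows from the same estimate applied to the difference of two solutions. Nonnegativity of $F_*$ is obtained by running the same scheme on a sequence of approximate problems (e.g.\ with a cutoff or with an added artificial damping and a nonnegativity-preserving iteration $v\cdot\nabla_x F^{n+1}+\nu F^{n+1} = \text{gain}(F^n)\ge0$, diffuse-reflected, starting from $F^0=M\mu\ge0$), passing to the limit using the uniform bounds.

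For the final continuity assertion, assuming $\Omega$ strictly convex and $\theta$ continuous on $\partial\Omega$, I would invoke the now-standard argument of Guo (via Kim): strict convexity guarantees that the backward exit time $t_{\mathbf{b}}(x,v)$ and exit position are continuous on $\bar\Omega\times\R^3\setminus\gamma_0$, and the grazing set $\gamma_0$ is where discontinuities could a priori propagate; one then shows the mild-formulation iteration map preserves continuity away from $\gamma_0$ (each bounce lands in $\gamma_-$ and, because the diffuse kernel $\mu_\theta$ depends continuously on $x$ through $\theta$, the reflected profile is continuous), and the uniform-in-$n$ $L^\infty$ bound lets one pass the continuity to the limit $F_*$. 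The main obstacle, and the place where genuine new work is needed, is the $L^\infty$ estimate with \emph{no loss of velocity weight} under the degenerate soft collision frequency: one must verify that the speeded-trajectory change of variables really does restore a uniform exponential-in-speeded-time decay, that the resulting velocity integrals against the stretched exponential weight $w$ converge with the admissible range \eqref{index}, and that this works over possibly unbounded domains as claimed — this requires a careful choice of the speeding factor matched to $\kappa$ and to $\zeta$, and is the technical heart of Theorem \ref{thm1.1}.
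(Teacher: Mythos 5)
Your proposal captures the paper's essential architecture for Theorem~\ref{thm1.1}: the linearization around $\mu$ leading to a linear steady boundary-value problem with an $O(\delta)$ boundary source, the speeded backward bicharacteristic $\hat v = (1+|v|^2)^{|\kappa|/2}v$ producing a uniformly bounded-below effective collision frequency $\hat\nu(v)=(1+|v|^2)^{|\kappa|/2}[\varepsilon+\nu(v)]\ge \hat\nu_0>0$, the Guo-style stochastic-cycle expansion in velocity bounces, the $L^2$--$L^\infty$ two-tier estimate with a Vidav double substitution, the nonlinear closure $\|wf\|_{L^\infty}\lesssim\delta+\|wf\|_{L^\infty}^2$ for $\delta$ small, and the continuity argument via strict convexity. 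That is indeed the skeleton of Section~3.

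There are, however, two places where the proposal is incomplete or departs from the paper in a way that would need to be filled. First, you supply a priori estimates but do not actually construct the linear solution operator. The paper devotes Lemmas~\ref{lemS3.4}--\ref{lemS3.7} to this: an $\varepsilon$-penalization $\varepsilon f+v\cdot\nabla_x f+Lf=g$ that makes mass conservation and the solvability condition automatic at every stage; a $(1-\tfrac1n)$-damped diffuse boundary $f|_{\gamma_-}=(1-\tfrac1n)P_\gamma f+r$ so the stochastic cycle is contractive on its own; and --- flagged in the introduction as a new ingredient compared to \cite{EGKM} --- a $\lambda$-continuation from $\mathcal{L}_0^{-1}$ (pure absorption transport) to $\mathcal{L}_1^{-1}$ (the full linearized operator) for $\mathcal{L}_\lambda=\varepsilon+v\cdot\nabla_x+\nu-\lambda K$, with uniform-in-$\lambda$ bounds via the observation $\nu-\lambda K=(1-\lambda)\nu+\lambda L\ge 0$. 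Without such a construction you have bounds for a hypothetical solution but no existence. Second, the nonnegativity of $F_*$. You propose a steady positivity-preserving iteration $v\cdot\nabla_x F^{n+1}+R(F^n)F^{n+1}=Q^+(F^n,F^n)$ with diffuse reflection. The paper explicitly \emph{avoids} this: it states at the end of the proof of Theorem~\ref{thm1.1} that positivity of $F_*$ will be established in Section~4, as a consequence of the dynamical stability --- the time-dependent positivity-preserving iteration of Proposition~\ref{prop6.1} works on a short interval anchored to nonnegative initial data, and $F_*$ is the long-time limit. Your steady iteration is a genuinely different route and as written is underjustified: the steady mild formulation with diffuse reflection is itself a fixed-point problem in the boundary trace (one needs the decaying-bounce-measure estimate of Lemma~\ref{lemS3.1} before $F^{n+1}$ is even well-defined), and you would separately need to prove that this nonnegative limit coincides with the $f_*$ produced by the perturbative scheme, which requires a uniqueness argument you have not supplied.
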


For simplicity, through the paper we would take $M=1$ in Theorem \ref{thm1.1} without loss of generality, namely, the stationary solution $F_\ast(x,v)$ has the same total mass as $\mu(v)$ in $\Omega$. Note that when there is no spatial variation on the boundary temperature, i.e., $\delta=0$, the stationary solution is reduced to the global Maxwellian.

The second  result is concerned with the dynamical stability of $F_\ast(x,v)$ under small perturbations in $L^\infty$. We assume that  \eqref{1.1} is also supplemented with initial data
\begin{equation}
\label{ad.id}
F(t,x,v)|_{t=0}=F_0(x,v).
\end{equation}
The goal is to show the large-time convergence of solutions of the initial-boundary value problem \eqref{1.1}, \eqref{diffuseB.C} and \eqref{ad.id} to the stationary solution $F_\ast(x,v)$, whenever they are sufficiently close to each other in some sense at initial time.

\begin{theorem}\label{thm1.2}
Let $-3<\ka<0$, $\b>3+|\k|$ and $(\varpi,\zeta)$ belong to \eqref{index}. 
Assume \eqref{S1.1}
with $\delta_0>0$ chosen to be further small enough. There exist constants 
$\vep_0>0$, $C_0>0$ and $\lambda_0>0$ such that if
$F_0(x,v)={F_*}(x,v)+\mu^{\frac{1}{2}}(v)f_{0}(x,v)\geq 0$ satisfies the mass conservation
\begin{equation}\label{mass}
\int_{\Omega}\int_{\mathbb{R}^{3}}f_0(x,v)\mu^{\frac{1}{2}}(v)\,\dd v\dd x=0,
\end{equation}
and
\begin{equation}\label{D1.1}
\|wf_0\|_{L^{\infty}}\leq \vep_0, 
\end{equation}
then the initial-boundary value problem \eqref{1.1}, \eqref{diffuseB.C} and \eqref{ad.id} on the Boltzmann equation 
admits a unique solution $F(t,x,v)={F_*(x,v)}+\mu^{\frac{1}{2}}(v)f(t,x,v)\geq 0$ satisfying
\begin{equation}
\label{sol.cons}
\int_{\Omega}\int_{\mathbb{R}^{3}}f(t,x,v)\mu^{\frac{1}{2}}(v)\,\dd v\dd x=0,
\end{equation}
and
\begin{align}\label{D1.3}
\|wf(t)\|_{L^{\infty}}+|wf(t)|_{L^{\infty}(\g)}\leq C_0e^{-\lambda_0 t^\alpha}\|wf_0\|_{L^{\infty}},
\end{align}
for all $t\geq 0$, where $\al\in (0,1)$ is given by
\begin{equation}
\label{def.alpha}
\alpha:=\frac{\zeta}{\zeta+|\ka|}.
\end{equation}
Moreover, if $\Omega$ is strictly convex, $F_0(x,v)$ is continuous except on $\g_0$ and satisfying
\begin{align}\label{D1.2a}
F_0(x,v)|_{\gamma_-}=\mu_{\theta}(v) \int_{v'\cdot n(x)>0} F_0(x,v') \{v'\cdot n(x)\} \,\dd v',
\end{align}
and $\theta(x) $ is continuous on $\partial \Omega$, then $F(t,x,v)$ is continuous in $[0,\infty)\times \{\bar{\Omega}\times \mathbb{R}^{3}\setminus\g_0\}$.
\end{theorem}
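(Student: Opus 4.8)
The plan is to prove Theorem~\ref{thm1.2} by the standard perturbative framework for the Boltzmann equation with diffuse reflection boundary, but carefully adapted to the soft-potential case and the non-isothermal stationary background $F_\ast$. Writing $F = F_\ast + \mu^{1/2} f$ and substituting into \eqref{1.1}, the perturbation $f$ satisfies a linear equation with a linearized operator of the form $\partial_t f + v\cdot\nabla_x f + \nu(v) f - K f = \Gamma(f,f) + (\text{lower-order terms involving } f_\ast)$, where the background contributes both a small potential-type perturbation of the collision operator (controllable since $\|wf_\ast\|_{L^\infty}\le C\delta$ by Theorem~\ref{thm1.1}) and a modified boundary condition. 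The boundary condition for $f$ becomes a diffuse-reflection operator: $f|_{\gamma_-} = \frac{1}{\mu^{1/2}}\big(\mu_\theta \int_{v'\cdot n>0} \mu^{1/2}(v') f(v')\{v'\cdot n\}\,\dd v' + \text{terms from }F_\ast\big)$; the key structural fact, as in \eqref{S3.5}, is that the zeroth-order part of this operator (with $\theta$ replaced by $\theta_0$) preserves $\sqrt\mu$ and is a contraction in the natural $L^2$ and $L^\infty$ boundary norms, with the $\theta$-variation producing an $O(\delta)$ correction.

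The core of the argument is the a priori estimate \eqref{D1.3}, and here I would run the usual $L^2$--$L^\infty$ bootstrap. First, establish an $L^2$ coercivity/energy estimate: using the conservation of mass \eqref{sol.cons} (which kills the macroscopic hydrodynamic part that would otherwise be in the null space) together with the boundary contribution from diffuse reflection, one obtains $\frac{d}{dt}\|f(t)\|_{L^2}^2 + c\|(\mathbf{I}-\mathbf{P})f\|_\nu^2 + (\text{boundary dissipation}) \le C\delta\|f\|_{L^2}^2 + (\text{nonlinear})$, and then a macroscopic estimate (via test functions / the local conservation laws, or an Ukai-type argument adapted to the bounded domain with diffuse boundary) recovers control of $\mathbf{P}f$ so that $\int_0^t\|f(s)\|_\nu^2\,ds$ is controlled by $\|f_0\|_{L^2}^2$ plus small terms. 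Second, use the mild (Duhamel) formulation along backward characteristics that bounce off the boundary — iterating the diffuse reflection kernel — to bound $\|wf(t)\|_{L^\infty}$ in terms of an initial contribution, a time integral of $\|f(s)\|_{L^2}$ (the $K$-operator gain, exploiting that $K$ is "averaging" and maps $L^2$ to weighted $L^\infty$ after one collision), the nonlinear term $\|\Gamma(f,f)\|$ controlled by $\|wf\|_{L^\infty}^2$, and the $O(\delta)$ background terms. Closing this in a Gronwall-type scheme gives boundedness; existence and uniqueness of the solution follow by the corresponding iteration scheme with the same estimates, and positivity $F\ge 0$ from the standard construction preserving nonnegativity.

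The genuinely delicate point — and the part I expect to be the main obstacle — is producing the sub-exponential rate $e^{-\lambda_0 t^\alpha}$ with $\alpha = \zeta/(\zeta+|\kappa|)$ \emph{without losing velocity weight}, since for soft potentials $\nu(v)\sim (1+|v|)^\kappa \to 0$ as $|v|\to\infty$, so there is no uniform spectral gap and ordinary exponential decay fails. The mechanism is a time-velocity splitting: on the region $\{|v|\le R(t)\}$ one has $\nu(v)\gtrsim (1+R(t))^\kappa = (1+R(t))^{-|\kappa|}$, giving damping, while on $\{|v|> R(t)\}$ one pays with the extra exponential weight $e^{\varpi|v|^\zeta}$ built into $w$, which beats any polynomial loss — choosing $R(t)$ so that $(1+R(t))^{-|\kappa|} t \sim R(t)^\zeta$, i.e.\ $R(t)\sim t^{1/(\zeta+|\kappa|)}$, balances the two and yields the exponent $t\cdot t^{-|\kappa|/(\zeta+|\kappa|)} = t^{\zeta/(\zeta+|\kappa|)} = t^\alpha$. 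Crucially, the boundedness of $\Omega$ and the diffuse reflection boundary are what allow this without weight loss: every backward characteristic reaches the boundary in finite (bounded) time, the diffuse reflection fully "refreshes" the velocity according to $\mu_\theta$, and the geometric decay from iterated bounces (stochastic-cycle / Vidav-type iteration) combines with the velocity-splitting to close the estimate in $L^\infty$ directly — in contrast to the torus case of \cite{Caf2,SG} where one is forced to trade polynomial velocity weights for decay. I would carry out this time-dependent splitting inside the $L^\infty$ mild-formulation estimate, after the $L^2$ decay has been upgraded to the same sub-exponential rate by an analogous interpolation between $\|f\|_{L^2}$ and $\|wf\|_{L^\infty}$ with a time-dependent cutoff.
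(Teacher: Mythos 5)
Your overall architecture — perturb around $F_*$, run an $L^2$--$L^\infty$ bootstrap via a Duhamel formulation with iterated diffuse-reflection bounces (the Vidav/stochastic-cycle device), and derive $\alpha=\zeta/(\zeta+|\kappa|)$ from a Young-type balance — agrees with the paper's Section~4, and your account of the $L^2$ step (weighted coercivity estimate plus a time--velocity cutoff at $R(t)\sim t^{1/(\zeta+|\kappa|)}$) is essentially Lemma~\ref{lm4.3}. But the $L^\infty$ mechanism you describe does not close. You write that for $|v|>R(t)$ ``one pays with the extra exponential weight $e^{\varpi|v|^\zeta}$ built into $w$.'' The target estimate \eqref{D1.3} carries the \emph{same} weight $w$ on both sides; along a free-transport segment of the backward characteristics the $w$-factors cancel identically, so there is no surplus of $e^{\varpi|v|^\zeta}$ to spend. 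Using $|f(v)|\leq\|wf\|_{L^\infty}\,e^{-\varpi|v|^\zeta}$ only controls an unweighted (or weaker-weighted) norm — precisely the weight loss the theorem is designed to avoid, and precisely the Strain--Guo/Caflisch route of \cite{Caf2,SG} that the paper distances itself from.

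What actually furnishes the velocity Gaussian needed for Caflisch's scaling in the $L^\infty$ estimate is the diffuse-reflection boundary itself, and although you mention this ingredient you do not attach it to the scaling step. In the $h=wf$ variable the boundary condition is $h|_{\gamma_-}=\tilde w(v)^{-1}\int h\,\tilde w\,\dd\sigma + \cdots$ with $\tilde w(v)^{-1}=w(v)\mu^{1/2}(v)\lesssim e^{-|v|^2/8}$ under \eqref{index}; each bounce therefore equips the outgoing velocity with a \emph{fresh} Gaussian in addition to $w$, and it is this Gaussian — not the weight — that is fed into the Caflisch inequality $c\langle v\rangle^\kappa\tau + |v|^\zeta/16\geq\lambda_1\tau^\alpha$ (see \eqref{4.1.11-1}) to produce the $e^{-\lambda_1(t-t_1)^\alpha}$ rate on boundary contributions. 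For the interior contribution that never reaches $\partial\Omega$, the split in Lemma~\ref{lm4.2} is at the \emph{fixed} threshold $|v|=d_\Omega$, not at a time-dependent $R(t)$: by \eqref{I2}, $t_{\mathbf{b}}(x,v)\leq\mathrm{diam}(\Omega)/|v|$, so such paths only survive for $|v|\leq d_\Omega$, where $\nu(v)\geq\bar\nu_0>0$ and one gets genuine exponential decay. Replacing ``pay with $w$'' by ``pay with the boundary $\sqrt\mu$, and restrict interior paths to $|v|\leq d_\Omega$ via $t_{\mathbf{b}}\lesssim 1/|v|$'' is the missing step; without it your $L^\infty$ estimate degenerates to a weight-losing bound.
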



We remark that the value of $\al$ in \eqref{def.alpha}, which is optimal in terms of the exponential velocity weighted function space, can be formally determined as in \cite{Caf2}; we will come back to this point later. By \eqref{D1.3}, we have obtained the global existence and large-time behavior of solutions simultaneously in the velocity-weighted $L^\infty$ space which is the same as that  initial data belong to.

One may notice from \eqref{D1.1} in Theorem \ref{thm1.2} above that the initial perturbation $f_0(x,v)$
is  required to generally have a small amplitude in the velocity-weighted $L^\infty$ space.  The goal of the third  result is to relax such restriction by allowing $F_0(x,v)$ to have large oscillations around the stationary solution $F_\ast(x,v)$ with the price that the initial perturbation $f_0(x,v)$ is small enough in some $L^p$ norm  for $1<p<\infty$.


\begin{theorem}\label{thm1.3}
Assume that all conditions in Theorem \ref{thm1.2} are satisfied, and additionally let
\begin{equation}
\label{cond.p}
\max\{\f32,\frac{3}{3+\ka}\}<p<\infty,\quad \b>\max\{3+|\ka|, 4\}.
\end{equation}
Assume \eqref{S1.1}
with $\delta_0>0$ chosen to be further small enough and initial data $F_0(x,v)=F_*(x,v)+\mu^{\frac{1}{2}}(v)f_0(x,v)\geq0$ satisfies the mass conservation \eqref{mass}. There exist constants 
$\vep_1>0$, $C_1>{1}$ and $C_2>1$ such that if  $f_0(x,v)$ satisfies
\begin{align}\label{M0}
M_0:=\|wf_0\|_{L^\infty}\leq C_1|\log\delta|,
\end{align}
and
\begin{equation}
\label{cond.lp}
 \|f_0\|_{L^p}\leq \vep_1,
\end{equation}
then the initial-boundary value problem \eqref{1.1}, \eqref{diffuseB.C} and \eqref{ad.id} on the Boltzmann equation admits a unique solution $F(t,x,v)={F_*(x,v)}+\mu^{\frac{1}{2}}(v)f(t,x,v)\geq 0$ satisfying \eqref{sol.cons} and
\begin{equation}\label{D1.3a}
\|wf(t)\|_{L^{\infty}}+|wf(t)|_{L^{\infty}{(\g)}}\leq  C_2 e^{C_2M_0}e^{-\lambda_0 t^{\alpha}}\|wf_0\|_{L^{\infty}},
\end{equation}
for all $t\geq 0$, where $\alpha$ is the same as in \eqref{def.alpha} and $\la_0$ is the same as in \eqref{D1.3}.
Moreover, if $\Omega$ is strictly convex, $F_0(x,v)$ is continuous except on $\g_0$  satisfying \eqref{D1.2a}, and $\theta(x) $ is continuous over $\partial \Omega$, then $F(t,x,v)$ is continuous in $[0,\infty)\times \{\bar{\Omega}\times \mathbb{R}^{3}\setminus\g_0\}$.
\end{theorem}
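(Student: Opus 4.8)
The plan is to prove Theorem~\ref{thm1.3} as a large‑amplitude analogue of Theorem~\ref{thm1.2} via a continuity (bootstrap) argument, in which the $L^\infty$‑smallness used there is replaced by the $L^p$‑smallness \eqref{cond.lp}.

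\emph{Step 1 (Set‑up).} Write $F=F_*+\mu^{1/2}f$; since $v\cdot\nabla_x F_*=Q(F_*,F_*)$ by \eqref{S3.1}, the perturbation solves
\begin{equation*}
\partial_t f+v\cdot\nabla_x f+Lf=\Gamma(f,f)+\CR f,\qquad f|_{t=0}=f_0,
\end{equation*}
with $L,\Gamma$ the standard linearized operator and bilinear term around $\mu$ and $\CR f:=\mu^{-1/2}\big(Q(\mu^{1/2}f,\mu^{1/2}f_*)+Q(\mu^{1/2}f_*,\mu^{1/2}f)\big)$ a linear operator of size $O(\delta)$ in weighted norms by \eqref{S1.2}; the diffuse condition for $f$ is the usual one up to an $O(\delta)$ correction from $\mu_\theta\neq\mu$, and \eqref{sol.cons} is preserved along the flow. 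Local existence, uniqueness and positivity $F\ge0$ for such large data follow from the standard approximation scheme (freeze the loss frequency and gain term at the previous iterate, solve the linear transport--collision problem with diffuse reflection along backward bicharacteristics, use propagation of nonnegativity and an $L^\infty$ bound along characteristics) on some $[0,T_*]$, $T_*=T_*(M_0)>0$. It then suffices to show that, on any interval $[0,T]$ on which the solution exists and obeys the a priori bound
\begin{equation*}
\sup_{0\le t\le T}e^{\lambda_0 t^\alpha}\big(\|wf(t)\|_{L^\infty}+|wf(t)|_{L^\infty(\gamma)}\big)\le \bar M:=2C_2\,e^{C_2M_0}\|wf_0\|_{L^\infty},
\end{equation*}
the same estimate holds with $\bar M$ replaced by $\tfrac12\bar M$; continuation then gives a global solution.

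\emph{Step 2 (A decaying $L^2\cap L^p$ bound).} From \eqref{cond.lp}, \eqref{M0} and H\"older/interpolation one gets $\|f_0\|_{L^2}\le\vep_1^{\sigma}M_0^{1-\sigma}$ for some $\sigma\in(0,1)$, arbitrarily small after shrinking $\vep_1$. Under the a priori bound we run coupled weighted $L^2$ and $L^p$ energy estimates for $f$: the microscopic coercivity $\langle Lf,f\rangle\gtrsim\|\{I-P\}f\|_\nu^2$, the bounded‑domain macroscopic estimate (which controls $\int_0^t\|Pf\|^2$ through the dissipation and the conserved mass \eqref{sol.cons}), and the $O(\delta)$ size of $\CR$ yield dissipation, while the cubic term $\langle\Gamma(f,f),f\rangle$ carries a factor $\|wf\|_{L^\infty}\le\bar M$ which, although not small, is handled in a simultaneous Gronwall loop using the smallness of $\vep_1$ and $\delta_0$ --- the lower thresholds on $p$ in \eqref{cond.p} being exactly what makes $\|\,|v-u|^{\kappa}\mu^{1/2}(u)\,\|_{L^{p'}_u}<\infty$ and the attendant interpolation inequalities valid. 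Since the collision frequency for soft potentials degenerates as $\nu(v)\sim(1+|v|)^{-|\kappa|}$, no exponential rate is available; splitting velocities at $|v|=R$, balancing $e^{-cR^{-|\kappa|}t}$ against the weight gain $e^{-\varpi R^{\zeta}}$ and optimizing at $R\sim t^{1/(\zeta+|\kappa|)}$ produces the sub‑exponential rate $e^{-\lambda_0 t^{\alpha}}$ with $\alpha$ as in \eqref{def.alpha}. The outcome is $\|f(t)\|_{L^2}+\|f(t)\|_{L^p}\le C\,e^{-\lambda_0 t^{\alpha}}\,\eta_0$, where $\eta_0$ can be made as small as desired by choosing $\vep_1,\delta_0$ small.

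\emph{Step 3 ($L^\infty$ bound, bootstrap, positivity, continuity).} Iterating twice the mild formulation of the $f$‑equation along the speeded backward bicharacteristics introduced for Theorem~\ref{thm1.1}, using the boundary ``stochastic cycles'' generated by the diffuse reflection and the boundedness of $\Omega$, and trading the local‑in‑$v$ part of $\|\cdot\|_{L^\infty}$ against $\|\cdot\|_{L^2}$ by compactness of the operator $K$, one obtains
\begin{multline*}
\|wf(t)\|_{L^\infty}\le C\,e^{-\lambda_0 t^{\alpha}}\|wf_0\|_{L^\infty}+C\int_0^t\CK(t,s)\,\|f(s)\|_{L^2}\,\dd s\\
+C\int_0^t\CK(t,s)\,\|f(s)\|_{L^p}\,\|wf(s)\|_{L^\infty}\,\dd s,
\end{multline*}
with $\CK$ integrable, the weight mismatch in the nonlinear term absorbed by $\beta>\max\{3+|\kappa|,4\}$ and $p>3/(3+\kappa)$, and the boundary norm treated likewise. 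Inserting the bounds of Step~2 and running Gronwall --- the initial layer $[0,T_*]$ accounting for the factor $e^{C_2M_0}$ (growth from the large‑amplitude nonlinearity before the decay becomes effective) and later times only for a factor $e^{C\eta_0}\le2$ --- gives $\|wf(t)\|_{L^\infty}+|wf(t)|_{L^\infty(\gamma)}\le C_2\,e^{C_2M_0}e^{-\lambda_0 t^{\alpha}}\|wf_0\|_{L^\infty}$, i.e. the improved bound $\tfrac12\bar M$. This closes the continuity argument, yielding the global solution and \eqref{D1.3a}. Positivity $F\ge0$ is inherited from the approximation scheme and the uniqueness; and when $\Omega$ is strictly convex, $F_0$ is continuous off $\gamma_0$ and compatible with \eqref{D1.2a}, and $\theta$ is continuous on $\partial\Omega$, the backward bicharacteristics meet $\gamma_0$ only on a null set and the Duhamel representation is continuous, so $F$ is continuous on $[0,\infty)\times\{\bar\Omega\times\R^3\setminus\gamma_0\}$, exactly as for Theorem~\ref{thm1.2}.

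\emph{Main obstacle.} The crux is the interplay in Steps~2--3: for soft potentials in a bounded domain one must simultaneously run the speeded‑characteristic $L^\infty$ scheme with boundary stochastic cycles, absorb the non‑small nonlinear factor $\|wf\|_{L^\infty}\le\bar M$ by borrowing smallness from the propagated $L^p$ norm of $f$ (which is why both lower thresholds on $p$ in \eqref{cond.p} and the extra weight $\beta>4$ are imposed), and extract the optimal sub‑exponential rate $t^{\alpha}$ from the competition between the degenerate collision frequency and the exponential velocity weight --- all while keeping the $\delta$‑sized corrections from the non‑Maxwellian steady state $F_*$ under control, which is precisely what forces the restriction $M_0\le C_1|\log\delta|$ in \eqref{M0} and constrains the admissible $C_1$.
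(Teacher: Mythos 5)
Your high-level strategy (bootstrap, trade $L^\infty$-smallness for $L^p$-smallness, use the mild formulation and speeded characteristics for the $L^\infty$ bound, and invoke $\beta>\max\{3+|\kappa|,4\}$ and the thresholds on $p$ to close the nonlinear term via Lemma \ref{lm2.5}) is correct in spirit and close to the paper's. But Step 2 contains a genuine gap that cannot be fixed by more careful bookkeeping.

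You claim that, under the a priori bound $\|wf(t)\|_{L^\infty}\le\bar M$, the coupled $L^2\cap L^p$ energy estimate with the Gronwall loop yields a decaying bound $\|f(t)\|_{L^2}+\|f(t)\|_{L^p}\le Ce^{-\lambda_0 t^\alpha}\eta_0$. This is not available. The nonlinear term in the $L^p$ energy identity contributes $C\bar M\,\|\nu^{1/p}f\|_{L^p}^p$ (cf.\ \eqref{5.1.5}), and since $\bar M$ is of size $e^{CM_0}$ and not small, this term dominates the microscopic dissipation outright; the best one can extract is exponential growth $\|f(t)\|_{L^p}\le e^{C_3\bar M t}\|f_0\|_{L^p}$, which is precisely what the paper proves in Lemma \ref{lm5.2}. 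Smallness of $\vep_1$ and $\delta_0$ cannot absorb a factor $\bar M$ multiplying the entire dissipation; it only keeps the $L^p$ norm small on a \emph{finite} time window $[0,T_0]$ with $T_0$ depending on $M_0$. Because of this, your global-in-time Gronwall in Step 3 is not well founded: the iterated mild-formulation estimate produces an $L^\infty$ bound only on $[0,T_0]$ (Lemmas \ref{lm5.3-1}--\ref{lm5.4}), and the sub-exponential decay can only be started once $\|wf(T_0)\|_{L^\infty}$ has been driven below the small-data threshold $\vep_0$ of Theorem \ref{thm1.2}.

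The paper's actual mechanism is therefore two-stage, not one-stage: on $[0,T_0]$ one has no decay, only a (large) a priori bound $\bar M=2e^{CM_0}$, exponential growth of $\|f\|_{L^p}$, and the inequality $\CE'(t)\le CM_0 e^{-\lambda_0 t^\alpha}\CE(t)+\CD$ from \eqref{5.3.1}, whose integration is what produces the factor $e^{CM_0}$; $T_0$ is then chosen so large that $\bar M e^{-\lambda_0 T_0^\alpha/2}\le\tilde\vep/4$, and $\delta$ so small that $CT_0^{5/2}\delta\bar M^3\le\tilde\vep/16$ --- this last constraint is exactly where $M_0\le C_1|\log\delta|$ enters. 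Only from time $T_0$ onward does one switch to Theorem \ref{thm1.2} to obtain the decay \eqref{D1.3a}. Your sketch conflates the finite-window non-decaying estimate with a global decaying one, which would require the dissipation to beat the $\bar M$-sized nonlinear contribution at all times --- an estimate that fails for $\bar M$ not small.

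A secondary but worth noting point: the sub-exponential rate $e^{-\lambda_0 t^\alpha}$ for soft potentials in a bounded domain is extracted not merely by an optimization over a velocity cutoff radius $R$, but through the bounded-domain mechanism $t_{\mathbf b}(x,v)\le d_\Omega/|v|$ combined with the Gaussian weight produced by the diffuse reflection boundary terms, i.e.\ \eqref{I1}--\eqref{I2} and the identity $e^{-\nu(v)(t-t_1)}e^{-|v|^2/16}\le Ce^{-\lambda_1(t-t_1)^\alpha}$; your Step 2 argument, which invokes the degenerate frequency alone, would not yield this rate without that structure.
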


\begin{remark}
We give a few remarks in order on the above theorem.
\begin{itemize}
  \item[(a)] Note that $C_1$ is independent of $\delta$. Then, from \eqref{M0}, $M_0=\|wf_0\|_{L^\infty}$ can be arbitrarily large, provided that both $\delta$ and $\|f_0\|_{L^p}$ are sufficiently small. Particularly, if one takes $\delta=0$ corresponding to the isothermal boundary temperature, there is no restriction on the upper bound of $M_0$. However, it is unclear how to remove the condition \eqref{M0} whenever $\delta>0$.
  \item[(b)] From \eqref{cond.p}, $p$ has to be large enough as $\ka$ gets close to $-3$. The condition \eqref{cond.lp} for the smallness of $f_0$ in $L^p$ is different from that in \cite{DHWY} and \cite{DW} where $L^1$ norm and $L^2$ norm were used respectively. Note that  \eqref{cond.lp} can be also guaranteed by  the smallness of $L^1$ or $L^2$ norm of $f_0$ and the velocity-weighted $L^\infty$ bound with the help of the interpolation.
  \item[(c)]  As already mentioned for Theorem \ref{thm1.2}, by \eqref{D1.3a} we have obtained the global existence and large-time behavior of solutions simultaneously in the velocity-weighted $L^\infty$ space which is the same as that initial data belong to. Estimate \eqref{D1.3a}  also implies that the solution may grow with an exponential rate of $M_0$ within  a short time.
\end{itemize}

\end{remark}

\subsection{Comments and literature}

The focus of this paper is on the effects of both the soft intermolecular interaction and the non-isothermal wall temperature on the large-time behaviour of solutions to the initial-boundary value problem on the Boltzmann equation. In what follows we review some known results related to our results and also give comments on how such effects occur.

\medskip
\noindent{(a) {\it Effect of soft potentials.}}
First of all, we discuss the effect of soft potentials on the global well-posedness of the Boltzmann equation in perturbation framework. Compared to the hard potentials, the main difficulty is the lack of the spectral gap of the linearized Boltzmann operator $L$, for instance, the multiplication operator $\nu(v)\sim \langle v\rangle^\ka$ has no strictly positive lower bound over large velocities $|v|$ for $\ka<0$.

In the spatially periodic domain $\T^3$, Caflish \cite{Caf1,Caf2} first  constructed the global-in-time solution for $-1<\ka<0$ and also studied the large-time behavior of solutions, where the proof is based on the time-decay property of the linearised equation together with the bootstrap argument on the nonlinear equation. One important observation by Caflish is that the function $\exp\{-\langle v\rangle^\ka t -c|v|^2\}$, obtained as the solution to the spatially homogeneous equation
$
\pa_t f+\langle v\rangle^\ka f=0
$
with initial data $f(0,v)=\exp\{-c|v|^2\}$, decays in time with a rate $\exp\{-\la t^\beta\}$ with $\beta=2/(2+|\ka|)$ by taking the infimum of $\langle v\rangle^\ka t +c|v|^2$ in $v\in \R^3$. We remark that such sub-exponential time-decay is ensured essentially by adding more exponential velocity weight at initial time; see \cite[equations (3.1) and (3.2) of Theorem 3.1 on page 76]{Caf2}.

Independently, Ukai-Asano \cite{UA} developed the semigroup theory in the case of soft potentials $-1<\ka<0$, and also obtained the global solution as well as the large-time behavior of solutions for the problem in the whole space $\R^3$. As pointed out by \cite[Theorem 9.1 and Remark 9.1 on page 96]{UA}, no solutions have been found in the large in time if initial data and solutions belong to the function space with the same velocity weights. We remark that it is the same situation if one adopts the approach of  \cite{UA} to treat the case of $\T^3$, for instance, one can obtain the arbitrarily large algebraic time-decay rate by postulating more polynomial velocity weights on initial data.

By the pure energy method in high-order Sobolev spaces,  Guo \cite{Guo3} constructed the global solutions  over $\T^3$ for the full range of soft potentials $-3<\ka<0$, but the large-time behavior of solutions was left. This problem was later completely solved by Strain-Guo in \cite{SG-06} and \cite{SG} in terms of the same spirit as in \cite{UA} and \cite{Caf2} by putting additional polynomial or exponential velocity weights on initial data. Such approach was also applied by Strain \cite{St} to study the asymptotic stability of the relativistic Boltzmann equation for the soft potentials in $\T^3$.

In the case of $\R^3$, we also mention Duan-Yang-Zhao \cite{DYZ2} and Strain \cite{St-op} to treat the optimal large-time behavior of solutions for $-3<\ka<0$. Particularly, \cite{DYZ2} found  a velocity weight function containing an exponential factor $\exp\{c|v|^2/(1+t)^q\}$. We remark that this kind of weight could be useful for simultaneously dealing with the global existence and   large-time behavior of solutions for the problem in the torus domain or even in the general bounded domain (for instance, \cite{LYa}), since the typical function  $\exp\{-\langle v\rangle^\ga t -c|v|^2/(1+t)^q\}$ induces a time-decay rate $\exp\{-\la t^{\beta'}\}$ with $\beta'=(2-q|\ka|)/(2+|\ka|)$. Therefore, the large-time behavior of solutions is gained by making the velocity weight in the solution space  become lower and lower as time goes on. Indeed, this is also in the same spirit as in  \cite{St-op} on the basis of the velocity-time splitting technique.

By comparison with those results mentioned above, Theorem \ref{thm1.2} or \ref{thm1.3} implies that the large-time behavior of solutions to the initial-boundary value problem under consideration of this paper is established in the situation where solutions and initial data enjoy the same exponential velocity weight. In other words, to obtain the sub-exponential time-decay for soft potentials, it is no need to put any additional velocity weight on initial data. Roughly speaking, the main reason to realize this point is due to not only the boundedness of the domain but also the diffuse reflection boundary condition, which will be explained in more detail later on. We remark that the results are  nontrivial to obtain even if the wall temperature is reduced to a constant implying that the stationary solution $F_\ast(x,v)$ is a global Maxwellian.

\medskip
\noindent{(b) {\it Effect of non-isothermal boundary.}}  The non-isothermal wall temperature provides an inhomogeneous source to force the Boltzmann solution to tend in large time to nontrivial stationary profiles. We review related works in the following two aspects which also involve the case of isothermal boundary. We mainly focus on general bounded domains. There exist also a number of papers in the setting of one-dimensional bounded intervals with different types of boundary conditions, cf.~\cite{Ma,Sone07}. Among them, we point out that Arkeryd together with his collaborators have made great contributions to this direction, for instance, \cite{ACI,AEMN-1,AN-00} and reference therein, where solutions are constructed mainly for large boundary data. The existence and  dynamical stability of the stationary solution  in a slab with diffuse reflection boundary was considered by Yu \cite{Yu1} in terms of a new probabilistic approach. Hydrodynamic limit to the compressible Navier-Stokes equations for the stationary Boltzmann  equation in a slab was studied by Esposito-Lebowitz-Marra \cite{ELM,ELM-non}. For other related works on the effects of  non-isothermal boundary, we also mention \cite{CCLS, KLT14,KLT13}.

\medskip
\noindent{\it $\bullet$ Time-dependent IBVP in general bounded domains.} A first investigation of the IBVP was made by Hamdache \cite{Ha} for a large-data existence theory in the sense of DiPerna-Lions \cite{DiL}. Extensions of such result have been made in \cite{AC,AM,Cer,Mi} in several directions including the case of general diffuse reflection with variable wall temperature. The large-time behavior of weak solutions was studied in \cite{AN, De, DV}. In the perturbation framework, via the idea of \cite{Vidav}, Guo \cite{Guo2} developed a new approach to treat the global existence, uniqueness and continuity of bounded solutions with different types of boundary conditions. Further progress on high-order Sobolev regularity of solutions was recently made in \cite{GKTT-IM}; see also references therein. For other related works on the study of the IBVP on the nonlinear Boltzmann equation, we would mention \cite{BG} for the general Maxwell boundary condition, \cite{GL} for the global existence of solutions with weakly inhomogeneous data in the case of specular reflection,  \cite{KL} for the specular boundary condition in convex domains with $C^3$  smoothness, and \cite{LYa} for a direct extension of \cite{Guo2} from hard potentials to soft potentials.

\medskip

\noindent{\it $\bullet$ Steady problem in general bounded domains.}  There are much less known results on the mathematical analysis of the stationary Boltzmann equation in a general $3$D bounded domain. First of all, it seems still open to establish a large-data DiPerna-Lions existence theory in the steady case; see \cite{AN-02}, however, for an $L^1$ existence theorem with inflow data when the collision operator is truncated for small velocities. In Guiraud \cite{Gui70,Gui72}, existence of stationary solutions was proved in convex bounded domains, but the positivity of obtained solutions remained unclear. Via the approach in \cite{Guo2}, Esposito et al \cite{EGKM} constructed the small-amplitude non-Maxwellian stationary solution for diffuse reflection when the space-dependent wall temperature has a small variation around a positive constant for hard potentials, and further obtained the positivity of stationary solutions as a consequence of the dynamical stability for the time-evolutionary Boltzmann equation. Indeed, \cite{EGKM} motivates us to study the steady Boltzmann equation for soft potentials, and we will explain the new mild formulation of solutions as well as new a priori estimates in more detail later on.

The hydrodynamic limit of the stationary Boltzmann equation on bounded domains in the incompressible setting was recently justified in \cite{EGKM-hy}. Notice that such research topic was also discussed in \cite{AGK} where the authors have particularly shown the non-existence of steady solutions for the Boltzmann equation with smooth divergence-free external forces in bounded domains with specular reflection.

\subsection{Strategy of the proof}
In what follows, we briefly explain the key points in our proof of Theorem \ref{thm1.1}, Theorem \ref{thm1.2} and Theorem \ref{thm1.3} respectively.

\medskip
\noindent{(a)} First, for the proof of Theorem \ref{thm1.1}, the key step is to establish a priori $L^\infty$-estimates on the steady solutions. The major difficulty comes from the degeneracy of collision frequency $\nu(v)\rightarrow 0$ as $|v|\rightarrow \infty$. Our strategy of overcoming this relies on introducing a new mild formulation of the steady Boltzmann equation along a speeded backward bi-characteristics on which the particles with large velocity move much faster than one along the classical characteristics. Precisely, we need to consider the solvability of the linearized steady Boltzmann equation with inhomogeneous source and boundary data
\begin{equation}
\label{ad.int.p0}
\left\{\begin{aligned}
&v\cdot \na_x f + L f=g,\\
&f|_{\ga_-}=P_\ga f +r.
\end{aligned}\right.
\end{equation} 
See Lemma \ref{lemS3.7}, particularly \eqref{S3.107} for the $L^\infty$ bound of $f$ in terms of $g$ and $r$. Basing on Lemma \ref{lemS3.7}, Theorem \ref{thm1.1} follows by showing the convergence of the iterative approximate solution sequence. 

To show Lemma \ref{lemS3.7}, we turn to study in Lemma \ref{lemS3.4} the solvability of the following approximate boundary-value problem:
\begin{equation}
\label{ad.int.p1}
\left\{\begin{aligned}
&\CL_\la f:=\varepsilon f+v\cdot \na_x f + \nu(v) f-\la Kf=g,\\
&f|_{\ga_-}=(1-\frac{1}{n})P_\ga f +r.
\end{aligned}\right.
\end{equation}
Here, compared to the previous works \cite{EGKM,EGKM-hy},  we input an extra parameter $\la\in [0,1]$ in order to carry out a new strategy of the  construction of solutions by making the interplay of $L^2$ and $L^\infty$ estimates. Specifically, we divide the proof by several steps as follows:

\medskip
\noindent{\it Step 1.} To show the well-posedness of $\CL_0^{-1}$ for $\la=0$. The reason why we start from the case of $\la=0$ is that there is no linear collision term $Kf$. In this case, we are able to directly construct the approximate solutions by solving the inflow problem, so the $L^\infty$ bound of approximate solutions is a  consequence of $L^\infty$ bounds of the source term $g$ as well as the corresponding boundary data. The uniform bound of solutions can be obtained in the same way as in the next Step 2.

\medskip
\noindent{\it Step 2.} To obtain the a priori estimates of solutions in both $L^2$ and $L^\infty$ uniform in all parameters $\varepsilon$, $n$ and $\la$. For the $L^2$ estimate, it is based on the fact that 
$$
\nu-\la K=(1-\la )\nu +\la L
$$ 
with $0\leq \la\leq 1$ is still nonnegative. For the velocity-weighted $L^\infty$ estimate on $h=wf$, we formally multiply the equation of \eqref{ad.int.p1} by $(1+|v|^2)^{|\ka|/2}$ so as to get
\begin{equation*}
\hat{v}\cdot \na_x h +(1+|v|^2)^{|\ka|/2}[\varepsilon+\nu(v)] h=\la (1+|v|^2)^{|\ka|/2} K_w h +(1+|v|^2)^{|\ka|/2}w g,
\end{equation*} 
and then write it as the mild form along the backward bi-characteristic $[x-\hat{v}(t-s), v]$, where $\hat{v}=(1+|v|^2)^{|\ka|/2} v$ is the transport velocity speeded up by comparison with the original velocity $v$. The advantage of such new mild  formulation is that the corresponding new collision frequency
$$
\hat{\nu}(v)=(1+|v|^2)^{|\ka|/2}[\varepsilon+\nu(v)] 
$$
has a uniform-in-$\varepsilon$ strictly positive lower bound independent of $v$. This is crucial for obtaining $L^\infty$-estimates for the steady problem. It should be pointed out that by using such new mild formulation, the $L^\infty$-estimates are also valid for the case that the domain is unbounded, so that our method in principle could be used to further study other physically important problems, such as exterior problems and shock wave theory.

\medskip
\noindent{\it Step 3.} To prove the well-posedness of $\CL_\la^{-1}$ for any $\la\in [0,\la_\ast]$ with a constant $\la_\ast>0$ small enough. The main idea of showing the existence of solutions is based on the fixed point argument for the solution operator
$$
\CL_{\la}^{-1} f=\CL_{0}^{-1} (\la Kf +g).
$$
Note that the contraction property is essentially the consequence of the fact that we are restricted to $\la>0$ small enough. Once the existence of solutions is established, we also have the uniform estimates in $L^2$ and $L^\infty$ obtained in Step 2.

\medskip
\noindent{\it Step 4.} To prove the well-posedness of $\CL_{\la_\ast+\la}^{-1}$ for any $\la\in [0,\la_\ast]$ small enough. Formally, we have 
\begin{equation*}
\CL_{\la_\ast+\la}^{-1}=\CL_{\la_\ast}^{-1}(\la Kf+g).
\end{equation*}
Therefore, we may make use of the same arguments as in Step 3 and also obtain the corresponding uniform estimates. In the end, by repeating such procedure we can establish the solvability of $\CL_{\la}^{-1}$ in the case of $\la=1$, and complete the construction of approximate solutions to the original boundary-value problem \eqref{ad.int.p0}.

\medskip
\noindent{(b)} 
Secondly, for the proof of Theorem \ref{thm1.2}, the key step is to study the time-decay structure of linearized IBVP problem around the steady solution provided by Theorem \ref{thm1.1}. In general, it is hard to obtain a satisfactory decay due to the degeneracy of $\nu(v)$ at large velocity. Unfortunately, our new mild formulation above no longer works for the time-dependent problem. Some new thought should be involved in. As mentioned before, so far there are basically two ways to get the decay of the Boltzmann solution for soft potentials. The first one, which was developed by Guo and Strain \cite{SG}, is to first establish the global existence of the solution with an extra sufficiently strong velocity weight and then obtain the decay of the solution without weight by an interpolation technique. Following their idea, Liu and Yang \cite{LYa} extend their work into the IBVP problem. The other one, which is developed by Duan, Yang and Zhao \cite{DYZ2}, is to introduce a velocity weight involving a time dependent factor $\exp\{c |v|^\zeta/(1+t)^q\}$ in order to compensate the degeneracy of collision frequency. One can see that there is an extra restriction in both theories that the initial datum must involve additional velocity weight. One of main contributions in the present work is to remove such a restriction.

More precisely, we are able to obtain simultaneously the global existence and the sub-exponential decay of the solution, {\it without} loss of any weight. The key observation used in our arguments is twofold. The first is to split the large velocity part and small velocity part in the following type estimate:
$$
\int_{\max\{t-t_\mathbf{b},0\}}^te^{-\nu(v)(t-s)}\dd s=\int_{\max\{t-t_\mathbf{b},0\}}^te^{-\nu(v)(t-s)}\{\Fi_{\{|v|\leq d_\Omega\}}+\Fi_{\{|v|>d_\Omega\}}\}\dd s,
$$
so it holds that 
\begin{align}\label{I1}
\int_{\max\{t-t_\mathbf{b},0\}}^te^{-\nu(v)(t-s)}\dd s
&\leq \int_{\max\{t-t_\mathbf{b},0\}}^te^{-\bar{\nu}_0(t-s)}\Fi_{\{|v|\leq d_\Omega\}}+\Fi_{t-1\leq s\leq t}\Fi_{\{|v|>d_\Omega\}}\dd s,
\end{align}
which is essentially based on the elementary fact that the backward exit time
\begin{align}\label{I2}
t_\mathbf{b}(x,v)\leq \f{\text{diam}(\Omega)}{|v|}.
\end{align}
One can see that even for the soft potential, \eqref{I1} still involves an exponential decay structure. The second is to notice that due to the diffusive reflection boundary condition, the boundary terms naturally exponentially decay in velocity. So, we can make use of the Caflish's idea 
to obtain that
$$
e^{-\nu(v)(t-t_1)}e^{-\f{|v|^2}{16}}\leq Ce^{-\lambda_1(t-t_1)^\alpha},
$$ 
where $\la_1>0$ is obtained by taking the infimum of $\nu(v)(t-t_1)+|v|^2/16$ with respect to velocity.  
This reveals the decay structure for boundary terms. However, \eqref{I1} and \eqref{I2} only work when $\Omega$ is bounded and the solution satisfies the diffuse reflection boundary condition. So far we don't know how to deal with the same problem for the specular reflection or even in the torus.

\medskip
\noindent{(c)} 
Thirdly, the strategy of the proof of Theorem \ref{thm1.3} is to use the linear decay theory provided by the second part to find a large time $T_0=T_0(M_0,\vep_0)$, such that
\begin{align}\label{wf0}
\|wf(T_0)\|_{L^\infty}\leq \vep_0.
\end{align}
Then we can extend our solution into $[T_0,\infty)$ by the previous small-amplitude theory. Since the initial data is allowed to have large oscillation around $F_*$, more efforts should be paid for treating the nonlinear term $w(v)\Gamma(f,f)$. The key point is to bound it pointwisely by a product of $L^\infty$-norm and $L^p$-norm with 
$\max\{\f32,\f{3}{3+\ka}\}<p<\infty$ and then apply a nonlinear iteration. To show \eqref{wf0}, we have to require the following estimates holds:
$$
C\left(\sup_{0\leq t\leq T_0}\|wf(t)\|_{L^\infty}\right)^3\cdot|\theta-1|_{L^\infty(\pa\Omega)}\lesssim \vep_0.
$$
Hence the restriction \eqref{M0} on the amplitude of initial data is naturally required.

\subsection{Plan of the paper}
In Section 2 we will provide some basic estimates on linear and nonlinear collision terms. In Section 3, we study the steady problem and give the proof of Theorem \ref{thm1.1} for the existence of the stationary solution. In Section 4, we study the large-time asymptotic stability of the obtained stationary solution under small perturbations and give the proof of Theorem \ref{thm1.2}. In Section 5, we further extend the result to the situation where initial perturbation can have large amplitude with an extra restriction but be small in $L^p$ norm, and give the proof of Theorem \ref{thm1.3}. In Appendix, we give the proof of a technical lemma which has been used before, and also give the proof of the local-in-time existence of solutions for completeness.

\subsection{Notations}
Throughout this paper, $C$ denotes a generic positive constant which may vary from line to line.  $C_a,C_b,\cdots$ denote the generic positive constants depending on $a,~b,\cdots$, respectively, which also may vary from line to line.  $A\lesssim B$ means that there exists a constant $C>0$ such that $A\leq C B$.
$\|\cdot\|_{L^2}$ denotes the standard $L^2(\Omega\times\mathbb{R}^3_v)$-norm and $\|\cdot\|_{L^\infty}$ denotes the $L^\infty(\Omega\times\mathbb{R}^3_v)$-norm. For the functions depending only in velocity $v$, we denote $\|\cdot\|_{L^p_v}
$ as the $L^p(\mathbb{R}^3_v)$-norm and $\langle\cdot,\cdot\rangle$ as the $L^2(\Omega\times \mathbb{R}^3_v)$ inner product or $L^2(\mathbb{R}^3_v)$ inner product.
Moreover, we denote $\|\cdot\|_{\nu}:=\|\sqrt{\nu}\cdot\|_{L^2}$. For the phase boundary integration, we define $d\gamma\equiv |n(x)\cdot v| dS(x)dx$, where $dS(x)$ is the surface measure and define $|f|_{L^p}^p=\int_{\gamma}|f(x,v)|^pd\gamma$ and the corresponding space is denoted as $L^p(\partial\Omega\times\mathbb{R}^3)=L^p(\partial\Omega\times\mathbb{R}^3;d\gamma)$. Furthermore, we denote $|f|_{L^p(\gamma_{\pm})}=|fI_{\gamma_{\pm}}|_{L^p}$ and $|f|_{L^\infty(\gamma_{\pm})}=|fI_{\gamma_{\pm}}|_{L^\infty}$. For simplicity, we denote $|f|_{L^\infty(\g)}=|f|_{L^\infty(\g_+)}+|f|_{L^\infty(\g_-)}$.

\section{Preliminaries}

\subsection{Basic properties of $L$}
First of all, associated with the global Maxwellian $\mu=\mu(v)$ in \eqref{GM}, we introduce the linearized collision operator $L$ around $\mu$ and the nonlinear collision operator $\Ga(\cdot,\cdot)$ respectively as
\begin{align}\label{1.9-1}
Lf
=-\f1{\sqrt{\mu}}\Big\{Q(\mu,\sqrt{\mu}f)+Q(\sqrt{\mu}f,\mu)\Big\},
\end{align}
and
\begin{equation}\label{1.10}
\Gamma(f,f) 
=\f1{\sqrt{\mu}}Q^+(\sqrt{\mu}f,\sqrt{\mu}f)-\f1{\sqrt{\mu}}Q^-(\sqrt{\mu}f,\sqrt{\mu}f)
:=\Gamma^+(f,f)-\Gamma^-(f,f),
\end{equation}
where $Q^{+}$ and $Q^{-}$ correspond to the gain part and loss part in $Q$ in \eqref{1.2} respectively.
As in \cite{Gl}, under the Grad's angular cutoff assumption \eqref{1.4} and \eqref{1.4-1}, $L$ can be decomposed as $L=\nu-K$, where $\nu=\nu(v)$ is the velocity multiplication operator given by
\begin{equation}
\label{1.12-1}
\nu(v)=\int_{\mathbb{R}^3}\int_{\mathbb{S}^2}B(|v-u|,\omega)\mu(u)\,\dd\omega \dd u\sim (1+|v|)^{\kappa},
\end{equation}
and $K=K_1-K_2$ is the integral operator in velocity given by
\begin{align}
(K_1f)(v)&=\int_{\mathbb{R}^3}\int_{\mathbb{S}^2}B(|v-u|,\omega)\mu^{\frac{1}{2}}(v)\mu^{\frac{1}{2}}(u)f(u)\,\dd\omega\dd u,\label{1.11}\\
(K_2f)(v)&=\int_{\mathbb{R}^3}\int_{\mathbb{S}^2}B(|v-u|,\omega)\mu^{\frac{1}{2}}(u)\left[\mu^{\frac{1}{2}}(u')f(v')
+\mu^{\frac{1}{2}}(v')f(u')\right]\,\dd\omega\dd u.
\label{1.12}
\end{align}
It is well-known that $L$ is a self-adjoint nonnegative-definite operator in $L^2_v$ space with the kernel
$$
\text{Ker}\,L=\text{span}\,\{\phi_0,\cdots, \phi_4\},
$$
where $\phi_{i}=\phi_{i}(v)$ $(i=0,1,\cdots,4)$ are the normal orthogonal basis of the null space $\text{Ker}\,L$ given by
\begin{eqnarray*}
\phi_0&=&(2\pi)^{-\f14}\mu^{\frac{1}{2}}(v),\\
\phi_i&=&(2\pi)^{-\f14}v_i\mu^{\frac{1}{2}}(v),\quad i=1,2,3,\\
 \phi_4&=&\f{(2\pi)^{-\f14}}{\sqrt{6}}(|v|^2-3)\mu^{\frac{1}{2}}(v).
\end{eqnarray*}
For each $f=f(v)\in L^2_v$, we denote the macroscopic part ${P}f$ as  the  projection of $f$ onto $\text{Ker}\,L$, i.e., 
\begin{align}\label{1.10-1}
{P}f=\sum_{i=0}^{4}\langle f,\phi_i\rangle
\,\phi_{i},
\end{align}
and further denote 
 $(I-P)f=f-Pf$ to be the microscopic  part of $f$. It is well-known (see \cite{Guo3} for instance) 
that there is a constant $c_0>0$ such that
\begin{align}\label{1.10-2}
\langle Lf,f \rangle\geq c_0\int_{\R^3}\nu(v)|
{(I-P)}f|^2\,\dd v.
\end{align}
Note that $L$ has no spectral gap in case of soft potentials with $-3<\kappa<0$, particularly, the collision frequency $\nu(v)$ tends to zero as $|v|\to \infty$ due to \eqref{1.12-1}.


\subsection{Estimates on collision operators}

Recall  $K=K_1-K_2$ with $K_1$ and $K_2$ given in terms of \eqref{1.11} and \eqref{1.12}. As in \cite{Guo3}, it holds that 
\begin{align}
\label{def.K}
Kf(v)=\int_{\mathbb{R}^3}k(v,u)f(u)\,\dd u,
\end{align}
where the integral kernel $k(v,u)$ is real and symmetric.

\begin{lemma}[\cite{DHWY}]
The following estimate holds true:
\begin{align}\label{2.27}
|k(v,u)|\leq C|v-u|^\kappa e^{-\f{|v|^2}{4}}e^{-\f{|u|^2}{4}}
+\f{C_\kappa}{|v-u|^{\f{3-\kappa}2}}e^{-\f{|v-u|^2}{8}}e^{-\f{\left||v|^2-|u|^2\right|^2}{8|v-u|^2}}.
\end{align}
Moreover, it holds that
\begin{align*}
\int_{\mathbb{R}^3}|k(v,u)|(1+|u|)^{-{\beta}}\,\dd u\leq C_{\kappa}(1+|v|)^{-1-{\beta}},
\end{align*}
for any ${\beta}\geq0$.
\end{lemma}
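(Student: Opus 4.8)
The plan is to split $k=k_1-k_2$ according to $K=K_1-K_2$ in \eqref{1.11}--\eqref{1.12} and estimate the two pieces by quite different means. The kernel of $K_1$ is elementary: carrying out the $\omega$-integration first in \eqref{1.11} gives $k_1(v,u)=\bigl(\int_{\S^2}b(\phi)\,\dd\omega\bigr)|v-u|^\kappa\mu^{1/2}(v)\mu^{1/2}(u)$, where the $\omega$-integral is a finite constant by the cutoff bound $0\le b(\phi)\le C|\cos\phi|$ in \eqref{1.4-1}, and $\mu^{1/2}(v)\mu^{1/2}(u)=\frac1{2\pi}e^{-|v|^2/4}e^{-|u|^2/4}$ by \eqref{GM}. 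This is already dominated by the first term on the right of \eqref{2.27}.

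The kernel of $K_2$ is the main point. Since $K_2$ in \eqref{1.12} is of gain type ($f$ is evaluated at $v'$ or at $u'$), I would pass to the Carleman representation: fixing $v$ and changing variables from $(u,\omega)$ to the post-collisional velocity together with a transverse coordinate rewrites each of the two pieces of $K_2f$ as $\int_{\R^3}k_2(v,u)f(u)\,\dd u$, with $k_2$ an explicit integral over the $2$-plane $\{w:\ w\perp(v-u)\}$ through $u$, weighted by $|v-u|^\kappa$ and a Carleman-type Jacobian. On this plane, conservation of momentum and energy ($u'=u+(v-v')$, $|v'|^2+|u'|^2=|v|^2+|u|^2$) turn the Gaussian weight $\mu^{1/2}(u)\mu^{1/2}(u')$ (respectively $\mu^{1/2}(u)\mu^{1/2}(v')$) into a Gaussian in $w$: its component orthogonal to $v-u$ within the plane is a shifted Gaussian that integrates to a bounded constant, while after completing the square the remaining part is exactly $e^{-|v-u|^2/8}e^{-(|v|^2-|u|^2)^2/(8|v-u|^2)}$, the displacement of the square being the scalar $\tfrac{|v|^2-|u|^2}{2|v-u|}$. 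The leftover in-plane integral of $(|v-u|^2+|w|^2)^{\kappa/2}$ against the Gaussian, split into $|w|\le|v-u|$ and $|w|>|v-u|$ and using $\kappa>-3$ for local integrability near $w=0$, together with the Jacobian and careful bookkeeping of the transverse Gaussian, produces the factor $C_\kappa|v-u|^{-(3-\kappa)/2}$ — this is the classical Grad-type computation (the hard-sphere power $|v-u|^{-1}$ at $\kappa=1$) adapted to soft potentials. The two pieces of $K_2$ give kernels of the same form, being exchanged under the relabeling $v'\leftrightarrow u'$, and summing them with $k_1$ yields \eqref{2.27}.

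For the weighted integral bound I would treat the two terms of \eqref{2.27} separately. The first is dominated by $e^{-|v|^2/4}\int_{\R^3}|v-u|^\kappa e^{-|u|^2/4}\,\dd u\le Ce^{-|v|^2/4}\le C(1+|v|)^{-1-\beta}$, the inner integral being uniformly bounded by splitting $|v-u|\le1$ (finite since $\kappa>-3$) and $|v-u|>1$ (where $|v-u|^\kappa\le1$). For the second term, split the $u$-integral according to $|u|\le\frac12|v|$ and $|u|>\frac12|v|$. On $\{|u|\le\frac12|v|\}$ one has $|v-u|\ge\frac12|v|$ and $||v|^2-|u|^2|\ge\frac34|v|^2$, so the energy factor is bounded by $e^{-c|v|^2}$, which dominates $(1+|v|)^{-1-\beta}$ after integrating the (locally integrable) singularity. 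On $\{|u|>\frac12|v|\}$ one bounds $(1+|u|)^{-\beta}\le C(1+|v|)^{-\beta}$, so it remains to prove $\int_{\R^3}|v-u|^{-(3-\kappa)/2}e^{-|v-u|^2/8}e^{-(|v|^2-|u|^2)^2/(8|v-u|^2)}\,\dd u\le C(1+|v|)^{-1}$; this is the standard Grad/Vidav argument, where for $|v|$ large the energy factor confines $u$ to a tube of thickness $\sim|v-u|/|v|$ around the sphere $\{|u|=|v|\}$, and this $1/|v|$ thickness, combined with the local integrability of $|v-u|^{-(3-\kappa)/2}$ ($\kappa>-3$) and the Gaussian cutoff in $|v-u|$, yields the factor $(1+|v|)^{-1}$; for bounded $|v|$ the integral is simply finite.

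The delicate step is clearly the $K_2$ estimate: producing the exact exponent $-(3-\kappa)/2$ together with the two Gaussian factors requires the Carleman change of variables and completion of the square, with care near the diagonal $v=u$ (where the hypothesis $\kappa>-3$ is exactly what is needed) and for the shifted transverse Gaussian. In the weighted integral estimate, the only genuinely nontrivial point is extracting the gain $(1+|v|)^{-1}$ from the thin-tube geometry, which is again classical; everything else reduces to splitting domains and using $\kappa>-3$.
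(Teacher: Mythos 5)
The paper does not prove this lemma itself but cites \cite{DHWY}; your approach is the standard Grad--Caflisch computation (Carleman representation, completion of the square in the direction of $v-u$, and the thin-annulus geometry for the weighted integral), and it is the same route taken in the cited reference and in the classical literature. The decomposition $k=k_1-k_2$ with the trivial bound on $k_1$, the explicit completed square giving $\exp\{-|v-u|^2/8\}\exp\{-(|v|^2-|u|^2)^2/(8|v-u|^2)\}$ with center $\pm\frac{|v|^2-|u|^2}{2|v-u|}$, and the tube-thickness argument giving the extra $(1+|v|)^{-1}$ are all correct and are the right ingredients.

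Two small imprecisions in the $K_2$ step are worth flagging, though they do not affect the validity of the plan. First, the integrand of the in-plane (Carleman) integral, once you include both the Jacobian $\tfrac{2}{|z_\parallel|^2}$ and the angular cutoff $b(\phi)\lesssim|\cos\phi|=|z_\parallel|/|z|$, is $\tfrac{C}{|v-u|}\bigl(|v-u|^2+|w|^2\bigr)^{(\kappa-1)/2}$, with exponent $(\kappa-1)/2$ rather than $\kappa/2$; the angular factor cannot be absorbed into ``bookkeeping''---without it, the resulting power is $|v-u|^{\min\{\kappa,-2\}}$, which for $-1<\kappa<0$ is strictly more singular than $|v-u|^{-(3-\kappa)/2}$ and the claimed bound would fail. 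Second, the hypothesis $\kappa>-3$ is not used as $2$D local integrability at $w=0$ (that would require $\kappa>-1$ for $(\kappa-1)/2$, or $\kappa>-2$ for $\kappa/2$); there is no actual singularity at $w=0$ because $(|v-u|^2+|w|^2)^{(\kappa-1)/2}$ has the floor $|v-u|^2$. Rather, $\kappa>-3$ enters when matching the powers coming from the regions $\{|w|\le|v-u|\}$ and $\{|w|>|v-u|\}$: the first produces roughly $|v-u|^{\kappa+1}$ after dividing by $|v-u|$, and $|v-u|^{\kappa+1}\lesssim|v-u|^{(\kappa-3)/2}$ for small $|v-u|$ precisely when $\kappa>-3$. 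The same threshold reappears in your weighted integral estimate, where the radial integral $\int_0^\infty r^{(1+\kappa)/2}e^{-r^2/8}\,dr$ converges iff $\kappa>-3$.
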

In order to deal with difficulties in the case of the soft potentials, as in \cite{SG} we introduce  a smooth cutoff function $0\leq\chi_m(s)\leq 1$ for $s\geq 0$ with $0<m\leq 1$ such that 
\begin{equation}
	\chi_m(s)=1\ \mbox{for}~0\leq s\leq m;\quad \chi_m(s)=0\  \mbox{for}~s\geq2m.\notag
\end{equation}
Then we define
\begin{align*}
	(K^mf)(v)&=\int_{\mathbb{R}^3}\int_{\mathbb{S}^2}B(|v-u|,\omega)\chi_m(|v-u|)\mu^{\frac{1}{2}}(u)\left[\mu^{\frac{1}{2}}(u')f(v')+\mu^{\frac{1}{2}}(v')f(u')\right]\,\dd\omega \dd u\nonumber\\
	&\quad-\int_{\mathbb{R}^3}\int_{\mathbb{S}^2}B(|v-u|,\omega)\chi_m(|v-u|)\mu^{\frac{1}{2}}(v)\mu^{\frac{1}{2}}(u)f(u)\,\dd\omega \dd u\nonumber\\
	&:=
	(K_2^mf)(v)-(K^m_1f)(v),
\end{align*}
and
\begin{equation*}
K^c=K-K^m.
\end{equation*}
Similar to \eqref{def.K}, we denote
\begin{align}\label{2.30a}
(K^mf)(v)=\int_{\mathbb{R}^3} k^m(v,u) f(u)\,\dd u,\quad 
(K^cf)(v)=\int_{\mathbb{R}^3} k^c(v,u) f(u)\,\dd u.
\end{align}
In the following lemma we recall some basic estimates on $K^m$ and $K^c$, whose proof can be found in \cite{SG} and further refined in a recent work \cite{DHWY}.

\begin{lemma}
Let  $-3<\kappa<0$. Then,
for any $0<m\leq1$, it holds that
	\begin{equation}\label{2.31}
		|(K^m f)(v)|\leq Cm^{3+\kappa}e^{-\f{|v|^2}{{6}}}\|f\|_{L^\infty_v},
	\end{equation}
where $C>0$ is independent of $m$.
The kernels $k^m(v,u)$ and $k^c(v,u)$ in \eqref{2.30a} satisfy
\begin{align*}
|k^m(v,u)|\leq C_\kappa \Big\{|v-u|^\kappa+|v-u|^{-\f{3-\kappa}2}\Big\}e^{-\frac{|v|^2+|u|^2}{16}},
\end{align*}
and
\begin{align}\label{2.33}
|k^c(v,u)|&\leq\f{C_\kappa m^{a(\kappa-1)}}{|v-u|^{1+\f{(1-a)}{2}(1-\kappa)}}\f{1}{(1+|v|+|u|)^{a(1-\kappa)}}e^{-\f{|v-u|^2}{10}}e^{-\f{\left||v|^2-|u|^2\right|^2}{16|v-u|^2}}\nonumber\\
&\quad+C|v-u|^\kappa [1-\chi_m(|v-u|)] e^{-\f{|v|^2}{4}}e^{-\f{|u|^2}{4}},
\end{align}	
where $0\leq a\leq 1$ is an arbitrary constant, and $C_\kappa$  is a constant depending only on $\kappa$. It is worth to  point out that $C_\kappa$ is independent of $ a$ and $m$.  Moreover, by denoting
\begin{equation*}
{k_w^c}
(v,u)=k^{c}(v,u)\frac{w(v)}{w(u)},
\end{equation*}
it holds that
\begin{equation}\label{2.40}
\int_{\mathbb{R}^3}|{k_w^c}(v,u)|{e^{\f{|v-u|^2}{32}}}\,\dd u \leq C
m^{\kappa-1}(1+|v|)^{\kappa-2},
\end{equation}
and 	
\begin{align}\label{2.40-1}
\int_{\mathbb{R}^3}|{k_w^c}(v,u)|{e^{\f{|v-u|^2}{32}}}\,\dd u \leq C
(1+|v|)^{-1},
\end{align}
where $C>0$ is independent of $m$.
\end{lemma}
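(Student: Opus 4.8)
All four bounds rest on the by now standard splitting $K=K^m+K^c$, where $K^m$ carries the cutoff $\chi_m(|v-u|)$ confining to the short relative velocities $|v-u|\le 2m$ and $K^c=K-K^m$; I would follow \cite{SG} and its refinement \cite{DHWY}. The plan is (i) to estimate $K^m$ crudely, using that it is supported on a set of $u$-measure $O(m^{3})$ and that on this set all four velocities $v,u,v',u'$ are mutually $O(m)$-close; and (ii) to estimate $K^c$ starting from Grad's representation of the gain term and the Carleman-type kernel bound \eqref{2.27}, in which the extra factor $|v-u|^{\kappa}b(\phi)$ (with $b(\phi)\lesssim|\cos\phi|$) from the collision kernel cancels the Jacobian singularity of the representation and leaves only the power $|v-u|^{-(3-\kappa)/2}$, integrable since $\kappa>-3$, which is then traded, via an interpolation in the free parameter $a\in[0,1]$, against gains in $m$ and in $1+|v|+|u|$.

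For \eqref{2.31} and the pointwise $k^m$-bound: on $\{|v-u|\le 2m\}$ one has $|v'-v|=|(v-u)\cdot\omega|\le 2m$ and $|u'-u|\le 2m$, so by the conservation laws $v'+u'=v+u$, $|v'|^2+|u'|^2=|v|^2+|u|^2$ every product of two of the half-Maxwellians that appear is $\le Ce^{-|v|^2/6}$, and $\le Ce^{-(|v|^2+|u|^2)/16}$ when it carries both arguments, uniformly in $0<m\le 1$. Bounding $B(|v-u|,\omega)\chi_m(|v-u|)\le C|v-u|^{\kappa}|\cos\phi|\,\Fi_{\{|v-u|\le 2m\}}$ and the remaining value of $f$ by $\|f\|_{L^\infty_v}$, and using
$$
\int_{\{|v-u|\le 2m\}}\int_{\mathbb{S}^2}|v-u|^{\kappa}|\cos\phi|\,\dd\omega\,\dd u=C\int_0^{2m}r^{2+\kappa}\,\dd r=C'm^{3+\kappa},
$$
finite precisely because $\kappa+3>0$, yields \eqref{2.31} with a constant independent of $m$; keeping the kernel form instead of integrating, together with \eqref{2.27} restricted to $\{|v-u|\le 2m\le 2\}$ (where $e^{-|v-u|^2/8}\le 1$ is harmless), gives $|k^m(v,u)|\le C_\kappa\{|v-u|^\kappa+|v-u|^{-(3-\kappa)/2}\}e^{-(|v|^2+|u|^2)/16}$.

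For \eqref{2.33}, write $k^c=k-k^m$. The loss-part contribution and the first term of \eqref{2.27} with the cutoff removed give directly the piece $C|v-u|^\kappa[1-\chi_m(|v-u|)]e^{-|v|^2/4}e^{-|u|^2/4}$. For the gain part, start from the second term of \eqref{2.27}, keep only half of the Carleman exponential, i.e.\ $e^{-||v|^2-|u|^2|^2/(16|v-u|^2)}$, relax $e^{-|v-u|^2/8}$ to $e^{-|v-u|^2/10}$, and factorize
$$
\frac{1}{|v-u|^{(3-\kappa)/2}}=\frac{1}{|v-u|^{1+\frac{1-a}{2}(1-\kappa)}}\cdot\frac{1}{|v-u|^{\frac{a}{2}(1-\kappa)}}.
$$
The leftover singular factor is absorbed into $m^{a(\kappa-1)}(1+|v|+|u|)^{-a(1-\kappa)}$ by combining the restriction $|v-u|\ge m$ (valid on the support of $1-\chi_m$) with the elementary bounds $e^{-cx^2}\lesssim_N(1+x)^{-N}$ applied to the freed exponential slack and to part of the Carleman factor, together with the fact that $1+|v|+|u|$ is comparable to the effective decay length of the kernel. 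A careful bookkeeping — carried out as in \cite{SG,DHWY} — shows that $C_\kappa$ can be kept uniform in both $a\in[0,1]$ and $m\in(0,1]$; this interpolation is the most delicate point of the proof, since one must simultaneously preserve integrability of the kernel, retain enough Carleman decay to absorb the velocity weight later, and extract the stated powers of $m$ and of $1+|v|+|u|$ with a uniform constant.

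Finally, \eqref{2.40} and \eqref{2.40-1} follow by integrating \eqref{2.33} in $u$ against $e^{|v-u|^2/32}\,w(v)/w(u)$. After the previous step one still retains $e^{-|v-u|^2/10}$ and $e^{-||v|^2-|u|^2|^2/(16|v-u|^2)}$, which dominate the weight ratio: for $0<\zeta<2$ because $w(v)/w(u)$ grows sub-Gaussianly in $|v-u|$, and for $\zeta=2$ by optimizing, over $|v|^2-|u|^2$, the term $\varpi(|v|^2-|u|^2)$ coming from $w(v)/w(u)$ against $-\frac{1}{16}||v|^2-|u|^2|^2/|v-u|^2$ from the kernel, which leaves a coefficient of $|v-u|^2$ that is negative under $\varpi<1/8$ from \eqref{index} (the precise constants $10,16,32$ are tuned for this). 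The remaining integral of the Carleman kernel against Gaussians is the classical Grad-type computation giving $\lesssim(1+|v|)^{-1}$: for \eqref{2.40-1} one takes $a=0$, leaving the singular power untouched (still integrable since $\kappa>-3$) and obtaining an $m$-independent constant; for \eqref{2.40} one takes $a=1$, and the extra factor $(1+|v|+|u|)^{-(1-\kappa)}\sim(1+|v|)^{-(1-\kappa)}$ on the effective support upgrades $(1+|v|)^{-1}$ to $(1+|v|)^{\kappa-2}$, with $m^{\kappa-1}$ the prefactor inherited from \eqref{2.33}; the loss-part contribution, being integrable and exponentially small in $|v|$, is negligible in both cases.
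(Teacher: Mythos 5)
The overall strategy — splitting $K=K^m+K^c$, estimating $K^m$ crudely from the smallness of the cutoff support, passing through the Grad--Carleman representation for the gain kernel, and interpolating in $a$ to produce the two integral bounds with $a=0$ and $a=1$ — is indeed the route of \cite{SG,DHWY}, which the paper cites without reproducing the proof. Your arguments for \eqref{2.31}, for the pointwise bound on $k^m$ (all four of $v,u,v',u'$ are $O(m)$-close on the cutoff support, so a single $\sqrt{\mu}$ yields decay in both $|v|$ and $|u|$), and for integrating \eqref{2.33} against $e^{|v-u|^2/32}w(v)/w(u)$ to obtain \eqref{2.40}--\eqref{2.40-1} are all sound.

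The gap is in \eqref{2.33}, specifically in the claim that the factor $(1+|v|+|u|)^{-a(1-\kappa)}$ can be extracted from the ``freed exponential slack'' of \eqref{2.27}. It cannot. Consider $|v|=|u|=R$ with $|v-u|=m$ fixed: then $||v|^2-|u|^2|=0$, the Carleman exponential equals $1$, and $e^{-|v-u|^2/40}\approx 1$, so the slack gives \emph{no} decay in $R$ whatsoever. The bound \eqref{2.27} at this configuration gives $|k(v,u)|\lesssim m^{-(3-\kappa)/2}$, constant in $R$, while \eqref{2.33} with $a=1$ asserts $|k^c(v,u)|\lesssim m^{\kappa-2}R^{\kappa-1}\to 0$ as $R\to\infty$. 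Thus \eqref{2.33} is strictly stronger than anything \eqref{2.27} can deliver; it expresses a genuine improvement of the gain kernel for soft potentials that is invisible after the perpendicular direction has already been integrated out. To obtain it one must go back to the Grad representation and observe that the $2$-dimensional perpendicular integral involves $\bigl(|v-u|^2+|\zeta_\perp|^2\bigr)^{(\kappa-1)/2}$ against a Gaussian centred at a point whose distance from the origin grows like $|v+u|$; when $|v|+|u|$ is large (even with $|v|=|u|$), the effective $|\zeta_\perp|$ is large and the soft-potential factor supplies the power $(1+|v|+|u|)^{\kappa-1}$. That mechanism — not the Carleman slack — is where the $a(1-\kappa)$ powers of $1+|v|+|u|$ come from, and it is precisely the point your sketch hides under ``careful bookkeeping.'' As written, the derivation of \eqref{2.33} from \eqref{2.27} does not close.
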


Furthermore, we need the following two lemmas for the later use.

\begin{lemma}[\cite{SG}]
Assume \eqref{index}, then for any $\eta>0$, it holds that
\begin{align}\label{2.28-1}
\left|\left\langle e^{\f{\varpi}{2}|\cdot|^{
\zeta}}Kf, f\right\rangle\right|\leq \left\|e^{\f{\varpi}{4}|\cdot|^{
\zeta}}f\right\|_{\nu}\left(\eta\left\| e^{\f{\varpi}{4}|\cdot|^{
\zeta}}f\right\|_{\nu}+C_{\eta}\left\|\Fi_{\{|\cdot|\leq C_\eta\}}f\right\|_{L^{2}}\right).
\end{align}
\end{lemma}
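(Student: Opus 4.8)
\textbf{Proof proposal for Lemma 2.5 (the estimate \eqref{2.28-1}).}
The plan is to decompose $K$ using the cutoff splitting $K = K^m + K^c$ already introduced before the statement, and to treat the two pieces separately: the small-relative-velocity part $K^m$ by its pointwise bound \eqref{2.31}, and the complementary part $K^c$ by a Cauchy--Schwarz argument built on the kernel estimate \eqref{2.40}. Throughout, write $g = e^{\frac{\varpi}{4}|\cdot|^\zeta} f$ so that $e^{\frac{\varpi}{2}|\cdot|^\zeta} = e^{\frac{\varpi}{4}|\cdot|^\zeta}\cdot e^{\frac{\varpi}{4}|\cdot|^\zeta}$ distributes one exponential factor onto each of the two slots of the inner product $\langle e^{\frac{\varpi}{2}|\cdot|^\zeta} Kf, f\rangle$. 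The target right-hand side is of the form $\|g\|_\nu(\eta\|g\|_\nu + C_\eta\|\Fi_{\{|\cdot|\le C_\eta\}}f\|_{L^2})$, so the job is to produce, for any prescribed $\eta>0$, a split into a term controlled by $\eta\|g\|_\nu^2$ and a term supported (up to exponentially small errors) on a bounded velocity ball.

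First I would handle $K^m$. By \eqref{2.31}, $|(K^m f)(v)| \le C m^{3+\kappa} e^{-|v|^2/6}\|f\|_{L^\infty_v}$; but since we only want an $L^2$-type estimate, the cleaner route is to use instead the kernel bound $|k^m(v,u)| \le C_\kappa\{|v-u|^\kappa + |v-u|^{-(3-\kappa)/2}\}e^{-(|v|^2+|u|^2)/16}$ stated in Lemma 2.3. The Gaussian factor $e^{-(|v|^2+|u|^2)/16}$ dominates the exponential weights $e^{\frac{\varpi}{4}|\cdot|^\zeta}$ (recall $\zeta\le 2$ and $\varpi<1/8$ when $\zeta=2$, so $\frac{\varpi}{4}|v|^2 < |v|^2/32 \ll |v|^2/16$), hence the weighted kernel $e^{\frac{\varpi}{4}|v|^\zeta}k^m(v,u)e^{\frac{\varpi}{4}|u|^\zeta}$ is absolutely integrable in $u$ uniformly in $v$ and vice versa, with a bound that decays in $|v|$. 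A Schur test (splitting off the integrable singularity $|v-u|^{-(3-\kappa)/2}$, which is locally integrable since $(3-\kappa)/2 < 3$ for $\kappa>-3$) then gives $|\langle e^{\frac{\varpi}{2}|\cdot|^\zeta}K^m f, f\rangle| \le C\|g\|_{L^2}^2 \le C\|g\|_\nu^2$ on the support where $\nu$ is bounded below; more carefully, because the kernel carries a strong Gaussian, one in fact gets this term bounded by $C\|\Fi_{\{|\cdot|\le C\}}f\|_{L^2}^2$ plus an exponentially small remainder, which fits the second term on the right-hand side.

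Next, for $K^c$: use the weighted kernel $k_w^c$ — or rather its analogue with weight $e^{\frac{\varpi}{4}|\cdot|^\zeta}$ in place of $w$, for which the same computation giving \eqref{2.40} applies — so that $\int_{\R^3}|k^c_\sharp(v,u)| e^{|v-u|^2/32}\,\dd u \le C m^{\kappa-1}(1+|v|)^{\kappa-2}$, where $k^c_\sharp(v,u) = k^c(v,u)e^{\frac{\varpi}{4}(|v|^\zeta - |u|^\zeta)}$. By symmetry the same bound holds integrating in $v$. Applying Cauchy--Schwarz in the bilinear form with this kernel, and bounding $(1+|v|)^{\kappa-2} \le (1+|v|)^{\kappa} \sim \nu(v)$, yields $|\langle e^{\frac{\varpi}{2}|\cdot|^\zeta}K^c f, f\rangle| \le C m^{\kappa-1}\|g\|_\nu^2$. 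This is the term that must be absorbed into $\eta\|g\|_\nu^2$: the point is that $m \in (0,1]$ is still free, so one does \emph{not} send $m\to 0$ (which would blow up $m^{\kappa-1}$), but rather observes that the $K^m$ contribution, after the Gaussian is exploited, is genuinely concentrated on a bounded ball with constant $C_m$, while the $K^c$ contribution has the small coefficient — wait, that is backwards, so the correct bookkeeping is: fix $m$ \emph{small} so that $K^m$ is small (its pointwise bound $\sim m^{3+\kappa}$, with $3+\kappa>0$, does go to $0$ as $m\to0$), giving the $\eta\|g\|_\nu^2$ term from $K^m$; then with $m$ now fixed, $K^c$ has kernel with the uniform-in-$v$ decaying bound $C(1+|v|)^{-1}$ from \eqref{2.40-1}, and its Gaussian/polynomial decay confines the estimate to a bounded ball $\{|v|\le C_\eta\}$ up to exponentially small error, producing the $C_\eta\|\Fi_{\{|\cdot|\le C_\eta\}}f\|_{L^2}$ term. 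The main obstacle is precisely this allocation — making sure the $\eta$-small part and the bounded-support part are extracted from the right pieces and that all exponential weights are genuinely dominated by the Gaussians in the kernels for the full admissible range \eqref{index} of $(\varpi,\zeta)$, in particular at the endpoint $\zeta=2$ where the margin $\varpi<1/8$ is what makes $e^{\frac{\varpi}{2}|v|^2}$ lose to $e^{|v-u|^2/32}e^{-||v|^2-|u|^2|^2/(16|v-u|^2)}$ after completing the square in the exponent of \eqref{2.33}.
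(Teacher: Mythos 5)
The paper does not prove this lemma; it cites it directly from Strain--Guo \cite{SG} and takes it as a black box, so there is no internal proof to compare against. Evaluating your argument on its own terms: the framework of splitting $K = K^m + K^c$ and using the kernel bounds is the right one and matches \cite{SG} in spirit, but the final bookkeeping you settle on after your mid-proof reversal is not correct as stated. You propose that, with $m$ fixed small, the entire $K^c$ contribution collapses onto the bounded ball $\{|v|\le C_\eta\}$ ``up to exponentially small error'' via \eqref{2.40-1}. This cannot work: \eqref{2.40-1} gives a kernel-integral bound of order $(1+|v|)^{-1}$, and when $\kappa < -1$ (the interesting part of the soft range, since $-3<\kappa<0$) the ratio $(1+|v|)^{-1}/\nu(v) \sim (1+|v|)^{-1-\kappa}$ is \emph{unbounded} as $|v|\to\infty$, so the tail $\int_{|v|>R}(1+|v|)^{-1}|g|^2\,\dd v$ is neither exponentially small nor controllable by $\eta\|g\|_\nu^2$ no matter how large $R$ is. The error outside the ball is a polynomial tail, not a Gaussian one.

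The missing ingredient is a velocity-radius split applied to $K^c$ using \eqref{2.40} rather than \eqref{2.40-1}: the bound $\int|k^c_w(v,u)|e^{|v-u|^2/32}\,\dd u \le Cm^{\kappa-1}(1+|v|)^{\kappa-2}$ has the crucial extra decay $(1+|v|)^{\kappa-2}/\nu(v) \sim (1+|v|)^{-2}\to 0$, so after Schur/Cauchy--Schwarz the $K^c$ part satisfies $\lesssim m^{\kappa-1}\int (1+|v|)^{\kappa-2}|g(v)|^2\,\dd v$, and splitting $\{|v|>R\}\cup\{|v|\le R\}$ yields $Cm^{\kappa-1}(1+R)^{-2}\|g\|_\nu^2 + C_{m,R}\|\Fi_{\{|\cdot|\le R\}}f\|_{L^2}^2$; taking $R$ large (depending on $\eta$ and the now-fixed $m$) makes the first factor $\le\eta/2$. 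In other words, $K^c$ must contribute to \emph{both} terms in \eqref{2.28-1}, and the small-$\eta$ coefficient on its large-velocity part comes from the $(1+|v|)^{-2}$ gain relative to $\nu(v)$, not from $m$. On the $K^m$ side, note that \eqref{2.31} is an $L^\infty_v$ bound and cannot directly be inserted into an $L^2$ inner product; you need either the kernel bound for $k^m$ together with the support $|v-u|\le 2m$ (giving a genuine $m^{(3+\kappa)/2}$ gain after integrating the singularity, combined with $e^{-c|v|^2}\lesssim\nu(v)$), or, equally well, just the Gaussian decay plus the same $|v|\gtrless R$ split, since $e^{-c|v|^2}/\nu(v)\to 0$. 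Your closing remark about needing $\varpi<1/8$ at $\zeta=2$ so that the exponential weight is dominated by the Gaussian structure of $k^c$ after completing the square in \eqref{2.33} is correct and is indeed a point that must be checked, but it is secondary to the allocation error above.
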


\begin{lemma}[\cite{LYa}]
It holds that
\begin{align}\label{2.43}
\|\nu^{-1}w\Gamma(f_1,f_2)\|_{L^\infty_v}\leq C\|wf_1\|_{L^\infty_v}\cdot\|wf_2\|_{L^\infty_v}.
\end{align}
\end{lemma}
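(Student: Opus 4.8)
The plan is to treat the gain and loss parts of $\Gamma(f_1,f_2)=\Gamma^+(f_1,f_2)-\Gamma^-(f_1,f_2)$ separately and reduce each to the scalar bound
\[
\int_{\R^3}\int_{\S^2}B(|v-u|,\omega)\sqrt{\mu(u)}\,\dd\omega\,\dd u\leq C\nu(v),
\]
which one obtains by essentially the same computation as behind \eqref{1.12-1}: by \eqref{1.4-1} the factor $b(\phi)$ is bounded in $L^1(\S^2)$, while $|v-u|^\kappa$ is locally integrable since $-3<\kappa<0$ and $\sqrt{\mu}$ is Gaussian, so splitting the $u$-integral near and far from $u=v$ and using the Gaussian tail for large $|u|$ gives $C(1+|v|)^\kappa\sim\nu(v)$.

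For the loss part I would start from $\Gamma^-(f_1,f_2)(v)=f_2(v)\int_{\R^3}\int_{\S^2}B(|v-u|,\omega)\sqrt{\mu(u)}f_1(u)\,\dd\omega\,\dd u$, write $f_1(u)=w(u)^{-1}(wf_1)(u)$, and use $w(u)\geq1$ (valid since $\beta,\varpi>0$) together with $|w(v)f_2(v)|\leq\|wf_2\|_{L^\infty_v}$. This bounds $\nu^{-1}(v)w(v)|\Gamma^-(f_1,f_2)(v)|$ by $\nu^{-1}(v)\|wf_1\|_{L^\infty_v}\|wf_2\|_{L^\infty_v}\int\int B\sqrt{\mu(u)}\,\dd\omega\,\dd u$, which is $\leq C\|wf_1\|_{L^\infty_v}\|wf_2\|_{L^\infty_v}$ by the scalar bound above.

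For the gain part the key is to use energy conservation. Since $|v'|^2+|u'|^2=|v|^2+|u|^2$, one has $\sqrt{\mu(u')\mu(v')}=\sqrt{\mu(u)\mu(v)}$, so $\Gamma^+(f_1,f_2)(v)=\int_{\R^3}\int_{\S^2}B(|v-u|,\omega)\sqrt{\mu(u)}f_1(u')f_2(v')\,\dd\omega\,\dd u$. Inserting the weighted factors $(wf_1)(u')$ and $(wf_2)(v')$, it then suffices to show $w(v)\leq w(u')w(v')$. The polynomial part follows from $1+|v|^2\leq1+|v'|^2+|u'|^2\leq(1+|v'|^2)(1+|u'|^2)$; for the exponential part $e^{\varpi|v|^\zeta}$ I would use $|v|^2\leq|v'|^2+|u'|^2$ together with the subadditivity $(a+b)^{\zeta/2}\leq a^{\zeta/2}+b^{\zeta/2}$, valid for $0<\zeta\leq2$ since then the exponent $\zeta/2$ is $\leq1$, which gives $|v|^\zeta\leq(|v'|^2+|u'|^2)^{\zeta/2}\leq|v'|^\zeta+|u'|^\zeta$. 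Hence the weight ratio is $\leq1$, the remaining integral is again $\leq C\nu(v)$, and the gain part obeys the same bound; combining both parts yields \eqref{2.43} with $C=C(\kappa,\beta)$ independent of $f_1,f_2$.

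I do not expect a genuine obstacle here: this is by now a standard pointwise estimate. The only place needing a short argument is the monotonicity $w(v)\leq w(v')w(u')$ of the weight under the collision map, which is precisely why $0<\zeta\leq2$ is assumed; I note that the sharper restriction $0<\varpi<1/8$ for $\zeta=2$ in \eqref{index} is irrelevant for this lemma and enters only in the operator estimates such as \eqref{2.28-1}.
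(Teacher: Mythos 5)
Your proof is correct. The paper itself gives no argument for \eqref{2.43}, citing only \cite{LYa}, so there is no in-paper proof to compare against; but your argument is the standard one, and it parallels exactly the reasoning the paper uses for its own weighted bilinear estimates in Lemma~\ref{lm2.5} (in particular the identity $\sqrt{\mu(u')\mu(v')}=\sqrt{\mu(u)\mu(v)}$, the factorization of the weight into polynomial and exponential pieces, and the subadditivity $(|v'|^2+|u'|^2)^{\zeta/2}\le|v'|^\zeta+|u'|^\zeta$ for $0<\zeta\le2$, which gives $w(v)\le w(v')w(u')$). The scalar bound $\int_{\R^3}\int_{\S^2}B(|v-u|,\omega)\sqrt{\mu(u)}\,\dd\omega\,\dd u\le C\nu(v)$ you rely on is valid for all $-3<\kappa<0$ by the same computation as \eqref{1.12-1}, and you are right that the restriction $\varpi<1/8$ when $\zeta=2$ plays no role here.
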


In the end we conclude this subsection with the following $L^p$ $(p>1)$ estimate on the nonlinear collision term, which will be used in Section 5.

\begin{lemma}\label{lm2.6}
Let $1<p<\infty$ and $w_{\beta}:=(1+|v|^2)^{\f{\beta}{2}}$ with $\beta p>3$. Then it holds that
\begin{align}\label{lm2.6-1}
\|\nu^{-1/p'}\Gamma(f,g)\|_{L^p_{v}}\leq C\min\left\{\|w_\beta f\|_{L^\infty_v}\cdot\|\nu^{1/p}g\|_{L^p_v},\,\|w_\beta g\|_{L^\infty_v}\cdot\|\nu^{1/p}f\|_{L^p_v}\right\},
\end{align}
where $p'$ is the conjugate of $p$ satisfying $1/p+1/p'=1.$
\end{lemma}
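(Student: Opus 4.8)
The plan is to prove the bilinear $L^p$ estimate \eqref{lm2.6-1} by reducing it to the $L^\infty$-type control of the collision kernel already available from the cutoff structure, combined with H\"older's inequality in the $u$-integration. First I would recall that, up to the harmless splitting $\Gamma=\Gamma^+-\Gamma^-$, the loss term $\Gamma^-(f,g)=\mu^{-1/2}Q^-(\sqrt{\mu}f,\sqrt{\mu}g)$ has the pointwise form $\nu_g(v)\,f(v)$-type expression where $\nu_g(v)=\int\int B(|v-u|,\omega)\sqrt{\mu}(u)g(u)\,\dd\omega\dd u$; since $\beta p>3$ we can write $|g(u)|\le \|w_\beta g\|_{L^\infty_v}(1+|u|^2)^{-\beta/2}$ and the $u$-integral of $|v-u|^\kappa\sqrt{\mu}(u)(1+|u|)^{-\beta}$ is bounded by $C\nu(v)\sim C(1+|v|)^\kappa$ (as in \eqref{1.12-1}), so $\|\nu^{-1/p'}\Gamma^-(f,g)\|_{L^p_v}\le C\|w_\beta g\|_{L^\infty_v}\|\nu^{1/p}f\|_{L^p_v}$, and symmetrically with the roles of $f,g$ exchanged. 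The main content is therefore the gain term.

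For the gain term $\Gamma^+(f,g)$ I would use the standard Carleman/Grad representation rewriting $\mu^{-1/2}Q^+(\sqrt{\mu}f,\sqrt{\mu}g)(v)$ as a double integral against a kernel of the Grad form, then decompose the angular integral into the post-collisional velocities $v',u'$. The key point is to estimate $|\Gamma^+(f,g)(v)|$ pointwise by $\int_{\R^3} l(v,u)\,|f|\,|g|$-type objects with $l$ enjoying decay like that of $k$ in \eqref{2.27}; absorbing one factor in $L^\infty$ via $w_\beta$ (using $\beta p>3$ to make $(1+|v|)^{-\beta}$ integrable in the appropriate dimension), we are left with controlling $\|\nu^{-1/p'}\int l(v,u)(1+|u|)^{-\beta}|g(u)|\,\dd u\|_{L^p_v}$. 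I would apply H\"older in $u$ with exponents $p,p'$: write $l(1+|u|)^{-\beta}|g(u)| = \big(l(1+|u|)^{-\beta}\big)^{1/p'}\cdot\big(l(1+|u|)^{-\beta}\big)^{1/p}|g(u)|$, so that after raising to the $p$-th power and integrating in $v$ one needs (i) $\sup_v \nu(v)^{-1}\int l(v,u)(1+|u|)^{-\beta}\,\dd u<\infty$ and (ii) $\sup_u \nu(u)^{-1}\int l(v,u)(1+|u|)^{-\beta}\,\dd v<\infty$ — both of which follow from the Grad-kernel decay estimates analogous to the second inequality in the lemma quoted after \eqref{2.27}, together with the fact that $\nu^{1/p}$ can be matched on the $g$ side because $l(v,u)$ carries a factor comparable to $|v-u|^\kappa$ near the diagonal. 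A Schur-test/Minkowski-type interpolation between (i) and (ii) then gives the claimed bound with $\|\nu^{1/p}g\|_{L^p_v}$ on the right; exchanging the roles of the two slots and taking the minimum yields \eqref{lm2.6-1}.

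The step I expect to be the main obstacle is the careful bookkeeping in the gain-term estimate: one must verify that, after pulling out the $L^\infty_v$ weight $w_\beta$, the residual kernel still has enough decay in both $|v-u|$ (integrable singularity since $\kappa>-3$, but one only has room if $p$ is not too large — this is implicitly why $\beta p>3$ rather than $\beta>3$ appears) and in $|v|,|u|$ at infinity to make the two Schur-type suprema finite while still leaving a full power of $\nu^{1/p}$ available to pair with $g$ in $L^p$. In particular, tracking the Gaussian and polynomial factors through the Carleman change of variables, and checking that the weight transfer $w(v)/w(u)$-type loss (here only the polynomial part $w_\beta$ matters) is harmless, requires the quantitative kernel bounds of the type in \eqref{2.27} rather than merely the qualitative compactness of $K$. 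Once those kernel estimates are in hand the rest is a routine H\"older/Schur argument, so I would organize the proof as: (1) dispatch $\Gamma^-$ directly; (2) set up the Grad representation of $\Gamma^+$ and the pointwise kernel bound; (3) apply H\"older in $u$ and reduce to two kernel-integral suprema; (4) verify the suprema using the decay estimates and the hypothesis $\beta p>3$; (5) symmetrize and conclude.
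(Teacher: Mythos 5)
Your handling of the loss term $\Gamma^-$ is in the same spirit as the paper's and is fine; in fact your version (pull $\|w_\beta g\|_{L^\infty_v}$ out first, then use $\int|v-u|^\kappa\sqrt{\mu(u)}(1+|u|)^{-\beta}\dd u\lesssim\nu(v)$) does not even need $\beta p>3$ for that piece because the Gaussian is still present. The paper instead applies H\"older in the $(u,\omega)$-integral \emph{before} pulling out the $L^\infty$ norm, obtaining $\iint B\sqrt{\mu(u)}|g(u)|^p|f(v)|^p$, and then drops the Gaussian; this is where $\beta p>3$ is genuinely used, namely so that $\int|v-u|^\kappa(1+|u|)^{-\beta p}\dd u\lesssim\nu(v)$ with no Gaussian help. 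They arrange things this way precisely because in the gain term the Gaussian disappears, and they want a uniform argument.

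For the gain term your route is genuinely different and, I think, misses the paper's key idea. After applying H\"older exactly as for $\Gamma^-$, the paper arrives at $\iiint B(v-u,\omega)\sqrt{\mu(u)}\,|f(v')|^p|g(u')|^p\,\dd\omega\dd u\dd v$ and then performs the \emph{pre--post-collisional change of variables} $(v,u)\mapsto(v',u')$ at fixed $\omega$ (unit Jacobian, $|v-u|=|v'-u'|$), which immediately turns the gain integral into the loss integral with $\sqrt{\mu(u)}$ becoming $\sqrt{\mu(u')}\le 1$; the whole gain estimate then takes two lines and needs no kernel machinery at all. Your proposal instead bounds one slot in $L^\infty_v$, passes to a Carleman/Grad kernel, and runs a weighted Schur test. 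That is plausible in outline but substantially heavier: the quantitative Grad-type decay you would need is for the bilinear gain operator with a \emph{polynomial} (not Gaussian) weight on one argument, which is not what \eqref{2.27} provides (those bounds are for the linearized operator $K$, one slot fixed at $\mu$), so you would have to derive fresh kernel estimates. Also, the two Schur suprema you write down are not quite the correct weighted $L^p\to L^p$ Schur conditions for $\nu^{-1/p'}\circ T\circ\nu^{-1/p}$ — the $(1+|u|)^{-\beta}$ factor belongs outside the $v$-integration and the $\nu$-powers need to be split as $\nu(v)^{-1/p'}\nu(u)^{-1/p}$, not $\nu^{-1}$ on each. So while your plan could probably be pushed through, you have left exactly the hard steps unverified, and the paper's pre--post-collisional change of variables sidesteps all of it.
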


\begin{proof}
We first consider the loss part. By H\"older inequality, we have
\begin{align}\label{lm2.6-2}
\|\nu^{-1/p'}\Gamma^-(f,g)\|_{L^p_v}^p&\leq \int_{\mathbb{R}^3}\nu(v)^{-(p-1)}|f(v)|^p\dd v\bigg(\int_{\mathbb{R}^3}\int_{\mathbb{S}^2}B(v-u,\omega)\sqrt{\mu(u)}|g(u)|^p\dd \omega\dd u\bigg)\nonumber\\
&\qquad\times \bigg(\int_{\mathbb{R}^3}\int_{\mathbb{S}^2}B(v-u,\omega)\sqrt{\mu(u)}\dd\omega\dd u\bigg)^{p-1}\nonumber\\
&\leq C\int_{\mathbb{R}^3}\int_{\mathbb{R}^3}\int_{\mathbb{S}^2}B(v-u,\omega)\sqrt{\mu(u)}|g(u)|^p|f(v)|^p\dd\omega\dd u\dd v.
\end{align}
Without loss of generality, we assume that
$$\|w_\beta f\|_{L^\infty_v}\cdot\|\nu^{1/p}g\|_{L^p_v}\geq\|w_\beta g\|_{L^\infty_v}\cdot\|\nu^{1/p}f\|_{L^p_v}.$$
Then from \eqref{lm2.6-2}, it holds that
\begin{align}\label{lm2.6-3}
\|\nu^{-1/p'}\Gamma^-(f,g)\|_{L^p_v}^p&\leq C\|w_\beta g\|^p_{L^\infty_v}\cdot\int_{\mathbb{R}^3}|f(v)|^p\dd v\int_{\mathbb{R}^3}\int_{\mathbb{S}^2}B(v-u,\omega)(1+|u|^2)^{-\f{\beta p}{2}}\dd \om\dd u\nonumber\\
&\leq C\|w_\beta g\|_{L^\infty_v}^p\cdot\|\nu^{1/p}f\|^p_{L^p_v}\nonumber\\
&\leq C\min\left\{\|w_\beta f\|_{L^\infty_v}\cdot\|\nu^{1/p}g\|_{L^p_v},\|w_\beta g\|_{L^\infty_v}\cdot\|\nu^{1/p}f\|_{L^p_v}\right\}^p.
\end{align}
Similarly, for the gain term $\Gamma^+(f,g)$, by H\"older inequality, we have
\begin{align}
\|\nu^{-1/p'}\Gamma^+(f,g)\|_{L^p_v}^p&\leq \int_{\mathbb{R}^3}\nu(v)^{-(p-1)}\dd v\bigg(\int_{\mathbb{R}^3}\int_{\mathbb{S}^2}B(v-u,\omega)\sqrt{\mu(u)}|f(v')|^p|g(u')|^p\dd \om\dd u\bigg)\nonumber\\
&\qquad\times \bigg(\int_{\mathbb{R}^3}\int_{\mathbb{S}^2}B(v-u,\omega)\sqrt{\mu(u)}\dd\omega\dd u\bigg)^{p-1}\nonumber\\
&\leq C\int_{\mathbb{R}^3}\int_{\mathbb{R}^3}\int_{\mathbb{S}^2}B(v-u,\omega)\sqrt{\mu(u)}|g(u')|^p|f(v')|^p\dd\omega\dd u\dd v.\nonumber
\end{align}
Making change of variable $(v,u)\rightarrow (v',u')$, it follows that
\begin{align}
\|\nu^{-1/p'}\Gamma^+(f,g)\|_{L^p_v}^p\leq C\int_{\mathbb{R}^3}\int_{\mathbb{R}^3}\int_{\mathbb{S}^2}B(v-u,\omega)|g(u)|^p|f(v)|^p\dd\omega\dd u\dd v.\nonumber
\end{align}
By the same argument as in \eqref{lm2.6-3}, one has
\begin{align}
\|\nu^{-1/p'}\Gamma^+(f,g)\|_{L^p_v}^p\leq C\min\left\{\|w_\beta f\|_{L^\infty_v}\cdot\|\nu^{1/p}g\|_{L^p_v},\|w_\beta g\|_{L^\infty_v}\cdot\|\nu^{1/p}f\|_{L^p_v}\right\}^p.\nonumber
\end{align}
From this and \eqref{lm2.6-3}, we prove \eqref{lm2.6-1}. Therefore, the proof of Lemma \ref{lm2.6} is complete.
\end{proof}

\subsection{Pointwise weighted estimates on nonlinear term $\Gamma(f,f)$}

Recall \eqref{WF}, \eqref{index}, and \eqref{1.10}. We shall give the estimates on the upper bound of $|w(v)\Gamma^{\pm}(f,f)(v)|$ for pointwise $v$ in terms of the product of the weighted $L^\infty$ norm and the $L^p$ norm for a suitable $p>1$ which is finite. These estimates will play an essential role in the proof of Theorem \ref{thm1.3} treating the initial data of large oscillations but with small $L^p$ perturbations.

\begin{lemma}\label{lm2.5}
Let $-3<\ka<0$, $\max\{\f32, \frac{3}{3+\ka}\}<p<\infty$, $(\varpi,\zeta)$ belong \eqref{index},  
and $\beta>4$.
Then, for each $v\in \R^3$, 
it holds that:
\begin{align}
&|w\Gamma^{-}(f,f)(v)|\leq C\nu(v)\|wf\|_{L^\infty_v}\cdot\left(\int_{\mathbb{R}^{3}}|f(u)|^{p}\,\dd u\right)^{\frac{1}{p}}, \label{2.41}\\
&|w\Gamma^{+}(f,f)(v)|\leq C\nu(v)\|wf\|_{L^\infty_v}\cdot\left(\int_{\mathbb{R}^{3}}|wf(u)|^{p}(1+|u|)^{-p(\beta-4)-4}\,\dd u\right)^{\frac{1}{p}}, \label{2.42}
\end{align}
where the generic constant $C>0$ is independent of  $v$.
\end{lemma}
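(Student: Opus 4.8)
The two bounds are proved by the same scheme: write $\Gamma^{\pm}$ out explicitly, peel off one copy of $\|wf\|_{L^\infty_v}$ from one of the two input slots by a pointwise weight bound, and estimate the remaining integral by H\"older's inequality to create the $L^p$-norm, the collision-kernel integral reproducing the factor $\nu(v)\sim(1+|v|)^{\ka}$. The constraints $p>\tfrac{3}{3+\ka}$ and $p>\tfrac32$ are precisely the integrability thresholds forced by those kernel integrals, and $\b>4$ accounts for the polynomial loss in the change of variables needed for the gain term.

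\emph{Loss term.} From \eqref{1.10}, for each $v$, $\Gamma^{-}(f,f)(v)=f(v)\int_{\R^3}\int_{\S^2}B(|v-u|,\omega)\mu^{1/2}(u)f(u)\,\dd\omega\,\dd u$, so $|w(v)\Gamma^{-}(f,f)(v)|\le\|wf\|_{L^\infty_v}\,|R(v)|$ with $R(v):=\int\int B\mu^{1/2}(u)f(u)\,\dd\omega\,\dd u$. Using $\int_{\S^2}b(\phi)\,\dd\omega\le C$ and H\"older's inequality in $u$,
\begin{equation*}
|R(v)|\le C\int_{\R^3}|v-u|^{\ka}\mu^{1/2}(u)|f(u)|\,\dd u\le C\Big(\int_{\R^3}|v-u|^{\ka p'}\mu^{p'/2}(u)\,\dd u\Big)^{1/p'}\|f\|_{L^p}.
\end{equation*}
Since $p>\tfrac{3}{3+\ka}$ is equivalent to $\ka p'>-3$, the $u$-integral converges; splitting it at $|u-v|=|v|/2$ and using that $\mu^{1/2}(u)\lesssim e^{-c|v|^2}$ on the inner region while $|v-u|^{\ka p'}\le C(1+|v|)^{\ka p'}$ on the outer one, one gets $|R(v)|\le C(1+|v|)^{\ka}\|f\|_{L^p}\le C\nu(v)\|f\|_{L^p}$, which is \eqref{2.41}. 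No weight is needed here.

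\emph{Gain term.} Energy conservation $\mu(u')\mu(v')=\mu(u)\mu(v)$ turns \eqref{1.10} into $\Gamma^{+}(f,f)(v)=\int_{\R^3}\int_{\S^2}B(|v-u|,\omega)\mu^{1/2}(u)f(v')f(u')\,\dd\omega\,\dd u$. The conservation laws also give $|v|^2\le|v'|^2+|u'|^2$ and, since $0<\zeta\le2$, $|v|^{\zeta}\le(|v'|^2+|u'|^2)^{\zeta/2}\le|v'|^{\zeta}+|u'|^{\zeta}$, hence $w(v)\le C\,w(v')w(u')$ for the weight in \eqref{WF} (polynomial and exponential parts separately). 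Bounding $|w(v')f(v')|\le\|wf\|_{L^\infty_v}$,
\begin{equation*}
|w(v)\Gamma^{+}(f,f)(v)|\le C\|wf\|_{L^\infty_v}\int_{\R^3}\int_{\S^2}|v-u|^{\ka}b(\phi)\mu^{1/2}(u)|w(u')f(u')|\,\dd\omega\,\dd u.
\end{equation*}
I would again split the $u$-integral at $|u|=|v|/2$: on $\{|u|\le|v|/2\}$ one has $|v-u|^{\ka}\le C(1+|v|)^{\ka}$, while on $\{|u|>|v|/2\}$ the Gaussian $\mu^{1/2}(u)$ produces $e^{-c|v|^2}$, so in both cases the prefactor $(1+|v|)^{\ka}\sim\nu(v)$ comes out. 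For the remaining $(u,\omega)$-integral I would use the pre-post collisional change of variables: with $z=v-u$ decomposed as $z=s\omega+y$, $y\perp\omega$, one has $u'=v-y$ and $b(\phi)\le C|\cos\phi|=C|s|/|z|$, which lets the $s$-integration be absorbed against the Gaussian in $u$; exchanging the order of the $\omega$- and $u'$-integrations by the co-area formula leaves an integral of $|w(u')f(u')|$ against a kernel with an integrable singularity $|v-u'|^{-1}$ near $u'=v$ — belonging to $L^{p'}_{\mathrm{loc}}$ exactly because $p>\tfrac32$ — and Gaussian-type decay at infinity. A final H\"older inequality in $u'$, splitting off the factor $(1+|u'|)^{-(\b-4)-4/p}$ to rebuild the weight $(1+|u|)^{-p(\b-4)-4}$ and to absorb the (at most four) polynomial powers of $(1+|u'|)$ lost in the change of variables, gives \eqref{2.42}.

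\emph{Main obstacle.} The loss term is routine. The real work is in the gain term: executing the change of variables $u\mapsto u'$ together with the co-area exchange of integration order, and tracking the weights carefully enough that exactly one power of $\|wf\|_{L^\infty_v}$ is peeled off, the remaining factor is the advertised weighted $L^p$-norm of $wf$, and the prefactor is exactly $\nu(v)=(1+|v|)^{\ka}$ — this last point, as for the loss term, relying on the near/far splitting of the $u$-integral against $\mu^{1/2}(u)$.
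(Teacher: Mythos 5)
Your loss-term estimate matches the paper's. The gain-term argument, however, has a genuine gap in the weight handling. Factorizing $w(v)\le w(v')w(u')$ and bounding $|w(v')f(v')|\le\|wf\|_{L^\infty_v}$ leaves $|w(u')f(u')|$, carrying the \emph{full} weight including the polynomial factor $(1+|u'|^2)^{\beta/2}$, in the remaining integral. Your final H\"older step must then split off $(1+|u'|)^{-(\beta-4)-4/p}$ to manufacture the $L^p$-factor in \eqref{2.42}, which pushes the \emph{growing} polynomial $(1+|u'|)^{[(\beta-4)+4/p]p'}$ onto the $L^{p'}$ side. The Gaussian produced by the Carleman $s$-integration decays only in the component of $u'$ orthogonal to $\omega$; after the coarea exchange it confines $u'$ to an $O(1)$-neighbourhood of the Thales sphere of diameter $[0,v]$, where $|u'|$ is comparable to $|v|$, and one checks that the $L^{p'}$-factor then grows like a positive power of $1+|v|$ (the neighbourhood of $u'=v$ alone contributes $(1+|v|)^{(\beta-4)+(5-p)/p}$) whenever $\beta>6-5/p$ — in particular for every $\beta>4$ once $p<5/2$. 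This growth is not cancelled by the $\nu(v)$ you extracted up front, so the argument does not close. The paper avoids this precisely by \emph{not} leaving the polynomial weight on the integrated slot: since $|u|^2+|v|^2=|u'|^2+|v'|^2$, one of $|u'|^2$, $|v'|^2$ is $\ge\tfrac12(|u|^2+|v|^2)$, so $(1+|v|^2)^{\beta/2}\le 2^{\beta/2}(1+|\cdot|^2)^{\beta/2}$ is transferred entirely to the slot bounded by $\|wf\|_{L^\infty_v}$, leaving only $g=\hat w|f|$ (the exponential weight alone) on the other. The $L^{p'}$ side then carries the \emph{decaying} weight $(1+|u|)^{-4}$, the H\"older factor $\bigl(\int(1+|u|)^{-4}|v-u|^{-p'}\,\dd u\bigr)^{1/p'}\lesssim(1+|v|)^{-1}$ converges with no Gaussian, and this is where the $\nu(v)$ prefactor actually comes from.

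Two further inaccuracies, less fatal but worth flagging. First, the singularity after the Carleman representation is $|v-u'|^{\ka}$, not $|v-u'|^{-1}$, once $\ka\le-1$: the $s$-integral $\int|s|(s^2+|y|^2)^{(\ka-1)/2}e^{-\cdots}\,\dd s$ behaves like $|y|^{\ka+1}$, not like a constant, when $\ka<-1$, and it is there that $p>\tfrac{3}{3+\ka}$ is forced — attributing $p>\tfrac32$ to a $|v-u'|^{-1}$ kernel is only correct for $\ka>-1$. This is exactly why the paper splits into two cases and uses the $\eps$-redistribution in Case 2. Second, the near/far split $\{|u|\le|v|/2\}$ versus $\{|u|>|v|/2\}$ used to pull out $\nu(v)$ is both unnecessary (the paper's H\"older already produces the $\nu(v)$ prefactor) and, as stated, not a valid pointwise bound: $|v-u|^{\ka}\le C(1+|v|)^{\ka}$ fails for $|v|\lesssim 1$, where $(|v|/2)^{\ka}$ is unboundedly large.
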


\begin{proof}
First, we consider  \eqref{2.41} regarding the estimate on the loss term. Indeed, it follows from H\"older inequality that
\begin{equation*}
\begin{aligned}
|w\Gamma^{-}(f,f)(v)|&\leq|wf(v)|\int_{\mathbb{R}^{3}}\int_{\mathbb{S}^{2}}B(|v-u|,\omega)\mu^{\frac{1}{2}}(u)|f(u)|\,\dd u\dd\omega\\
&\leq C\|wf\|_{L^\infty_v}\cdot\left(\int_{\mathbb{R}^{3}}|f(u)|^{p}\,\dd u\right)^{\frac{1}{p}}\cdot
\left(\int_{\mathbb{R}^{3}}|v-u|^{\frac{p\ka}{p-1}}\mu^{\frac{p}{2(p-1)}}(u)\,\dd u\right)^{\frac{p}{p-1}}\\
&\leq C \nu(v)\|wf\|_{L^\infty_v}\cdot{\left(\int_{\mathbb{R}^{3}}|f(u)|^{p}\,\dd u\right)^{\frac{1}{p}}},
\end{aligned}
\end{equation*}
where we have used the fact that $\frac{p\ka}{p-1}>-3$ due to $p>\frac{3}{3+\ka}$. Then, \eqref{2.41} is proved.

To prove \eqref{2.42} for the gain term,
we denote
\begin{equation}
\hat{w}(v):=e^{\varpi|v|^{\zeta}},\nonumber
\end{equation}
which is the pure exponential factor of $w(v)$,  and also set $g(v)=\hat{w}(v)|f(v)|$. Since $|u|^{2}+|v|^{2}=|u'|^{2}+|v'|^{2}$, we have  either $|u'|^{2}\geq\frac{1}{2}\left(|u|^{2}+|v|^{2}\right)$ or $|v'|^{2}\geq\frac{1}{2}\left(|u|^{2}+|v|^{2}\right)$. Without loss of generality, we may assume that $|v'|^{2}\geq\frac{1}{2}\left(|u|^{2}+|v|^{2}\right)$, and then we have
\begin{equation}
\label{l2.6ap1}
|w\Gamma^{+}(f,f)(v)|\leq C \hat{w}(v)\|wf\|_{L^\infty_v}\cdot\int_{\mathbb{R}^{3}}\int_{\mathbb{S}^{2}}B(|v-u|,\omega)\mu^{\frac{1}{2}}(u)|g(v')|\hat{w}(v')^{-1}\hat{w}(u')^{-1}\,\dd u\dd\omega.
\end{equation}
Note that 
in virtue of  $0<\zeta\leq 2$, it holds that
$$
\begin{aligned}
\hat{w}(v)&=e^{\varpi|v|^{\zeta}}\leq e^{\varpi (|u|^{2}+|v|^{2})^{\frac{\zeta}{2}}}=e^{\varpi(|u'|^{2}+|v'|^{2})^{\frac{\zeta}{2}}}\leq e^{\varpi (|u'|^{\zeta}+|v'|^{\zeta})}=\hat{w}(v')\hat{w}(u').
\end{aligned}
$$
Using this, \eqref{l2.6ap1} gives that
\begin{equation}\label{2.45}
|w\Gamma^{+}(f,f)(v)|\leq C \|wf\|_{L^\infty_v}\cdot\int_{\mathbb{R}^{3}}\int_{\mathbb{S}^{2}}B(|v-u|,\omega)\mu^{\frac{1}{2}}(u)|g(v')|\,\dd u\dd\omega.
\end{equation}
To further estimate the integral term on the right-hand side of \eqref{2.45}, we denote $z=u-v$, $z_{\parallel}=\{(u-v)\cdot\omega\}\omega$, and $z_{\perp}=z-z_{\parallel}$, then the collision kernel can be estimated as
\begin{equation}\label{2.45-1}
B(|v-u|,\omega)\leq {C|z_{\parallel}|\left(|z_{\parallel}|^{2}+|z_{\perp}|^{2}\right)^{\frac{\ka-1}{2}}.}
\end{equation}
Plugging \eqref{2.45-1} back to \eqref{2.45} and making change of variable $u\to z$, $|w\Gamma^{+}(f,f)(v)|$ can be further bounded by
$$
C\|wf\|_{L^\infty_v}\cdot\int_{\mathbb{R}^{3}}\int_{\mathbb{S}^{2}}|z_{\parallel}|\cdot|z|^{\ka-1}g(v+z_{\parallel})\mu^{\frac{1}{2}}(v+z)\,\dd z\dd\omega.
$$
By writing $\dd z\dd\omega=\frac{2}{|z_{\parallel}|^2} \dd z_{\parallel}\dd z_{\perp}$, we derive that the above term is bounded by
\begin{equation}\label{2.46}
C\|wf\|_{L^\infty_v}\cdot\int_{\mathbb{R}^{3}}|z_{\parallel}|^{-1}g(v+z_{\parallel})\,\dd z_{\parallel}
\int_{\Pi_{\perp}}(|z_{\parallel}|^{2}+|z_{\perp}|^{2})^{\frac{\ka-1}{2}}\mu^{\frac{1}{2}}(v_{\perp}+z_{\perp})\,\dd{z_{\perp}},
\end{equation}
where $\Pi_{\perp}=\left\{z_{\perp}\in \mathbb{R}^{3}: z_{\parallel}\cdot z_{\perp}=0 \right\}$ and $v_{\perp}=\frac{ v \cdot z_{\perp}}{|v|\cdot |z_{\perp}|}z_{\perp}$ is the projection of $v$ to $\Pi_{\perp}$. To estimate 
\eqref{2.46}, we divide it by two cases.

\medskip
\noindent{\it Case 1: $-1<\ka<0$}. In this case, thanks to $-2<\ka-1<-1$, we have
$$
\int_{\Pi_{\perp}}\big(|z_{\parallel}|^{2}+|z_{\perp}|^{2}\big)^{\frac{\ka-1}{2}}\mu^{\frac{1}{2}}(v_{\perp}+z_{\perp})\,\dd{z_{\perp}}\leq\int_{\Pi_{\perp}}|z_{\perp}|^{\ka-1}\mu^{\frac{1}{2}}(v_{\perp}+z_{\perp})\,\dd{z_{\perp}}\leq C,
$$
for a finite constant $C>0$.
Then it holds that for $p>\f32$,
\begin{align}\label{2.46-2}
|w\Gamma^{+}(f,f)(v)|&\leq C\int_{\mathbb{R}^3}|z_{\parallel}|^{-1}g(v+z_{\parallel})\dd z_{\parallel}=C\int_{\mathbb{R}^3}|u-v|^{-1}g(u)\dd u\nonumber\\
&\leq C\bigg(\int_{\mathbb{R}^3}\f{(1+|u|)^{-4}}{|u-v|^{\f{p}{p-1}}}\bigg)^{\f{p-1}{p}}\bigg(\int_{\mathbb{R}^3}|g(u)|^p(1+|u|)^{4p-4}\dd u\bigg)^{\f1p}\nonumber\\
&\leq C(1+|v|)^{-1}\bigg(\int_{\mathbb{R}^3}|g(u)|^p(1+|u|)^{4p-4}\dd u\bigg)^{\f1p}.
\end{align}
{\it Case 2 :} $-3<\ka\leq -1$. To avoid the higher singularity on the right hand side of \eqref{2.46}, we choose $0<\eps=\eps(p,\ka)<\min\{3+\frac{p\ka}{p-1}, \frac{2p}{p-1}\}$ and bound \eqref{2.46} in terms of
\begin{align}\label{2.46-1}
|w\Gamma^{+}(f,f)| &\leq C\|wf\|_{L^\infty_v}\cdot\int_{\mathbb{R}^{3}}|z_{\parallel}|^{\ka-\frac{(p-1)\eps}{p}}g(v+z_{\parallel})\dd z_{\parallel}
\int_{\Pi_{\perp}}|z_{\perp}|^{-2+\frac{(p-1)\eps}{p}}\mu^{\frac{1}{2}}(v_{\perp}+z_{\perp})\dd{z_{\perp}}\nonumber\\
&\leq C\|wf\|_{L^\infty_v}\cdot\int_{\mathbb{R}^{3}}|z_{\parallel}|^{\ka-\frac{(p-1)\eps}{p}}g(v+z_{\parallel})\dd z_{\parallel},
\end{align}
where in the second line we have used  the fact that
$$
\int_{\Pi_{\perp}}|z_{\perp}|^{-2+\frac{(p-1)\eps}{p}}\mu^{\frac{1}{2}}(v_{\perp}+z_{\perp})\dd{z_{\perp}}<\infty.
$$
Making change of variable $ z_{\parallel}+v\rightarrow u $ and using H\"older's inequality, it holds that
\begin{multline}\label{2.47}
\int_{\mathbb{R}^{3}}|z_{\parallel}|^{\ka-\frac{(p-1)\eps}{p}}g(v+z_{\parallel})dz_{\parallel}=
\int_{\mathbb{R}^{3}}|u-v|^{\ka-\frac{(p-1)\eps}{p}}g(u)\dd u\\
\leq\left[\int_{\mathbb{R}^{3}}\frac{(1+|u|)^{-4}}{|u-v|^{\eps+\frac{p|\ka|}{p-1}}}\dd u\right]^{\frac{p-1}{p}}\times
\left[\int_{\mathbb{R}^{3}}|g(u)|^{p}(1+|u|)^{4p-4}\dd u\right]^{\frac{1}{p}}.
\end{multline}
Since $0<\eps+\frac{p|\ka|}{p-1}<3$, the right hand side of \eqref{2.47} can be further bounded by
$$
C (1+|v|)^{\ka-\frac{p-1}{p}\eps}\left[\int_{\mathbb{R}^{3}}|g(u)|^{p}(1+|u|)^{4p-4}\dd u\right]^{\frac{1}{p}}
$$
which combining with \eqref{2.46-1} and \eqref{2.46-2},  
immediately yields \eqref{2.42}. Hence the proof of Lemma {\ref{lm2.5}} is complete.
\end{proof}


\section{Steady problem}

To construct the solution to the steady Boltzmann equation \eqref{S3.1} and \eqref{diffuseB.C}, we first consider the approximate linearized steady problem
\begin{align}\label{S3.3}
\begin{cases}
\v f+ v\cdot \nabla_xf+Lf=g,\\
f(x,v)|_{\gamma_{-}}=P_{\gamma}f+r,
\end{cases}
\end{align}
where $P_{\g}f$ is defined as
\begin{equation}\label{S3.4}
P_{\gamma}f(x,v)=\mu^{\frac{1}{2}}(v)\int_{v'\cdot n(x)>0} f(x,v') \mu^{\frac{1}{2}}(v') \{v'\cdot n(x)\}\dd v'.
\end{equation}
As in \cite{EGKM}, the penalization term $\v f$ is used to guarantee the conservation of mass.
Recall the weight function $w(v)$ defined by \eqref{WF} with \eqref{index}.
We also define
\begin{equation*}
h(x,v):=w(v) f(x,v), 
\end{equation*}
then 
\eqref{S3.3} can be rewritten as
\begin{align}\label{S3.8}
\begin{cases}
\dis \v h+v\cdot \nabla_x h+\nu(v) h=K_{w} h+wg,\\[2mm]
\dis h(x,v)|_{\gamma_-}=\f{1}{\tilde{w}(v)} \int_{v'\cdot n(x)>0} h(x,v') \tilde{w}(v') \dd\sigma'+wr(x,v),
\end{cases}
\end{align}
where
$$
\tilde{w}(v)\equiv \f{1}{w(v)\mu^{\frac{1}{2}}(v)},
\quad
K_wh=wK(\f{h}{w}).
$$

\subsection{A priori $L^\infty$ estimate}
For the approximate steady Boltzmann equation \eqref{S3.8},  the most difficult part is to obtain the $L^\infty$-bound due to the degeneration of frequency $\nu(v)$ as $|v|\rightarrow\infty$. To overcome this difficulty, the main idea is to introduce a new characteristic line.

\begin{definition}

Given $(t,x,v)$, let $[\hat{X}(s),V(s)]$ be the backward bi-characteristics for the {{\it steady}} Boltzmann equation {\eqref{S3.1}}, which is determined by
\begin{align}\label{ode1}
\begin{cases}
\dis \frac{d\hat{X}(s)}{ds}=(1+|V(s)|^2)^{\frac{|
{\ka}|}{2}} V(s):=\hat{V}(s),\\[3mm]
\dis \frac{dV(s)}{ds}=0,\\[3mm]
[\hat{X}(t),V(t)]=[x,v].
\end{cases}
\end{align}
The solution is then given by
\begin{align}\label{S3.10}
[\hat{X}(s;t,x,v),V(s;t,x,v)]=[x-\hat{v}(t-s),v],\quad 
\hat{v}:=(1+|v|^2)^{\f{|\kappa|}{2}}v,
\end{align}
which is called the speeded backward bi-characteristic for the problem \eqref{S3.1}.
\end{definition}

We note that compared to the usual characteristic line as used in the time-evolutionary case,  the particle along \eqref{ode1} or \eqref{S3.10} with given $(x,v)$  travels with the velocity $\hat{v}$ which has the much faster speed than $|v|$ itself for $|v|$ for large velocity. This is the key idea to overcome the difficulty of soft potentials in treating the steady problem on the Boltzmann equation.


In terms of the speeded backward bi-characteristic, we need to redefine the corresponding backward exit time et al.  Indeed, for each $(x,v)$ with $x\in \bar{\Omega}$ and $v\neq 0,$ we define the   {\it backward exit time} $\hat{t}_{\mathbf{b}}(x,v)\geq 0$ to be the last moment at which the
back-time straight line $[\hat{X}({-\tau}
;0,x,v),V({-\tau}
;0,x,v)]$ remains in $\bar{\Omega}$:
\begin{equation*}
\hat{t}_{\mathbf{b}}(x,v)={\sup\, \{s \geq 0:x-\hat{v}\tau\in\bar{\Omega}\text{ for }0\leq \tau\leq s\}.}
\end{equation*}
We therefore have $x-\hat{t}_{\mathbf{b}}\hat{v}\in \partial \Omega $ and $\xi (x-\hat{t}_{\mathbf{b}}\hat{v})=0.$ We also define
\begin{equation*}
\hat{x}_{\mathbf{b}}(x,v) 
=x-\hat{t}_{\mathbf{b}}\hat{v}\in \partial \Omega .
\end{equation*}
Note that the fact that $v\cdot n(\hat{x}_{\mathbf{b}})=v\cdot n(\hat{x}_{\mathbf{b}}(x,v)) \leq 0$ always holds true.

Let $x\in \bar{\Omega}$, $(x,v)\notin \gamma _{0}\cup \g_{-}$ and
$
(t_{0},x_{0},v_{0})=(t,x,v)$. For $v_{k+1}\in \hat{\mathcal{V}}_{k+1}:=\{v_{k+1}\cdot n(\hat{x}_{k+1})>0\}$, the back-time cycle is defined as
\begin{equation}\label{S3.13}
\left\{\begin{aligned}
\hat{X}_{cl}(s;t,x,v)&=\sum_{k}\Fi_{[\hat{t}_{k+1},\hat{t}_{k})}(s)\{\hat{x}_{k}-\hat{v}_k(\hat{t}_{k}-s)\},\\[1.5mm]
V_{cl}(s;t,x,v)&=\sum_{k}\Fi_{[\hat{t}_{k+1},\hat{t}_{k})}(s)v_{k},
\end{aligned}\right.
\end{equation}
with
\begin{equation*}
(\hat{t}_{k+1},\hat{x}_{k+1},v_{k+1})
=(\hat{t}_{k}-\hat{t}_{\mathbf{b}}(\hat{x}_{k},v_{k}), \hat{x}_{\mathbf{b}}(\hat{x}_{k},v_{k}),v_{k+1}).
\end{equation*}
We also define the iterated integral
$$
\int_{\Pi_{j=1}^{k-1}\hat{\CV}_j}\Pi_{j=1}^{k-1}\dd\hat{\sigma}_j
:=\int_{\hat{\CV}_1}\cdots\left\{\int_{\hat{\CV}_{k-1}}\dd\hat{\sigma}_{k-1}\right\}\cdots\dd\hat{\sigma}_1,
$$
where $$\dd\hat{\sigma}_j:=\mu(v_j)\{n(\hat{x}_j)\cdot v_j\},\quad j=1,\cdots, k-1$$ are probability measures.
%

\begin{lemma}\label{lemS3.1}
Let $(\eta,\zeta)$ belong to
$$
\{\zeta=2, 0\leq\eta<1/2\}\cup\{0\leq \zeta<2,\eta\geq 0\}.
$$
For $T_0>0$ sufficiently large, there exist constants $\hat{C}_1$ and $\hat{C}_2$ independent of $T_0$ such that for $k=\hat{C}_1T_0^{\frac54}$ and $(t,x,v)\in[0,T_0]\times\bar{\Omega}\times\mathbb{R}^3$, it holds that
\begin{align}\label{S3.15}
\int_{\Pi _{j=1}^{k-1}\hat{\mathcal{V}}_{j}} \Fi_{\{\hat{t}_k>0\}}~  \Pi _{j=1}^{k-1} {e^{\eta|v_j|^\zeta}} \dd\hat{\sigma} _{j}\leq \left(\frac12\right)^{\hat{C}_2T_0^{\frac54}}.
\end{align}
\end{lemma}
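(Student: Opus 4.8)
The plan is to carry out, in the setting of the speeded backward bi-characteristic, a stochastic-cycle estimate in the spirit of \cite{Guo2}: decompose each integration velocity $v_j$ into a \emph{good} part, on which $v_j$ is non-grazing and of bounded magnitude, and a \emph{bad} part; show that along every good leg of the cycle a definite amount of travel time is consumed; and observe that the bad part carries only an $O(\delta)$ fraction of the mass of the probability measure $\dd\hat\sigma_j$ even after inserting the weight $e^{\eta|v_j|^\zeta}$. On the event $\{\hat t_k>0\}$ the elapsed travel time over the $k$ legs is less than $T_0$, so only an $O(T_0)$ number of legs can be good; a combinatorial sum over which of the remaining $k-1$ legs are bad then yields the super-exponentially small bound. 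The delicate point, and the main obstacle, is the geometric lower bound on the single-leg travel time, which must be made uniform over a general bounded domain with merely smooth (not convex) boundary.

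First, the geometric core. Fix a small $\delta>0$ and put $\hat{\CV}_j^\delta:=\{v_j\in\hat{\CV}_j:\ v_j\cdot n(\hat x_j)\ge\delta,\ |v_j|\le\delta^{-1}\}$. The claim is that there is $\tau_\delta>0$, depending only on $\Omega$ and $\delta$, with $\hat t_{\mathbf b}(\hat x_j,v_j)\ge\tau_\delta$ whenever $v_j\in\hat{\CV}_j^\delta$. On $\hat{\CV}_j^\delta$ one has $|\hat v_j|=(1+|v_j|^2)^{|\kappa|/2}|v_j|\le R_\delta$ for an explicit constant $R_\delta$, and the bi-characteristic between bounces is the straight segment $s\mapsto\hat x_j-\hat v_j s$; so it is enough to bound from below the distance the ray from $\hat x_j$ in the inward unit direction $-v_j/|v_j|$ travels before it meets $\partial\Omega$ again. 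Writing $\Omega=\{\xi<0\}$ and Taylor expanding $\xi$ at $\hat x_j$, the bound $v_j\cdot n(\hat x_j)/|v_j|\ge\delta^2$ (which follows from the definition of $\hat{\CV}_j^\delta$) gives $\xi(\hat x_j-s\,v_j/|v_j|)\le-c\,\delta^2 s+O(s^2)<0$ for $s\in(0,s_0)$, with $s_0>0$ uniform over $\partial\Omega$ and over all admissible directions; hence the ray stays strictly inside $\Omega$ for a uniformly positive distance. A compactness argument on $\{(x,\omega):x\in\partial\Omega,\ \omega\cdot n(x)\le-\delta^2\}$, together with lower semicontinuity of the exit distance, upgrades this to a uniform lower bound $d_*=d_*(\Omega,\delta)>0$, and then $\tau_\delta:=d_*/R_\delta$ works. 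Only smoothness of $\partial\Omega$ is used here; convexity is not needed.

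Next, the measure bounds and the counting. Under the stated range of $(\eta,\zeta)$ the function $e^{\eta|v|^\zeta}\mu(v)$ is integrable against $|v\cdot n(x)|$ — for $\zeta=2$ this is exactly where $\eta<1/2$ is used — so that $\Lambda:=\sup_{x\in\partial\Omega}\int_{v\cdot n(x)>0}e^{\eta|v|^\zeta}\mu(v)\{v\cdot n(x)\}\,\dd v<\infty$, while a direct estimate gives $\int_{\hat{\CV}_j\setminus\hat{\CV}_j^\delta}e^{\eta|v_j|^\zeta}\,\dd\hat\sigma_j\le C_0\delta$ with $C_0$ independent of $\delta$ (the portion $|v_j|>\delta^{-1}$ is exponentially small, the portion $0<v_j\cdot n(\hat x_j)<\delta$ has mass $O(\delta)$). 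We then fix $\delta$ small, once and for all, so that $C_0\delta\le\tfrac12$; this also fixes $\tau_\delta>0$. On $\{\hat t_k>0\}$ one has $\sum_{j=0}^{k-1}\hat t_{\mathbf b}(\hat x_j,v_j)=\hat t_0-\hat t_k<T_0$, hence by the geometric estimate at most $N_\delta:=T_0/\tau_\delta$ of the indices $j\in\{1,\dots,k-1\}$ can have $v_j\in\hat{\CV}_j^\delta$.

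Finally, the combinatorial summation. Expanding $\Pi_{j=1}^{k-1}\Fi_{\hat{\CV}_j}=\sum_{S\subset\{1,\dots,k-1\}}\Pi_{j\in S}\Fi_{\hat{\CV}_j\setminus\hat{\CV}_j^\delta}\,\Pi_{j\notin S}\Fi_{\hat{\CV}_j^\delta}$, only the subsets $S$ with $|S^c|\le N_\delta$ survive after multiplication by $\Fi_{\{\hat t_k>0\}}$; integrating out $v_{k-1},\dots,v_1$ in turn and using the two bounds above at each step, the term indexed by $S$ is $\le(C_0\delta)^{|S|}\Lambda^{|S^c|}\le(1/2)^{|S|}\max(1,\Lambda)^{N_\delta}$, and for each admissible value of $|S^c|$ there are at most $\binom{k-1}{|S^c|}\le k^{N_\delta}$ such subsets. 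Summing over the at most $N_\delta+1$ admissible values of $|S^c|$ yields
\[
\int_{\Pi _{j=1}^{k-1}\hat{\CV}_j}\Fi_{\{\hat t_k>0\}}\ \Pi _{j=1}^{k-1} e^{\eta|v_j|^\zeta}\,\dd\hat\sigma_j\ \le\ (N_\delta+1)\,k^{N_\delta}\,\max(1,\Lambda)^{N_\delta}\,(1/2)^{k-1-N_\delta}.
\]
With $k=\hat C_1T_0^{5/4}$ and $N_\delta$ linear in $T_0$, the prefactor is $\exp\{O(T_0\log T_0)\}$ while $(1/2)^{k-1-N_\delta}\le(1/2)^{\hat C_1T_0^{5/4}/2}$ for $T_0$ large; since $T_0\log T_0=o(T_0^{5/4})$, choosing $\hat C_1$ large and then $T_0$ large enough gives \eqref{S3.15} with some $\hat C_2$ independent of $T_0$. (The exponent $5/4$ plays no role here: any fixed power $>1$, or a sufficiently large linear factor with $\delta$ allowed to depend on $T_0$, would serve equally well; it is chosen for convenience in later sections.)
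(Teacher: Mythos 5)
Your proof is correct, and the core is the same as the paper's: decompose each leg of the speeded back-time cycle into a non-grazing, bounded-speed (``good'') part and a grazing/large (``bad'') part; use a geometric lower bound on the single-leg travel time to cap the number of good legs by the time budget $T_0$ on the event $\{\hat t_k>0\}$; and sum combinatorially over the placement of the bad legs. What you do differently is the final balancing. The paper lets the non-grazing threshold $\epsilon$ depend on $T_0$, choosing it so that the aggregated factor $C_{\Omega,N}T_0\epsilon^{N/2-3-|\kappa|}$ equals $\tfrac12$; the count of good legs then grows like a super-linear power of $T_0$, $k$ is set to a fixed multiple of that count, and the combinatorial prefactors are swallowed into the single $\tfrac12$. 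You instead fix the threshold $\delta$ once and for all, so the number $N_\delta$ of good legs is only linear in $T_0$, and you absorb the surviving prefactor $(N_\delta+1)\,k^{N_\delta}\max(1,\Lambda)^{N_\delta}=\exp\{O(T_0\log T_0)\}$ into $(\tfrac12)^{k-1-N_\delta}=\exp\{-cT_0^{5/4}\}$ via $T_0\log T_0=o(T_0^{5/4})$. Your bookkeeping is cleaner in that the non-grazing threshold is decoupled from $T_0$, the reason any fixed power of $T_0$ strictly larger than $1$ suffices (so that the exponent $\tfrac54$ is indeed not essential) is made completely transparent, and $\hat C_1$ may be taken to be any fixed positive constant rather than a sufficiently large one. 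One optional simplification in your geometric step: the compactness and lower-semicontinuity appeal is not needed, since $\xi$ is smooth on all of $\R^3$ and the global Taylor bound $\xi(\hat x_j-s\,v_j/|v_j|)\le -c_1\delta^2 s+\tfrac12\|D^2\xi\|_{L^\infty}s^2$, with $c_1:=\inf_{\partial\Omega}|\nabla\xi|>0$, already yields the uniform exit-distance lower bound $d_*\ge 2c_1\delta^2/\|D^2\xi\|_{L^\infty}$ directly; lower semicontinuity of the exit distance is in any case delicate for non-convex domains and best avoided.
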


\begin{proof}
We take $\epsilon>0$ small enough, and define the
non-grazing sets
$$
\hat{\mathcal{V}}_{j}^\epsilon=\{v_j\in\hat{\mathcal{V}}_{j}~:~ v_j\cdot n(\hat{x}_j)\geq \epsilon~\mbox{and}~|v_j|\leq \frac1\epsilon\},\quad j\geq 1.
$$
Then a direct calculation shows that
\begin{align}
\int_{\hat{\mathcal{V}}_{j}\backslash \hat{\mathcal{V}}_{j}^{\epsilon}} {e^{\eta|v_j|^\zeta}}\dd\hat{\sigma}_j\leq C\epsilon,\nonumber
\end{align}
where the constant  $C>0$ is independent of $j$. By similar arguments as in \cite[Lemma 2]{Guo2}, one can prove
$$
\hat{t}_j-\hat{t}_{j+1}\geq \frac{|v_j\cdot n(\hat{x}_j)|}{C_\Omega |v_j|^2(1+|v_j|^2)^{\frac{|\kappa|}{2}}}
$$
with a positive constant $C_\Omega$ depending only on the domain.  If $v_j\in\hat{\mathcal{V}}_j$, then we have
$\hat{t}_j-\hat{t}_{j+1}\geq \frac{\epsilon^{3+|\kappa|}}{C_\Omega }$. Therefore, if $\hat{t}_k=\hat{t}_k(t,x,v,v_1, \cdots, v_{k-1})>0$, there can be at most $\left[ \frac{C_\Omega T_0}{\epsilon^{3+|\kappa|}}\right]+1$ number of $v_j\in \hat{\mathcal{V}}_j^\epsilon$ for $1\leq j\leq k-1$. Hence we have
\begin{align}\label{S3.17}
&\int_{\Pi _{j=1}^{k-1}\hat{\mathcal{V}}_{j}} \Fi_{\{\hat{t}_k>0\}}~  \Pi _{j=1}^{k-1} e^{\eta|v_j|^\zeta}\dd\hat{\sigma} _{j}\nonumber\\
&\leq \sum_{n=1}^{\left[ \frac{C_\Omega T_0}{\epsilon^{3+|\kappa|}}\right]+1} \int_{\{\mbox{There are n number} ~v_j\in \hat{\mathcal{V}}_j^{\epsilon} ~\mbox{for some} ~1\leq j\leq k-1\}} \Pi _{j=1}^{k-1} e^{\eta|v_j|^\zeta}\dd\hat{\sigma} _{j}\nonumber\\
&\leq \sum_{n=1}^{\left[ \frac{C_\Omega T_0}{\epsilon^{3+|\kappa|}}\right]+1} \left(
\begin{gathered}
k-1\nonumber\\
n
\end{gathered}\right) \left|\sup_{j}\int_{\hat{\mathcal{V}}_j^\epsilon}{e^{\eta|v_j|^\zeta}} \dd\hat{\sigma}_j\right|^n\cdot \left|\sup_{j}\int_{\hat{\mathcal{V}}_j\backslash\hat{\mathcal{V}}_j^\epsilon}{e^{\eta|v_j|^\zeta}} \dd\hat{\sigma}_j\right|^{k-1-n}\nonumber\\
&\leq \left( \left[ \frac{C_\Omega T_0}{\epsilon^{3+|\kappa|}}\right]+1\right)\cdot (k-1)^{\left[ \frac{C_\Omega T_0}{\epsilon^{3+|\kappa|}}\right]+1} (C\epsilon)^{{k-1-2\left[ \frac{C_\Omega T_0}{\epsilon^{3+|\kappa|}}+1\right]}}.
\end{align}
One can take $k-{1}=N\left(\left[ \frac{C_\Omega T_0}{\epsilon^{3+|\kappa|}}\right]+1\right)$ with
$\left[ \frac{C_\Omega T_0}{\epsilon^{3+|\kappa|}}\right]\geq1$ and $N> 2(3+|\kappa|)$, so that \eqref{S3.17} 
can be bounded as
\begin{align*}
\int_{\Pi _{j=1}^{k-1}\hat{\mathcal{V}}_{j}} ~\Fi_{\{\hat{t}_k>0\}}~  \Pi _{j=1}^{k-1}{e^{\eta|v_j|^\zeta}} \dd\hat{\sigma} _{j}
&\leq \left\{ 2N\left(\left[ \frac{C_\Omega T_0}{\epsilon^{3+|\kappa|}}\right]+1\right)\right\}^{\left[ \frac{C_\Omega T_0}{\epsilon^{3+|\kappa|}}\right]+{1}}  (C\epsilon)^{\f{N}{2}\left({1}+\left[ \frac{C_\Omega T_0}{\epsilon^{3+|\kappa|}}\right]\right)}\nonumber\\
&\leq \left\{ 4N\left[ \frac{C_\Omega T_0}{\epsilon^{3+|\kappa|}}\right] (C\epsilon)^{\frac{N}{2}}\right\}^{\left[ \frac{C_\Omega T_0}{\epsilon^{3+|\kappa|}}\right]+{1}} \nonumber\\
&\leq \left\{ C_{\Omega,N} \cdot T_0\cdot \epsilon^{\frac{N}{2}-3-|\kappa|}\right\}^{\left[ \frac{C_\Omega T_0}{\epsilon^{3+|\kappa|}}\right]+{1}}.
\end{align*}
We choose
$$
\epsilon=\left(\frac{1}{2C_{\Omega,N} \cdot T_0}\right)^{\frac{1}{\frac{N}{2}-3-|\kappa|}}
$$
such that $ C_{\Omega,N} \cdot T_0\cdot \epsilon^{\frac{N}{2}-3-|\kappa|}=1/2$. Note that for
large $T_0$, it holds that $\eps>0$ is small, and
$$
\left[ \frac{C_\Omega T_0}{\epsilon^{3+|\kappa|}}\right]+{1}\cong C_{\Omega,N}T_0^{1+\frac{3+|\kappa|}{\frac{N}{2}-3-|\kappa|}}.
$$
Finally, we take $N=6(3+|\kappa|)$, so that $\left[ \frac{C_\Omega T_0}{\epsilon^{3+|\kappa|}}\right]+{1}=CT_0^{\frac54}$  and
$$
k=6(3+|\kappa|)\left\{\left[ \frac{C_\Omega T_0}{\epsilon^{3+|\kappa|}}\right]+1\right\}+{1}=CT_0^{\frac54}.
$$
Therefore, \eqref{S3.15}  follows. This completes the proof of Lemma \ref{lemS3.1}.
\end{proof}

Along the back-time cycle \eqref{S3.13}, we can represent the solution of \eqref{S3.8} in a mild formulation which enables us to get the $L^\infty$ bound of solutions in the steady case.  Indeed, for later use, we consider the following iterative linear problems involving a parameter $\la\in [0,1]$:
\begin{equation}\label{S3.19}
\begin{cases}
\v h^{i+1}+v\cdot\nabla_x h^{i+1}+\nu(v) h^{i+1}=\lambda K_{
{w}}^m h^i+\lambda K^c_w h^i +wg,\\[3mm]
\dis h^{i+1}(x,v)|_{\gamma_-}=\f{1}{\tilde{w}(v)} \int_{v'\cdot n(x)>0} h^i(x,v') \tilde{w}(v') \dd\sigma'+w(v)r(x,v),
\end{cases}
\end{equation}
for $i=0,1, 2,\cdots$, where $h^0=h^0(x,v)$ is given. For the mild formulation of \eqref{S3.19}, we have the following lemma whose proof is omitted for brevity as it is similar to that in \cite{Guo2}.


\begin{lemma}
Let $0\leq \lambda\leq 1$. Denote $\hat{\nu}(v):=(1+|v|^2)^{\frac{|\kappa|}{2}}[\v+\nu(v)]$. For each $t\in[0,T_0]$ and for each $(x,v)\in \bar{\Omega}\times \mathbb{R}^3\setminus (\gamma_0\cup \gamma_-)$,  
we have
\begin{align}\label{S3.20}
&h^{i+1}(x,v)=\sum_{n=1}^4J_n+\sum_{n=5}^{14}\Fi_{\{t_1>s\}}J_n,
\end{align}
with
\begin{align}
&J_1= \Fi_{\{\hat{t}_1\leq 0\}} e^{-\hat{\nu}(v)t} h^{i+1}(x-\hat{v}t),\nonumber\\
&J_2+J_3+J_4=\int_{\max\{\hat{t}_1,0\}}^t e^{-\hat{\nu}(v)(t-s)} (1+|v|^2)^\f{|\ka|}{2}\Big[\lambda K_w^mh^{i}+\lambda K^c_wh^{i}+wg\Big](x-\hat{v}(t-s),v)\dd s,\nonumber\\
&J_5=e^{-\hat{\nu}(v)(t-\hat{t}_1)}w(v) r(\hat{x}_1,v),\notag\\
&J_6=\frac{e^{-\hat{\nu}(v)(t-\hat{t}_1)}}{\tilde{w}(v)} \int_{\Pi _{j=1}^{k-1}\hat{\mathcal{V}}_{j}}
\sum_{l=1}^{k-2} \Fi_{\{\hat{t}_{l+1}>0\}} w(v_l)r(\hat{x}_{l+1},v_{l})\dd \hat{\Sigma}_{l}(\hat{t}_{l+1}),\nonumber\\
&J_7=\frac{e^{-\hat{\nu}(v)(t-\hat{t}_1)}}{\tilde{w}(v)} \int_{\Pi _{j=1}^{k-1}\hat{\mathcal{V}}_{j}} \sum_{l=1}^{k-1} \Fi_{\{\hat{t}_{l+1}\leq0<\hat{t}_l\}} h^{i+1-l}(\hat{x}_l-\hat{v}_l\hat  {t}_l,v_l) \dd\hat{\Sigma}_{l}(0),\notag\\
&J_8+J_9+J_{10}=\frac{e^{-\hat{\nu}(v)(t-\hat{t}_1)}}{\tilde{w}(v)} \int_{\Pi _{j=1}^{k-1}\hat{\mathcal{V}}_{j}} \sum_{l=1}^{k-1}\int_0^{\hat{t}_l} \Fi_{\{\hat{t}_{l+1}\leq0<\hat{t}_l\}}\nonumber\\
&\qquad\qquad\qquad\qquad\times (1+|v_l|^2)^{\f{|\ka|}{2}}\Big[\lambda K_w^mh^{i-l}+\lambda K^c_wh^{i-l}+wg\Big](\hat{x}_l-\hat{v}_l(\hat{t}_l-s),v_l) \dd\hat{\Sigma}_l(s),\notag\\
&J_{11}+J_{12}+J_{13}=\frac{e^{-\hat{\nu}(v)(t-\hat{t}_1)}}{\tilde{w}(v)} \int_{\Pi _{j=1}^{k-1}\hat{\mathcal{V}}_{j}} \sum_{l=1}^{k-1}\int_{\hat{t}_{l+1}}^{\hat{t}_l} \Fi_{\{\hat{t}_{l+1}\leq0<\hat{t}_l\}}\nonumber\\
&\qquad\qquad\qquad\qquad \times(1+|v_l|^2)^{\f{|\ka|}{2}}\Big[\lambda K_w^mh^{i-l}+\lambda K^c_wh^{i-l}+wg\Big](\hat{x}_l-\hat{v}_l(\hat{t}_l-s),v_l) \,\dd\hat{\Sigma}_l(s),\nonumber
\end{align}
\begin{align}
&J_{14}=\frac{e^{-\hat{\nu}(v)(t-\hat{t}_1)}}{\tilde{w}(v)} \int_{\Pi _{j=1}^{k-1}\hat{\mathcal{V}}_{j}}  I_{\{\hat{t}_{k}>0\}} h^{i+1-k}(\hat{x}_k,v_{k-1}) \dd\hat{\Sigma}_{k-1}(\hat{t}_k),\nonumber
\end{align}
where we have denoted
\begin{align*}
&\dd\hat{\Sigma}_l(s) = \big\{\Pi_{j=l+1}^{k-1}\dd\hat{\sigma}_j\big\}\cdot \big\{\tilde{w}(v_l) e^{-\hat{\nu}(v_l)(\hat{t}_l-s)} \dd\hat{\sigma}_l\big\}\cdot \big\{\Pi_{j=1}^{l-1} e^{-\hat{\nu}(v_j)(\hat{t}_j-\hat{t}_{j+1})} \dd\hat{\sigma}_j\big\}.
\end{align*}
\end{lemma}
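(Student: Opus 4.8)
The plan is to reproduce, along the speeded bi-characteristic $\hat{X}(s;t,x,v)=x-\hat{v}(t-s)$, the Duhamel-plus-unfolding argument of \cite{Guo2}. Fix $t\in[0,T_0]$ and $(x,v)\in\bar{\Omega}\times\R^3\setminus(\g_0\cup\g_-)$. Multiplying the transport equation in \eqref{S3.19} by $(1+|v|^2)^{|\ka|/2}$ gives
\begin{equation*}
\hat{v}\cdot\na_x h^{i+1}+\hat{\nu}(v)\,h^{i+1}=(1+|v|^2)^{\frac{|\ka|}{2}}\big[\lambda K_w^m h^i+\lambda K_w^c h^i+wg\big],
\end{equation*}
and since $V(s)\equiv v$ along $\hat{X}(s)$, the left-hand side equals $\frac{\dd}{\dd s}h^{i+1}(\hat{X}(s),v)+\hat{\nu}(v)h^{i+1}(\hat{X}(s),v)$. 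Integrating this scalar linear ODE in $s$ from the entering time $\max\{\hat{t}_1,0\}$ up to $s=t$ produces the first layer of \eqref{S3.20}: on $\{\hat{t}_1\le 0\}$ the trajectory stays in $\bar{\Omega}$ on the whole window and one picks up $J_1$ together with the source terms $J_2+J_3+J_4$; on $\{\hat{t}_1>0\}$ the trajectory reaches $\hat{x}_1=\hat{x}_{\mathbf b}(x,v)\in\partial\Omega$ and one obtains instead $e^{-\hat{\nu}(v)(t-\hat{t}_1)}h^{i+1}(\hat{x}_1,v)$ plus the source integral over $[\hat{t}_1,t]$.

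On $\{\hat{t}_1>0\}$ one has $v\cdot n(\hat{x}_1)\le 0$, so $(\hat{x}_1,v)\in\g_-\cup\g_0$ and, off the measure-zero grazing set, the boundary relation in \eqref{S3.19} applies; its $r$-part yields $J_5$, while the diffuse part rewrites $h^{i+1}(\hat{x}_1,v)$ as a $v_1$-integral of $h^i(\hat{x}_1,v_1)$ against the probability measure $\dd\hat{\sigma}_1$ over $\hat{\CV}_1$. To each such $h^i(\hat{x}_1,v_1)$ one applies the previous step with $(t,x,v)$ replaced by $(\hat{t}_1,\hat{x}_1,v_1)$, noting $V(\cdot)\equiv v_1$ on the new segment and $\hat{t}_2=\hat{t}_1-\hat{t}_{\mathbf b}(\hat{x}_1,v_1)$. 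Iterating, at the $l$-th stage one is unfolding $h^{i+1-l}$ (the superscript drops by one at each reflection), and exactly one of three things occurs: the window is exhausted before the next bounce, $\hat{t}_{l+1}\le 0<\hat{t}_l$, giving the interior-type term $J_7$ with $h^{i+1-l}(\hat{x}_l-\hat{v}_l\hat{t}_l,v_l)$ and the accompanying source integrals $J_8+J_9+J_{10}$ on $[0,\hat{t}_l]$; or a further reflection occurs, giving the source integrals $J_{11}+J_{12}+J_{13}$ on $[\hat{t}_{l+1},\hat{t}_l]$ and the boundary term $J_6$ at $\hat{x}_{l+1}$ that feeds the next stage; or, after $k-1$ reflections, one stops with the remainder $J_{14}$ carrying $\Fi_{\{\hat{t}_k>0\}}h^{i+1-k}(\hat{x}_k,v_{k-1})$. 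Collecting the transport factors $e^{-\hat{\nu}(v_j)(\hat{t}_j-\hat{t}_{j+1})}$ from the completed segments, the factor $\tilde{w}(v_l)e^{-\hat{\nu}(v_l)(\hat{t}_l-s)}$ from the segment being integrated, and the successive measures $\dd\hat{\sigma}_j$ into $\dd\hat{\Sigma}_l(s)$ reproduces \eqref{S3.20}.

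The work in this argument is entirely bookkeeping, and that is where the care is needed. First, the reflection kernel in the $h$-equation carries the ratio $\tilde{w}(v)^{-1}\tilde{w}(v_1)$, and through the repeated substitutions the inner $\tilde{w}(v_j)$ factors must telescope against the outer $\tilde{w}$ prefactors exactly as encoded in $\dd\hat{\Sigma}_l$; one verifies this by induction on $l$. Second, one must check that $\{\hat{t}_1\le 0\}$ versus $\{\hat{t}_1>0\}$, and within the latter the events $\{\hat{t}_{l+1}\le 0<\hat{t}_l\}$ for $l=1,\dots,k-1$ together with $\{\hat{t}_k>0\}$, give a disjoint and exhaustive decomposition, so that the $J_n$'s are neither double-counted nor missed; the precise index accounting --- in particular that $J_{14}$ carries $h^{i+1-k}$ and that $\dd\hat{\Sigma}_l$ stops at the $l$-th segment --- has to be tracked through the recursion. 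Third, \eqref{S3.19} is a steady problem and the artificial window $[0,t]$ is only a device for running the cycle, so one must interpret the superscripts $h^{i+1-l}$, $h^{i+1-k}$ consistently over the relevant range of indices, exactly as in \cite{Guo2}. Finally, the whole construction presupposes that for a.e.\ $(x,v)$ the back-time cycle \eqref{S3.13} is well defined --- each $v_j\neq 0$ and the trajectory meets $\partial\Omega$ non-tangentially --- which follows from the standard measure-zero property of $\g_0$ and of grazing trajectories; replacing $v$ by $\hat{v}$ in those arguments does not affect the conclusion. The main obstacle is thus not a single estimate but this cumulative, induction-driven bookkeeping, which is precisely why the authors defer to \cite{Guo2}.
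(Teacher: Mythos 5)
Your proposal is correct and follows exactly the route the authors intend when they defer to \cite{Guo2}: speed up the characteristic by multiplying the steady equation through by $(1+|v|^2)^{|\ka|/2}$, integrate the resulting transport ODE along $\hat{X}(s)=x-\hat{v}(t-s)$, and then unfold the diffuse-reflection boundary condition recursively, decrementing the iteration index $i+1\mapsto i+1-l$ at each bounce while accumulating the exponential transport factors and probability measures into $\dd\hat{\Sigma}_l$. Your bookkeeping checklist (telescoping of the $\tilde{w}$ ratios, exhaustive disjointness of the events $\{\hat{t}_{l+1}\le 0<\hat{t}_l\}$ and $\{\hat{t}_k>0\}$, and the measure-zero grazing set surviving the smooth change of variables $v\mapsto\hat{v}$) is exactly the content of the omitted argument.
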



\begin{lemma}\label{lemS3.3}
Let  $\beta>3$.  Let $h^i$, $i=0,1,2,\cdots$, be the solutions to \eqref{S3.19}, satisfying 
$$
\|h^i\|_{L^\infty}+|h^i|_{L^\infty(\gamma)}<\infty.
$$ 
Then there exists $T_0>0$ large enough  such that for $i\geq k:=
{\hat{C}_1}T_0^{\frac54}$, it holds 
that
\begin{align}\label{S3.24}
\|h^{i+1}\|_{L^\infty}{+|h^{i+1}|_{L^\infty(\g)}}&\leq \frac18 \sup_{0\leq l\leq k} \{\|h^{i-l}\|_{L^\infty}\}+C\Big\{\|\nu^{-1}wg\|_{L^\infty}+|wr|_{{L^\infty(\g_-)}}\Big\}\nonumber\\
&\quad+C \sup_{0\leq l\leq k}\left\{\left\|\frac{\sqrt{\nu}h^{i-l}}{w}\right\|_{L^2}\right\}.
\end{align}
Moreover, if $h^i\equiv h$ for $i=1,2,\cdots$, i.e., $h$ is a solution, then \eqref{S3.24} is reduced to the following estimate
\begin{align}\label{S3.24-1}
\|h\|_{L^\infty}{+|h|_{L^\infty(\g)}}&\leq C\Big\{\|\nu^{-1}wg\|_{L^\infty}+|wr|_{{L^\infty (\g_-)}}\Big\}+\left\|\frac{\sqrt{\nu}h}{w}\right\|_{L^2}.
\end{align}
Here it is emphasized that the positive constant  $C>0$ does not depend on $\lambda\in[0,1]$ and $\vep>0$.
\end{lemma}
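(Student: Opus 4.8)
The plan is to start from the explicit mild representation \eqref{S3.20}, and bound each of the fourteen terms $J_n$ in $L^\infty$, carefully tracking the dependence on $\lambda\in[0,1]$ and $\v>0$. The crucial structural fact we exploit throughout is that $\hat\nu(v)=(1+|v|^2)^{|\ka|/2}[\v+\nu(v)]$ has a strictly positive lower bound $\hat\nu_0>0$ that is uniform in $v$ and in $\v$, since $\nu(v)\sim(1+|v|)^\ka$ so $(1+|v|^2)^{|\ka|/2}\nu(v)\sim 1$ for large $|v|$, while $\v\geq0$ only helps. This replaces the vanishing-$\nu$ factor of the classical soft-potential analysis by a genuine exponential damping $e^{-\hat\nu_0(t-s)}$, which is what allows the $L^\infty$ estimate to go through.

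First I would handle the ``easy'' groups. The transport term $J_1$ contributes $e^{-\hat\nu_0 t}\|h^{i+1}\|_{L^\infty}$, which is harmless for $t=T_0$ large; the source terms among $J_2,\dots,J_4$ and $J_8,\dots,J_{13}$ are all bounded by $C\|\nu^{-1}wg\|_{L^\infty}$ after integrating $\int_0^t (1+|v|^2)^{|\ka|/2}e^{-\hat\nu(v)(t-s)}\dd s\lesssim \hat\nu(v)^{-1}(1+|v|^2)^{|\ka|/2}\lesssim \nu(v)^{-1}(1+|v|^2)^{-|\ka|/2}\cdot(1+|v|^2)^{|\ka|/2}$, i.e.\ $\lesssim 1$ uniformly; and the boundary-data terms $J_5,J_6$ are bounded by $C|wr|_{L^\infty(\gamma_-)}$ using that $\dd\hat\sigma_j$ and $\dd\hat\Sigma_l$ are (sub-)probability measures together with $w(v_l)\tilde w(v_l)^{-1}$-type factors being integrable against the Gaussian weights. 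The weight transfer $w(v_l)/\tilde w(v)$ through the diffuse reflection kernel is controlled exactly as in \cite{Guo2} because $\tilde w(v)=(w(v)\mu^{1/2}(v))^{-1}$ carries a full Gaussian.

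The heart of the matter is the collision contribution $\lambda K_w^m h^i$ and $\lambda K_w^c h^i$ appearing in $J_2+J_3+J_4$ and in the iterated terms $J_8,\dots,J_{13}$, and the ``deep-in-the-past'' boundary term $J_{14}$. For $J_{14}$, Lemma \ref{lemS3.1} with $k=\hat C_1 T_0^{5/4}$ gives the factor $(1/2)^{\hat C_2 T_0^{5/4}}$, so $J_{14}\leq \frac18\sup_{0\le l\le k}\|h^{i+1-k}\|_{L^\infty}$ for $T_0$ large; likewise the $l$-sum in $J_7$ is absorbed, producing the $\frac18\sup_{0\le l\le k}\|h^{i-l}\|_{L^\infty}$ on the right of \eqref{S3.24}. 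For the $K_w$ terms, I would split $K_w=K_w^m+K_w^c$ and use $|K_w^m h|\leq Cm^{3+\ka}e^{-|v|^2/6}\|h\|_{L^\infty}$ from \eqref{2.31} to make the $K^m$ part have a coefficient $\lesssim m^{3+\ka}$ that is absorbed into $\frac18$ by choosing $m$ small; for the $K^c$ part, which has the integrable kernel estimate \eqref{2.40}--\eqref{2.40-1}, I would substitute the mild formula for $h^i$ back once more (the standard Vidav--Guo double-duhamel iteration along the speeded characteristics), splitting into a small-velocity region $|v|\le N$, $|u|\le N$ and its complement. On the complement the kernel smallness \eqref{2.40-1} plus the exponential $e^{-\hat\nu_0(t-s)}$ gives a small constant times $\sup\|h^{i-l}\|_{L^\infty}$; on the bounded region a change of variables $u'=x-\hat v(t-s_1)-\hat v'(s_1-s_2)$ (now with the \emph{speeded} velocities, whose Jacobian is nondegenerate because $\hat v=(1+|v|^2)^{|\ka|/2}v$ is a diffeomorphism bounded below in the relevant range) converts the double time integral into an $L^2_{x,v}$ norm, yielding the term $C\sup_{0\le l\le k}\|\sqrt\nu h^{i-l}/w\|_{L^2}$. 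The main obstacle I anticipate is precisely this last change-of-variables / Jacobian estimate for the speeded characteristic: one must check that on the truncated velocity set the map $s_2\mapsto \hat X_{cl}(s_2)$ has Jacobian bounded away from zero uniformly (including across boundary bounces encoded in the $\hat t_l$'s), so that the $L^\infty\to L^2$ gain is legitimate; this is where the convexity/geometry of $\Omega$ and the estimate $\hat t_j-\hat t_{j+1}\gtrsim \eps^{3+|\ka|}$ from the proof of Lemma \ref{lemS3.1} are needed. Once \eqref{S3.24} is established, the reduction \eqref{S3.24-1} is immediate by setting $h^i\equiv h$, moving the $\frac18\|h\|_{L^\infty}$ to the left, and rescaling the $L^2$ coefficient to $1$ (harmlessly, since any constant in front of $\|\sqrt\nu h/w\|_{L^2}$ is acceptable after possibly enlarging $T_0$).
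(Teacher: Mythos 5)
You reproduce the paper's own strategy: use the mild formulation \eqref{S3.20} along the speeded bicharacteristics, exploit $\hat\nu(v)\geq\hat\nu_0>0$ uniformly in $v$ and $\vep$, bound the source and boundary terms, absorb $J_{14}$ via Lemma \ref{lemS3.1}, split $K_w=K_w^m+K_w^c$, and perform a Vidav double-Duhamel iteration on the $K_w^c$ part to pass to an $L^2$ quantity. This is exactly the paper's proof.

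Where you go astray is in describing the change-of-variables step and in the reasons you give for why it works. The map you invoke, $s_2\mapsto\hat{X}_{cl}(s_2)$, is a curve $\R\to\R^3$ and has no Jacobian in the relevant sense. What the paper actually does (see \eqref{S3.38}--\eqref{S3.39}, and the analogous step in the estimate of $B_2$) is, for a \emph{fixed} time $s\leq\hat t_l-\tfrac1N$, the \emph{velocity} change of variables $v_l\mapsto y_l:=\hat x_l-\hat v_l(\hat t_l-s)$, with
$$\left|\frac{\partial y_l}{\partial v_l}\right|=|\hat t_l-s|\cdot\left|\frac{\partial\hat v}{\partial v}\right|\geq\frac1{N^3},$$
using $|\hat t_l-s|\geq 1/N$ from the time truncation and the elementary identity $|\partial\hat v/\partial v|=(1+|v|^2)^{\frac{3|\kappa|}{2}-1}\{1+(1+|\kappa|)|v|^2\}\geq1$. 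This change of variables takes place entirely inside one segment of the backward cycle, so no boundary bounce is crossed; convexity of $\Omega$ plays no role here, and the lower bound $\hat t_j-\hat t_{j+1}\gtrsim\vep^{3+|\kappa|}$ is used only in the proof of Lemma \ref{lemS3.1} for the counting argument, not for this Jacobian. In short, the Jacobian estimate is the \emph{easy} part of the proof (the speeding only improves it for large $|v|$), so your anticipated ``main obstacle'' is not an obstacle; the delicate parts are the uniform-in-$\lambda,\vep$ bookkeeping and the smallness bookkeeping in $m,N,T_0$ needed to make the coefficient of $\sup_l\|h^{i-l}\|_{L^\infty}$ drop below $1/8$.

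A smaller point: the coefficient in front of $\|\sqrt\nu h^{i-l}/w\|_{L^2}$ in \eqref{S3.24} is $C_{N,k,m}$, which grows with $k\sim T_0^{5/4}$; it cannot be renormalized to $1$ by ``enlarging $T_0$'' as you suggest. Estimate \eqref{S3.24-1} should be read with an implicit constant in front of the $L^2$ term, which is in fact how it is applied in the rest of the paper.
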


\begin{proof}
By the definition of $\hat{\nu}(v)$, we first note that
\begin{equation}\label{S3.25}
\hat{\nu}(v)\geq (1+|v|^2)^{\frac{|\kappa|}{2}}\nu(v)\geq \hat{\nu}_0>0,
\end{equation}
 where $\hat{\nu}_0$ is a positive constant independent of $\v$ and $v\in\mathbb{R}^3$. For $J_1$, it follows from \eqref{S3.25} that
\begin{equation}\label{S3.26}
|J_1|\leq e^{-\hat{\nu}_0 t} \|h^{i+1}\|_{L^\infty}.
\end{equation}
For $J_2$, it follows from \eqref{2.31} that
\begin{align}\label{S3.43}
|J_2| &\leq Cm^{3+\kappa}\int_{\max\{\hat{t}_1,0\}}^t e^{-\hat{\nu}_0 (t-s)}(1+|v|^2)^{\f{|\ka|}{2}} e^{-\frac{|v|^2}{20}} \|h^{i}\| \dd s\nonumber\\
&\leq Cm^{3+\kappa} e^{-\frac{|v|^2}{32}}  \|h^i\|_{L^\infty}.
\end{align}
For those terms involving the source $g$, we notice that
\begin{align}\label{S3.27}
\frac{1}{\tilde{w}(v)}\leq 
Cw(v)e^{-\f{|v|^2}{4}}\leq Ce^{-\f{|v|^2}{8}},
\end{align}
which immediately yields  that
\begin{equation}\label{S3.28}
\left\{\begin{aligned}
&\int_{\Pi _{j=1}^{k-1}\hat{\mathcal{V}}_{j}} \tilde{w}(v_l)[1+|v_l|^2]^{\frac{|\kappa|}{2}} \  \Pi_{j=1}^{k-1} \dd\hat{\sigma}_j\leq C<\infty,\quad\mbox{for}\quad 1\leq l\leq k-1,\\[1.5mm]
&\int_{\Pi _{j=1}^{k-1}\hat{\mathcal{V}}_{j}} \sum_{l=1}^{k-1} \Fi_{\{\hat{t}_{l+1}\leq0<\hat{t}_l\}} \tilde{w}(v_l)[1+|v_l|^2]^{\frac{|\kappa|}{2}} \ \Pi_{j=1}^{k-1} \dd\hat{\sigma}_j\leq Ck.
\end{aligned}\right.
\end{equation}
Then it follows from \eqref{S3.27} and \eqref{S3.28} that
\begin{align}\label{S3.29}
|J_4|+|J_{10}|+|J_{13}|\leq Ck \|\nu^{-1}wg\|_{L^\infty},
\end{align}
\begin{align}\label{S3.30}
|J_5|+|J_6|\leq Ck  |wr|_{L^\infty{(\g_-)}},
\end{align}
and
\begin{align}\label{S3.31}
|J_7|&\leq C e^{-\frac18|v|^2} e^{-\hat{\nu}_0 (t-t_1)} \int_{\Pi _{j=1}^{k-1}\hat{\mathcal{V}}_{j}} \sum_{l=1}^{k-1} \Fi_{\{\hat{t}_{l+1}\leq0<\hat{t}_l\}} \dd\hat{\Sigma}_{l}(0)\cdot \sup_{1\leq l\leq k-1}\{\|h^{i+1-l}\|_{L^\infty}\}\nonumber\\
&\leq Ck  e^{-\hat{\nu}_0 t} e^{-\frac18|v|^2} \cdot \sup_{1\leq l\leq k-1}\{\|h^{i+1-l}\|_{L^\infty}\}.
\end{align}
For the term $J_{14}$, it follows from \eqref{S3.27} and  Lemma \ref{lemS3.1} that
\begin{align}\label{S3.32}
|J_{14}|\leq C e^{-\frac18|v|^2} \left(\frac12\right)^{\hat{C}_2 T_0^{\frac54}}\cdot {|h^{i+1-k}|_{L^\infty(\ga_-)}},
\end{align}
where we have taken $k=
{\hat{C}_1}T_0^{\frac54}$ and $T_0$ is a large constant to be  chosen later. From the boundary condition given in the second equation of \eqref{S3.19}, it further holds that
\begin{align}
|h^{i+1-k}|_{L^\infty(\ga_-)}\leq C|h^{i-k}|_{L^\infty(\g_+)}+|wr|_{L^\infty(\g_-)}.\nonumber
\end{align}
For $J_8$, using \eqref{2.31}, \eqref{S3.25}, \eqref{S3.27} and \eqref{S3.28}, one obtains that
\begin{align}\label{S3.33}
{|J_8|}&\leq C m^{3+\kappa} e^{-\f18|v|^2}\cdot \sup_{1\leq l\leq k-1}\{\|h^{i-l}\|_{L^\infty}\} \nonumber\\
&\qquad\quad\times\int_{\Pi _{j=1}^{k-1}\hat{\mathcal{V}}_{j}} \sum_{l=1}^{k-1}I_{\{\hat{t}_{l+1}\leq0<\hat{t}_l\}}\int_0^{t_l} e^{-\hat{\nu}_0(t-s)} \dd s  \ \nu(v_l)^{-1} \tilde{w}(v_l)e^{-\frac{|v_l|^2}{8}}\Pi_{j=1}^{k-1} \dd\hat{\sigma}_j\nonumber\\
&\leq C m^{3+\kappa} e^{-\f18|v|^2} \cdot \sup_{1\leq l\leq k-1}\{\|h^{i-l}\|_{L^\infty}\} \int_{\Pi _{j=1}^{k-1}\hat{\mathcal{V}}_{j}} \sum_{l=1}^{k-1}I_{\{\hat{t}_{l+1}\leq0<\hat{t}_l\}} \nu(v_l)^{-1} \tilde{w}(v_l) \Pi_{j=1}^{k-1} \dd\hat{\sigma}_j\nonumber\\
&\leq Ck  m^{3+\kappa} e^{-\f18|v|^2} \cdot \sup_{1\leq l\leq k-1}\{\|h^{i-l}\|_{L^\infty}\}.
\end{align}
Here we remark that the factor $e^{-\f18|v|^2} $ on the right-hand side of \eqref{S3.33} is very crucial for the later use of the Vidav's iteration.
{For $J_9$}, it holds that 
\begin{align}\label{S3.34}
|J_9| &\leq  C e^{-\f18|v|^2} \sum_{l=1}^{k-1}\int_{\Pi _{j=1}^{l-1}\hat{\mathcal{V}}_{j}} \dd\hat{\sigma}_{l-1}\cdots \dd\hat{\sigma}_1 \int_0^{\hat{t}_l} e^{-\hat{\nu}_0(t-s)} \dd s\nonumber\\
&\qquad\times \int_{\mathcal{V}_l}\int_{\mathbb{R}^3} \Fi_{\{\hat{t}_{l+1}\leq0<\hat{t}_l\}} \nu(v_l)^{-1} \tilde{w}(v_l) |k^c_w(v_l,v') h^{i-l}(\hat{x}_l-\hat{v}_{{l}}(\hat{t}_l-s),v')|\dd v' \dd\hat{\sigma}_l \nonumber\\
&=C e^{-\f18|v|^2} \sum_{l=1}^{k-1}\int_{\Pi _{j=1}^{l-1}\hat{\mathcal{V}}_{j}} \dd\hat{\sigma}_{l-1}\cdots \dd\hat{\sigma}_1 \int_0^{\hat{t}_l} e^{-\hat{\nu}_0(t-s)} \dd s\int_{\mathcal{V}_l\cap \{|v_l|\geq N\}}\int_{\mathbb{R}^3} (\cdots)\dd v' \dd\hat{\sigma}_l \nonumber\\
&\quad+C e^{-\f18|v|^2} \sum_{l=1}^{k-1}\int_{\Pi _{j=1}^{l-1}\hat{\mathcal{V}}_{j}} \dd\hat{\sigma}_{l-1}\cdots \dd\hat{\sigma}_1 \int_0^{\hat{t}_l} e^{-\hat{\nu}_0(t-s)} \dd s\int_{\mathcal{V}_l\cap \{|v_l|\leq N\}}\int_{\mathbb{R}^3} (\cdots)\dd v' \dd\hat{\sigma}_l\nonumber\\
&:=\sum_{l=1}^{k-1} \left(J_{91l}+J_{92l}\right).
\end{align}
We shall estimate the right-hand terms of \eqref{S3.34} as follows. By using \eqref{2.40-1}, 
we have
\begin{align}\label{S3.35}
\sum_{l=1}^{k-1} J_{91l}&\leq C e^{-\f18|v|^2} \sum_{l=1}^{k-1}\int_{\Pi _{j=1}^{l-1}\hat{\mathcal{V}}_{j}} \dd\hat{\sigma}_{l-1}\cdots \dd\hat{\sigma}_1 \int_0^{\hat{t}_l} e^{-\hat{\nu}_0(t-s)} \dd s\nonumber\\
&\qquad\qquad\qquad\times\int_{\mathcal{V}_l\cap \{|v_l|\geq N\}} e^{-\f18|v_l|^2} dv_l \cdot \sup_{1\leq l\leq k-1}\{\|h^{i-l}\|_{L^\infty}\}\nonumber\\
&\leq Ck e^{-\f18|v|^2} e^{-\frac1{16}N^2} \cdot \sup_{1\leq l\leq k-1}\{\|h^{i-l}\|_{L^\infty}\}.
\end{align}
And, for each term $J_{92l}$, we also have
\begin{align}\label{S3.36}
J_{92l}&\leq C e^{-\f18|v|^2} \int_{\Pi _{j=1}^{l-1}\hat{\mathcal{V}}_{j}} \dd\hat{\sigma}_{l-1}\cdots \dd\hat{\sigma}_1 \int_{\hat{t}_l-\frac1N}^{\hat{t}_l} e^{-\hat{\nu}_0(t-s)} \dd s\int_{\mathcal{V}_l\cap \{|v_l|\leq N\}}\int_{\mathbb{R}^3} (\cdots)\dd v' \dd\hat{\sigma}_l\nonumber\\
&\quad+C e^{-\f18|v|^2} \int_{\Pi _{j=1}^{l-1}\hat{\mathcal{V}}_{j}} \dd\hat{\sigma}_{l-1}\cdots \dd\hat{\sigma}_1 \int_0^{\hat{t}_l-\frac1N} e^{-\hat{\nu}_0(t-s)} \dd s\int_{\mathcal{V}_l\cap \{|v_l|\leq N\}} e^{-\f18|v_l|^2} \dd v_l\nonumber\\
&\qquad\qquad\times \int_{\Red{\{}|v'|\geq 2N\Red{\}}}  |k_w^c(v_l,v')| e^{\frac{|v_l-v'|^2}{64}} \dd v' e^{-\frac{N^2}{64}}\cdot \sup_{1\leq l\leq k-1}\{\|h^{i-l}\|_{L^\infty}\}\nonumber\\
&\quad+C e^{-\f18|v|^2} \int_{\Pi _{j=1}^{l-1}\hat{\mathcal{V}}_{j}} \dd\hat{\sigma}_{l-1}\cdots \dd\hat{\sigma}_1 \int_0^{\hat{t}_l-\frac1N} e^{-\hat{\nu}_0(t-s)} \dd s\int_{\mathcal{V}_l\cap \{|v_l|\leq N\}}\int_{\Red{\{}|v'|\leq 2N\Red{\}}} \nonumber\\
&\qquad\qquad\times  \Fi_{\{\hat{t}_{l+1}\leq0<\hat{t}_l\}} e^{-\frac18|v_l|^2} |k^c_w(v_l,v') h^{i-l}(x_l-\hat{v}_{{l}}(\hat{t}_l-s),v')|\dd v' \dd v_l \nonumber\\
&\leq C e^{-\f18|v|^2} \int_{\Pi _{j=1}^{l-1}\hat{\mathcal{V}}_{j}} \dd\hat{\sigma}_{l-1}\cdots \dd\hat{\sigma}_1 \int_0^{\hat{t}_l-\frac1N} e^{-\hat{\nu}_0(t-s)} \dd s\int_{\mathcal{V}_l\cap \{|v_l|\leq N\}}\int_{|v'|\leq 2N} \nonumber\\
&\qquad\qquad\times  \Fi_{\{\hat{t}_{l+1}\leq0<\hat{t}_l\}} e^{-\frac18|v_l|^2} |k^c_w(v_l,v') h^{i-l}(x_l-\hat{v}_{{l}}(\hat{t}_l-s),v')|\dd v' \dd v_l\nonumber\\
&\qquad+\frac{C}{N}e^{-\f18|v|^2}\cdot \|h^{i-l}\|_{L^\infty}.
\end{align}
To estimate the first term on the right-hand side of  \eqref{S3.36},  it follows from {\eqref{2.33}} 
that
\begin{align}\label{S3.38}
&\int_{\mathcal{V}_l\cap \{|v_l|\leq N\}}\int_{|v'|\leq 2N} \Fi_{\{\hat{t}_{l+1}\leq0<\hat{t}_l\}} e^{-\frac18|v_l|^2} |k^c_w(v_l,v') h^{i-l}(\hat{x}_l-\hat{v}_{{l}}(\hat{t}_l-s),v')|\dd v' \dd v_l\nonumber\\
&\leq C_N \left\{\int_{\mathcal{V}_l\cap \{|v_l|\leq N\}}\int_{|v'|\leq 2N} e^{-\frac18|v_l|^2} |k^c_w(v_l,v')|^2 \dd v' \dd v_l \right\}^{\frac12}\nonumber\\
&\qquad\times \left\{\int_{\mathcal{V}_l\cap \{|v_l|\leq N\}}\int_{|v'|\leq 2N} \Fi_{\{\hat{t}_{l+1}\leq0<\hat{t}_l\}} \left|\frac{{\sqrt{\nu(v')}}h^{i-l}(\hat{x}_l-\hat{v}_{{l}}(\hat{t}_l-s),v')}{w(v')}\right|^2 \dd v' \dd v_l \right\}^{\frac12}\nonumber\\
&\leq C_N  m^{\kappa-1}\left\{\int_{\mathcal{V}_l\cap \{|v_l|\leq N\}}\int_{|v'|\leq 2N} \Fi_{\{\hat{t}_{l+1}\leq0<\hat{t}_l\}} \left|\frac{\sqrt{\nu(v')}h^{i-l}(\hat{x}_l-\hat{v}_{{l}}(\hat{t}_l-s),v')}{w(v')}\right|^2 \dd v' \dd v_l \right\}^{\frac12}.
\end{align}
Let $y_l=\hat{x}_l-\hat{v}(\hat{t}_l-s)\in \Omega$ for $s\in [0,\hat{t}_l-\frac1N]$.  A direct computation
shows that
\begin{align}\label{S3.39}
\left|\f{\pa y_l}{\pa v_l}\right|=|\hat{t}_l-s|\cdot\left|\frac{\partial \hat{v}}{\partial v}\right|\geq \f{(1+|v|^2)^{\frac{3|\kappa|}{2}-1}}{N^3}\cdot \{1+(1+|\kappa|)|v|^2\}\geq \f{1}{N^3}.
\end{align}
Thus, by making change of variable $\hat{v}_l\rightarrow y_l$ and using \eqref{S3.39},  one obtains that
\begin{align}
&\left\{\int_{\mathcal{V}_l\cap \{|v_l|\leq N\}}\int_{|v'|\leq 2N} \Fi_{\{\hat{t}_{l+1}\leq0<\hat{t}_l\}} \left|\frac{\sqrt{\nu(v')}h^{i-l}(\hat{x}_l-\hat{v}_{{l}}(\hat{t}_l-s),v')}{w(v')}\right|^2 \dd v' \dd v_l \right\}^{\frac12}\nonumber\\
&\leq C_{N}\left\{\int_{\Omega}\int_{|v'|\leq 2N}\left|\frac{\sqrt{\nu(v')}h^{i-l}(y_l,v')}{w(v')}\right|^2 \dd v' \dd y_l \right\}^{\frac12}\leq C_N \left\|\frac{\sqrt{\nu}h^{i-l}}{w}\right\|_{L^2},\nonumber
\end{align}
which together with \eqref{S3.38} and \eqref{S3.36} yield that
\begin{align}\label{S3.40}
J_{92l}\leq \frac{C}{N}e^{-\f18|v|^2}\cdot \sup_{1\leq l\leq k-1}\{\|h^{i-l}\|_{L^\infty}\}+C_N  m^{\kappa-1} e^{-\f18|v|^2}\left\|\frac{\sqrt{\nu}h^{i-l}}{w}\right\|_{L^2}.
\end{align}
Thus it follows from \eqref{S3.40}, \eqref{S3.35} and \eqref{S3.34} that
\begin{align}\label{S3.41}
|J_9| &\leq  \frac{Ck}{N}e^{-\f18|v|^2}\cdot \sup_{1\leq l\leq k-1}\{\|h^{i-l}\|_{L^\infty}\}+C_N k m^{\kappa-1} e^{-\f18|v|^2}\cdot \sup_{1\leq l\leq k-1}\left\{\left\|\frac{\sqrt{\nu}h^{i-l}}{w}\right\|_{L^2}\right\}.
\end{align}
By similar arguments as in \eqref{S3.33}-\eqref{S3.41}, one can obtain
\begin{align}\label{S3.42}
|J_{11}|+|J_{12}|&\leq Cke^{-\f18|v|^2}\left\{m^{3+\kappa}+\frac1N\right\}\cdot \sup_{1\leq l\leq k-1}\{\|h^{i-l}\|_{L^\infty}\}\nonumber\\
&\qquad+C_N km^{\kappa-1} e^{-\f18|v|^2}\cdot \sup_{1\leq l\leq k-1}\left\{\left\|\frac{\sqrt{\nu}h^{i-l}}{w}\right\|_{L^2}\right\}.
\end{align}
Now substituting \eqref{S3.42}, \eqref{S3.41}, \eqref{S3.33}, \eqref{S3.32}, \eqref{S3.31}, \eqref{S3.30}, \eqref{S3.29}, \eqref{S3.43} and \eqref{S3.26} into \eqref{S3.20}, we get, for $t\in[0,T_0]$,  that
\begin{align}\label{S3.44}
|h^{i+1}(x,v)| & \leq \int_{\max\{\hat{t}_1,0\}}^t e^{-\hat{\nu}_0(t-s)} (1+|v|^2)^{\f{|\ka|}{2}} \int_{\mathbb{R}^3}|k^c_w(v,v')h^{i}(x-\hat{v}(t-s),v')|\dd v'\dd s\nonumber\\
&\quad + A_i(t,v),
\end{align}
where we have denoted
\begin{align*}
A_i(t,v)&:=Ck e^{-\f1{32}|v|^2}\left\{m^{3+\kappa}+e^{-\hat{\nu}_0 t}+ \left(\frac12\right)^{\hat{C}_2T_0^{\frac54}}+\frac{1}N\right\}\cdot \sup_{0\leq l\leq k-1}\{\|h^{i-l}\|_{L^\infty}{+|h^{i-l}|_{L^\infty(\g_+)}}\}\nonumber\\
&\quad+e^{-\hat{\nu}_0 t} \|h^{i+1}\|_{L^\infty}+Ck \Big\{\|\nu^{-1}wg\|_{L^\infty}+|wr|_{{L^\infty(\g_-)}}\Big\}\nonumber\\
&\quad+C_{N,k,m}\  e^{-\f18|v|^2}\cdot \sup_{1\leq l\leq k-1}\left\{\left\|\frac{\sqrt{\nu}h^{i-l}}{w}\right\|_{L^2}\right\}.
\end{align*}
We denote $x'=x-\hat{v}(t-s)\in \Omega$ and $\hat{t}_1'=\hat{t}_1(s,x',v')$  for $s\in(\min\{t_1,0\},t)$.  Using the Vidav's iteration in \eqref{S3.44}, then we obtain that
\begin{align}\label{S3.46}
|h^{i+1}(x,v)| &\leq A_i(t,v)+\int_{0}^t e^{-\hat{\nu}_0(t-s)} (1+|v|^2)^{\f{|\ka|}{2}} \int_{\mathbb{R}^3}|k^c_w(v,v')| A_{i-1}(s,v')\dd v'\dd s \nonumber\\
&\quad+\int_{0}^tds \int_0^s e^{-\hat{\nu}_0(t-\tau)} \dd\tau \int_{\mathbb{R}^3}\int_{\mathbb{R}^3}|k^c_w(v,v') k^c_w(v',v'')|\nonumber\\
&\qquad\times \Fi_{\{\max\{\hat{t}_1,0\}<s<t\}}  \Fi_{\{\max\{\hat{t}'_1,0\}<\tau<s\}}  |h^{i-1}(x'-\hat{v}'(s-\tau),v'')|\dd v'' \dd v'\nonumber\\
&:=A_i(t,v)+B_1+B_2.
\end{align}
For the term $B_1$, using {\eqref{2.40}} and {\eqref{2.40-1}}, one has
\begin{align}\label{S3.47}
B_1&\leq Ck\left\{m^{3+\kappa}+m^{\kappa-1}e^{-\f12\hat{\nu}_0t}+
\left(\frac12\right)^{{\hat{C}_2T_0^{\frac54}}}
+\frac1N\right\} \cdot \sup_{0\leq l\leq k}\{\|h^{i-l}\|_{L^\infty}+{|h^{i-l}|_{L^\infty(\g_+)}}\}\nonumber\\
&\qquad +C_{k,m}\Big\{\|\nu^{-1}wg\|_{L^\infty}+|wr|_{{L^\infty(\g_-)}}\Big\}+C_{N,k,m}\  \sup_{0\leq l\leq k}\left\{\left\|\frac{\sqrt{\nu}h^{i-l}}{w}\right\|_{L^2}\right\}.
\end{align}
For the term $B_2$, we split the estimate by several cases.

\noindent{\it Case 1.} For $|v|\geq N$, we have from {\eqref{2.40}} that
\begin{align}\label{S3.48}
B_2\leq Cm^{2(\kappa-1)} \|h^{i-1}\|_{L^\infty} (1+|v|)^{-4}\leq \frac{Cm^{2(\kappa-1)} }{N^4} \|h^{i-1}\|_{L^\infty}.
\end{align}

\noindent{\it Case 2.}  For $|v|\leq N, |v'|\geq 2N$ or $|v'|\leq 2N, |v''|\geq 3N$. In this case, we note from {\eqref{2.40}}  that
\begin{align*}
\begin{cases}
\dis \int_{|v|\leq N, |v'|\geq 2N} \Big|k_w^c(v,v') e^{\frac{|v-v'|^2}{32}}\Big|\dd v'
\leq C m^{\kappa-1} (1+|v|)^{\kappa-2},\\[3mm]
\dis \int_{|v'|\leq 2N, |v''|\geq 3N} \Big|k_w^c(v',v'') e^{\frac{|v'-v''|^2}{32}}\Big|\dd v''
\leq C m^{\kappa-1} (1+|v'|)^{\kappa-2}.
\end{cases}
\end{align*}
This yields  that
\begin{align}\label{S3.50}
&\int_{0}^t\dd s \int_0^s e^{-\hat{\nu}_0(t-\tau)} \dd\tau \left\{\int_{|v|\leq N, |v'|\geq 2N} +\int_{|v'|\leq 2N, |v''|\geq 3N} \right\}(\cdots)\dd v'' \dd v'\nonumber\\
&\leq e^{-\frac{N^2}{32}}\|h^{i-1}\|_{L^\infty}\int_{|v|\leq N, |v'|\geq 2N} |k^c_w(v,v')e^{\frac{|v-v'|^2}{32}}| \cdot|k^c_w(v',v'')| \nu(v)^{-1}\nu(v')^{-1}\dd v'' \dd v'\nonumber\\
&\quad + e^{-\frac{N^2}{32}}\|h^{i-1}\|_{L^\infty} \int_{|v|\leq N, |v'|\geq 2N} |k^c_w(v,v')|\cdot |k^c_w(v',v'')e^{\frac{|v'-v''|^2}{32}}| \nu(v)^{-1}\nu(v')^{-1}\dd v'' \dd v'\nonumber\\
&\leq C m^{2(\kappa-1)}  e^{-\frac{N^2}{32}}\|h^{i-1}\|_{L^\infty}.
\end{align}

\noindent{\it Case 3.}  For $|v|\leq N$, $|v'|\leq 2N$, and $|v''|\leq 3N$, we first note that
\begin{align}\label{S3.51}
&\int_{0}^t\dd s \int_0^s e^{-\hat{\nu}_0(t-\tau)} \dd\tau
\int_{|v'|\leq 2N, |v''|\leq 3N}(\cdots)\dd v'' \dd v'\nonumber\\
&\leq  \frac{C}{N} m^{2(\kappa-1)}  \|h^{i-1}\|_{L^\infty}+\int_{0}^t\dd s \int_0^{s-\frac1N} e^{-\hat{\nu}_0(t-\tau)} \dd\tau \int_{\mathbb{R}^3}\int_{\mathbb{R}^3}(\cdots)\dd v'' \dd v'\nonumber\\
&\leq \frac{C}{N} m^{2(\kappa-1)}  \|h^{i-1}\|_{L^\infty}\nonumber\\
&\quad+C_{N,k}\int_{0}^t\dd s \int_0^{s-\frac1N} e^{-\hat{\nu}_0(t-\tau)} \dd\tau
\left\{\int_{|v'|\leq 2N, |v''|\leq 3N} |k^c(v,v') k^c(v',v'')|^2\dd v'' \dd v'\right\}^{\f12}\nonumber\\
&\quad \times\left\{\int_{|v'|\leq 2N, |v''|\leq 3N} \Fi_{\{\max\{\hat{t}_1,0\}<s<t\}}  \Fi_{\{\max\{\hat{t}'_1,0\}<\tau<s\}} \Big|\frac{\sqrt{\nu(v'')}h^{i-1}(y',v'')}{w(v'')}\Big|^2\dd v'' \dd v'\right\}^{\f12}\nonumber\\
&\leq \frac{C}N m^{2(\kappa-1)}  \|h^{i-1}\|_{L^\infty}+C_{N,k,m}  \int_{0}^t\dd s \int_0^{s-\frac1N} e^{-\hat{\nu}_0(t-\tau)} \dd\tau \nonumber\\
&\quad\times \left\{\int_{|v'|\leq 2N, |v''|\leq 3N} \Fi_{\{\max\{\hat{t}_1,0\}<s<t\}}  \Fi_{\{\max\{\hat{t}'_1,0\}<\tau<s\}} \Big|\frac{\sqrt{\nu(v'')}h^{i-1}(y',v'')}{w(v'')}\Big|^2\dd v'' \dd v'\right\}^{\f12},
\end{align}
where we have denoted $y'=x'-\hat{v}'(s-\tau)\in \Omega$ for $s\in (\max\{\hat{t}_1,0\}, s)$ and $\tau\in (\max\{\hat{t}'_1,0\}, s)$. Similar to \eqref{S3.39}, we make  change of variable $v'\mapsto y'$, so that the second term on the right-hand side of \eqref{S3.51} is bounded as
\begin{align}\nonumber
&\int_{0}^t\dd s \int_0^{s-\frac1N} e^{-\hat{\nu}_0(t-\tau)} \dd\tau  \left\{\int_{|v'|\leq 2N, |v''|\leq 3N} (\cdots)\dd v'' \dd v'\right\}^{\f12}\leq  CN^{\frac32}\left\|\frac{\sqrt{\nu}h^{i-1}}{w}\right\|_{L^2},
\end{align}
which together with \eqref{S3.51} yield that
\begin{multline}\label{S3.52}
\int_{0}^t\dd s \int_0^s e^{-\hat{\nu}_0(t-\tau)} \dd\tau
\int_{|v'|\leq 2N, |v''|\leq 3N}(\cdots)\dd v'' \dd v'\\
\leq \frac{C}{N}m^{2(\kappa-1)}  \|h^{i-1}\|_{L^\infty}+C_{N,k,m} \left\|\frac{\sqrt{\nu}h^{i-1}}{w}\right\|_{L^2}.
\end{multline}
Combining \eqref{S3.48}, \eqref{S3.50} and \eqref{S3.52}, we have
\begin{align}
B_2 \leq \frac{C}{N}m^{2(\kappa-1)}  \|h^{i-1}\|_{L^\infty}+C_{N,k,m} \left\|\frac{\sqrt{\nu}h^{i-1}}{w}\right\|_{L^2}.\nonumber
\end{align}
Hence, the above estimate together with \eqref{S3.47} and \eqref{S3.46} yields that for any $t\in[0,T_0]$,
\begin{align*}
|h^{i+1}(x,v)|&\leq Ck\left\{m^{3+\kappa}+m^{\kappa-1}e^{-\f12\hat{\nu}_0t}+\left(\frac12\right)^{{\hat{C}_2T_0^{\frac54}}}
+\frac{m^{2(\kappa-1)}}N\right\} \notag\\
&\qquad\qquad\times \sup_{0\leq l\leq k}\{\|h^{i-l}\|_{L^\infty}{+|h^{i-l}|_{L^\infty(\g_+)}}\}\nonumber\\
&\quad +e^{-\hat{\nu}_0 t} \|h^{i+1}\|_{L^\infty}+C_{k,m}\Big\{\|\nu^{-1}wg\|_{L^\infty}+|wr|_{{L^\infty(\g_-)}}\Big\}\nonumber\\
&\quad+C_{N,k,m}\  \sup_{0\leq l\leq k}\left\{\left\|\frac{\sqrt{\nu}h^{i-l}}{w}\right\|_{L^2}\right\}.
\end{align*}
Now we take $k=\hat{C}_1t^{\frac54}=\hat{C}_1T_0^{\frac54}$ and choose $m=T_0^{-\frac{9}{4(3+\kappa)}}$. We first fix $t=T_0$ large enough, and then choose $N$ large enough, so that  one has $e^{-\hat{\nu}_0t}\leq \frac12$ and
\begin{align}\nonumber
Ck\left\{m^{3+\kappa}+m^{\kappa-1}e^{-\f12\hat{\nu}_0t}+\left(\frac12\right)^{{\hat{C}_2T_0^{\frac54}}}
+\frac{m^{2(\kappa-1)}}N\right\}\leq \frac1{16}.
\end{align}
Therefore \eqref{S3.24} follows. This completes  the proof of Lemma \ref{lemS3.3}.
\end{proof}


\subsection{Approximate sequence}
Now we are in a position to  construct solutions to \eqref{S3.3} or equivalently \eqref{S3.5}. First of all, we consider the following approximate problem
\begin{align}\label{S3.54}
\begin{cases}
\dis \v f^n+ v\cdot \nabla_xf^n+\nu(v) f^n-Kf^n=g,\\[2mm]
\dis f^n(x,v)|_{\gamma_{-}}=(1-\frac{1}{n})P_{\gamma}f^n+r,
\end{cases}
\end{align}
where $\v\in(0,1]$ is arbitrary and $n>1$ is an integer. Recall $k=\hat{C}_1T_0^{\frac54}$ with $T_0$ large enough. To the end, we choose $n_0>1$ large enough such that
$$
\frac18 (1-\frac2n+\frac{3}{2n^2})^{-\frac{k+1}{2}}\leq \frac12
$$
for any $n\geq n_0$.

\begin{lemma}\label{lemS3.4}
Let $\v>0$, $n\geq n_0$, and $\beta>3$. Assume $\|\nu^{-1}wg\|_{L^\infty}+|wr|_{L^\infty{(\g_-)}}<\infty$.
Then there exists a unique solution $f^n$ to \eqref{S3.54} satisfying
\begin{align*}
\|wf^{n}\|_{L^\infty}+|wf^n|_{L^\infty(\gamma)}\leq C_{\v,n}\Big( |wr|_{L^\infty({\gamma_-})}+\|\nu^{-1}wg\|_{L^\infty} \Big),
\end{align*}
where the positive constant $C_{\v,n}>0$ depends only on $\v$ and $n$. Moreover, if  $\Omega$ is a strictly convex domain,  
g is continuous in $\Omega\times\mathbb{R}^3$ and $r$ is continuous in $\g_-$, then $f^n$ is continuous away from grazing set $\gamma_0$.
\end{lemma}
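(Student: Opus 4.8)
The plan is to fix $\v\in(0,1]$ and $n\ge n_0$ once and for all and to construct $f^n$ by a homotopy in an auxiliary parameter $\la\in[0,1]$: writing $\CL_\la f:=\v f+v\cdot\na_x f+\nu(v)f-\la Kf$, one solves
\begin{equation*}
\CL_\la f=g,\qquad f|_{\g_-}=\Big(1-\tfrac1n\Big)P_\gamma f+r,
\end{equation*}
for $\la$ running from $0$ up to $1$, the endpoint $\la=1$ being exactly \eqref{S3.54}. I would start from $\la=0$, where the collision gain term is absent so that the problem is a damped transport equation coupled only through the reflection operator: in the weighted unknown $h=wf$ one solves the inflow problem by successive approximation on the boundary traces, using that $\int_{v'\cdot n(x)>0}\mu(v')\{v'\cdot n(x)\}\,\dd v'=1$ — so that Jensen's inequality gives the $L^2$-contraction $|P_\gamma f|_{L^2(\g_-)}\le|f|_{L^2(\g_+)}$ and a finite (though possibly $>1$) bound $|P_\gamma f|_{L^\infty(\g_-)}\le C|f|_{L^\infty(\g_+)}$ — together with the geometric gain $1-\tfrac1n$ at each reflection, so that the iterates converge. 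This produces $\CL_0^{-1}$ with the bound $\|wf\|_{L^\infty}+|wf|_{L^\infty(\g)}\le C_{\v,n}(\|\nu^{-1}wg\|_{L^\infty}+|wr|_{L^\infty(\g_-)})$, the constant $C_{\v,n}$ degenerating as $\v\to0$ or $n\to\infty$; for strictly convex $\Omega$ the backward exit time and exit position are continuous off $\g_0$, hence so is each iterate, and uniform convergence preserves continuity away from $\g_0$.

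Next I would record the a priori estimates, for which here only constants depending on $(\v,n)$ are needed. Testing $\CL_\la f=g$ against $f$: the collision contribution is nonnegative since $\nu-\la K=(1-\la)\nu+\la L$ with $\langle Lf,f\rangle\ge c_0\|\sqrt\nu(I-P)f\|_{L^2}^2$ by \eqref{1.10-2}; the transport term gives $\tfrac12|f|_{L^2(\g_+)}^2-\tfrac12|f|_{L^2(\g_-)}^2$, and via $|f|_{L^2(\g_-)}\le(1-\tfrac1n)|f|_{L^2(\g_+)}+|r|_{L^2(\g_-)}$ with Young's inequality this is coercive in $|f|_{L^2(\g_+)}^2$ with factor $\sim\tfrac1n$; finally the penalization $\v\|f\|_{L^2}^2$ controls $\|f\|_{L^2}^2$ directly, since $\nu\lesssim1$ forces $\langle g,f\rangle\le\|\nu^{-1/2}g\|_{L^2}\|\sqrt\nu f\|_{L^2}\le\tfrac C\v\|\nu^{-1/2}g\|_{L^2}^2+\tfrac\v2\|f\|_{L^2}^2$. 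One obtains $\|\sqrt\nu f\|_{L^2}+|f|_{L^2(\g_+)}\le\tfrac{C_n}\v\big(\|\nu^{-1}wg\|_{L^\infty}+|wr|_{L^\infty(\g_-)}\big)$, uniformly in $\la$. Feeding $\|\sqrt\nu h/w\|_{L^2}=\|\sqrt\nu f\|_{L^2}$ into the analogue of \eqref{S3.24-1} of Lemma \ref{lemS3.3} — whose proof carries over verbatim, indeed with better constants, for the $(1-\tfrac1n)$-reflection — upgrades this to the desired $\|wf\|_{L^\infty}+|wf|_{L^\infty(\g)}\le C_{\v,n}(\|\nu^{-1}wg\|_{L^\infty}+|wr|_{L^\infty(\g_-)})$, still uniformly in $\la$. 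Uniqueness for the $\la=1$ problem is then immediate: a difference of two solutions solves the homogeneous problem, whence the energy identity forces $\v\|f\|_{L^2}^2=0$.

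For the continuation I would fix $\la_\ast=\la_\ast(\v,n)>0$ so small that $C_{\v,n}\la_\ast C_K<\tfrac12$, where $C_K$ controls $h\mapsto\nu^{-1}wK(h/w)$ through the kernel estimates of Section~2 (for the regular part $K^c$ this contraction is really run together with the Vidav-type $L^2$ gain already built into \eqref{S3.24-1}, so it is carried out simultaneously in $L^2$ and in the weighted $L^\infty$). Assuming $\CL_{\la_0}^{-1}$ has been constructed with operator norm $\le C_{\v,n}$ — true for $\la_0=0$ — the solution operator of $\CL_{\la_0+\la}$ is the fixed point of $f\mapsto\CL_{\la_0}^{-1}(\la Kf+g)$ with boundary datum $(1-\tfrac1n)P_\gamma f+r$, a contraction for $\la\in(0,\la_\ast]$; its fixed point solves the $(\la_0+\la)$-problem, inherits the a priori bound with constant $\le C_{\v,n}$ by the previous step, and is continuous off $\g_0$ since the contraction iterates are. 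Iterating $\lceil1/\la_\ast\rceil$ times reaches $\la=1$, i.e.\ \eqref{S3.54}, which completes the proof.

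The genuine obstacle, I expect, is the coupling between these two estimates rather than either one alone: for soft potentials $\langle(\nu-\la K)f,f\rangle$ only sees the microscopic part — the coefficient $1-\la$ in front of the full $\|\sqrt\nu f\|_{L^2}^2$ degenerating as $\la\uparrow1$ — while Lemma \ref{lemS3.3} only closes the weighted $L^\infty$ bound \emph{modulo} the very $L^2$ norm it is meant to control; it is precisely the penalization $\v f$ together with the strict contraction $1-\tfrac1n<1$ at the boundary that break this apparent circularity, which is also the structural reason the constant in this lemma is allowed to — and does — blow up as $\v\to0$ and $n\to\infty$.
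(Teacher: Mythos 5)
Your proposal matches the paper's proof essentially step for step: the four-stage homotopy in $\lambda$, the observation $\nu-\lambda K=(1-\lambda)\nu+\lambda L\geq0$, the $\lambda$-uniform a~priori bounds obtained by coupling the $\vep$-penalized $L^2$ energy identity (with the $(1-\tfrac1n)$ boundary gain) to the weighted $L^\infty$ estimate \eqref{S3.24-1}, and the fixed-point bootstrap $T_{\lambda_0+\lambda}f=\CL_{\lambda_0}^{-1}(\lambda Kf+g)$ with $\lambda_\ast=\lambda_\ast(\vep,n)$ chosen so small that $C_{\vep,n}\lambda_\ast C_K<\tfrac12$. The only place the paper is more careful is the $\lambda=0$ stage: it first verifies each iterate $h^i$ lies in $L^\infty$ with $i$-dependent constants (so that Lemma \ref{lemS3.3} is even applicable) and then invokes the iteration Lemma \ref{lemA.1} to upgrade \eqref{S3.24} to uniform bounds and a Cauchy property, whereas you leave that bookkeeping implicit; this is not a genuine gap, just a detail to spell out.
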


\begin{proof}  We consider the solvability of the  following boundary value problem
\begin{equation}\label{S3.56-1}
\begin{cases}
\mathcal{L}_\lambda f:=\v f+v\cdot \nabla_x f+\nu(v) f-\lambda Kf=g,\\
f(x,v)|_{\gamma_-}=(1-\frac1n) P_{\gamma} f+r(x,v),
\end{cases}
\end{equation}
for $\lambda\in[0,1]$. For brevity we denote $\mathcal{L}_\lambda^{-1}$ to be the solution operator associated with the problem, meaning that $f:=\mathcal{L}_\lambda^{-1} g$ is a solution to the BVP \eqref{S3.56-1}. Our idea is to prove the existence of $\mathcal{L}_0^{-1}$, and then extend to obtain the existence of  $\mathcal{L}_1^{-1}$ in a continuous argument on $\la$.  Since the proof is very long, we split it  into  several steps.   

\medskip
\noindent{\it Step 1.} In this step, we prove the existence of $\mathcal{L}_0^{-1}$. We consider the following approximate sequence
\begin{align}\label{S3.57}
\begin{cases}
\mathcal{L}_0 f^{i+1}=\v f^{i+1}+v\cdot \nabla_x f^{i+1} + \nu(v) f^{i+1}=g,\\
f^{i+1}(x,v)|_{\gamma_-}=(1-\frac1n) P_{\gamma} f^{i}+r,
\end{cases}
\end{align}
for $i=0,1,2,\cdots$, where we have set $f^0\equiv0$. We will construct $L^\infty$ solutions to \eqref{S3.57} for $i=0,1,2,\cdots$, and 
establish uniform $L^\infty$-estimates.

Firstly, we will solve inductively the linear equation \eqref{S3.57} by 
the method of characteristics. Let $h^{i+1}(x,v)=w(v)f^{i+1}(x,v)$.  For 
almost every $(x,v)\in\bar{\Omega}\times\mathbb{R}^3\backslash (\gamma_0\cup \gamma_-)$, 
one can write
\begin{align}\label{S3.58}
h^{i+1}(x,v)
&=e^{-(\v+\nu(v))t_{\mathbf{b}}} w(v)\left[(1-\frac1n) P_\gamma f^{i}+r\right](x_{\mathbf{b}}(x,v),v) \nonumber\\
&\quad+\int_{t_1}^t  e^{-(\v+\nu(v))(t-s)} (wg)(x-v(t-s),v) \dd s,
\end{align}
and for $(x,v)\in \g_-$, we write
\begin{align}\label{S3.58-1}
h^{i+1}(x,v)=w(v)\left[(1-\f1n)P_\g f^i+r\right](x,v).
\end{align}
Noting the definition of $P_{\gamma} f$, we have
\begin{equation}\label{S3.59}
|wP_{\gamma} f|_{L^\infty}\leq C|f|_{L^\infty(\gamma_+)}.
\end{equation}
We consider \eqref{S3.58} with $i=0$. Noting $h^0\equiv0$,   then 
it is straightforward to see that
\begin{align*}
\|h^{1}\|_{L^\infty}\leq |wr|_{L^\infty(\g_-)}+ \frac{C}{\v} \|wg\|_{L^\infty}<\infty.
\end{align*}
Therefore we have obtained the solution to \eqref{S3.57} with $i=0$.  Assume that we have already solved \eqref{S3.57} for $i\leq l$ and obtained
\begin{equation}\label{S3.62}
\|h^{l+1}\|_{L^\infty}+|h^{l+1}|_{L^\infty(\gamma)}\leq C_{\v,n,l+1} \Big( |wr|_{L^\infty{(\g_-)}}+ \|wg\|_{L^\infty}\Big)<\infty.
\end{equation}
We now consider \eqref{S3.57} for $i=l+1$. Noting \eqref{S3.62}, then we can solve \eqref{S3.57} by using \eqref{S3.58} and \eqref{S3.58-1} with $i=l+1$. We still need to prove $h^{l+2}\in L^\infty$. Indeed, it follows from \eqref{S3.58}, \eqref{S3.58-1} and \eqref{S3.59} that
\begin{align*}
\|h^{l+2}\|_{L^\infty}+{|h^{l+2}|_{L^\infty(\g)}}&\leq C(|wr|_{L^\infty{(\g_-)}}+ |h^{l+1}|_{L^\infty{(\g_+)}} )+\frac{C}{\v} \|wg\|_{L^\infty}\nonumber\\
&\leq C_{\v,n,l+2}  \Big( |wr|_{L^\infty{{(}\g_-}{)}}+ \|wg\|_{L^\infty}\Big)<\infty.
\end{align*}
Therefore, inductively we have solved  \eqref{S3.57} for $i=0,1,2,\cdots$ and obtained
\begin{align}\label{S3.65}
\|h^{i}\|_{L^\infty}+|h^{i}|_{L^\infty(\gamma)}\leq C_{\v,n,i}\Big( |wr|_{L^\infty{(\g_-)}}+ \|wg\|_{L^\infty}\Big)<\infty,
\end{align}
for $i=0,1,2,\cdots$.
The positive  constant $C_{\v,n,i}$ may increase to infinity as $i\rightarrow \infty$.
Here, we emphasize that we first need to know the sequence $\{h^i\}_{i=0}^{\infty}$ is in $L^\infty$-space, otherwise one can not use Lemma \ref{lemS3.3} to get uniform $L^\infty$ estimates.

If  $\Omega$ is a convex domain, let $(x,v)\in \Omega\times\mathbb{R}^3\backslash \gamma_0$, then it holds $v\cdot n(x_{\mathbf{b}}(x,v))<0$ which yields that $t_{\mathbf{b}}(x,v)$ and $x_{\mathbf{b}}(x,v)$ are smooth by Lemma 2 in \cite{Guo2}. Therefore if $g$ and $r$ are continuous, we have that $f^i(x,v)$ is continuous away from grazing set.

\vspace{1mm}

Secondly, in order to take the limit $i\rightarrow\infty$, one has to get some uniform  estimates.  Multiplying \eqref{S3.57} by $f^{i+1}$ and integrating the resultant equality over $\Omega\times\mathbb{R}^3$, one obtains that
\begin{align}\label{S3.66}
& \v\|f^{i+1}\|^2_{L^2}+\frac12|f^{i+1}|^2_{L^2(\gamma_+)}+\|f^{i+1}\|^2_{\nu}\nonumber\\
&\leq \frac12(1-\frac2n+\frac{3}{2n^2})|f^{i}|^2_{L^2(\gamma_+)}+C_n |r|^2_{L^2(\gamma_-)}+\frac{C}{\v}\|g\|^2_{L^2}+ \frac{\v}{4}\|f^{i+1}\|^2_{L^2},
\end{align}
where we have used $|P_\gamma f^i|_{L^2(\gamma_-)}=|P_\gamma f^i|_{L^2(\gamma_+)}\leq |f^i|_{L^2(\gamma_+)}$. Then, from \eqref{S3.66},  we have
\begin{align*}
&\frac{3}{2}\v\|f^{i+1}\|^2_{L^2}+|f^{i+1}|^2_{L^2(\gamma_+)}+2\|f^{i+1}\|^2_{\nu}\nonumber\\
&\leq (1-\frac2n+\frac{3}{2n^2})|f^{i}|^2_{L^2(\gamma_+)}+C_{\v,n}\Big\{ |r|^2_{L^2(\gamma_-)}+\|g\|^2_{L^2}\Big\}.
\end{align*}
Now we take the difference $f^{i+1}-f^i$ in \eqref{S3.57}, then by similar energy estimate as above, we obtain
\begin{align*}
&\frac{3}{2}\v\|f^{i+1}-f^i\|^2_{L^2}+|f^{i+1}-f^i|^2_{L^2(\gamma_+)}+2\|f^{i+1}-f^i\|^2_{\nu}\nonumber\\
&\leq (1-\frac2n+\frac{3}{2n^2})|f^{i}-f^{i-1}|^2_{L^2(\gamma_+)}\notag\\
&\leq \cdots\notag\\
&\leq (1-\frac2n+\frac{3}{2n^2})^{i} |f^1|^2_{L^2(\gamma)}\nonumber\\
&\leq C_\v\cdot  (1-\frac2n+\frac{3}{2n^2})^{i} \cdot \Big( |wr|_{L^\infty{(\g_-)}}+ \|wg\|_{L^\infty}\Big)<\infty.
\end{align*}
Noting $1-\frac2n+\frac{3}{2n^2}<1$, thus $\{f^{i}\}_{i=0}^\infty$ is a Cauchy sequence in $L^2$, i.e.,
\begin{align*}
\|f^{i}-f^j\|^2_{L^2}+|f^{i}-f^j|^2_{L^2(\gamma_+)}+\|
{f^i-f^j}\|^2_{\nu}\rightarrow0,\quad\mbox{as} \  i,j\rightarrow\infty.
\end{align*}
And we also have, for $i=0,1,2,\cdots$, that
\begin{align}\label{S3.70}
\|f^{i}\|^2_{L^2}+|f^{i}|^2_{L^2(\gamma_+)}+\|f^{i}\|^2_{\nu}\leq C_{\v,n}\Big\{ |r|^2_{L^2(\gamma_-)}+\|g\|^2_{L^2}\Big\},
\end{align}
where $C_{\v,n}>0$ is a positive constant which depends only on $\v$ and $n$.

\vspace{1mm}

Next we consider the uniform  $L^\infty$ estimate. Here we point out that Lemma \ref{lemS3.3} still holds by replacing 1 with $1-\frac1n$ in the boundary condition, and the constants in Lemma \ref{lemS3.3} do not depend on $n\geq1$.  Thus we apply Lemma \ref{lemS3.3} to obtain that
\begin{align*}
\|h^{i+1}\|_{L^\infty}&\leq \frac18 \sup_{0\leq l\leq k} \{\|h^{i-l}\|_{L^\infty}\}
+C\Big\{ |wr|_{L^\infty{(\gamma_-)}}+\|\nu^{-1}wg\|_{L^\infty}\Big\}+C\sup_{0\leq l\leq k} \Big\{ \|f^{i-l}\|_{\nu} \Big\}\nonumber\\
&\leq \frac18 \sup_{0\leq l\leq k} \{\|h^{i-l}\|_{L^\infty}\}
+C_{\v,n}\Big\{ |wr|_{L^\infty{(\gamma_-)}}+\|\nu^{-1}wg\|_{L^\infty}\Big\},
\end{align*}
where we have used \eqref{S3.70} in the second inequality.  Now we apply Lemma \ref{lemA.1} to obtain that for $i\geq k+1$,
\begin{align}\label{S3.72}
\|h^{i}\|_{L^\infty}&\leq \frac18 \max_{0\leq l\leq k} \Big\{\|h^{1}\|_{L^\infty}, \ \|h^{2}\|_{L^\infty}, \cdots, \|h^{2k}\|_{L^\infty}\Big\}\nonumber\\
&\qquad+\frac{8+k}{7} C_{\v,n}\Big\{ |wr|_{L^\infty{(\gamma_-)}}+\|\nu^{-1}wg\|_{L^\infty}\Big\}\nonumber\\
&\leq C_{\v,n,k}\Big\{ |wr|_{L^\infty{(\gamma_-)}}+\|\nu^{-1}wg\|_{L^\infty}\Big\},
\end{align}
where we have used \eqref{S3.65} in the second inequality.
Hence it follows from \eqref{S3.72} and \eqref{S3.65} that
\begin{align}\label{S3.73}
\|h^{i}\|_{L^\infty}
&\leq C_{\v,n,k}\Big\{ |wr|_{L^\infty{(\gamma_-)}}+\|\nu^{-1}wg\|_{L^\infty}\Big\}, \quad\mbox{for} \  i\geq 1.
\end{align}
Taking the difference $h^{i+1}-h^i$ and then applying Lemma \ref{lemS3.3} to $h^{i+1}-h^i$, we have that for $i\geq k$,
\begin{align}\label{S3.74}
&\|h^{i+2}-h^{i+1}\|_{L^\infty}\nonumber\\
&\leq \frac18 \max_{0\leq l\leq k} \Big\{\|h^{i+1-l}-h^{i-l}\|_{L^{\infty}}\Big\}+C \sup_{0\leq l\leq k} \Big\{\|f^{i+1-l}-f^{i-l}\|_{\nu}\Big\}\nonumber\\
&\leq \frac18 \max_{0\leq l\leq k} \Big\{\|h^{i+1-l}-h^{i-l}\|_{L^{\infty}}\Big\}+C_{\v} \cdot \Big\{ |wr|_{L^\infty{(\gamma_-)}}+\|\nu^{-1}wg\|_{L^\infty}\Big\} \cdot  \eta_n^{i-k}\nonumber\\
&\leq \frac18 \max_{0\leq l\leq k} \Big\{\|h^{i+1-l}-h^{i-l}\|_{L^{\infty}}\Big\}+C_{\v,k} \Big\{ |wr|_{L^\infty{(\gamma_-)}}+\|\nu^{-1}wg\|_{L^\infty}\Big\} \cdot  \eta_n^{i+k+1},
\end{align}
where we have denoted  $\eta_n:=
(1-\frac2n+\frac{3}{2n^2})^{1/2}
<1$. Here we choose $n$ large enough so that $\frac18 \eta_n^{-k-1}\leq \frac12$, then it follows from \eqref{S3.74} and Lemma \ref{lemA.1}  that
\begin{align}\label{S3.75}
\|h^{i+2}-h^{i+1}\|_{L^\infty}&\leq \left(\frac18\right)^{\left[\frac{i}{k+1}\right]} \max_{0\leq l\leq 2k}\Big\{\|h^1\|_{L^\infty}, \|h^2\|_{L^\infty},  \cdots , \|h^{2k+1}\|_{L^\infty}\Big\}\nonumber\\
&\quad+C_{\v, k}  \Big\{ |wr|_{L^\infty{(\gamma_-)}}+\|\nu^{-1}wg\|_{L^\infty}\Big\} \cdot  \eta_n^{i}\nonumber\\
&\leq C_{\v,n,k} \cdot \Big\{|wr|_{L^\infty{(\gamma_-)}}+\|\nu^{-1}wg\|_{L^\infty}\Big\}\cdot\left\{\left(\frac18\right)^{\left[\frac{i}{k+1}\right]}  +\eta_n^{i}\right\},
\end{align}
for $i\geq k+1$. Then \eqref{S3.75} implies immediately that $\{h^i\}_{i=0}^\infty$ is a Cauchy sequence in $L^\infty$, i.e., there exists a limit function $
{h}\in L^\infty$ so that $\|h^{i}-
{h}\|_{L^\infty}\rightarrow 0$ as $i\rightarrow \infty$. Thus we obtained a function 
{$f:=\f{h}{w}$} solves
\begin{equation*}
\begin{cases}
\mathcal{L}_0 f=\v f+ v\cdot \nabla_xf+\nu(v) f=g,\\[2mm]
\dis f(x,v)|_{\gamma_{-}}=(1-\frac{1}{n})P_{\gamma}f+r,
\end{cases}
\end{equation*}
with $n\geq n_0$ large enough.
Moreover, from \eqref{S3.73}, 
there exists a constant $C_{\v,n,k}$ such that
\begin{align*}
\|h\|_{L^\infty}+|h|_{L^\infty(\g)}\leq C_{\v,n,k}\Big\{ |wr|_{L^\infty(\gamma_-)}+\|\nu^{-1}wg\|_{L^\infty}\Big\}.
\end{align*}

\medskip
\noindent{\it Step 2.} {\it A priori estimates.} For any given $\lambda\in[0,1]$, let $f^n$ be the solution of \eqref{S3.56-1}, i.e.,
\begin{align}\label{S3.79}
\begin{cases}
\mathcal{L}_\lambda f^n=\v f^{n}+v\cdot\nabla_x f^{n}+\nu(v) f^{n}-\lambda Kf^n=g,\\[2mm]
f^{n}(x,v)|_{\gamma_-}=(1-\frac1n) P_{\gamma}f^{n}+r(x,v).
\end{cases}
\end{align}
Moreover we also assume that $\|wf^{n}\|_{L^\infty}+|wf^n|_{L^\infty(\gamma)}<\infty$. Firstly,  we shall consider   {\it a priori}  $L^2$-estimates. Multiplying \eqref{S3.79} by $f^{n}$, one has that
\begin{align}\label{S3.80}
& \v\|f^{n}\|^2_{L^2}+\frac12|f^{n}|^2_{L^2(\gamma_+)}-\frac12|(1-\frac1n) P_{\gamma}f^{n}+r|^2_{L^2(\gamma_-)}+\|f^{n}\|^2_{\nu}\nonumber\\
&\leq \lambda\langle Kf^n, f^{n}\rangle +\frac{\v}{4} \|f^n\|_{L^2}+\frac{C}{\v}\|g\|^2_{L^2}.
\end{align}
We note that $\langle Lf^n, f^{n}\rangle\geq0 $, which implies that
\begin{equation}\label{S3.81}
\lambda\langle Kf^n, f^{n}\rangle \leq \lambda\|f^n\|_{\nu}.
\end{equation}
On the other hand, a direct computation shows that
\begin{equation}\label{S3.82}
|(1-\frac1n) P_{\gamma}f^{n}+r|^2_{L^2(\gamma_-)}\leq (1-\frac2n+\frac{3}{2n^2}) |f^n|_{L^2(\gamma_+)}+C_n |r|^2_{L^2(\gamma_-)}.
\end{equation}
Substituting \eqref{S3.81} and \eqref{S3.82} into \eqref{S3.80}, one has that
\begin{align}\label{S3.83}
\|\mathcal{L}^{-1}_\lambda g\|^2_{L^2}+|\mathcal{L}^{-1}_\lambda g|^2_{L^2(\gamma_+)}=\|f^{n}\|^2_{L^2}+|f^{n}|^2_{L^2(\gamma_+)}\leq  C_{\v,n} \Big[|r|^2_{L^2(\gamma_-)}+\|g\|^2_{L^2}\Big].
\end{align}
Let $h^n:=w f^n$.  Then, by using \eqref{S3.24-1} and \eqref{S3.83}, we  obtain
\begin{align}\label{S3.86}
\|w\mathcal{L}^{-1}_\lambda g\|_{L^\infty}=\|h^n\|_{L^\infty}\leq C_{\v,n} \Big\{ |wr|_{L^\infty(\gamma_-)}+\|\nu^{-1}wg\|_{L^\infty} \Big\}.
\end{align}

\vspace{1mm}

On the other hand, Let $\nu^{-1}wg_1 \in L^\infty$ and $\nu^{-1}wg_2 \in L^\infty$.  Let  $f^n_1=\mathcal{L}^{-1}_\lambda g_1$ and $f^n_2=\mathcal{L}^{-1}_\lambda g_2$ be the solutions to \eqref{S3.79} with $g$ replaced by $g_1$ and $g_2$, respectively. Then we have that
\begin{align*}
\begin{cases}
\v (f^{n}_2-f^n_1)+v\cdot\nabla_x (f^{n}_2-f^n_1)+\nu(v) (f^{n}_2-f^n_1)-\lambda K(f^{n}_2-f^n_1)=g_2-g_1,\\[2mm]
(f^{n}_2-f^n_1)(x,v)|_{\gamma_-}=(1-\frac1n) P_{\gamma}(f^{n}_2-f^n_1).
\end{cases}
\end{align*}
By similar arguments as in \eqref{S3.79}-\eqref{S3.86}, we obtain
\begin{equation}\label{S3.88}
\|\mathcal{L}^{-1}_\lambda g_2-\mathcal{L}^{-1}_\lambda g_1\|^2_{L^2}+|\mathcal{L}^{-1}_\lambda g_2-\mathcal{L}^{-1}_\lambda g_1|^2_{L^2(\gamma_+)}\leq  C_{\v,n} \|g_2-g_1\|^2_{L^2},
\end{equation}
and
\begin{equation}\label{S3.89}
\|w(\mathcal{L}^{-1}_\lambda g_2-\mathcal{L}^{-1}_\lambda g_1)\|_{L^\infty}\leq C_{\v,n}  \|\nu^{-1}w(g_2-g_1)\|_{L^\infty}.
\end{equation}
The uniqueness of solution to \eqref{S3.79} also follows from \eqref{S3.88}.
We point out that the constant $C_{\v,n}$ in \eqref{S3.83}, \eqref{S3.86}, \eqref{S3.88} and \eqref{S3.89} does not depend on $\lambda\in[0,1]$. This property is crucial for us to extend $\mathcal{L}_0^{-1}$ to $\mathcal{L}_1^{-1}$ by a bootstrap argument.

\medskip
\noindent{\it Step 3.}  In this step,  we shall prove the existence of solution $f^n$ to \eqref{S3.56-1} for sufficiently small $0<\lambda\ll1$, i.e., to prove the existence of operator $\mathcal{L}_{\lambda}^{-1}$. Firstly, we define the Banach space
\begin{equation}\nonumber
\mathbf{X}:=\Big\{f=f(x,v) :  \   wf\in L^\infty(\Omega\times\mathbb{R}^3), \ wf\in L^\infty(\gamma), \  \mbox{and} \  f(x,v)|_{\gamma_-}=(1-\frac1n) P_\gamma f+r \Big\}.
\end{equation}
Now we define
\begin{align}\nonumber
T_\lambda f=\mathcal{L}_0^{-1} \Big(\lambda K f+g\Big).
\end{align}
For any $f_1, f_2\in \mathbf{X}$, by using \eqref{S3.89}, we have  that
\begin{align}
\|w(T_\lambda f_1-T_\lambda f_2)\|_{L^\infty}&= \left\|w\{\mathcal{L}_0^{-1}(\lambda Kf_1+g)-\mathcal{L}_0^{-1}(\lambda Kf_2+g)\} \right\|_{L^\infty}\nonumber\\
&\leq C_{\v,n}\|\nu^{-1} w\{(\lambda Kf_1+g)-(\lambda Kf_2+g)\}\|_{L^\infty}\nonumber\\
&\leq \lambda \|\nu^{-1} w (Kf_1-Kf_2)\|_{L^\infty} \nonumber\\
&\leq \lambda C_{K,\v,n} \|w(f_1-f_2)\|_{L^\infty},\nonumber
\end{align}
where we have used \eqref{2.31} and \eqref{2.40} with $m=1$ in the last inequality.
We take $\lambda_\ast>0$ sufficiently small such that $\lambda_\ast C_{K,\v,n}\leq 1/2$, then $T_\lambda : \mathbf{X}\rightarrow \mathbf{X}$ is a contraction mapping for $\lambda\in[0,\lambda_\ast]$. Thus $T_\lambda$ has a fixed point, i.e.,  $\exists\, f^\lambda\in \mathbf{X}$ such that
\begin{equation}\nonumber
f^\lambda=T_\lambda f^\lambda=\mathcal{L}_0^{-1} \Big(\lambda K f^\lambda+g\Big),
\end{equation}
which yields immediately that
\begin{align}\nonumber
\mathcal{L}_\lambda f^\lambda=\v f^\lambda + v\cdot \nabla_x f^\lambda+\nu f^\lambda-\lambda Kf^{\lambda}=g.
\end{align}
Hence, for any $\lambda\in[0,\lambda_\ast]$,  we have solved \eqref{S3.56-1} with $f^\lambda=\mathcal{L}_\lambda^{-1}g\in\mathbf{X}$.  Therefore we have obtained the existence of $\mathcal{L}_\lambda^{-1}$ for $\lambda\in[0,\lambda_\ast]$. Moreover the operator $\mathcal{L}_\lambda^{-1}$  has the properties \eqref{S3.83}, \eqref{S3.86}, \eqref{S3.88} and \eqref{S3.89}.

Next we define
\begin{align}\nonumber
T_{\lambda_\ast+\lambda}f=\mathcal{L}_{\lambda_\ast}^{-1}\Big(\lambda K f+g\Big).
\end{align}
Noting the estimates for $\mathcal{L}_{\lambda_\ast}^{-1}$ are independent of $\lambda_\ast$. By similar arguments, we can prove $T_{\lambda_\ast+\lambda} : \mathbf{X}\rightarrow \mathbf{X}$ is a contraction mapping  for $\lambda\in[0,\lambda_\ast]$.  Then we obtain the exitence of operator $\mathcal{L}_{\lambda_\ast+\lambda}^{-1}$, and  \eqref{S3.83}, \eqref{S3.86}, \eqref{S3.88} and \eqref{S3.89}.   Step by step, we can finally obtain the existence of operator $\mathcal{L}_1^{-1}$, and $\mathcal{L}_1^{-1}$ satisfies the estimates in \eqref{S3.83}, \eqref{S3.86}, \eqref{S3.88} and \eqref{S3.89}. The continuity is easy to obtain since the convergence of sequence under consideration is always in $L^\infty$.  Therefore we complete the proof of Lemma \ref{lemS3.4}. \end{proof}


\begin{lemma}\label{lemS3.6}
Let $\v>0$ and $\beta>3$, and  assume $\|\nu^{-1}wg\|_{L^\infty}+|wr|_{L^\infty{(\g_-)}}<\infty$. Then there exists a unique solution $f^\v$ to solve the approximate linearized steady Boltzmann equation \eqref{S3.3}. Moreover, it satisfies
\begin{align}\label{S3.93}
\|wf^\v\|_{L^\infty} +|wf^\v|_{L^\infty(\gamma)} \leq C_{\v} \Big\{ |wr|_{L^\infty{(\g_-)}}+\|\nu^{-1}wg\|_{L^\infty} \Big\},
\end{align}
where the positive constant $C_{\v}>0$ depends only on $\v$. Moreover, if $\Omega$ is {a strictly} convex domain,  
{$g$ is continuous in $\Omega\times \mathbb{R}^3$ and $r$ is continuous in $\g_-$,} then $f^\v$ is continuous away from the  grazing set $\gamma_0$.
\end{lemma}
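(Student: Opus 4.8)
The plan is to obtain $f^\varepsilon$ as the limit, as $n\to\infty$, of the solutions $f^n$ of the approximate problem \eqref{S3.54} furnished by Lemma \ref{lemS3.4} (taking $\lambda=1$ there). The difficulty is that the bounds in Lemma \ref{lemS3.4} degenerate as $n\to\infty$: in the $L^2$ energy estimate behind \eqref{S3.82}, when $r\neq0$ the cross term $\langle P_\gamma f^n,r\rangle_{\gamma_-}$ forces a loss of a factor $\sim n^2$, so that $C_{\varepsilon,n}\to\infty$. Hence the heart of the matter is to recover bounds that are \emph{uniform in $n$}; once this is done, the passage to the limit, the uniqueness and the continuity are routine.

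First I would construct a \emph{boundary lift} $\Psi=\Psi^\varepsilon$, namely the solution of the pure transport problem $\varepsilon\Psi+v\cdot\nabla_x\Psi+\nu(v)\Psi=0$ subject to the \emph{full} diffuse reflection boundary condition $\Psi|_{\gamma_-}=P_\gamma\Psi+r$. Since there is no collision term, $\Psi$ can be built directly from the mild (backward-cycle) formulation: expressing $\Psi|_{\gamma_+}$ in terms of $\Psi|_{\gamma_-}$ along the characteristics and inserting $P_\gamma$ leads to the Neumann series $\Psi|_{\gamma_-}=\sum_{k\geq0}(P_\gamma\mathcal U)^k r$, where $\mathcal U$ denotes transport-with-absorption from $\gamma_-$ to $\gamma_+$; this series converges in the weighted $L^\infty(\gamma_-)$ norm because the diffuse reflection makes $\|(P_\gamma\mathcal U)^k\|$ decay geometrically in $k$, by the same mechanism as in Lemma \ref{lemS3.1} but along the ordinary (unspeeded) backward characteristics. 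One obtains $\|w\Psi\|_{L^\infty}+|w\Psi|_{L^\infty(\gamma)}\leq C_\varepsilon|wr|_{L^\infty(\gamma_-)}$, and $\Psi$ is continuous on $\bar\Omega\times\mathbb{R}^3\setminus\gamma_0$ when $\Omega$ is strictly convex and $r$ is continuous.

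Next I would set $\bar f^n:=f^n-\Psi$, which solves $\varepsilon\bar f^n+v\cdot\nabla_x\bar f^n+L\bar f^n=g+K\Psi$ together with $\bar f^n|_{\gamma_-}=(1-\tfrac1n)P_\gamma\bar f^n-\tfrac1n P_\gamma\Psi$. Thus the residual inhomogeneous boundary datum $\bar r_n:=-\tfrac1n P_\gamma\Psi$ is now of size $\lesssim 1/n$, while $\|\nu^{-1}w(g+K\Psi)\|_{L^\infty}\leq\|\nu^{-1}wg\|_{L^\infty}+C_\varepsilon|wr|_{L^\infty(\gamma_-)}$ by the kernel estimates \eqref{2.31} and \eqref{2.40}. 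Running the $L^2$ energy estimate for $\bar f^n$ as in Lemma \ref{lemS3.4} (using $\langle L\bar f^n,\bar f^n\rangle\geq0$, cf.~\eqref{1.10-2}), the analogue of \eqref{S3.82} now loses only $\leq Cn^2|\bar r_n|^2_{L^2(\gamma_-)}\leq Cn^2\cdot n^{-2}C_\varepsilon|wr|^2_{L^\infty(\gamma_-)}$, so that the powers of $n$ cancel and $\|\bar f^n\|_{L^2}^2\leq C_\varepsilon(\|\nu^{-1}wg\|_{L^\infty}^2+|wr|^2_{L^\infty(\gamma_-)})$ uniformly in $n$. Feeding $\|\sqrt\nu\bar f^n\|_{L^2}\leq C\|\bar f^n\|_{L^2}$ into \eqref{S3.24-1} of Lemma \ref{lemS3.3} then yields $\|w\bar f^n\|_{L^\infty}+|w\bar f^n|_{L^\infty(\gamma)}\leq C_\varepsilon(\|\nu^{-1}wg\|_{L^\infty}+|wr|_{L^\infty(\gamma_-)})$ uniformly in $n$, hence the same bound for $f^n=\bar f^n+\Psi$.

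With these uniform bounds in hand, $\{f^n\}$ is Cauchy: $f^{n+1}-f^n=\bar f^{n+1}-\bar f^n$ solves the homogeneous equation with residual boundary datum $\tfrac1{n(n+1)}P_\gamma f^n$, which is of size $\lesssim 1/n^2$ by the uniform bound just obtained; the same energy estimate followed by \eqref{S3.24-1} give $\|w(f^{n+1}-f^n)\|_{L^\infty}+|w(f^{n+1}-f^n)|_{L^\infty(\gamma)}\leq C_\varepsilon n^{-1}(\|\nu^{-1}wg\|_{L^\infty}+|wr|_{L^\infty(\gamma_-)})$, which is summable, so $f^n\to f^\varepsilon$ in the weighted $L^\infty$ norm and on $\gamma$. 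Letting $n\to\infty$ in \eqref{S3.54} (the terms $Kf^n$ and $(1-\tfrac1n)P_\gamma f^n$ pass to the limit) shows that $f^\varepsilon$ solves \eqref{S3.3}, and the estimate \eqref{S3.93} is inherited. For uniqueness, the difference of two solutions satisfies the homogeneous full-boundary problem, so the energy identity reads $\varepsilon\|f_1-f_2\|_{L^2}^2+\langle L(f_1-f_2),f_1-f_2\rangle+\tfrac12|f_1-f_2|^2_{L^2(\gamma_+)}-\tfrac12|P_\gamma(f_1-f_2)|^2_{L^2(\gamma_-)}=0$; since $|P_\gamma(f_1-f_2)|_{L^2(\gamma_-)}\leq|f_1-f_2|_{L^2(\gamma_+)}$ and $\langle L\cdot,\cdot\rangle\geq0$, this forces $f_1=f_2$. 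Continuity away from $\gamma_0$ follows because each $f^n$ and $\Psi$ are continuous there and the convergence is uniform. The step I expect to be the main obstacle is the construction of the lift $\Psi$ — solving the pure transport equation with the full diffuse reflection boundary condition, whose well-posedness (lacking the $(1-\tfrac1n)$ penalization that made the iteration in Lemma \ref{lemS3.4} contractive) rests on the geometric decay induced by diffuse reflection — together with the bookkeeping that makes the $n^2$ loss in \eqref{S3.82} cancel against the $\lesssim 1/n$ residual boundary datum.
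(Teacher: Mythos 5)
Your proposal takes a genuinely different route from the paper. The paper does not introduce a boundary lift; instead it rederives the $L^2$ energy estimate for $f^n$ using the coercivity \eqref{1.10-2} together with the trace estimates of Lemma~\ref{lemS1.1}, arriving at \eqref{S3.101} in which the constant $C_{\varepsilon',\varepsilon}$ is independent of $n$. The key is \emph{not} to play Young's inequality against the penalization margin $\frac{2}{n}-\frac{3}{2n^2}$ (which, as you say, is what produces the $C_n\sim n^2$ loss in \eqref{S3.82}), but to use a \emph{fixed} small $\eta$ in the boundary estimate, accept $\eta|P_\gamma f^n|^2_{L^2(\gamma_+)}$ on the right, and absorb it via \eqref{S3.96}--\eqref{S3.100}. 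The resulting uniform-in-$n$ $L^2$ bound makes the Cauchy argument \eqref{S3.103} trivial: the difference $f^{n_1}-f^{n_2}$ has inhomogeneous boundary datum $(\frac{1}{n_2}-\frac{1}{n_1})P_\gamma f^{n_2}$ of size $O(n_1^{-1}+n_2^{-1})$ in $L^2(\gamma_-)$, and the $n$-independent constant does not re-introduce powers of $n$. Your lift device correctly yields the a priori bound \eqref{S3.93} (the $n^2$ loss against the $n^{-2}$-small residual indeed cancels), and the uniqueness argument via $\varepsilon\|f_1-f_2\|^2_{L^2}\le 0$ is exactly right.

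There is, however, a gap in your Cauchy step. For $\delta f_n:=f^{n+1}-f^n$ the lift $\Psi$ cancels and the residual boundary datum is $\frac{1}{n(n+1)}P_\gamma f^n=O(n^{-2})$. Using your $C_{n}\sim n^2$ loss one gets $\|\delta f_n\|_{L^2}^2\lesssim n^2\cdot n^{-4}=n^{-2}$, hence via \eqref{S3.24-1} the bound $\|w\delta f_n\|_{L^\infty}\lesssim n^{-1}$ that you state; but $\sum_n n^{-1}$ \emph{diverges}, so this does not show $\{f^n\}$ is Cauchy (e.g.\ $\|f^{2n}-f^n\|_{L^\infty}$ is only $O(\log2)$, not $o(1)$). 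You can repair this: since the $\varepsilon\|\delta f_n\|_{L^2}^2$ coercivity already supplies the needed sign, you do not need any positive boundary margin, so you may take $\eta\sim n^{-1}$ in the Young inequality behind \eqref{S3.82} rather than $\eta\sim n^{-2}$; this gives $C_\eta\sim n$ instead of $n^2$, whence $\|w\delta f_n\|_{L^\infty}\lesssim n^{-3/2}$, which \emph{is} summable. (Alternatively, perform a second lift $\Psi_n$ against the residual $\frac{1}{n(n+1)}P_\gamma f^n$ to push the new residual to $O(n^{-3})$, or simply switch to the paper's trace-estimate route, which makes the constant $n$-independent outright.) I would also flag that your construction of the lift $\Psi$ — the convergence of the Neumann series $\sum_k(P_\gamma\mathcal U)^k r$ for the \emph{unpenalized}, \emph{unspeeded} steady transport — is not a direct corollary of Lemma~\ref{lemS3.1}, which concerns speeded characteristics over a bounded artificial time; you need the ordinary-characteristics analogue (cf.\ \eqref{4.1.5-1}), and you must explicitly handle the $J_1$- and $J_7$-type self-referential terms by observing that a backward characteristic persisting for artificial time $T_0$ forces $|v|\lesssim d_\Omega/T_0$ and hence $\nu(v)T_0\to\infty$, together with the $\varepsilon$-absorption along long bounce chains. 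You correctly identify this as the main obstacle, but it deserves a full argument.
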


\begin{proof} Let $f^n$ be the solution of \eqref{S3.54} constructed in Lemma \ref{lemS3.4} for $n\geq n_0$ with $n_0$ large enough.   Multiplying \eqref{S3.54} by $f^n$ {and using the coercivity estimate \eqref{1.10-2}}, one obtains that
\begin{align}\label{S3.94}
\v\|f^n\|^2_{L^2}+|f^n|^2_{L^2(\gamma_+)}+2c_0 \|
{(I-P)}f^n\|^2_{\nu}
\leq C_{\v} \|g\|^2_{L^2}+ \Big| (1-\frac1n)P_\gamma f^n+r \Big|^2_{L^2(\gamma_-)}.
\end{align}
Here the projection $P$ is defined by \eqref{1.10-1}.
A direct calculation shows that
\begin{align}
\Big| (1-\frac1n)P_\gamma f^n+r \Big|^2_{L^2(\gamma_-)}&\leq |P_\gamma f^n|^2_{L^2(\gamma_-)}+2|P_\gamma f^n|_{L^2(\gamma_-)}\cdot |r|_{L^2(\gamma_-)}+|r|^2_{L^2(\gamma_-)}\nonumber\\
&\leq |P_\gamma f^n|^2_{L^2(\gamma_+)}+\eta |P_\gamma f^n|^2_{L^2(\gamma_+)}+C_\eta |r|^2_{L^2(\gamma_-)},\nonumber
\end{align}
which together with \eqref{S3.94} yield that
\begin{align}\label{S3.95}
\v\|f^n\|^2_{L^2}+|(I-P_\gamma)f^n|^2_{\gamma_+}+2c_0 \|
{(I-P)}f^n\|^2_{\nu}
\leq \eta  |P_\gamma f^n|^2_{L^2(\gamma_+)}+C_{\v,\eta} (\|g\|^2_{L^2} +  |r|^2_{L^2(\gamma_-)}).
\end{align}
where $\eta>0$ is a small constant to be chosen later.

We still need to bound the first term on the right-hand side of \eqref{S3.95}. Firstly a direct calculation shows that
\begin{equation}\label{S3.96}
\frac12 |P_\gamma f^n|^2_{L^2(\gamma_+)}\leq |P_\gamma f^n I_{\gamma_+\backslash \gamma_+^{\v'}}|^2_{L^2(\gamma_+)},
\end{equation}
provided that $0<\v'\ll 1$. We note that
\begin{equation}\nonumber
f^n(x,v)=(I-P_\gamma)f^n(x,v)+P_\gamma f^n(x,v),\quad \  \forall\, (x,v)\in \gamma_+,
\end{equation}
which yields that
\begin{align}\label{S3.97}
|P_\gamma f^n I_{\gamma_+\backslash \gamma_+^{\v'}}|^2_{L^2(\gamma)}=2|f^n I_{\gamma_+\backslash \gamma_+^{\v'}}|^2_{L^2(\gamma)}+2|(I-P_\gamma)f^n  I_{\gamma_+\backslash \gamma_+^{\v'}}|^2_{L^2(\gamma)}.
\end{align}
On the other hand, it follows from \eqref{S3.54} that
\begin{equation}\nonumber
\f{1}{2}v\cdot \nabla_x(|f^n|^2)=-\v |f^n|^2-f^n Lf^n+g f^n,
\end{equation}
which yields that
\begin{equation}\label{S3.98}
\|v\cdot \nabla_x(|f^n|^2)\|_{L^1}\leq C\Big\{\|f^n\|^2_{L^2}+\|
{(I-P)}f^n\|^2_{\nu}+\|g\|^2_{L^2}\Big\}.
\end{equation}
It follows from \eqref{S3.98} and \eqref{S3.95} that
\begin{align}
|f^n I_{\gamma_+\backslash \gamma_+^{\v'}}|^2_{L^2(\gamma)}&=|(f^n)^2 I_{\gamma_+\backslash \gamma_+^{\v'}}|_{L^1(\gamma)}\notag \\
&\leq C_{\v'}\Big\{ \|f^n\|^2_{L^2}+ \|v\cdot \nabla_x(|f^n|^2)\|_{L^1} \Big\}\nonumber\\
&\leq C_{\v'} \Big\{\|f^n\|^2_{L^2}+\|
{(I-P)}f^n\|^2_{\nu}+\|g\|^2_{L^2}\Big\}\nonumber\\
&\leq C_{\v',\v} \eta  |P_\gamma f^n|^2_{L^2(\gamma_+)}+C_{\v',\v,\eta} \Big\{\|g\|^2_{L^2} +  |r|^2_{L^2(\gamma_-)}\Big\},\nonumber
\end{align}
where in the second line we have used \eqref{S2.2} which will be given in Lemma \ref{lemS1.1} later on. 
The above estimate together with \eqref{S3.96} and \eqref{S3.97} yield that
\begin{align*}
|P_\gamma f^n|^2_{L^2(\gamma_+)}\leq C_{\v',\v} \eta  |P_\gamma f^n|^2_{L^2(\gamma_+)}+C_{\v',\v,\eta} \Big\{\|g\|^2_{L^2} +  |r|^2_{L^2(\gamma_-)}\Big\}.
\end{align*}
Taking $\eta$ small so that $C_{\v',v} \eta\leq \frac12$, one obtains that
\begin{align}\label{S3.100}
|P_\gamma f^n|^2_{L^2(\gamma_+)}\leq C_{\v',\v} \Big\{\|g\|^2_{L^2} +  |r|^2_{L^2(\gamma_-)}\Big\}\Red{.}
\end{align}
Combining \eqref{S3.100} and \eqref{S3.95}, one has
\begin{align}\label{S3.101}
\|f^n\|^2_{L^2}+|f^n|^2_{L^2(\gamma_+)}+2c_0 \|
{(I-P)}f^n\|^2_{\nu}
\leq C_{\v',\v} (\|g\|^2_{L^2} +  |r|^2_{L^2(\gamma_-)}).
\end{align}
We apply \eqref{S3.24-1} and use \eqref{S3.101} to obtain
\begin{align*}
\|wf^n\|_{L^\infty}{+|wf^n|_{L^\infty(\g)}}&\leq C\Big\{\|\nu^{-1}w g\|_{L^\infty}+|wr|_{L^\infty{(\g_-)}}+\|f^n\|_{L^2}\Big\}\nonumber\\
&\leq C_{\v',\v}  \Big\{\|\nu^{-1}w g\|_{L^\infty}+|wr|_{L^\infty{(\g_-)}}\Big\}
\end{align*}
Taking the difference $f^{n_1}-f^{n_2}$ with $n_1,n_2\geq n_0$, we know that
\begin{align}\label{S3.102}
\begin{cases}
\v (f^{n_1}-f^{n_2})+ v\cdot \nabla_x (f^{n_1}-f^{n_2})+L (f^{n_1}-f^{n_2})=0,\\[2mm]
 (f^{n_1}-f^{n_2})(x,v)|_{\gamma_{-}}=(1-\frac{1}{n_1})P_{\gamma} (f^{n_1}-f^{n_2})+(\frac{1}{n_2}-\frac{1}{n_1}) P_\gamma f^{n_2}.
\end{cases}
\end{align}
Multiplying \eqref{S3.102} by $ f^{n_1}-f^{n_2}$, and integrating it over $\Omega\times\mathbb{R}^3$, by similar arguments as in \eqref{S3.94}-\eqref{S3.101}, we can obtain
\begin{align}\label{S3.103}
&\|(f^{n_1}-f^{n_2})\|^2_{L^2}+|(f^{n_1}-f^{n_2})|^2_{L^2(\gamma_+)}+2c_0 \|
{(I-P)}(f^{n_1}-f^{n_2})\|^2_{\nu}\nonumber\\
&\leq C_{\v',\v} \Big| (\frac{1}{n_2}-\frac{1}{n_1}) P_\gamma f^{n_2}\Big|^2_{L^2(\gamma_-)}
\leq C_{\v',\v} \cdot\Big\{(\frac1{n_1})^2+(\frac1{n_2})^2\Big\}\cdot |f^{n_2}|^2_{L^2(\gamma_+)}\nonumber\\
&\leq C_{\v',\v} \cdot (\|g\|^2_{L^2} +  |r|^2_{L^2(\gamma_-)})\cdot \Big\{(\frac1{n_1})^2+(\frac1{n_2})^2\Big\}\rightarrow0,
\end{align}
as $n_1$, $n_2 \rightarrow \infty$,
where we have used the uniform estimate \eqref{S3.101} in the last inequality.  Applying \eqref{S3.24-1} to $  f^{n_1}-f^{n_2}$  and using \eqref{S3.103}, then one has
\begin{multline*}
\|w(f^{n_1}-f^{n_2})\|_{L^\infty}{+|w(f^{n_1}-f^{n_2})|_{L^\infty(\g)}}\leq C \Big|w (\frac{1}{n_2}-\frac{1}{n_1}) P_\gamma f^{n_2}\Big|_{L^\infty(\gamma_-)}+C\|f^{n_1}-f^{n_2}\|_{{L^2}}
\\
\leq C_{\v',\v} \cdot (\|wg\|_{L^\infty} +  |wr|_{L^\infty(\gamma_-)})\cdot \Big\{\frac1{n_1}+\frac1{n_2}\Big\}\rightarrow0,
\end{multline*}
as $n_1,\ n_2 \rightarrow \infty$, which yields that $wf^n$ is a Cauchy sequence in $L^\infty$. We denote $f^\v=\lim_{n\rightarrow\infty} f^n$, then it is direct to check that $f^\v$ is a solution to \eqref{S3.3}, and \eqref{S3.93} holds. The continuity of $f^\v$ is easy to obtain since the convergence of  sequences is always in $L^\infty$ and $f^n$ is continuous away from the grazing set.  Therefore we have completed the proof of Lemma \ref{lemS3.6}.
\end{proof}


Now we assume
\begin{align}\label{S3.55}
\iint_{\Omega\times\mathbb{R}^3} g(x,v) \mu^{\frac{1}{2}}(v) \dd v\dd x= \int_{\gamma_-} r(x,v) \mu^{\frac{1}{2}}(v) \dd\gamma=0.
\end{align}

\begin{lemma}\label{lemS3.5}
Let $f$ be a solution
\begin{align*}
	v\cdot \nabla_x f+Lf=g,\quad f_{\gamma_-} = P_{\gamma}f+r
\end{align*}
	in the weak sense of
\begin{align*}
	\int_{\gamma} \psi f \  \{v\cdot n(x)\} \dd v \dd S_x-\int_{\Omega\times\mathbb{R}^3} v\cdot \nabla_x\psi f \dd v\dd x= -\int_{\Omega\times\mathbb{R}^3} \psi \ Lf \dd v\dd x+\int_{\Omega\times\mathbb{R}^3} \psi g  \dd v \dd x,
\end{align*}
	where $\psi \in H^1(\Omega\times\mathbb{R}^3)$. Assume $\int_{\Omega\times\mathbb{R}^3}f \sqrt{\mu} \dd v\dd x=0$ and \eqref{S3.55}, then it holds that
\begin{align*}
	\|
{P}f\|^2_{L^2}\leq C\Big\{ \|
{(I-P)}f\|^2_{\nu}+\|g\|_{L^2} + |(I-P_\gamma)f|^2_{L^2(\gamma_+)} + |r|^2_{L^2(\gamma_-)}\Big\}.
\end{align*}
\end{lemma}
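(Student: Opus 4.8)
The plan is to run the standard ``hydrodynamic coercivity'' argument for the steady linearized operator. Write the macroscopic part as $Pf(x,v)=\{a(x)+b(x)\cdot v+c(x)(|v|^2-3)\}\sqrt{\mu}(v)$, so that $\|Pf\|_{L^2}^2\sim\|a\|_{L^2}^2+\|b\|_{L^2}^2+\|c\|_{L^2}^2$; it then suffices to bound $\|a\|_{L^2}$, $\|b\|_{L^2}$, $\|c\|_{L^2}$ by the right-hand side of the asserted inequality. First I would derive the weak local conservation laws: testing the weak formulation with $\psi=\zeta(x)\sqrt{\mu}$, $\psi=\zeta(x)v_i\sqrt{\mu}$, $\psi=\zeta(x)(|v|^2-3)\sqrt{\mu}$ for arbitrary scalar $\zeta\in H^1(\Omega)$, and using that the pure-velocity factors lie in $\mathrm{Ker}\,L$, so that $\langle\psi,Lf\rangle=0$, produces identities expressing $\nabla_x a$, $\nabla_x b$, $\nabla_x c$ as distributions built from moments of $(I-P)f$, from $g$, and from the boundary flux of $f$ on $\gamma$.

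Next, for each macroscopic field I would use a dedicated admissible test function built from a solution of an auxiliary Poisson problem. For $c$: let $\phi_c$ solve $-\Delta\phi_c=c-\bar{c}$ in $\Omega$ with the Neumann condition $\nabla_x\phi_c\cdot n=0$ on $\partial\Omega$ (solvable since the datum has zero mean), and test with $\psi_c=(|v|^2-\beta_c)(v\cdot\nabla_x\phi_c)\sqrt{\mu}$ for a suitable constant $\beta_c$; integrating by parts in $x$, extracting $\int_\Omega|c-\bar{c}|^2$ from the transport term acting on the $c$-part of $Pf$, using $\langle Lf,\psi_c\rangle=\langle (I-P)f,L\psi_c\rangle$, and invoking the elliptic estimate $\|\phi_c\|_{H^2(\Omega)}\lesssim\|c\|_{L^2}$, one gets $\|c-\bar{c}\|_{L^2}^2$ controlled by $\|(I-P)f\|_\nu\|c\|_{L^2}$, a term involving $\|g\|_{L^2}$, a boundary term, and cross terms in $a,b$. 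Analogous Poisson problems and moment test functions handle $\|b\|_{L^2}$ and $\|a-\bar{a}\|_{L^2}$, while the averages $\bar{a},\bar{c}$ and $\int_\Omega b$ are pinned down by the zeroth-moment conservation laws together with the normalization $\int_{\Omega\times\mathbb{R}^3}f\sqrt{\mu}\,dv\,dx=0$ and the compatibility \eqref{S3.55}. Running the three estimates simultaneously and applying Young's inequality then absorbs the mutual cross terms and converts the $\|(I-P)f\|_\nu$- and $\|g\|_{L^2}$-factors into $\|(I-P)f\|_\nu^2$ and a power of $\|g\|_{L^2}$, as in the statement.

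The delicate ingredient is the boundary integral $\int_\gamma\psi f\,\{v\cdot n(x)\}\,dv\,dS_x$ in the weak formulation. I split $\gamma=\gamma_+\cup\gamma_-$ and substitute $f|_{\gamma_-}=P_\gamma f+r$. Since $P_\gamma f(x,v)=\mu^{1/2}(v)\,z(x)$ with $z(x)=\int_{v'\cdot n(x)>0}f(x,v')\mu^{1/2}(v')\{v'\cdot n(x)\}\,dv'$, its contribution is $z(x)$ times the normal moment $\int_{\mathbb{R}^3}\psi(x,v)\mu^{1/2}(v)\{v\cdot n(x)\}\,dv$; for $\psi\in\{\psi_a,\psi_b,\psi_c\}$ this moment is a multiple of $\nabla_x\phi\cdot n$, which vanishes on $\partial\Omega$ by the Neumann choice above. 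Thus the uncontrolled boundary trace of $Pf$ never enters, and after writing $f=(I-P_\gamma)f+P_\gamma f$ on $\gamma_+$ (the latter piece killed the same way) only $|(I-P_\gamma)f|_{L^2(\gamma_+)}$ and $|r|_{L^2(\gamma_-)}$ remain. Collecting the three field estimates and absorbing all cross terms yields the claim.

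I expect the main obstacle to be exactly this boundary bookkeeping: designing the auxiliary elliptic problems so that every boundary contribution collapses to $|(I-P_\gamma)f|_{L^2(\gamma_+)}$ and $|r|_{L^2(\gamma_-)}$, and making sure that \eqref{S3.55} together with the zero-mass normalization really does control the averages $\bar{a},\bar{c}$ so that the Poincar\'e-type step closes. The interior part --- weak conservation laws, elliptic estimates, and the final Young's-inequality absorption --- is routine.
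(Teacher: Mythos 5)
Your outline captures the macroscopic-coercivity strategy, and this is indeed the route the paper takes: the printed proof simply defers to Lemma~3.3 of \cite{EGKM}, which uses precisely the machinery you describe (weak conservation laws from testing against $\mathrm{Ker}\,L$, auxiliary Poisson solves, elliptic regularity, bookkeeping of the boundary terms so that only $(I-P_\gamma)f$ on $\gamma_+$ and $r$ on $\gamma_-$ survive). So the high-level plan is the same as the reference the paper cites.

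The soft spot is your treatment of the spatial averages. By imposing Neumann data $\partial_n\phi=0$ on every Poisson problem you are forced to solve $-\Delta\phi_c = c-\bar c$, $-\Delta\phi_b^j = b^j-\bar b^j$, and so on, so the transport term only produces $\|c-\bar c\|_{L^2}^2$, $\|b-\bar b\|_{L^2}^2$; you then assert that $\bar c$ and $\int_\Omega b$ are ``pinned down by the zeroth-moment conservation laws together with the normalization $\int f\sqrt{\mu}=0$ and the compatibility \eqref{S3.55}''. This is not justified as stated: the normalization gives $\bar a=0$ only, and integrating the remaining weak conservation laws over $\Omega$ produces relations among boundary fluxes of $(I-P_\gamma)f$, $r$, and moments of $g$, not a bound on $\bar b$ or $\bar c$. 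The standard escape (the one in \cite{EGKM} and in Guo's bounded-domain argument) is to exploit the free constant in the velocity polynomial rather than the elliptic boundary data: for $\psi_c=(|v|^2-\beta_c)(v\cdot\nabla_x\phi_c)\sqrt{\mu}$, the boundary moment $\int(|v|^2-\beta_c)(v\cdot\nabla\phi_c)(v\cdot n)\mu\,\dd v$ is a positive multiple of $(5-\beta_c)\,\partial_n\phi_c$, hence vanishes identically by taking $\beta_c=5$, no matter which boundary condition $\phi_c$ carries; one may then solve the Dirichlet problem $-\Delta\phi_c=c$, $\phi_c|_{\partial\Omega}=0$, and the transport contribution from $Pf$ becomes $\mp C\|c\|_{L^2}^2$ directly, with no leftover mean. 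For the $b$-type test functions $v_iv_j\phi\sqrt{\mu}$ the boundary moment vanishes by odd parity in $v$ regardless; for $a$ the constant is used up to decouple from $c$, and the boundary moment is then nonzero, so there one genuinely wants a Neumann $\phi_a$ together with $\bar a=0$. The upshot is that the constant choice and the Neumann/Dirichlet choice kill different terms and are not interchangeable; your version, using Neumann across the board, leaves $\bar b$ and $\bar c$ unaccounted for. That is a genuine gap, though a fixable one, and the rest of the proposal is on the same track as the paper's reference.
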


\begin{proof} The proof is almost the same to Lemma 3.3 in \cite{EGKM}, the details are omitted here for simplicity of presentation.
\end{proof}

\vspace{2mm}

\begin{lemma}\label{lemS3.7}
Assume \eqref{S3.55}. Let $\beta>3+|\kappa|$, and  assume $\|\nu^{-1}wg\|_{L^\infty}+|wr|_{L^\infty{(\g_-)}}<\infty$.   Then there exists a  unique solution $f=f(x,v)$  to the linearized steady Boltzmann equation
\begin{align}\label{S3.106}
v\cdot \nabla_xf+Lf=g,\quad
f(x,v)|_{\gamma_{-}}=P_{\gamma}f+r,
\end{align}
such that $\int_{\Omega\times\mathbb{R}^3} f \sqrt{\mu} dvdx=0 $ and
\begin{equation}\label{S3.107}
\|wf\|_{L^\infty} +|wf|_{L^\infty(\gamma)} \leq C\Big\{ |wr|_{L^\infty{(\gamma_-)}}+\|\nu^{-1}wg\|_{L^\infty} \Big\}.
\end{equation}
Moreover, if  $\Omega$ is a strictly convex domain, $g$ is continuous in $\Omega\times\mathbb{R}$ and $r$ is continuous in $\g_-$,  
then $f$ is continuous away from the grazing set $\gamma_0$.
\end{lemma}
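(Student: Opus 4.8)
The plan is to construct $f$ as the limit, as $\v\to0^{+}$, of the solutions $f^{\v}$ of the penalized problem \eqref{S3.3} furnished by Lemma \ref{lemS3.6}; the only genuinely new work is to promote the $\v$-dependent bound \eqref{S3.93} to one uniform in $\v$, after which the passage to the limit is routine. The first point I would record is that each $f^{\v}$ automatically conserves mass: integrating $\v f^{\v}+v\cdot\nabla_x f^{\v}+Lf^{\v}=g$ over $\Omega\times\R^{3}$ against $\sqrt{\mu}$, using $\langle Lf^{\v},\sqrt{\mu}\rangle=0$, the divergence theorem, the boundary relation $f^{\v}|_{\gamma_-}=P_\gamma f^{\v}+r$, and the normalization $\int_{v\cdot n>0}(v\cdot n)\mu(v)\,\dd v=1$ (cf.\ \eqref{S3.5}), one finds that the total boundary flux equals $-\int_{\gamma_-}r\sqrt{\mu}\,\dd\gamma=0$ while $\iint g\sqrt{\mu}=0$ by \eqref{S3.55}; hence $\v\iint f^{\v}\sqrt{\mu}=0$, i.e.\ $\iint_{\Omega\times\R^{3}}f^{\v}\sqrt{\mu}\,\dd v\,\dd x=0$ for every $\v\in(0,1]$.

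The heart of the matter is a $\v$-uniform $L^{2}$ estimate. Multiplying the equation by $f^{\v}$ and integrating, the coercivity \eqref{1.10-2}, the bound $|P_\gamma f^{\v}+r|_{L^{2}(\gamma_-)}^{2}\le(1+\eta)|f^{\v}|_{L^{2}(\gamma_+)}^{2}+C_\eta|r|_{L^{2}(\gamma_-)}^{2}$, and the splitting $\langle g,f^{\v}\rangle=\langle\nu^{-1/2}g,\sqrt{\nu}(I-P)f^{\v}\rangle+\langle g,Pf^{\v}\rangle$ give
\[
\v\|f^{\v}\|_{L^{2}}^{2}+|(I-P_\gamma)f^{\v}|_{L^{2}(\gamma_+)}^{2}+\|(I-P)f^{\v}\|_\nu^{2}\lesssim\eta\big(\|Pf^{\v}\|_{L^{2}}^{2}+|P_\gamma f^{\v}|_{L^{2}(\gamma_+)}^{2}\big)+C_\eta\big(\|\nu^{-1/2}g\|_{L^{2}}^{2}+|r|_{L^{2}(\gamma_-)}^{2}\big).
\]
The macroscopic term $\|Pf^{\v}\|_{L^{2}}^{2}$ I would control by Lemma \ref{lemS3.5} applied to $f^{\v}$, which solves $v\cdot\nabla_x f^{\v}+Lf^{\v}=g-\v f^{\v}$ with $\iint(g-\v f^{\v})\sqrt{\mu}=0$, obtaining $\|Pf^{\v}\|_{L^{2}}^{2}\lesssim\|(I-P)f^{\v}\|_\nu^{2}+\|g\|_{L^{2}}^{2}+\v^{2}\|f^{\v}\|_{L^{2}}^{2}+|(I-P_\gamma)f^{\v}|_{L^{2}(\gamma_+)}^{2}+|r|_{L^{2}(\gamma_-)}^{2}$, where $\v^{2}\|f^{\v}\|_{L^{2}}^{2}=\v\cdot(\v\|f^{\v}\|_{L^{2}}^{2})$ is absorbed for $\v\le1$. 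The boundary term $|P_\gamma f^{\v}|_{L^{2}(\gamma_+)}^{2}$ I would handle exactly as in \eqref{S3.96}--\eqref{S3.100} of Lemma \ref{lemS3.6}, via the Green identity $\tfrac12 v\cdot\nabla_x(|f^{\v}|^{2})=-\v|f^{\v}|^{2}-f^{\v}Lf^{\v}+gf^{\v}$; the key difference from Lemma \ref{lemS3.6} is that here every constant can be kept free of $\v$ (the only explicitly $\v$-dependent term $\v|f^{\v}|^{2}$ is bounded by $|f^{\v}|^{2}$, and the $\v$-dependence in Lemma \ref{lemS3.6} entered only through an avoidable use of Young's inequality on $\langle g,f^{\v}\rangle$). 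Choosing $\eta$ small and combining closes the system and yields, uniformly in $\v$,
\[
\|\sqrt{\nu}f^{\v}\|_{L^{2}}^{2}+\v\|f^{\v}\|_{L^{2}}^{2}+|f^{\v}|_{L^{2}(\gamma_+)}^{2}+\|(I-P)f^{\v}\|_\nu^{2}\le C\big(\|\nu^{-1}wg\|_{L^{\infty}}^{2}+|wr|_{L^{\infty}(\gamma_-)}^{2}\big),
\]
where I have used that $\|\nu^{-1/2}g\|_{L^{2}}$ and $|r|_{L^{2}(\gamma_-)}$ are dominated by $\|\nu^{-1}wg\|_{L^{\infty}}$ and $|wr|_{L^{\infty}(\gamma_-)}$ thanks to the exponential factor in $w$.

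With this $L^{2}$ bound, \eqref{S3.24-1} of Lemma \ref{lemS3.3} (whose constant is independent of $\v$) gives $\|wf^{\v}\|_{L^{\infty}}+|wf^{\v}|_{L^{\infty}(\gamma)}\le C(\|\nu^{-1}wg\|_{L^{\infty}}+|wr|_{L^{\infty}(\gamma_-)})$ uniformly. To pass to the limit, I would observe that $f^{\v_1}-f^{\v_2}$ solves $v\cdot\nabla_x(f^{\v_1}-f^{\v_2})+L(f^{\v_1}-f^{\v_2})=\v_2 f^{\v_2}-\v_1 f^{\v_1}$ with $(f^{\v_1}-f^{\v_2})|_{\gamma_-}=P_\gamma(f^{\v_1}-f^{\v_2})$ and zero total mass; the same $L^{2}$ machinery gives $\|f^{\v_1}-f^{\v_2}\|_{L^{2}}^{2}+|f^{\v_1}-f^{\v_2}|_{L^{2}(\gamma_+)}^{2}\lesssim\|\v_2 f^{\v_2}-\v_1 f^{\v_1}\|_{L^{2}}^{2}$, and since the previous step bounds $\v_i\|f^{\v_i}\|_{L^{2}}^{2}$ uniformly we get $\v_i\|f^{\v_i}\|_{L^{2}}\lesssim\v_i^{1/2}\to0$, so $\{f^{\v}\}$ is Cauchy in $L^{2}$ with converging traces. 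Its limit $f$ then inherits \eqref{S3.107} from the $\v$-uniform bound (equivalently, one reapplies Lemma \ref{lemS3.3} with $\v=0$, which is legitimate since \eqref{S3.25} still holds), has zero total mass, and solves \eqref{S3.106} in the weak sense: in the weak formulation $\v\langle f^{\v},\psi\rangle\to0$, $Lf^{\v}\to Lf$ in $L^{2}$, and the traces converge. Uniqueness is immediate, since the difference of two solutions with the same data and zero mass satisfies the homogeneous problem and the $L^{2}$ estimate forces it to vanish; and when $\Omega$ is strictly convex with $g,r$ continuous, $f$ is the unique mild solution of \eqref{S3.106}, so its continuity away from $\gamma_0$ follows as in \cite{Guo2} from the Duhamel representation, the continuity of $t_{\mathbf{b}}$ and $x_{\mathbf{b}}$ off $\gamma_0$, and dominated convergence.

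I expect the main obstacle to be exactly the $\v$-uniform $L^{2}$ estimate: the bare energy identity only controls $\v\|f^{\v}\|_{L^{2}}^{2}$, so one must run the coupled system ``coercivity $+$ macroscopic estimate (Lemma \ref{lemS3.5}) $+$ trace estimate'' and check that, for the whole soft-potential range $-3<\kappa<0$ in which $\nu(v)\to0$ as $|v|\to\infty$, all constants are genuinely independent of $\v$ (in particular that the $K$-terms appearing in the Green identity and in Lemma \ref{lemS3.5} are absorbed by $\|(I-P)f^{\v}\|_\nu$ rather than by the full $L^{2}$ norm). Granting this, the application of Lemma \ref{lemS3.3} and the $\v\to0$ limit are standard.
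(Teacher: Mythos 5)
Your overall architecture — mass conservation, uniform $L^{2}$ bound, application of Lemma \ref{lemS3.3} with $\v=0$, then passage to the limit and uniqueness via the energy identity — agrees with the paper. The place where the argument does not close as written is the control of the macroscopic boundary flux $|P_\gamma f^{\v}|_{L^{2}(\gamma_+)}^{2}$. You propose to run steps \eqref{S3.96}--\eqref{S3.100} of Lemma \ref{lemS3.6} verbatim, applying the trace estimate \eqref{S2.2} to $|f^{\v}|^{2}$ and the unweighted Green identity $\tfrac12\,v\cdot\nabla_x(|f^{\v}|^{2})=-\v|f^{\v}|^{2}-f^{\v}Lf^{\v}+gf^{\v}$. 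But \eqref{S2.2} applied to $|f^{\v}|^{2}$ produces the interior term $\||f^{\v}|^{2}\|_{L^{1}}=\|f^{\v}\|_{L^{2}}^{2}$, and the term $\|f^{\v}Kf^{\v}\|_{L^{1}}$ likewise contributes $\|f^{\v}\|_{L^{2}}^{2}$. For soft potentials $\nu(v)\sim\langle v\rangle^{\kappa}\to0$, the unweighted norm $\|f^{\v}\|_{L^{2}}$ is \emph{not} dominated by $\|f^{\v}\|_{\nu}$, and the energy identity only controls $\v\|f^{\v}\|_{L^{2}}^{2}$ and $\|f^{\v}\|_{\nu}^{2}$ — exactly the obstruction the paper flags with ``we should be careful since we do not have the uniform bound on $\|f^{\v}\|_{L^{2}}$.'' Your closing remark that the only danger is whether ``the $K$-terms $\ldots$ are absorbed by $\|(I-P)f^{\v}\|_\nu$'' misses the larger issue: the zeroth-order term $\|f^{\v}\|_{L^2}^2$ appears even before touching $K$.

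The paper's fix is to run the whole trace chain with a $\nu$-weight: one shows first (via \eqref{S3.111}) that $|P_\gamma f^{\v}|_{L^{2}(\gamma_+)}^{2}\lesssim|\sqrt{\nu}\,P_\gamma f^{\v}\,I_{\gamma_+\setminus\gamma_+^{\v'}}|_{L^{2}}^{2}$ — legitimate because $P_\gamma f^{\v}=z^{\v}_\gamma(x)\sqrt{\mu}(v)$ is a fixed Gaussian profile in $v$ — and then applies \eqref{S2.2} to $\nu|f^{\v}|^{2}$ together with the weighted Green identity $\tfrac12\,v\cdot\nabla_x(\nu|f^{\v}|^{2})=-\v\nu|f^{\v}|^{2}-\nu f^{\v}Lf^{\v}+\nu f^{\v}g$. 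Now the interior term is $\|\nu|f^{\v}|^{2}\|_{L^{1}}=\|f^{\v}\|_{\nu}^{2}$, the $K$-contribution is $\|\nu f^{\v}Kf^{\v}\|_{L^{1}}\lesssim\|f^{\v}\|_\nu^2$, and every term is genuinely $\v$-uniform; this yields \eqref{S3.117}--\eqref{S3.118}. A secondary point worth noting: the hypothesis $\beta>3+|\kappa|$ (rather than $\beta>3$ from the earlier lemmas) is precisely what allows one to close the $\v\to0$ convergence — the paper only obtains Cauchy convergence in the \emph{weaker} norm $\|\nu w(f^{\v_2}-f^{\v_1})\|_{L^{\infty}}$ (see \eqref{S3.122} and the comment following it), not in $\|w(\cdot)\|_{L^\infty}$; your sketch of the limiting step should be phrased accordingly, as an $L^{2}$ Cauchy argument plus inheritance of the $\v$-uniform $L^\infty$ bound in the limit, rather than as $L^\infty_w$ convergence.
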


\begin{proof}
Let $f^\v$ be the solution of \eqref{S3.3} constructed in Lemma \ref{lemS3.6} for $\v>0$.   Multiplying the first equation of \eqref{S3.3} by $\sqrt{\mu}$, taking integration over $\Omega\times\mathbb{R}^3$, and noting \eqref{S3.55}, it is straightforward to see that
\begin{equation*}
\int_{\Omega\times\mathbb{R}^3} f^\v(x,v)\mu^{\frac{1}{2}}(v) \ \dd v\dd x=0,
\end{equation*}
for any $\vep>0$. Multiplying the first equation of \eqref{S3.3} by $f^\v$ and integrating the resultant equation over $\Omega\times\mathbb{R}^3$, it follows from Cauchy inequality that
\begin{align}\label{S3.109}
&\v\|f^\v\|^2_{L^2}+\frac12|(I-P_\gamma)f^\v|^2_{L^2(\gamma_+)}+c_0\|
(I-P)f^\v\|^2_{\nu}\nonumber\\
&\leq \eta [\|f^\v\|^2_{\nu}+|P_\gamma f^\v|^2_{L^2(\gamma_+)}]+C_\eta [\|\nu^{-\frac12}g\|^2_{L^2}+|r|^2_{L^2(\gamma_-)}].
\end{align}
Applying Lemma \ref{lemS3.5} to $f^\v$, we obtain
\begin{align}\nonumber
\|
{P}f^\v\|^2_{L^2}\leq C\Big\{ \|
{(I-P)}f^\v\|^2_{\nu}+\v\|f^\v\|^2_{L^2}+ |(I-P_\gamma)f^\v|^2_{L^2(\gamma_+)} +\|g\|^2_{L^2}+ |r|^2_{L^2(\gamma_-)}\Big\},
\end{align}
which together with \eqref{S3.109} imply that
\begin{multline}\label{S3.110}
\v\|f^\v\|^2_{L^2}+\|f^\v\|^2_{\nu}+|(I-P_\gamma)f^\v|^2_{L^2(\gamma_+)}\\
\leq C\eta [\|f^\v\|^2_{\nu}+|P_\gamma f^\v|^2_{L^2(\gamma_+)}]+C_\eta [\|\nu^{-\frac12}g\|^2_{L^2}+|r|^2_{L^2(\gamma_-)}],
\end{multline}
where $\eta>0$ is a small positive constant to be determined later.

To control the term  $|P_\gamma f^\v|^2_{L^2(\gamma_+)}$ on the right-hand side of \eqref{S3.110}, we should be  careful since we do not have the uniform bound on  $\|f^\v\|_{L^2}$. Denote
$$
z^\v_{\gamma}(x):=\int_{v'\cdot n(x)>0} f^\v(x,v') \sqrt{\mu(v')} \{v'\cdot n(x)\}\dd v',
$$
then one has  $P_{\gamma} f^\v=z^\v_{\gamma}(x) \mu^{\frac{1}{2}}(v)$. A direct calculation shows that
\begin{align}\label{S3.111}
\int_{v\cdot n(x)\geq \v', \  \v'\leq |v|\leq \frac1{\v'}} \nu(v) \mu(v) |v\cdot n(x)| \dd v\geq c_1>0,
\end{align}
provided that $0<\v'\ll1$, where $c_1>0$ is a positive constant independent of $\v'$. By using \eqref{S3.111}, we have that
\begin{align*}
|\sqrt{\nu}P_\gamma f^\v I_{\gamma_+\backslash \gamma_+^{\v'}}|^2_{L^2}&=\int_{\partial\Omega}|z_\gamma^\v (x)|^2 {\dd S_x}\cdot
\int_{v\cdot n(x)\geq \v', \  \v'\leq |v|\leq \frac1{\v'}} \nu(v) \mu(v) |v\cdot n(x)| \dd v\nonumber\\
&\geq c_1 |P_\gamma f^\v|^2_{L^2(\gamma_+)},
\end{align*}
which yields immediately that
\begin{align}\label{S3.113}
|P_\gamma f^\v|^2_{L^2(\gamma_+)}&\leq C|\sqrt{\nu}P_\gamma f^\v I_{\gamma_+\backslash \gamma_+^{\v'}}|^2_{L^2}\notag\\
&\leq C \Big\{ |\sqrt{\nu} f^\v I_{\gamma_+\backslash \gamma_+^{\v'}}|^2_{L^2}+ |\sqrt{\nu}(I-P_\gamma) f^\v I_{\gamma_+\backslash \gamma_+^{\v'}}|^2_{L^2}\Big\}\nonumber\\
&\leq C|\sqrt{\nu} f^\v I_{\gamma_+\backslash \gamma_+^{\v'}}|^2_{L^2}+C\eta [\|f^\v\|^2_{\nu}+|P_\gamma f^\v|^2_{L^2(\gamma_+)}]\notag\\
&\quad+C_\eta [\|\nu^{-\frac12}g\|^2_{L^2}+|r|^2_{L^2(\gamma_-)}].
\end{align}
It follows from \eqref{S3.3} that
\begin{align*}
{\f12}v\cdot \nabla_x(\nu|f^\v|^2)=-\v \nu|f^\v|^2-\nu f^\v Lf^\v +\nu f^\v g,
\end{align*}
which implies that
\begin{align}\label{S3.115}
\|v\cdot \nabla_x(\nu|f^\v|^2)\|_{L^1}\leq C\{\|
{f^\vep}\|^2_{\nu}+\|g\|^2_{\nu}\}.
\end{align}
It follows from \eqref{S3.115} and \eqref{S2.2} that
\begin{align*}
\f{1}{2}|\sqrt{\nu} f^\v I_{\gamma_+\backslash \gamma_+^{\v'}}|^2_{L^2(\gamma_+)}
&={\f12}|\nu (f^\v)^2 I_{\gamma_+\backslash \gamma_+^{\v'}}|_{L^1(\gamma_+)}
\leq C\Big\{ \|\nu\ (f^\v)^2\|_{L^1} +\|v\cdot \nabla_x(\nu|f^\v|^2)\|_{L^1}\Big\}\nonumber\\
&\leq  C\{\|f^\varepsilon\|^2_{\nu}+\|g\|^2_{\nu}\},
\end{align*}
which together with \eqref{S3.113} and by taking $\eta>0$ suitably small, yield that
\begin{equation}\label{S3.117}
|P_\gamma f^\v|^2_{L^2(\gamma_+)}\leq C\|f^\v\|^2_{\nu}+C [\|\nu^{-\frac12}g\|^2_{L^2}+|r|^2_{L^2(\gamma_-)}].
\end{equation}
Combining \eqref{S3.117} and \eqref{S3.110}, then taking $\eta>0$ small, one has that
\begin{equation}\label{S3.118}
\|f^\v\|^2_{\nu}+|f^\v|^2_{L^2(\gamma_+)}\leq C [\|\nu^{-\frac12}g\|^2_{L^2}+|r|^2_{L^2(\gamma_-)}].
\end{equation}
Applying \eqref{S3.24-1} to $f^\v$ and using \eqref{S3.118}, then we obtain
\begin{equation}\label{S3.119}
\|wf^\v\|_{L^\infty}{+|wf^\vep|_{L^\infty(\g)}}\leq C [\|\nu^{-\frac12}wg\|_{L^\infty}+|wr|_{L^\infty(\gamma_-)}],
\end{equation}

Next we consider the convergence of $f^\v$ as $\v\rightarrow0+$. For any $\v_1,\v_2>0$, we consider the difference $f^{\v_2}-f^{\v_1}$ satisfying
\begin{equation}\label{S3.120}
\begin{cases}
v\cdot \nabla_x (f^{\v_2}-f^{\v_1})+ L(f^{\v_2}-f^{\v_1})=-\v_2 f^{\v_2}+\v_1 f^{\v_1},\\[2mm]
(f^{\v_2}-f^{\v_1})|_{\gamma_-}=P_\gamma (f^{\v_2}-f^{\v_1}).
\end{cases}
\end{equation}
Multiplying \eqref{S3.120} by $f^{\v_2}-f^{\v_1}$, integrating the resultant equation and by similar arguments as in \eqref{S3.109}-\eqref{S3.118}, one gets
\begin{align}\label{S3.121}
&\|f^{\v_2}-f^{\v_1}\|^2_{\nu}+|f^{\v_2}-f^{\v_1}|^2_{L^2(\gamma_+)}\nonumber\\
&\leq C \|\nu^{-\frac12}(\v_2 f^{\v_2}-\v_1 f^{\v_1})\|^2_{L^2}
\leq C(\v_1^2+\v_2^2) [\|w f^{\v_1}\|^2_{L^\infty} + \|w f^{\v_2}\|^2_{L^\infty} ]\nonumber\\
&\leq C(\v_1^2+\v_2^2)\cdot  [\|\nu^{-1}wg\|_{L^\infty}+|wr|_{L^\infty(\gamma_-)}]^2,
\end{align}
as $\v_1$, $\v_2\rightarrow 0+$, where we have used \eqref{S3.119} in the last inequality. Finally, applying \eqref{S3.24-1} to $f^{\v_2}-f^{\v_1}$, then we obtain
\begin{align}\label{S3.122}
\|\nu w(f^{\v_2}-f^{\v_1})\|_{L^\infty}{+|\nu w(f^{\v_2}-f^{\v_1})|_{L^\infty(\g)}}&\leq C\Big\{ \|w (\v_2 f^{\v_2}-\v_1 f^{\v_1})\|_{L^\infty} +\|f^{\v_2}-f^{\v_1}\|_{\nu} \Big\}\nonumber\\
&\leq C(\v_1+\v_2)\cdot  [\|\nu^{-1}wg\|_{L^\infty}+|wr|_{L^\infty(\gamma_-)}],
\end{align}
as $\v_1$, $\v_2\rightarrow 0+$,
where we have used \eqref{S3.121} and \eqref{S3.119} above. Here we have to demand $\beta>3+|\kappa|$ so that we can apply \eqref{S3.24-1}.  And we point out here that we can only obtain the convergence in a weak norm $L^\infty_{\nu w}$ but not $L^\infty_w$, the main reason is that for soft potentials, in order to get the uniform $L^\infty_w$ estimate, one has to demand $g$ to has the more velocity weight. With \eqref{S3.122}, we know that there exists a function $f$ so that $\|\nu w(f^{\v}-f)\|_{L^\infty}\rightarrow0$ as $\v\rightarrow 0+$. And it is 
direct to see that $f$ solves \eqref{S3.106}. Also,  \eqref{S3.107} follows immediately from \eqref{S3.119}.  If $\Omega$ is convex, the continuity of $f$ 
directly follows from the $L^\infty$-convergence. Therefore the proof of Lemma \ref{lemS3.7} is complete.
\end{proof}

\subsection{Proof of Theorem \ref{thm1.1}.} We consider the following iterative sequence
\begin{align}\label{S3.123}
\begin{cases}
v\cdot \nabla_x f^{j+1}+Lf^{j+1}=\Gamma(f^j,f^j),\\[1.5mm]
f^{j+1}|_{\gamma_-}=P_\gamma f^{j+1}+\frac{{\mu_\theta}-\mu}{\sqrt{\mu}} +\frac{{\mu_\theta}-\mu}{\sqrt{\mu}}\int_{v'\cdot n(x)>0} f^j(x,v')\sqrt{\mu(v')} \{v'\cdot n(x)\} \dd v',
\end{cases}
\end{align}
for $j=0,1,2\cdots$ with $f^0\equiv0$. A direct computation shows that
\begin{align}\label{S3.124}
\int_{\Omega\times\mathbb{R}^3} \Gamma(f^j,f^j)\mu^{\frac{1}{2}}(v) \dd v\dd x=0,\quad 
\int_{v\cdot n(x)<0} [\mu_{{\theta}}
(v)-\mu(v)] \{v\cdot n(x)\} \dd v =0,
\end{align}
which yield that
\begin{align*}
\int_{\gamma_-} \left\{\frac{{\mu_\theta}-\mu}{\sqrt{\mu}} +\frac{{\mu_\theta}-\mu}{\sqrt{\mu}}\int_{v'\cdot n(x)>0} f^j(x,v')\sqrt{\mu(v')} \{v'\cdot n(x)\} \dd v' \right\} \mu^{\frac{1}{2}}(v)\dd\gamma\equiv0.
\end{align*}
We note that
\begin{align}\label{S3.126}
\|\nu^{-1} w \Gamma(f^j,f^j)\|_{L^\infty}\leq C \|wf^j\|^2_{L^\infty},
\end{align}
and
\begin{multline}\label{S3.127}
\left| w\left\{\frac{{\mu_\theta}-\mu}{\sqrt{\mu}} +\frac{{\mu_\theta}-\mu}{\sqrt{\mu}}\int_{v'\cdot n(x)>0} f^j(x,v')\sqrt{\mu(v')} \{v'\cdot n(x)\} \dd v'\right\} \right|_{L^\infty{(\g_-)}}\\
\leq C\delta+C\delta |f^j|_{L^\infty{(\g_+)}}.
\end{multline}
Noting \eqref{S3.124}-\eqref{S3.127}, and using Lemma \ref{lemS3.7}, we can solve \eqref{S3.123} inductively for $j=0,1,2,\cdots$. Moreover, it follows from \eqref{S3.107}, \eqref{S3.126} and \eqref{S3.127} that
\begin{align}\label{S3.128}
\|wf^{j+1}\|_{L^\infty}+|wf^{j+1}|_{L^\infty(\gamma)}\leq C_1\delta+C_1\delta |f^j|_{L^\infty{(\g_+)}}+C_1 \|wf^j\|^2_{L^\infty}.
\end{align}
By induction, we shall prove  that
\begin{equation}\label{S3.129}
\|wf^{j}\|_{L^\infty}+|wf^{j}|_{L^\infty(\gamma)}\leq 2C_1\delta,\quad\mbox{for} \  j=1,2,\cdots.
\end{equation}
Indeed, for $j=0$, it follows from $f^0\equiv0$ and \eqref{S3.128} that
\begin{equation*}
\|wf^{1}\|_{L^\infty}+|wf^{1}|_{L^\infty(\gamma)}\leq C_1\delta.
\end{equation*}
Now we assume that \eqref{S3.129} holds for $j=1,2\cdots, l$,  then we consider the case for $j=l+1$. Indeed it follows from \eqref{S3.128}  that
\begin{align*}
\|wf^{l+1}\|_{L^\infty}+|wf^{l+1}|_{L^\infty(\gamma)}&\leq C_1\delta+C_1\delta |f^l|_{L^\infty{(\g)}}+C_1 \|wf^l\|^2_{L^\infty}\nonumber\\
&\leq C_1\delta(1+2C_1\delta+4C_1^2\delta)\leq  \f{3}{2}C_1\delta,
\end{align*}
where we have used  $\eqref{S3.129}$ with $j=l$, and chosen $\delta>0$ small enough such that  $2C_1\delta+4C_1^2 \delta\leq 1/2$.  
Therefore we have proved \eqref{S3.129} by induction. 

Finally we consider the convergence of sequence  $f^j$. For the difference $f^{j+1}-f^j$, we have
\begin{equation}\label{S3.131}
\begin{cases}
\dis v\cdot \nabla_x (f^{j+1}-f^j)+L(f^{j+1}-f^j)=\Gamma(f^{j}-f^{j-1},f^{j})+\Gamma(f^{j-1},f^{j}-f^{j-1}),\\[2mm]
\dis (f^{j+1}-f^j)|_{\gamma_-}=P_\gamma (f^{j+1}-f^j)\\[2mm]
\dis \qquad\qquad\qquad\qquad+\frac{{\mu_\theta}-\mu}{\sqrt{\mu}}\int_{v'\cdot n(x)>0} [f^j-f^{j-1}](x,v')\sqrt{\mu(v')} \{v'\cdot n(x)\} \dd v'.
\end{cases}
\end{equation}
Applying \eqref{S3.107} to \eqref{S3.131}, we have that
\begin{align}\label{S3.133}
&\|w\{f^{j+1}-f^j\}\|_{L^\infty}+|w\{f^{j+1}-f^j\}|_{L^\infty(\gamma)}\nonumber\\
&\leq C\Big\{ \|\nu^{-1}w\Gamma(f^{j}-f^{j-1},f^{j})\|_{L^\infty} +\|\nu^{-1}w\Gamma(f^{j-1},f^{j}-f^{j-1})\|_{L^\infty}\Big\}\nonumber\\
&\quad+C\Big|w\Big\{\frac{{\mu_\theta}-\mu}{\sqrt{\mu}}\int_{v'\cdot n(x)>0} [f^j-f^{j-1}](x,v')\sqrt{\mu(v')} \{v'\cdot n(x)\} \dd v'\Big\}\Big|_{L^\infty{(\gamma_-)}}\nonumber\\
&\leq C[\delta+\|wf^{j}\|_{L^\infty}+\|wf^{j-1}\|_{L^\infty}]\cdot \Big\{\|w(f^{j}-f^{j-1})\|_{L^\infty}+|w(f^j-f^{j-1})|_{L^\infty{(\gamma_+)}}\Big\}\nonumber\\
&\leq C\delta \Big\{\|w(f^{j}-f^{j-1})\|_{L^\infty}+|w(f^j-f^{j-1})|_{L^\infty{(\g_+)}}\Big\}\nonumber\\
&\leq \frac12 \Big\{\|w(f^{j}-f^{j-1})\|_{L^\infty}+|w(f^j-f^{j-1})|_{L^\infty{(\g_+)}}\Big\},
\end{align}
where we have used \eqref{S3.129} and taken $\delta>0$ small such that $C\delta\leq 1/2$. Hence $f^j$ is a Cauchy sequence in $L^\infty$, then we obtain the solution by taking the limit $f_*=\lim_{j\rightarrow\infty} f^j$.  The uniqueness can also be obtained by using the inequality as \eqref{S3.133}.

If $\Omega$ is convex, the continuity of $f_*$ is 
a direct consequence of $L^\infty$-convergence. The positivity of $F_*:=\mu+\sqrt{\mu} f_*$ will be proved in {Section 4}.  Therefore we complete the proof of Theorem \ref{thm1.1}. \qed


\section{Dynamical stability under small perturbations}

In this section, we are concerned with the large-time asymptotic stability of the steady solution $F_*$ obtained in Theorem \ref{thm1.1}. For this purpose, we introduce the perturbation
$$
f(t,x,v):=\f{F(t,x,v)-F_*(x,v)}{\sqrt{\mu}}.
$$
Then the initial-boundary value problem on $f(t,x,v)$ reads as
\begin{equation}\label{4.1}
\left\{
\begin{aligned}
&\partial_{t}f+v\cdot\nabla_{x}f+Lf=-L_{\sqrt{\mu}f_{*}}f+\Gamma(f,f),\\
&f(t,x,v)|_{t=0}=f_{0}(x,v):=\f{F(0,x,v)-F_*(x,v)}{\mu^{\frac{1}{2}}(v)},\\
&f(t,x,v)|_{\g_{-}}=P_{\g}f+\frac{\mu_{\theta}-\mu}{\sqrt{\mu}}\int_{n(x)\cdot u>0}f(t,x,u)\sqrt{\mu(u)}|n(x)\cdot u|\dd u.
\end{aligned}
\right.
\end{equation}
Here $P_{\g}f$ is defined in \eqref{S3.4}, the linearized collision operator $L$ is defined in \eqref{1.9-1}, the nonlinear term $\Gamma(f,f)$ is defined in \eqref{1.10} and
\begin{align*}
L_{\sqrt{\mu}f_{*}}f:=-\frac{1}{\sqrt{\mu}}\left[Q(\sqrt{\mu}f_{*},\sqrt{\mu}f)+Q(\sqrt{\mu}f,\sqrt{\mu}f_{*})\right].
\end{align*}
Recall \eqref{WF}. Let
\begin{align}\label{4.2}
h(t,x,v):=w(v)f(t,x,v).
\end{align}
Then one can reformulate \eqref{4.1} as
\begin{equation}\label{nph}
\left\{\begin{aligned}
&\partial_{t}h+v\cdot\nabla_{x}h+\nu_*(x,v)h-K_{w}h=-wL_{\sqrt{\mu}f_*}f+w\Gamma(f,f),\\
&h|_{t=0}=wf_0:=h_0,\\
&h|_{\g_{-}}=\frac{1}{\tilde{w}(v)}\int_{n(x)\cdot u>0}h(u)\tilde{w}(u)\dd \sigma(x)+w(v)\frac{\mu_{\theta}-\mu}{\sqrt{\mu}}\int_{n(x)\cdot u>0}h(u)\tilde{w}(u)\dd \sigma(x),
\end{aligned}\right.
\end{equation}
where $\tilde{w}$ and $K_wh$ are the same as ones defined before, and for each $x\in \pa\Omega$, $\dd \sigma(x)$ denotes the probability measure 
$$ 
\dd\sigma(x)=\{n(x)\cdot u\}\dd u
$$ 
in the velocity space
$\CV(x):=\{u\in \mathbb{R}^3:n(x)\cdot u>0\}$. Also, we have denoted the  collision frequency ${\nu}_*(x,v)$ at the stationary solution as
\begin{align}\label{4.4}
\nu_*(x,v):=\int_{\mathbb{R}^3}\int_{\mathbb{S}^2}B(|v-u|,\omega)F_*(x,u)\dd u.
\end{align}
For simplicity, to the end we denote
\begin{align}\label{4.1.5-2}
r[h](t,x,v):=\f{\mu_{\theta}-\mu}{\sqrt{\mu}}\int_{n(x)\cdot u>0}h(t,x,v)\tilde{w}(u)\dd \sigma(x).
\end{align}

\subsection{Characteristics for time-dependent problem}

Given $(t,x,v),$ let $[X(s),V(s)]$ 
be the backward bi-characteristics associated with the initial-boundary value problem \eqref{4.1} on the Boltzmann equation, which is determined by
\begin{align*}
\begin{cases}
\dis \frac{{\dd}X(s)}{{\dd}s}=V(s),
\frac{{\dd}V(s)}{{\dd}s}=0,\\[2mm]
[X(t),V(t)]=[x,v].
\end{cases}
\end{align*}
The solution is then given by
\begin{equation*}
[X(s),V(s)]=[X(s;t,x,v),V(s;t,x,v)]=[x-(t-s)v,v].
\end{equation*}
Similarly as in the steady case, for each $(x,v)$ with $x\in \bar{\Omega}$ and $v\neq 0,$ we define its {backward exit time} $t_{\mathbf{b}}(x,v)\geq 0$ to be the last moment at which the
back-time straight line $[X({-\tau}
;0,x,v),V({-\tau}
;0,x,v)]$ remains in $\bar{\Omega}$:
\begin{equation*}
t_{\mathbf{b}}(x,v)={\sup\{s \geq 0:x-\tau v\in\bar{\Omega}\text{ for }0\leq \tau\leq s\}.}
\end{equation*}
We also define
\begin{equation*}
x_{\mathbf{b}}(x,v)=x(t_{\mathbf{b}})=x-t_{\mathbf{b}}v\in \partial \Omega .
\end{equation*}
Let $x\in \bar{\Omega}$, $(x,v)\notin \gamma _{0}\cup \g_{-}$ and 
$
(t_{0},x_{0},v_{0})=(t,x,v)$. For $v_{k+1}\in \mathcal{V}_{k+1}:=\{v_{k+1}\cdot n(x_{k+1})>0\}$, the back-time cycle is defined as
\begin{equation}\label{2.18}
\left\{\begin{aligned}
X_{cl}(s;t,x,v)&=\sum_{k}\Fi_{[t_{k+1},t_{k})}(s)\{x_{k}-v_k(t_{k}-s)\},\\[1.5mm]
V_{cl}(s;t,x,v)&=\sum_{k}\Fi_{[t_{k+1},t_{k})}(s)v_{k},
\end{aligned}\right.
    \end{equation}
with
\begin{equation*}
(t_{k+1},x_{k+1},v_{k+1})=(t_{k}-t_{\mathbf{b}}(x_{k},v_{k}),x_{\mathbf{b}}(x_{k},v_{k}),v_{k+1}).
\end{equation*}
For $k\geq 2$,
the iterated integral  means that 
\begin{equation*}
\int_{\Pi _{l=1}^{k-1}\mathcal{V}_{l}}\Pi _{l=1}^{k-1}\dd\sigma _{l}:=
\int_{\mathcal{V}_{1}}...\left\{ \int_{\mathcal{V}_{k-1}}\,\dd\sigma_{k-1}\right\} \cdots\dd\sigma _{1}.
\end{equation*}
where
\begin{align*}
{\dd\sigma_i:=\mu(v_i)\{n(x_i)\cdot v_i\}\dd v_i.}
\end{align*}
Note that all $v_{l}$ ($l=1,2,\cdots$) are independent variables, and $t_k$, $x_k$ depend on $t_l$, $x_l$, $v_l$ for $l\leq k-1$, and the velocity space $\mathcal{V}_{l}$ implicitly depends on $(t,x,v,v_{1},v_{2},\cdots,v_{l-1}).$\\


Define the near-grazing set of $\gamma_{+}$ and  $\gamma_{-}$ as
\begin{align}\label{S2.1}
\gamma^\v_{\pm}=\left\{(x,v)\in\gamma_{\pm}~:~ |v\cdot n(x)|<\v~\mbox{or}~|v|\geq\v~\mbox{or}~|v|\leq\f1\v\right\}.
\end{align}
Then we have

\begin{lemma}[\cite{CIP,Guo2}]\label{lemS1.1}
Let $\v>0$ be a small positive constant, then it holds that
\begin{align}\label{S2.2}
|f\Fi_{\gamma_{\pm}\setminus \gamma_{\pm}^\v}|_{L^1(\ga)}
\leq C_{\v,\Omega} \Big\{\|f\|_{L^1} + \|v\cdot \nabla_x f\|_{L^1}\Big\},
\end{align}
and
\begin{align}\label{S2.3}
\int_0^t |f(\tau)\Fi_{\gamma_+\setminus \gamma_{+}^\v}|_{L^1(\ga)}\dd\tau\leq C_{\v,\Omega} \bigg\{\|f(0)\|_{L^1}+\int_0^t \Big[\|f(\tau)\|_{L^1}+\|[\partial_{\tau}+v\cdot\nabla_x] f(\tau)\|_{L^1}\Big]\dd s\bigg\},
\end{align}
where the positive constant $C_{\v}>0$ depends only on $\v$.
\end{lemma}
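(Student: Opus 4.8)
The plan is to reduce both estimates to a single application of the divergence theorem (Green's identity) on the phase space $\Omega \times \mathbb{R}^3$. The statement is exactly the trace estimate of Cercignani-Illner-Pulvirenti and of \cite{Guo2}, so I would follow that route. First I would fix a small $\varepsilon>0$ and, on the near-grazing cutoff set $\gamma_\pm \setminus \gamma_\pm^\varepsilon$ — where $|v\cdot n(x)|\ge \varepsilon$ and $\varepsilon \le |v| \le 1/\varepsilon$ — I would choose a test function. The idea is to construct a smooth vector field $\psi(x,v)$ on $\bar\Omega \times \mathbb{R}^3$ that agrees with $n(x)$ (extended smoothly off $\partial\Omega$) on a neighborhood of the boundary, is bounded with bounded derivatives, and is supported where $\varepsilon/2 \le |v| \le 2/\varepsilon$; then $v\cdot \psi(x,v)$ is comparable to $|v\cdot n(x)|$ on the relevant portion of $\gamma_\pm \setminus \gamma_\pm^\varepsilon$. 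Applying the Gauss-Green identity to $\mathrm{div}_x\big( v\, |f| \,(v\cdot \psi)\big) = (v\cdot\nabla_x f)\,\mathrm{sgn}(f)\,(v\cdot\psi) + |f|\, v\cdot \nabla_x(v\cdot\psi)$ and integrating in $v$ over the compact velocity shell, the boundary term produces $\int_{\gamma_\pm\setminus\gamma_\pm^\varepsilon} |f|\, |v\cdot\psi|\, |n(x)\cdot v|\, \dd S\dd v \gtrsim |f \Fi_{\gamma_\pm \setminus \gamma_\pm^\varepsilon}|_{L^1(\gamma)}$ (after also handling the sign of $v\cdot n$), while the interior terms are bounded by $C_{\varepsilon,\Omega}\{\|f\|_{L^1} + \|v\cdot\nabla_x f\|_{L^1}\}$, using that $v$ is bounded on the support and $\psi$ has bounded derivatives. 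Since $|f|$ is not smooth, I would first prove the identity for $f$ replaced by $\sqrt{|f|^2+\delta^2}-\delta$ (or mollify $f$), then pass to the limit $\delta\to 0$; this is the standard routine justification and I would only remark on it.

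For \eqref{S2.3} the argument is the same but carried out on the time-space-velocity cylinder $[0,t]\times\Omega\times\mathbb{R}^3$. I would apply the same Green identity to $\partial_\tau(|f|\,\phi) + \mathrm{div}_x(v\,|f|\,\phi)$ with $\phi = v\cdot\psi(x,v)$ independent of $\tau$: integrating over $[0,t]\times\Omega\times\mathbb{R}^3$ gives the spatial boundary integral $\int_0^t |f(\tau)\Fi_{\gamma_+\setminus\gamma_+^\varepsilon}|_{L^1(\gamma)}\,\dd\tau$ on one side, plus the two time-slice terms $\int |f(t)\phi| - \int |f(0)\phi|$; bounding $|f(t)\phi| \le C_\varepsilon|f(t)|$ and estimating $\|f(t)\|_{L^1} \le \|f(0)\|_{L^1} + \int_0^t \|(\partial_\tau + v\cdot\nabla_x)f(\tau)\|_{L^1}\,\dd\tau$ (from integrating the transport identity over $\Omega\times\mathbb{R}^3$, where the boundary flux has a sign or is again controlled), absorbs the $t$-slice term into the right-hand side. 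The interior terms again contribute $\int_0^t \big[\|f(\tau)\|_{L^1} + \|(\partial_\tau + v\cdot\nabla_x)f(\tau)\|_{L^1}\big]\,\dd\tau$ up to the constant $C_{\varepsilon,\Omega}$.

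The one genuinely delicate point — and the step I expect to cost the most care — is the construction of the test vector field $\psi$ together with the verification that on $\gamma_\pm\setminus\gamma_\pm^\varepsilon$ the boundary integrand $|v\cdot\psi|$ really is bounded below by a positive constant times $|v\cdot n(x)|$ while $|v\cdot\psi| \lesssim \varepsilon^{-1}$ uniformly, so that the extracted boundary term controls $|f\Fi_{\gamma_\pm\setminus\gamma_\pm^\varepsilon}|_{L^1(\gamma)}$ and not something weaker; getting the sign of $v\cdot n(x)$ consistent on $\gamma_+$ versus $\gamma_-$ (so no cancellation occurs) is part of this. Everything else — the approximation to make $|f|$ differentiable, the compactness of the velocity shell making all $v$-factors harmless, and the bookkeeping of constants depending only on $\varepsilon$ and $\Omega$ — is routine. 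Since this is precisely \cite[Lemma 2.2 or similar]{Guo2} and appears in \cite{CIP}, I would present the above as a sketch and refer to those sources for the full details.
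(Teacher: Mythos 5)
Your test-function route is one of the two standard routes (the other, used in \cite{Guo2}, integrates $f$ backward along the free-transport characteristic for a short positive time, uniformly bounded below on the non-grazing set, and changes variables). For \eqref{S2.2} your argument is correct: with $\psi$ a bounded smooth extension of $n$ and $\phi = (v\cdot\psi)\,\eta(|v|)$ with $\eta$ cutting to the shell $\varepsilon/2\le |v|\le 2/\varepsilon$, the boundary integrand $\phi\,(v\cdot n)=(v\cdot n)^2\eta$ near $\partial\Omega$ is nonnegative on both $\gamma_+$ and $\gamma_-$ and bounded below by $\varepsilon\,|v\cdot n|$ on $\gamma_\pm\setminus\gamma_\pm^\varepsilon$, so no cancellation occurs, and the interior terms give exactly the claimed right-hand side.

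For \eqref{S2.3} there is a genuine gap in the step where you absorb the $\tau=t$ slice. With $\phi = v\cdot\psi$ the slice term $-\int_{\Omega\times\mathbb{R}^3}|f(t)|\,\phi$ has no favorable sign, since $\phi$ changes sign in the interior; you propose to bound it by $C_\varepsilon\|f(t)\|_{L^1}$ and then to use the estimate $\|f(t)\|_{L^1}\le \|f(0)\|_{L^1}+\int_0^t\|(\partial_\tau+v\cdot\nabla_x)f(\tau)\|_{L^1}\,\dd\tau$. That estimate does not hold for a general $f$: integrating $(\partial_\tau+v\cdot\nabla_x)|f|=\mathrm{sgn}(f)(\partial_\tau+v\cdot\nabla_x)f$ over $\Omega\times\mathbb{R}^3$ yields
$$\|f(t)\|_{L^1}+\int_0^t\!\!\int_{\gamma_+}|f|\,\dd\gamma\,\dd\tau=\|f(0)\|_{L^1}+\int_0^t\!\!\int_{\gamma_-}|f|\,\dd\gamma\,\dd\tau+\int_0^t\!\!\int \mathrm{sgn}(f)(\partial_\tau+v\cdot\nabla_x)f,$$
and the incoming flux $\int_{\gamma_-}|f|\,\dd\gamma$ on the right has the bad sign and is not controlled by the quantities allowed on the right of \eqref{S2.3}. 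Saying it ``has a sign or is again controlled'' hides precisely the term one cannot handle this way.

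The fix is to change the multiplier rather than to try to bound $\|f(t)\|_{L^1}$. Take $\phi(x,v)=\chi\big(v\cdot\psi(x)\big)\,\eta(|v|)$, where $\chi$ is smooth, nondecreasing, $\chi=0$ on $(-\infty,0]$ and $\chi=1$ on $[\varepsilon,\infty)$, and $\eta$ is the same velocity-shell cutoff. Then $\phi\ge 0$ everywhere (so the $\tau=t$ slice $-\int|f(t)|\phi\le 0$ may simply be dropped), $\phi$ vanishes identically on $\gamma_-$ (so the spatial boundary term reduces to an integral over $\gamma_+$ alone), $\phi\equiv 1$ on $\gamma_+\setminus\gamma_+^\varepsilon$ (so the boundary integral dominates $\int_0^t|f\Fi_{\gamma_+\setminus\gamma_+^\varepsilon}|_{L^1(\gamma)}\dd\tau$ exactly), and $\phi$ with its first derivatives is bounded by $C_\varepsilon$ (so the interior and $\tau=0$ contributions give the stated right-hand side). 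With this $\phi$ your Green's-identity computation closes immediately and the proof of \eqref{S2.3} is correct. The rest of your write-up (the mollification of $|f|$, the cutoff to a compact velocity shell, the interpretation of the typo in the definition of $\gamma_\pm^\varepsilon$) is fine.
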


\subsection{Linear problem}
In this part, we study the following linear inhomogeneous problem:
\begin{equation}\label{4.1.0}
\left\{
\begin{aligned}
&\pa_{t}f+v\cdot\nabla_{x}f+Lf=g, \\
&f|_{\gamma_{-}}=P_{\g}f+\f{\mu_{\theta}-\mu}{\sqrt{\mu}}\int_{n(x)\cdot u>0}f\sqrt{\mu}|n(x)\cdot u|\dd u,\\
&f(t,x,v)|_{t=0}=f_0(x,v),
\end{aligned}
\right.
\end{equation}
where $g$ is a given function. Recall that $h(t,x,v)$ defined in \eqref{4.2}. Then the equation of $h$ as well as the boundary condition read:
\begin{equation}\label{4.1.1}
\left\{
\begin{aligned}
&\pa_{t}h+v\cdot\nabla_{x}h+\nu_*(x,v)h-K_wh=wg, \\
&h|_{\gamma_{-}}=\frac{1}{\tilde{w}(v)}\int_{n(x)\cdot u>0}h(u)\tilde{w}(u)\dd\sigma(x)+w(v)\f{\mu_{\theta}-\mu}{\sqrt{\mu}}\int_{n(x)\cdot u>0}h(u)\tilde{w}(u)\dd\sigma(x).\\
\end{aligned}
\right.
\end{equation}
Recall the stochastic cycle defined in \eqref{2.18}. For any $0\leq s\leq t$, we define
$$
\dis I(t,s):=e^{-\int_s^t\nu_*(X_{cl}(\tau),V_{cl}(\tau))\dd\tau}.
$$
The following Lemma is to establish the mild formulation for \eqref{4.1.1}. Since its proof is almost the same as for \cite[Lemma 24]{Guo2}, we omit details for brevity.

\begin{lemma}
Let $k\geq 1$ be an integer and $h(t,x,v) \in L^{\infty}$ satisfy \eqref{4.1.1}. For any $t>0$, for almost every $(x,v)\in\bar{\Omega}\times \mathbb{R}^3\setminus\g_0\cup\g_-$ and for any $0\leq s\leq t$, it holds that
\begin{align}\label{4.1.5}
h(t,x,v)=\sum_{n=1}^4\CI_n+\sum_{n=5}^{14}\Fi_{\{t_1>s\}}\CI_{n},
\end{align}
where
\begin{align}
&\CI_1=\Fi_{\{t_1\leq s\}}I(t,s)h(s,x-(t-s)v,v)\nonumber\\
&\CI_2+\CI_3+\CI_4=\int_{\max\{t_{1},s\}}^{t}I(t,\tau)[K_w^mh+K_w^ch+g](\tau,x-(t-\tau)v,v)\dd\tau\nonumber\\
&\CI_5=\frac{I(t,t_{1})}{\tilde{w}(v)}\int_{\prod_{j=1}^{k-1}\CV_{j}}\sum^{k-1}_{l=1}\Fi_{\{t_{l+1}\leq s<t_{l}\}}h(s,x_{l}-(t_{l}-s)v_{l},v_{l})\dd\Sigma_{l}(s)\nonumber\\
&\CI_6+\CI_7+\CI_8=\frac{I(t,t_{1})}{\tilde{w}(v)}\int_{\prod_{j=1}^{k-1}\CV_{j}}\int_{s}^{t_{l}}\sum^{k-1}_{l=1}\Fi_{\{t_{l+1}\leq s<t_{l}\}}\notag\\
&\qquad\qquad\qquad\qquad\qquad\qquad\qquad\quad\times[K_w^mh+K^c_wh+wg](\tau,x_{l}-(t_{l}-\tau)v_{l},v_{l})\dd\Sigma_{l}(\tau)\dd\tau\nonumber\\
&\CI_9+\CI_{10}+\CI_{11}=\frac{I(t,t_{1})}{\tilde{w}(v)}\int_{\prod_{j=1}^{k-1}\CV_{j}}\int_{t_{l+1}}^{t_{l}}\sum^{k-1}_{l=1}\Fi_{\{t_{l+1}> s\}}\notag\\
&\qquad\qquad\qquad\qquad\qquad\qquad\qquad\quad\times[K_w^mh+K^c_wh+wg](\tau,x_{l}-(t_{l}-\tau)v_{l},v_{l})\dd\Sigma_{l}(\tau)\dd\tau\nonumber
\end{align}
\begin{align}
&\CI_{12}=I(t,t_{1})wr(t_{1},x_{1},v),\quad\CI_{13}=\frac{I(t,t_{1})}{\tilde{w}(v)}\int_{\prod_{j=1}^{k-1}\CV_{j}}\sum^{k-2}_{l=1}\Fi_{\{t_{l+1}> s\}}wr(t_{l+1},x_{l+1},v_{l})\dd\Sigma_{l}(t_{l+1})\nonumber\\
&\CI_{14}=\frac{I(t,t_{1})}{\tilde{w}(v)}\int_{\prod_{j=1}^{k-1}\CV_{j}}\Fi_{\{t_{k}>s\} }h(t_{k},x_{k},v_{k-1})\dd\Sigma_{k-1}(t_{k}),\notag
\end{align}
where the measure $\dd\Sigma_{l}(\tau)$ is defined by
\begin{align}
d\Sigma_{l}(\tau):=\{\prod_{j=l+1}^{k-1}\dd\sigma_{j}\}\{\tilde{w}(v_{l})I(t_{l},\tau)\dd\sigma_{l}\}
\{\prod_{j=1}^{l-1}I(t_{j},t_{j+1})\dd\sigma_{j}\}\nonumber.
\end{align}
\end{lemma}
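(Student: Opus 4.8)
The plan is to obtain \eqref{4.1.5} by iterating the Duhamel (mild) representation of the equation in \eqref{4.1.1} along the backward stochastic cycle \eqref{2.18}, applying the diffuse-reflection boundary condition each time a characteristic leg meets $\pa\Omega$. Fix $(t,x,v)$ with $x\in\bar\Omega$ and $(x,v)\notin\g_0\cup\g_-$; by Lemma \ref{lemS1.1} together with the standard regularity of $t_{\mathbf b},x_{\mathbf b}$ as in \cite{Guo2}, the cycle $(t_l,x_l,v_l)$ is well defined for a.e.\ such $(x,v)$ and a.e.\ choice of reflected velocities $v_l\in\CV_l$, so all manipulations below are legitimate a.e. Multiplying the first line of \eqref{4.1.1} by the integrating factor along the straight leg $\tau\mapsto x-(t-\tau)v$ and integrating $\tau$ from $\max\{t_1,s\}$ to $t$ yields the one-leg identity
\begin{align*}
h(t,x,v)=&\ \Fi_{\{t_1\le s\}}\,I(t,s)\,h(s,x-(t-s)v,v)+\Fi_{\{t_1>s\}}\,I(t,t_1)\,h(t_1,x_1,v)\\
&+\int_{\max\{t_1,s\}}^{t}I(t,\tau)\,[K_w^m h+K_w^c h+wg](\tau,x-(t-\tau)v,v)\,\dd\tau,
\end{align*}
where $t_1=t-t_{\mathbf b}(x,v)$, $x_1=x_{\mathbf b}(x,v)$, and $K_w=K_w^m+K_w^c$. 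The first term is $\CI_1$ and the $\dd\tau$-integral contributes to $\CI_2+\CI_3+\CI_4$.

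On the event $\{t_1>s\}$ the point $(x_1,v)\in\g_-$, so the boundary condition in \eqref{4.1.1} applies and splits $h(t_1,x_1,v)=\frac{1}{\tilde w(v)}\int_{\CV_1}h(t_1,x_1,v_1)\tilde w(v_1)\,\dd\sigma_1+wr(t_1,x_1,v)$; the last term (times $I(t,t_1)$) is $\CI_{12}$. For each $v_1\in\CV_1$ I would apply the same one-leg identity to $h(t_1,x_1,v_1)$ along the leg with velocity $v_1$, producing: a term with indicator $\Fi_{\{t_2\le s<t_1\}}$ reaching time $s$ (feeding $\CI_5$), an interior integral of $K_w^m h+K_w^c h+wg$ over $[\max\{t_2,s\},t_1]$ (feeding $\CI_6+\CI_7+\CI_8$ when $t_2\le s$ and $\CI_9+\CI_{10}+\CI_{11}$ when $t_2>s$), and a boundary term $h(t_2,x_2,v_1)$ with $\Fi_{\{t_2>s\}}$ to be split again. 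Iterating this substitution $k-1$ times and tracking the accumulated factors produces at level $l$ precisely $\dd\Sigma_l(\tau)=\{\prod_{j=l+1}^{k-1}\dd\sigma_j\}\{\tilde w(v_l)I(t_l,\tau)\dd\sigma_l\}\{\prod_{j=1}^{l-1}I(t_j,t_{j+1})\dd\sigma_j\}$: the product $\prod_{j=1}^{l-1}I(t_j,t_{j+1})$ is the damping over the completed legs, $\tilde w(v_l)I(t_l,\tau)$ carries the partial leg currently integrated and the weight emerging from the $l$-th reflection, and $\prod_{j=l+1}^{k-1}\dd\sigma_j$ are trivial probability factors inserted only so that every term can be displayed under the common iterated integral $\int_{\prod_{j=1}^{k-1}\CV_j}$. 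One checks that the prefactor $I(t,t_1)/\tilde w(v)$ of the first reflection is common to $\CI_5,\dots,\CI_{14}$, and that the factor $1/\tilde w(v_{l-1})$ produced by the $l$-th reflection cancels the $\tilde w(v_{l-1})$ introduced at the $(l-1)$-th one, so the weights telescope down to the single $\tilde w(v_l)/\tilde w(v)$ visible at level $l$ (and $\tilde w(v_{k-1})/\tilde w(v)$ in $\CI_{14}$). Terminating the iteration at the $(k-1)$-th reflection leaves exactly the untruncated remainder $\CI_{14}=\frac{I(t,t_1)}{\tilde w(v)}\int_{\prod_{j=1}^{k-1}\CV_j}\Fi_{\{t_k>s\}}h(t_k,x_k,v_{k-1})\,\dd\Sigma_{k-1}(t_k)$, while the boundary sources gathered along the way assemble into $\CI_{13}$.

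The only genuine difficulty is the combinatorial bookkeeping: after the $l$-th substitution one must check that the three kinds of contributions (the leg that reaches time $s$, the interior $K_w^m h+K_w^c h+wg$ integral on that leg, and the source $wr$) land in the correct $\CI_n$ with the correct indicator $\Fi_{\{t_{l+1}\le s<t_l\}}$ or $\Fi_{\{t_{l+1}>s\}}$, and that the running product of integrating factors and $\tilde w$-weights reorganizes into $\dd\Sigma_l(\tau)$. All the interchanges of sums and integrals are harmless since $h\in L^\infty$, the $\dd\sigma_j$ are probability measures on $\CV_j$, $\int_{\CV_j}\tilde w(v_j)\,\dd\sigma_j<\infty$ because $\tilde w\mu^{1/2}=1/w$ is integrable, $I(t,\tau)\le 1$, and the number of reflections is fixed at $k$; hence every displayed integral in \eqref{4.1.5} is absolutely convergent. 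As the argument is, \emph{mutatis mutandis}, that of \cite[Lemma 24]{Guo2} with $\nu$ replaced by the stationary collision frequency $\nu_*$ and with the additional inhomogeneous boundary term $wr$, in practice I would write out the first two reflections explicitly and then close the general step by induction on $l$.
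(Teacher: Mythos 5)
Your proposal is correct and matches the paper's intended argument: the paper itself omits the proof, citing that it is "almost the same as for \cite[Lemma 24]{Guo2}," and the iterated one‑leg Duhamel identity along the backward reflection cycle that you outline --- with the boundary condition splitting off $wr$ at each reflection, the telescoping cancellation of the $\tilde w(v_l)$ factors, and the trivial insertion of $\prod_{j>l}\dd\sigma_j$ to display everything under the common iterated integral --- is precisely that argument, adapted to $\nu_*$ in place of $\nu$ and to the inhomogeneous boundary data $wr$.
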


The same as Lemma \ref{lemS3.1}, we have
\begin{lemma}
Let $(\eta,\zeta)$ belong to
$$\{\zeta=2, 0\leq\eta<1/2\}\cup\{0\leq \zeta<2,\eta\geq 0\}.
$$
For $T_0$ sufficiently large, there exist constants $\hat{C}_3$ and $\hat{C}_4$ independent of $T_0$ such that for $k=\hat{C}_3T_0^{\frac54}$ and $(t,x,v)\in[s,s+T_0]\times\bar{\Omega}\times\mathbb{R}^3$, it holds that
\begin{align}\label{4.1.5-1}
\int_{\Pi _{j=1}^{k-1}\hat{\mathcal{V}}_{j}} \Fi_{\{{t}_k>s\}}~  \Pi _{j=1}^{k-1} e^{\eta|v_j|^\zeta} \dd{\sigma} _{j}\leq \left(\frac12\right)^{\hat{C}_4T_0^{\frac54}}.
\end{align}
\end{lemma}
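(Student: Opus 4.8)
The plan is to repeat, essentially verbatim, the argument used for Lemma~\ref{lemS3.1}, the only change being that the speeded backward bi-characteristic $[\hat X(s),V(s)]$ is replaced by the ordinary straight-line characteristic $[X(s),V(s)]=[x-(t-s)v,v]$ underlying the back-time cycle \eqref{2.18}, so that the exponent $3+|\kappa|$ appearing there is replaced by $3$. First I would fix a small $\epsilon>0$ and introduce the non-grazing sets
$$
\mathcal{V}_{j}^\epsilon=\Big\{v_j\in\mathcal{V}_{j}~:~ v_j\cdot n(x_j)\geq \epsilon\ \mbox{and}\ |v_j|\leq \tfrac1\epsilon\Big\},\qquad j\geq 1,
$$
and observe that, uniformly in $j$,
$$
\int_{\mathcal{V}_{j}\setminus \mathcal{V}_{j}^{\epsilon}} e^{\eta|v_j|^\zeta}\,\dd\sigma_j\leq C\epsilon ,
$$
since $\dd\sigma_j=\mu(v_j)\{n(x_j)\cdot v_j\}\dd v_j$ carries a Gaussian factor which dominates $e^{\eta|v_j|^\zeta}$ precisely under the stated restriction on $(\eta,\zeta)$ (either $0<\zeta<2$, or $\zeta=2$ with $\eta<1/2$). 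This is the only place the hypothesis on $(\eta,\zeta)$ is used.

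Next I would establish the geometric lower bound on the time between consecutive bounces. By the argument of \cite[Lemma 2]{Guo2}, for the straight-line cycle one has
$$
t_j-t_{j+1}\geq \frac{|v_j\cdot n(x_j)|}{C_\Omega\,|v_j|^2}
$$
with $C_\Omega>0$ depending only on the domain; hence $v_j\in\mathcal{V}_j^\epsilon$ forces $t_j-t_{j+1}\geq \epsilon^{3}/C_\Omega$. Consequently, if $(t,x,v)\in[s,s+T_0]\times\bar\Omega\times\mathbb{R}^3$ and $t_k=t_k(t,x,v,v_1,\dots,v_{k-1})>s$, then among $v_1,\dots,v_{k-1}$ there can be at most $\big[\tfrac{C_\Omega T_0}{\epsilon^{3}}\big]+1$ indices $j$ with $v_j\in\mathcal{V}_j^\epsilon$.

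Then I would run the combinatorial estimate exactly as in \eqref{S3.17}: splitting the integration domain according to the number $n$ of non-grazing indices, bounding by $\binom{k-1}{n}$ times $\big(\sup_j\!\int_{\mathcal{V}_j^\epsilon}e^{\eta|v_j|^\zeta}\dd\sigma_j\big)^{n}\big(\sup_j\!\int_{\mathcal{V}_j\setminus\mathcal{V}_j^\epsilon}e^{\eta|v_j|^\zeta}\dd\sigma_j\big)^{k-1-n}$, and using the $C\epsilon$ bound for the second factor, one gets a bound of the form $\big(\big[\tfrac{C_\Omega T_0}{\epsilon^{3}}\big]+1\big)(k-1)^{[\,\cdots\,]+1}(C\epsilon)^{\,k-1-2([\,\cdots\,]+1)}$. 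Choosing $k-1=N\big(\big[\tfrac{C_\Omega T_0}{\epsilon^{3}}\big]+1\big)$ with $N>6$, then $\epsilon=\big(\tfrac{1}{2C_{\Omega,N}T_0}\big)^{1/(\frac N2-3)}$ so that the effective base equals $1/2$, and finally fixing $N$ so that $\big[\tfrac{C_\Omega T_0}{\epsilon^{3}}\big]+1\cong \hat{C}_4 T_0^{5/4}$ and $k=\hat C_3 T_0^{5/4}$, yields \eqref{4.1.5-1}. The only real bookkeeping issue—identical to that in Lemma~\ref{lemS3.1}—is the calibration of $\epsilon$ as a fixed negative power of $T_0$ so that both the number of bounces and the exponent in the final estimate scale like a prescribed power of $T_0$; because $\Omega$ is bounded and the characteristics are straight lines, this is routine, and no further ideas beyond those in Lemma~\ref{lemS3.1} are needed.
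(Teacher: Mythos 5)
Your proposal is correct and is essentially the paper's intended argument: the paper itself gives no separate proof of this lemma, stating only ``The same as Lemma~\ref{lemS3.1},'' and your reduction to Lemma~\ref{lemS3.1} with the single substitution $3+|\kappa|\mapsto 3$ (coming from the straight-line bounce-time lower bound $t_j-t_{j+1}\geq |v_j\cdot n(x_j)|/(C_\Omega|v_j|^2)$ in place of the speeded one) captures exactly what is needed. You also correctly isolate where the hypothesis on $(\eta,\zeta)$ enters, namely the uniform-in-$j$ integrability of $e^{\eta|v_j|^\zeta}$ against the Gaussian probability measure $\dd\sigma_j$, and the final calibration of $\epsilon$ and $N$ as fixed powers of $T_0$ is routine bookkeeping identical to \eqref{S3.17} and the lines following it.
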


The following proposition is to clarify the solvability of the linear  problem \eqref{4.1.0}.

\begin{proposition}\label{prop4.1}
Let $-3<\gamma<0$, $\beta>3+|\ka|,$ and $(\varpi,\zeta)$ satisfy \eqref{index}. Assume that
\begin{align*}
\int_{\Omega}\int_{\mathbb{R}^3} f_0(x,v)\mu^{\frac{1}{2}}(v)\dd x\dd v=\int_{\Omega} \int_{\mathbb{R}^3} g(t,x,v)\mu^{\frac{1}{2}}(v)\dd x\dd v=0,
\end{align*}
and also assume  that 
$$\|wf_0\|_{L^{\infty}}+\sup_{s\geq 0}e^{\lambda_0 s^\alpha}\|\nu^{-1}wg(s)\|_{L^{\infty}}<\infty,
$$
where $\la_0>0$ is a small constant to be chosen in the proof.
Then the linear IBVP problem \eqref{4.1.0} admits a unique solution $f(t,x,v)$ satisfying
\begin{align}\label{4.1.3}
\sup_{0\leq s\leq t}e^{\lambda_0 s^\alpha}\{\|wf(s)\|_{L^{\infty}}+|wf(s)|_{L^{\infty}(\g)}\}\leq C\|wf_0\|_{L^{\infty}}+C\sup_{0\leq s\leq t}e^{\lambda_0 s^\alpha}\|\nu^{-1}wg(s)\|_{L^{\infty}},
\end{align}
for any $t\geq 0$.
Moreover, if $\Omega$ is convex, $f_0(x,v)$ is continuous except on $\g_0$, $g$ is continuous in $(0,\infty)\times\Omega\times \mathbb{R}^3$,
\begin{align*}
f_0|_{\gamma_-}=P_{\g}f_0+\f{\mu_{\theta}-\mu}{\sqrt{\mu}}\int_{\{n(x)\cdot v'>0\}}f_0\sqrt{\mu}|n(x)\cdot v'|\dd v',
\end{align*}
and $\theta(x) $ is continuous on $\partial \Omega$, then $f(t,x,v)$ is continuous over $[0,\infty)\times \{\bar{\Omega}\times \mathbb{R}^{3}\setminus\g_0\}$.
\end{proposition}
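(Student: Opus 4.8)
The plan is to combine the $L^\infty$ mild formulation \eqref{4.1.5} with a double Duhamel/Vidav iteration, as in Lemma \ref{lemS3.3}, but now carrying along the exponential-in-time weight $e^{\lambda_0 s^\alpha}$ and exploiting the two key mechanisms described in the strategy: the exit-time bound $t_{\mathbf b}(x,v)\le \mathrm{diam}(\Omega)/|v|$ and the Gaussian velocity factor coming from the diffuse boundary, which together upgrade the degenerate collision frequency $\nu(v)\sim\langle v\rangle^{\kappa}$ to a usable sub-exponential decay. First I would set up the existence side by an approximation scheme (penalize the boundary operator $P_\gamma$ by $(1-\tfrac1n)P_\gamma$, add $\varepsilon f$ as in the steady case, or simply solve the linear problem by the standard iteration on the mild formulation \eqref{4.1.5}) to produce a sequence of $L^\infty$ solutions whose existence is not in doubt; this is routine and parallels Step 1 of Lemma \ref{lemS3.4}. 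The real content is the uniform a priori estimate \eqref{4.1.3}, together with uniqueness, which follows from the same estimate applied to the difference of two solutions (the difference solves \eqref{4.1.0} with $g=0$, $f_0=0$, so \eqref{4.1.3} forces it to vanish).

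For the a priori estimate I would split the time axis into windows of length $T_0$ with $T_0$ large and fixed (as in Lemma \ref{lemS3.3}) and $k=\hat C_3 T_0^{5/4}$, and iterate the mild formula \eqref{4.1.5} twice. The bulk (interior) collision terms $K_w^m, K_w^c$ are handled exactly as the $J$- and $B$-terms in the proof of Lemma \ref{lemS3.3}: the small-$m$ bound \eqref{2.31}, the kernel bounds \eqref{2.40}, \eqref{2.40-1}, the change of variables \eqref{S3.39}, and Lemma \ref{lemS1.1} convert the double-Duhamel terms into a contraction factor $\tfrac18$ times $\sup_l\|h^{i-l}\|_{L^\infty}$ plus a controllable $L^2$ remainder; the perturbation $wL_{\sqrt\mu f_*}f$ is absorbed into this structure because $\|wf_*\|_{L^\infty}\le C\delta$ by Theorem \ref{thm1.1} and $\delta_0$ is taken small. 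The new point is the time weight: using $I(t,\tau)\le e^{-\bar\nu_0(t-\tau)}\Fi_{\{|v|\le d_\Omega\}}+\Fi_{\{t-1\le\tau\le t,\,|v|>d_\Omega\}}$ (the estimate \eqref{I1}–\eqref{I2} in the strategy, valid since $\nu_*$ is bounded below on bounded velocities and the exit time is short for large $|v|$), I can pull out an honest exponential factor on the small-velocity part, while on the large-velocity part the Gaussian $e^{-|v|^2/16}$ sitting in every boundary term (from $1/\tilde w(v)\le Ce^{-|v|^2/8}$) lets me invoke Caflisch's minimization $e^{-\nu(v)(t-\tau)}e^{-|v|^2/16}\le Ce^{-\lambda_1(t-\tau)^\alpha}$ with $\lambda_1$ obtained by minimizing $\nu(v)(t-\tau)+|v|^2/16$; choosing $\lambda_0<\lambda_1$ small and using $e^{\lambda_0 t^\alpha}e^{-\lambda_0\tau^\alpha}\le e^{\lambda_0(t-\tau)^\alpha}$ (sub-additivity of $s\mapsto s^\alpha$) closes the weighted estimate. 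The $L^2$ remainder term is controlled separately by a weighted $L^2$ energy estimate on \eqref{4.1.0}: testing against $f$, using the coercivity \eqref{1.10-2}, the $L^2$ boundary gain from the diffuse reflection (as in \eqref{S3.94}–\eqref{S3.101}), the macroscopic estimate Lemma \ref{lemS3.5}, and the conservation law $\int\int f\sqrt\mu=0$ to kill the hydrodynamic kernel; since $L$ has no spectral gap one only gets $\|\sqrt\nu(I-P)f\|_{L^2}$, but this is exactly what appears in the $L^\infty$ remainder, and the $e^{\lambda_0 s^\alpha}$-weighted version of this $L^2$ estimate yields the needed decay of $\|\nu^{1/2}f(s)\|_{L^2}$ after feeding in the already-established $L^\infty$ bound on the boundary term.

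The main obstacle, as the authors flag, is precisely making \eqref{I1}–\eqref{I2} and the Caflisch minimization cooperate with the time weight \emph{without losing any velocity weight}: one must verify that at every occurrence of a collision kernel the compensating factor ($e^{-c|v|^2}$ from the boundary, or the short window $\{t-1\le\tau\le t\}$ from $t_{\mathbf b}\le\mathrm{diam}(\Omega)/|v|$ in the interior) is genuinely available, including inside the $k$-fold iterated boundary integrals $\dd\Sigma_l$, and that the grazing-set contributions $\CI_{14}$ are killed by \eqref{4.1.5-1} as in \eqref{S3.32}. Once the windowed estimate
\[
\sup_{s\le t\le s+T_0} e^{\lambda_0 t^\alpha}\{\|wf(t)\|_{L^\infty}+|wf(t)|_{L^\infty(\gamma)}\}\le \tfrac12\sup_{\tau\le s}e^{\lambda_0\tau^\alpha}\|wf(\tau)\|_{L^\infty}+C\big(\|wf_0\|_{L^\infty}+\sup_{\tau\ge0}e^{\lambda_0\tau^\alpha}\|\nu^{-1}wg(\tau)\|_{L^\infty}\big)
\]
is in hand, iterating over consecutive windows and summing the geometric series gives \eqref{4.1.3}. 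Finally, the continuity assertion follows, as in Lemma \ref{lemS3.7} and Theorem \ref{thm1.1}, from the fact that the approximating sequence converges in $L^\infty$ and each approximant is continuous away from $\gamma_0$ when $\Omega$ is strictly convex and the data (including the compatibility condition on $f_0$ and the continuity of $\theta$) are continuous, using Lemma 2 of \cite{Guo2} for the smoothness of $t_{\mathbf b},x_{\mathbf b}$ on the non-grazing set.
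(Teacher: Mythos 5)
Your plan is essentially the paper's proof: the argument is organized into a windowed $L^\infty$ estimate via the time-dependent mild formulation with a double Duhamel/Vidav iteration (Lemma~\ref{lm4.2}), a separate $L^2$ decay lemma via energy estimates (Lemma~\ref{lm4.3}), and an iteration over consecutive windows of length $T_0$ using the sub-additivity $(t-\tau)^\alpha+\tau^\alpha\geq t^\alpha$ to propagate the rate. All the mechanisms you identify — the split at $|v|=d_\Omega$, the bound $t_{\mathbf b}\leq\mathrm{diam}(\Omega)/|v|$ forcing $\max\{t_1,s\}\geq t-1$ for large velocity, the Gaussian factor from $1/\tilde w$, the contraction from Lemma~\ref{lemS3.1}-type bounds, the change-of-variable $v_l\mapsto y_l$ to reach $L^2$ — are exactly the ones the paper deploys. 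Two refinements worth noting. First, in the Caflisch minimization the paper splits $e^{-|v|^2/8}$ as $e^{-|v|^\zeta/16}\cdot(\text{rest})$ and minimizes $c\langle v\rangle^\kappa(\tau_1-\tau_2)+|v|^\zeta/16$, not $|v|^2/16$; this is what produces precisely the advertised $\alpha=\zeta/(\zeta+|\kappa|)$ for all admissible $\zeta\leq2$, and it matches the rate delivered by the $L^2$ lemma (the exponential $L^2$ weight is $e^{\varpi|\cdot|^\zeta/4}$, not $e^{c|\cdot|^2}$), so nothing is lost by being conservative here. Second, your sketch of the $L^2$ decay is light on the key technical step: after obtaining the flat-in-time weighted $L^2$ energy inequality, one sets $\tilde f=e^{\lambda'(1+t)^\alpha}f$ and, to control $\int_0^t(1+s)^{\alpha-1}\|\tilde f(s)\|_{L^2}^2\dd s$, splits $v$-space at $1+|v|=(1+t)^{1/(\zeta+|\kappa|)}$: on the low-velocity part $(1+s)^{\alpha-1}\lesssim\nu(v)$ is absorbed by the dissipation, while on the high-velocity part the stronger $e^{\varpi|\cdot|^\zeta/4}$-weighted bound takes over. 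Also, the term $L_{\sqrt\mu f_*}f$ you mention is not part of the linear problem \eqref{4.1.0} — it gets absorbed later in the proof of Theorem~\ref{thm1.2} by treating $g=-L_{\sqrt\mu f_*}f+\Gamma(f,f)$ and using Proposition~\ref{prop4.1} as a black box. These are matters of detail rather than gaps; the overall architecture of your argument coincides with the paper's.
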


The proof of Proposition \ref{prop4.1} will be given after we prepare two lemmas.

\begin{lemma}\label{lm4.2} Let $h(t,x,v)$ be the $L^\infty$-solution of the linear problem \eqref{4.1.1}. Then for any $s\geq 0$, for any $s\leq t \leq s+T_0$ with $T_0>0$ is sufficiently large, and for almost everywhere $(x,v)\in \bar{\Omega}\times \mathbb{R}^3\setminus\g_0$, it holds that
\begin{align}\label{4.1.6}
|h(t,x,v)|\leq& CT_0^{5/2}e^{-\lambda_1(t-s)^{\alpha}}\|h(s)\|_{L^\infty}\nonumber\\
&+CT_{0}^{5/2}e^{-\tilde{\lambda}t^{\alpha}}\{m^{3+\ka}+\delta+2^{-T_0}+\f{1}{N}\}\cdot\sup_{s\leq \tau\leq t}e^{\tilde{\lambda}\tau^\alpha}\{\|h(\tau)\|_{L^\infty}+|h(\tau)|_{L^\infty(\g)}\}\nonumber\\
&+C_{N, T_0,m}e^{-\tilde{\lambda}t^{\alpha}}\sup_{s\leq \tau\leq t}\|e^{\tilde{\lambda}\tau^{\alpha}}f(\tau)\|_{L^2}+CT_0^{5/2}e^{-\tilde{\lambda}t^\alpha}\sup_{s\leq \tau\leq t}\|\nu^{-1}e^{\tilde{\lambda}\tau^\alpha}wg(\tau)\|_{L^\infty},
\end{align}
where $\la_1>0$ is a generic constant given in \eqref{4.1.11-1}, and $0< \tilde{\lambda}<\lambda_1$ is an arbitrary constant to be chosen later. Here  $m>0$ can be chosen arbitrarily small and $N$ can be chosen arbitrarily large.
\end{lemma}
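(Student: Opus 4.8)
The plan is to run the Vidav--Guo $L^{\infty}$ scheme from the mild representation \eqref{4.1.5} with $k=\hat C_3 T_0^{5/4}$, estimating the families $\CI_1,\dots,\CI_{14}$ and absorbing their contributions by the three mechanisms highlighted in the strategy section: (i) the geometric exit-time bound $t_{\mathbf b}(x,v)\le \mathrm{diam}(\Omega)/|v|$, which splits every time integral along a leg into the piece with $|v|\le d_\Omega$ (where $\nu_*(x,v)\ge\bar\nu_0>0$ for $\delta$ small, since $\nu_*\approx\nu$ by Theorem \ref{thm1.1}) and the piece with $|v|>d_\Omega:=\mathrm{diam}(\Omega)$ (where the leg has length $<1$); (ii) Caflish's inequality $e^{-\nu(v)(t-\tau)}e^{-c|v|^2}\le Ce^{-\lambda_1(t-\tau)^{\alpha}}$, where $\lambda_1$ is the infimum over $v$ of $\big(\nu(v)(t-\tau)+c|v|^2\big)(t-\tau)^{-\alpha}$ with $\alpha$ as in \eqref{def.alpha} (this pins down \eqref{4.1.11-1}), together with the superadditivity $(a+b)^{\alpha}\le a^{\alpha}+b^{\alpha}$ for $\alpha\in(0,1)$, which converts $e^{-\lambda_1(t-\tau)^{\alpha}}e^{-\tilde\lambda\tau^{\alpha}}$ into $e^{-\tilde\lambda t^{\alpha}}$ for any $\tilde\lambda<\lambda_1$; and (iii) a second substitution of \eqref{4.1.5} into the bad kernel term, \`a la Lemma \ref{lemS3.3}, to trade $k^c_w$ for an $L^2$ norm.

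Concretely, I would first handle the transported data. On $\{t_1\le s\}$ one has $t-s\le t_{\mathbf b}(x,v)$; the $|v|\le d_\Omega$ piece gives $I(t,s)\le e^{-\bar\nu_0(t-s)}\le Ce^{-\lambda_1(t-s)^{\alpha}}$ and the $|v|>d_\Omega$ piece gives $t-s<1$, so $|\CI_1|\le Ce^{-\lambda_1(t-s)^{\alpha}}\|h(s)\|_{L^{\infty}}$. The restart term $\CI_5$, which also involves $h(s,\cdot)$ but after some bounces, carries the extra prefactor $1/\tilde w(v)\sim e^{-c|v|^2}$ produced by the diffuse reflection together with the full-cycle decay $I(t,s)$; applying (i)--(ii) to it and bounding the remaining iterated measure against the probability measures $\dd\sigma_j$ (which costs only a polynomial factor $T_0^{5/2}$ from the sum over $l$) yields the first line of \eqref{4.1.6}.

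Next, grouping the integral terms by kernel: the $K^m_w$ parts of $\CI_2,\CI_6,\CI_7,\CI_9,\CI_{10}$ carry, by \eqref{2.31}, the small factor $m^{3+\kappa}$ and a Gaussian $e^{-c|v|^2}$ (using $\varpi<1/8\le1/6$ and $w\ge1$); the inhomogeneous parts $\CI_4,\CI_8,\CI_{11}$ are controlled after $|wg|\le\nu\|\nu^{-1}wg\|_{L^{\infty}}$; and the boundary parts $\CI_{12},\CI_{13}$ carry $w(v)\tfrac{\mu_{\theta}-\mu}{\sqrt{\mu}}$, which is $O(\delta)e^{-c|v|^2}$ since $|\theta-\theta_0|\le\delta$, times $|r[h]|\le C|h|_{L^{\infty}(\gamma)}$ (the boundary integrals being finite because $\tilde w\mu\sim\mu^{1/2}/w$ is integrable). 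In each case, combining the Gaussian with the $\nu_*$-exponential along the characteristic and invoking (i)--(ii), then pulling out $\sup_{s\le\tau\le t}e^{\tilde\lambda\tau^{\alpha}}\{\cdots\}$, produces exactly the middle lines of \eqref{4.1.6} with coefficients $m^{3+\kappa}$, $\delta$, and (from $\CI_{14}$ via \eqref{4.1.5-1}) $(1/2)^{\hat C_4 T_0^{5/4}}\le 2^{-T_0}$.

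The only laborious step is the $K^c_w$ part of $\CI_2,\CI_6,\CI_7,\CI_9,\CI_{10}$, where $k^c$ is not small: substituting \eqref{4.1.5} into these terms once more and splitting the double velocity integral into $|v|\ge N$, $\{|v|\le N,\ |v'|\ge 2N\}$, and $\{|v|\le N,\ |v'|\le 2N\}$, the first two regions give $e^{-cN^2}\|h\|_{L^{\infty}}$ by \eqref{2.40}--\eqref{2.40-1}, while on the third region, after removing the time layer of width $1/N$ (handled by a crude $C/N\cdot\|h\|_{L^{\infty}}$ bound), Cauchy--Schwarz and the change of variables $v'\mapsto y'=x'-(s-\tau)v'$, whose Jacobian is $(s-\tau)^3\ge N^{-3}$ on what remains, turn the kernel integral into $C_{N,T_0,m}\|\sqrt{\nu}\,h(\tau)/w\|_{L^2}\le C_{N,T_0,m}\|f(\tau)\|_{L^2}$ and hence into the last line of \eqref{4.1.6}. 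I expect the main obstacle to be the uniformity bookkeeping in step (ii): one must verify that the rate $\lambda_1$ from \eqref{4.1.11-1} is independent of $(x,v)$ and of $m,N,\delta,\varepsilon$, and that the sub-exponential factor is not degraded by the bounces or by the two applications of the mild formula, so that the weight $e^{\tilde\lambda\tau^{\alpha}}$ --- and no extra velocity weight --- suffices to close the scheme.
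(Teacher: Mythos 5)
Your proposal is correct and takes essentially the same route as the paper: the exit-time bound $t_{\mathbf b}\le d_\Omega/|v|$ to restore a strictly positive effective frequency, Caflish's inequality to extract the rate $e^{-\lambda_1(\cdot)^\alpha}$ from the Gaussian weight coming from diffuse reflection and the $K^m$ cutoff, the small prefactors $m^{3+\kappa}$, $\delta$, $2^{-T_0}$, $1/N$ from the respective pieces, and one Vidav iteration followed by the time-layer/velocity split and the change of variables $v'\mapsto y'$ (with Jacobian $\gtrsim N^{-3}$) to convert the $k^c_w$ kernel into an $L^2$ contribution. The only cosmetic slip is the grouping of indices for the $K^m_w$ versus $K^c_w$ versus source pieces of $\CI_2,\dots,\CI_{11}$, which does not affect the substance of the argument.
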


\begin{proof}
We take $T_0$ sufficiently large and $k=\hat{C}_3T_0^{5/4}$ such that \eqref{4.1.5-1} holds for $\eta=\f5{16}$. Recall ${\nu}_*(x,v)$ defined in \eqref{4.4}. Then it holds from \eqref{S1.2} that
\begin{align}
\f{\nu(v)}{2}\leq {\nu}_*(x,v)\leq \f{3\nu(v)}{2},\nonumber
\end{align}
provided that $\delta>0$ is suitably small. Hence it holds that
\begin{align*}
e^{-\f{3\nu(v)}{2}(t-s)}\leq I(t,s)\leq  e^{-\f{\nu(v)}{2}(t-s)},
\end{align*}
for any $0\leq s\leq t$. We first estimate $\CI_1$. On one hand, if $s\leq t\leq s+1$, it obviously holds that
\begin{align}\label{4.1.8-1}|\CI_1|\leq \|h(s)\|_{L^\infty}\leq e^{\tilde{\lambda}}e^{-\tilde{\lambda}(t-s)}\|h(s)\|_{L^\infty},
\end{align}
for any $\tilde{\lambda}>0$. On the other hand, if $t>s+1$, we note that
$$
0\leq t_{\mathbf{b}}(x,v)\leq \f{d_\Omega}{|v|},
$$
where $d_\Omega:=\sup_{x,y\in\Omega}|x-y|$ is the diameter of the bounded domain $\Omega$. Then for $|v|>d_{\Omega}$, it holds that $$t_1-s=t-t_{\mathbf{b}}(x,v)-s
>0.$$
In other words, $\CI_1$  appears only when the particle velocity $|v|$ is not greater than $d_\Omega$, so that we have
\begin{align}\label{4.1.8}
|\CI_1|\leq Ce^{-\f{\nu(v)(t-s)}{2}}\Fi_{\{|v|\leq d_\Omega\}}\|h(s)\|_{L^\infty}\leq C_{\Omega}e^{-\bar{\nu}_0(t-s)}\|h(s)\|_{L^\infty},
\end{align}
where $$\bar{\nu}_0:=\f{\inf_{|v|\leq d_{\Omega}}\nu(v)}{2}>0$$ depends only on $d_\Omega$. Collecting the estimates \eqref{4.1.8-1} and \eqref{4.1.8} on these two cases, we have
\begin{align}\label{4.1.8-2}
|\CI_1|\leq C_{\Omega}e^{-\bar{\nu}_0(t-s)}\|h(s)\|_{L^\infty}.
\end{align}
For $\CI_4$, we split the velocity to estimate it as
\begin{align}
|\CI_4|&\leq\int_{\max\{t_1,s\}}^te^{-\f{\nu(v)(t-\tau)}{2}}\nu(v)\cdot\{\Fi_{\{|v|\leq d_\Omega\}}+\Fi_{\{|v|> d_{\Omega}\}}\}\cdot\|\nu^{-1}wg(\tau)\|_{L^\infty}\dd\tau\nonumber.
\end{align}
For $|v|>d_\Omega$, we have $\max\{t_1,s\}\geq t_1\geq t-t_{\mathbf{b}}(x,v)\geq t-1$, which implies, for any $\tilde{\lambda}>0$. that
$$\begin{aligned}&\int_{\max\{t_1,s\}}^te^{-\f{\nu(v)(t-\tau)}{2}}\nu(v)\Fi_{\{|v|> d_\Omega\}}\|\nu^{-1}wg(\tau)\|_{L^\infty}\\
&\quad\leq C\int_{\max\{t_1,s\}}^t\Fi_{\{t-1\leq \tau\leq t\}}\|\nu^{-1}wg(\tau)\|_{L^\infty}\\
&\quad\leq e^{\tilde{\lambda}}\int_{\max\{t_1,s\}}^te^{-\tilde{\lambda}(t-\tau)}\Fi_{\{t-1\leq \tau\leq t\}}\|\nu^{-1}wg(\tau)\|_{L^\infty}\dd\tau\\
&\quad\leq C_{\tilde{\lambda},\alpha}e^{-\tilde{\lambda}t^\alpha}\cdot\sup_{s\leq \tau\leq t}\|e^{\tilde{\lambda}\tau^\alpha}\nu^{-1}wg(\tau)\|_{L^\infty}.
\end{aligned}
$$
For $|v|\leq d_\Omega$, we have
\begin{multline*}
\int_{\max\{t_1,s\}}^te^{-\f{\nu(v)(t-\tau)}{2}}\nu(v)\Fi_{\{|v|\leq d_{\Omega}\}}\|\nu^{-1}wg(\tau)\|_{L^\infty}\dd\tau\\
\leq C_{\Omega}\int_{\max\{t_1,s\}}^t e^{-\bar{\nu}_0(t-\tau)}\|\nu^{-1}wg(\tau)\|_{L^\infty}\dd\tau\\
\leq C_{\Omega,\tilde{\lambda},\alpha}e^{-\tilde{\lambda}t^\alpha}\cdot\sup_{s\leq \tau\leq t}\|e^{\tilde{\lambda}\tau^\alpha}\nu^{-1}wg(\tau)\|_{L^\infty}.
\end{multline*}
Conbining these two estimates, we have
\begin{align}\label{4.1.9}
|\CI_4|\leq Ce^{-\tilde{\lambda}t^\alpha}\cdot\sup_{s\leq \tau\leq t}\|e^{\tilde{\lambda}\tau^\alpha}\nu^{-1}wg(\tau)\|_{L^\infty}.
\end{align}
Similarly, by using \eqref{2.31}, it holds that
\begin{align}\label{4.1.11}
|\CI_2|\leq Cm^{3+\ka}e^{-\tilde{\lambda}t^\alpha}e^{-\f{|v|^2}{16}}\sup_{s\leq \tau\leq t}\|e^{\tilde{\lambda}\tau^\alpha}h(\tau)\|_{L^\infty}.
\end{align}
For $\CI_{5}$, we take $|v_m|=\max\{|v_1|,|v_2|,\cdots,|v_l|\}$. By a direct computation, we have, for some positive constant $c>0$, that
\begin{align}\label{4.1.11-2}
&\frac{I(t,t_{1})}{\tilde{w}(v)}\bigg[\prod_{j=1}^{l-1}I(t_{j},t_{j+1})\bigg]\times
\tilde{w}(v_{l})I(t_{l},s)\nonumber\\
&\quad\leq \f{e^{-c\langle v\rangle^{\ka}(t-t_1)}}{\tilde{w}(v)}\prod_{j=1}^{l-1}e^{-c\langle v_j\rangle^{\ka}(t_j-t_{j+1})}\cdot e^{-c\langle v_l\rangle^{\ka}(t_l-s)}\tilde{w}(v_l)\nonumber\\
&\quad \leq   \f{Ce^{-c\langle v\rangle^{\ka}(t-t_1)}}{\tilde{w}(v)} e^{-c\langle v_m\rangle^{\ka}(t_1-s)}e^{\f{|v_m|^2}{4}}.
\end{align}
Here we have denoted $\langle v\rangle:=(1+|v|^2)^{1/2}$. We note the fact from Young's inequality that
\begin{align}\label{4.1.11-1}
c\langle v\rangle^{\ka}(\tau_1-\tau_2)+\f{|v|^{\zeta}}{16}\geq \lambda_1(\tau_1-\tau_2)^{\alpha},
\end{align}
for a generic constant $\lambda_1>0$ depending only on $\zeta$. Then the right-hand side of \eqref{4.1.11-2} is further bounded as
$$
\begin{aligned}
C  \f{e^{-c\langle v\rangle^{\ka}(t-t_1)}e^{-\f{|v|^\zeta}{16}}}{\tilde{w}(v)e^{-\f{|v|^\zeta}{16}}} e^{-c\langle v_m\rangle^{\ka}(t_1-s)-\f{|v_m|^\zeta}{16}}e^{\f{5|v_m|^2}{16}}\leq Ce^{-\f{|v|^2}{16}}e^{-\lambda_1(t-s)^\alpha}e^{\f{5|v_m|^2}{16}}.
\end{aligned}
$$
Here we have used the elementary fact that $x^{\alpha}+y^{\alpha}\geq (x+y)^{\alpha}$ for $x,y\geq 0$ and $0\leq \alpha\leq 1$. Therefore, it holds that
\begin{align}\label{4.1.12}
|\CI_5|\leq& \sum_{l=1}^{k-1}\sum_{m=1}^{l}Ce^{-\f{|v|^2}{16}}e^{-\lambda_1(t-s)^{\alpha}}\|h(s)\|_{L^\infty} \nonumber\\
&\times\bigg\{\int_{\prod_{j=1}^{l}\CV_{j}}\Fi_{\{t_{l+1}\leq s<t_{l}\}}\times\Fi_{\big\{|v_{m}|= \max\big[|v_{1}|,|v_{2}|,...|v_{l}|\big]\big\}}e^{\frac{5|v_{m}|^{2}}{16}}
\prod_{j=1}^{l}\dd\sigma_{j}\bigg\}\nonumber\\
\leq &Ck^2e^{-\f{|v|^2}{16}}e^{-\lambda_1(t-s)^{\alpha}}\|h(s)\|_{L^\infty}\cdot\sup_{j}\left|\int_{\CV_j}e^{\frac{5|v_{j}|^{2}}{16}}\dd\sigma_j\right|\nonumber\\
\leq& Ck^2e^{-\f{|v|^2}{16}}e^{-\lambda_1(t-s)^{\alpha}}\|h(s)\|_{L^\infty}.
\end{align}
Similarly, we have
\begin{align}\label{4.1.14}
|\CI_{8}|+|\CI_{11}|\leq Ck^2e^{-\f{|v|^2}{16}}e^{-\tilde{\lambda}t^{\alpha}}\cdot\sup_{s\leq \tau\leq t}\|e^{\tilde{\lambda}\tau^\alpha}\nu^{-1}wg(\tau)\|_{L^\infty}.
\end{align}
Recall $r$ defined in \eqref{4.1.5-2}. Similar for obtaining \eqref{4.1.12}, we have
\begin{align}\label{4.1.13}
|\CI_{12}|+|\CI_{13}|\leq C\delta k^2e^{-\f{|v|^2}{16}}e^{-\tilde{\lambda}t^{\alpha}}\cdot\sup_{s\leq \tau\leq t}|e^{\tilde{\lambda}\tau^\alpha}h(\tau)|_{L^\infty (\g_+)}.
\end{align}
By \eqref{2.31}, it holds that
\begin{align}\label{4.1.15}
|\CI_{6}|+|\CI_{9}|\leq Ck^2m^{3+\ka}e^{-\f{|v|^2}{16}}e^{-\tilde{\lambda}t^{\alpha}}\cdot\sup_{s\leq \tau\leq t}\|e^{\tilde{\lambda}\tau^\alpha}h(\tau)\|_{L^\infty}.
\end{align}
For $\CI_{14}$, we have, from \eqref{4.1.5-1}, that
\begin{align}\label{4.1.16}
|\CI_{14}|\leq &Ce^{-\f{|v|^2}{16}}e^{-\tilde{\lambda}t^{\alpha}}\cdot\sup_{s\leq \tau\leq t}|e^{\tilde{\lambda}\tau^\alpha}h(\tau)|_{L^\infty(\g_-)}\cdot\int_{\Pi _{j=1}^{k-1}\hat{\mathcal{V}}_{j}} \Fi_{\{{t}_k>s\}}~  \Pi _{j=1}^{k-1} e^{\f{5|v_j|^2}{16}} \dd{\sigma} _{j}\nonumber\\
\leq &Ce^{-\f{|v|^2}{16}}e^{-\tilde{\lambda}t^{\alpha}}\cdot\sup_{s\leq \tau\leq t}|e^{\tilde{\lambda}\tau^\alpha}h(\tau)|_{L^\infty(\g_-)}\cdot \left(\frac12\right)^{\hat{C}_4T_0^{\frac54}}.
\end{align}
Now we consider the terms involving $K^c_{w}$. Similar as in \eqref{4.1.12}, we have 
\begin{align*}
|\CI_7| &\leq  C e^{-\f{|v|^2}{16}} \sum_{l=1}^{k-1}\sum_{m=1}^{l}\int_{\Pi _{j=1}^{l-1}{\mathcal{V}}_{j}}\dd{\sigma}_{l-1}\cdots \dd{\sigma}_1 \nonumber\\
&\qquad\times \int_{\mathcal{V}_l}\int_{\mathbb{R}^3} \int_s^{{t}_l} e^{-\lambda_1(t-\tau)^{\alpha}} \Fi_{\{{t}_{l+1}\leq s<{t}_l\}} e^{\f{5|v_m|^2}{16}} |k^c_w(v_l,v') h(\tau,{x}_l-{v}_l({t}_l-\tau),v')|\dd \tau\dd v' \dd{\sigma}_l. 
\end{align*}
Then, by splitting the integral domain, we further have
\begin{align}\label{4.1.17}
|\CI_7| 
&\leq C e^{-\f{|v|^2}{16}} \sum_{l=1}^{k-1}\sum_{m=1}^{l-1}\int_{\Pi _{j=1}^{l-1}{\mathcal{V}}_{j}}e^{\f{5|v_m|^2}{16}} \dd{\sigma}_{l-1}\cdots \dd{\sigma}_1 \int_{\mathcal{V}_l\cap \{|v_l|\geq N\}}\int_{\mathbb{R}^3}\int_s^{{t}_l} \Delta\dd \tau\dd v' \dd{\sigma}_l \nonumber\\
&\quad+C e^{-\f{|v|^2}{16}} \sum_{l=1}^{k-1}\sum_{m=1}^{l-1}\int_{\Pi _{j=1}^{l-1}{\mathcal{V}}_{j}}e^{\f{5|v_m|^2}{16}}  \dd{\sigma}_{l-1}\cdots \dd{\sigma}_1 \int_{\mathcal{V}_l\cap \{|v_l|\leq N\}}\int_{\mathbb{R}^3} \int_{t_l-\f1N}^{{t}_l}  \Delta\dd \tau\dd v' \dd{\sigma}_l\nonumber\\
&\quad+ C e^{-\f{|v|^2}{16}} \sum_{l=1}^{k-1}\sum_{m=1}^{l-1}\int_{\Pi _{j=1}^{l-1}{\mathcal{V}}_{j}} e^{\f{5|v_m|^2}{16}} \dd{\sigma}_{l-1}\cdots \dd{\sigma}_1 \int_{\mathcal{V}_l\cap \{|v_l|\leq N\}}\int_{\{|v'|\geq 2N\}} \int_s^{{t}_l-\f1N}  \Delta \dd \tau\dd v' \dd{\sigma}_l\nonumber\\
&\quad+ C e^{-\f{|v|^2}{16}} \sum_{l=1}^{k-1}\sum_{m=1}^{l-1}\int_{\Pi _{j=1}^{l-1}{\mathcal{V}}_{j}} e^{\f{5|v_m|^2}{16}} \dd{\sigma}_{l-1}\cdots \dd{\sigma}_1 \int_{\mathcal{V}_l\cap \{|v_l|\leq N\}}\int_{\{|v'|\leq 2N\}} \int_s^{{t}_l-\f1N}  \Delta \dd \tau\dd v' \dd{\sigma}_l\nonumber\\
:&=Ce^{-\f{|v|^2}{16}}\sum_{l=1}^{k-1}\bigg\{\CI_{71l}+\CI_{72l}+\CI_{73l}+\CI_{74l}\bigg\},
\end{align}
where we have denoted that
\begin{align}\Delta:=e^{-\lambda_1(t-\tau)^{\alpha}} \Fi_{\{{t}_{l+1}\leq s<{t}_l\}}e^{\f{5|v_l|^2}{16}}|k^c_w(v_l,v') h(\tau,{x}_l-{v}_l({t}_l-\tau),v')|.\nonumber
\end{align}
For $\CI_{71l}$, we use \eqref{2.40-1} to obtain, for any $0<\tilde{\lambda}<\lambda_1$, that
\begin{align}\label{4.1.18}
\CI_{71l}&\leq Ck e^{-\tilde{\lambda}t^{\alpha}}\cdot\sup_{s\leq \tau\leq t}\|e^{\tilde{\lambda}\tau^\alpha}h(\tau)\|_{L^\infty}\cdot\sup_{j=0,\cdots,l-1}\left|\int_{\CV_j}e^{\f{5|v_j|^2}{16}}\dd\sigma_j\right|\cdot \left|\int_{|v_l|\geq N}e^{-\f{|v_l|^2}{16}}\dd v_l\right|\nonumber\\
&\leq Cke^{-\frac{N^2}{32}} e^{-\tilde{\lambda}t^{\alpha}}\cdot\sup_{s\leq \tau\leq t}\|e^{\tilde{\lambda}\tau^\alpha}h(\tau)\|_{L^\infty}.
\end{align}
For $\CI_{72l}$, it is straightforward to see that \begin{align}\label{4.1.19}
\CI_{72l}\leq \f{Ck}{N} e^{-\tilde{\lambda}t^{\alpha}}\cdot\sup_{s\leq \tau\leq t}\|e^{\tilde{\lambda}\tau^\alpha}h(\tau)\|_{L^\infty}.
\end{align}
For $\CI_{73l}$, since $|v'-v_l|\geq N$, then by \eqref{2.40-1}, it holds that
\begin{align}\label{4.1.20}
|\CI_{73l}|\leq Cke^{-\frac{N^2}{32}} e^{-\tilde{\lambda}t^{\alpha}}\cdot\sup_{s\leq \tau\leq t}\|e^{\tilde{\lambda}\tau ^\alpha}h(\tau)\|_{L^\infty}.
\end{align}
For $\CI_{74l}$, by H\"older's inequality, it holds that
\begin{align}
 &\int_{\mathcal{V}_l\cap \{|v_l|\leq N\}}\int_{\{|v'|\leq 2N\}} \int_s^{{t}_l-\f1N}  \Delta \dd \tau\dd v' \dd{\sigma}_l\nonumber\\
 &\quad\leq C \int_s^{t_l-\f1N}e^{-\lambda_1(t-\tau)^{\alpha}}\dd\tau\bigg\{\int_{\mathcal{V}_l\cap \{|v_l|\leq N\}}\int_{|v'|\leq 2N} e^{-\frac18|v_l|^2} |k^c_w(v_l,v')|^2 \dd v' \dd v_l \bigg\}^{1/2}\nonumber\\
&\qquad\quad\times \bigg\{\int_{\mathcal{V}_l\cap \{|v_l|\leq N\}}\int_{|v'|\leq 2N} \Fi_{\{{t}_{l+1}\leq s<{t}_l\}} \left|h(\tau,{x}_l-{v_l}({t}_l-\tau),v')\right|^2 \dd v' \dd v_l \bigg\}^{1/2}\nonumber\\
&\quad\leq C_{N} m^{\ka-1}\int_s^{t_l-\f1N}e^{-\lambda_1(t-\tau)^{\alpha}}\dd\tau\nonumber\\
&\qquad\times\bigg\{\int_{\mathcal{V}_l\cap \{|v_l|\leq N\}}\int_{|v'|\leq 2N} \Fi_{\{{t}_{l+1}\leq s<{t}_l\}} \left|f(\tau,{x}_l-{v_l}({t}_l-\tau),v')\right|^2 \dd v' \dd v_l\bigg\}^{1/2}.\nonumber
\end{align}
Here we have used \eqref{2.33} for $\alpha=1$ in the last inequality. Note that $y_l:={x}_l-{v}({t}_l-\tau)\in \Omega$ for $s\leq \tau\leq t_l-\f1N$. Making change of variable $v_l\rightarrow y_l$, we obtain that
\begin{align}
\int_{\mathcal{V}_l\cap \{|v_l|\leq N\}}\int_{\mathbb{R}^3} \int_s^{{t}_l-\f1N}  \Delta \dd \tau\dd v' \dd{\sigma}_l\leq C_Nm^{\ka-1}e^{-\tilde{\lambda}t^\alpha}\cdot\sup_{s\leq \tau\leq t}e^{\tilde{\lambda}{\tau}^\alpha}\|f(\tau)\|_{L^2}.\nonumber
\end{align}
Combing this with \eqref{4.1.17}, \eqref{4.1.18}, \eqref{4.1.19}, \eqref{4.1.20}, we have
\begin{align}\label{4.1.21}
|\CI_7|\leq \f{Ck}{N}e^{-\f{|v|^2}{16}}e^{-\tilde{\lambda}t^\alpha}\cdot\sup_{s\leq \tau\leq t}\|e^{\tilde{\lambda}\tau^\alpha}h(\tau)\|_{L^\infty}+C_Nm^{\ka-1}e^{-\f{|v|^2}{16}}e^{-\tilde{\lambda}t^\alpha}\cdot\sup_{s\leq \tau\leq t}e^{\tilde{\lambda}{\tau}^\alpha}\|f(\tau)\|_{L^2}.
\end{align}
Similarly,
\begin{align}\label{4.1.22}
|\CI_{10}|\leq \f{Ck}{N}e^{-\f{|v|^2}{16}}e^{-\tilde{\lambda}t^\alpha}\cdot\sup_{s\leq \tau\leq t}\|e^{\tilde{\lambda}\tau^\alpha}h(\tau)\|_{L^\infty}+C_Nm^{\ka-1}e^{-\f{|v|^2}{16}}e^{-\tilde{\lambda}t^\alpha}\cdot\sup_{s\leq \tau\leq t}e^{\tilde{\lambda}{\tau}^\alpha}\|f(\tau)\|_{L^2}.
\end{align}
Substituting \eqref{4.1.8-2}, \eqref{4.1.9}, \eqref{4.1.11}, \eqref{4.1.12}, \eqref{4.1.13}, \eqref{4.1.14}, \eqref{4.1.15}, \eqref{4.1.16}, \eqref{4.1.21}, \eqref{4.1.22} into \eqref{4.1.5}, we have
\begin{align}\label{4.1.23}
|h(t,x,v)|\leq\int_{\max\{t_1,s\}}^t\int_{\mathbb{R}^3}I(t,\tau)\left|k_w^c(v,u)h(\tau,x-(t-\tau)v,u)\right|\dd u\dd\tau+\hat{A}(t,v),
\end{align}
where we have denoted
\begin{align*}
\hat{A}(t,v):=&Ck^2e^{-\lambda_1(t-s)^{\alpha}}\|h(s)\|_{L^\infty}\nonumber\\
&+Ck^2e^{-\tilde{\lambda}t^\alpha}e^{-\f{|v|^2}{16}}\{m^{3+\ka}+\delta+2^{-T_0}+\f{1}{N}\}\cdot\sup_{s\leq \tau\leq t}e^{\tilde{\lambda}\tau^\alpha}\{\|h(\tau)\|_{L^\infty}+|h(\tau)|_{L^\infty(\g)}\}\nonumber\\
&+C_{N,T_0,m}e^{-\f{|v|^2}{16}}e^{-\tilde{\lambda}t^\alpha}\cdot\sup_{s\leq \tau\leq t}\|e^{\tilde{\lambda}{\tau}^\alpha}f(\tau)\|_{L^2}\notag\\
&+Ck^2e^{-\tilde{\lambda}t^\alpha}\sup_{s\leq \tau\leq t}\|\nu^{-1}e^{\tilde{\lambda}\tau^\alpha}wg(\tau)\|_{L^\infty}.
\end{align*}
Denote $x':=x-(t-\tau)v$ and $t_{1}':=t_1(\tau,x',u)$. Now we use \eqref{4.1.23} for $h(\tau,x-(t-\tau)v,u)$ to evaluate
\begin{align}\label{4.1.24}
|h(t,x,v)|&\leq \hat{A}(t,v)+\int_{\max\{t_1,s\}}^t\int_{\mathbb{R}^3}I(t,\tau)\left|k_w^c(v,u)\hat{A}(\tau,u)\right|\dd u\dd\tau\nonumber\\
&\quad+\int_{s}^t\int_{s}^\tau\int_{\mathbb{R}^3}\int_{\mathbb{R}^3}\Fi_{\{\max\{t_1,s\}\leq \tau\leq t\}}\Fi_{\{\max\{t_1',s\}\leq \tau'\leq \tau\}}\nonumber\\
&\qquad\times I(t,\tau')\left|k_w^c(v,u)k_w^c(u,u')h(\tau',x'-(\tau-\tau')u,u')\right|\dd u\dd u'\dd\tau'\dd\tau\nonumber\\
&:=\hat{A}(t,v)+\hat{B}_1+\hat{B}_2.
\end{align}
Similar for obtaining \eqref{4.1.9}, we have
\begin{align}\label{4.1.25}
|\hat{B}_1|\leq &\int_{\max\{t_1,s\}}^t\int_{\mathbb{R}^3}\left\{\Fi_{\{|v|\leq d_\Omega\}}+\Fi_{\{|v|>d_{\Omega}\}}\right\}I(t,\tau)|k_w(v,u)A(\tau,u)|\dd u\dd\tau\nonumber\\
\leq& C_{\Omega}\int_{\max\{t_1,s\}}^t\int_{\mathbb{R}^3}\left\{e^{-\bar{\nu}_0(t-\tau)}\Fi_{\{t-1\leq\tau\leq t\}}+e^{-\bar{\nu}_0(t-\tau)}\Fi_{\{|v|\leq d_\Omega\}}\right\}|k_w(v,u)A(\tau,u)|\dd u\dd\tau\nonumber\\
\leq& Ck^2e^{-\lambda_1(t-s)^{\alpha}}\|h(s)\|_{L^\infty}\nonumber\\
&+Ck^2e^{-\tilde{\lambda}t^\alpha}\{m^{3+\ka}+\delta+2^{-T_0}+\f{1}{N}\}\cdot\sup_{s\leq \tau\leq t}e^{\tilde{\lambda}\tau^\alpha}\{\|h(\tau)\|_{L^\infty}+|h(\tau)|_{L^\infty(\g)}\}\nonumber\\
&+C_{N,T_0,m}e^{-\tilde{\lambda}t^\alpha}\cdot\sup_{s\leq \tau\leq t}\|e^{\tilde{\lambda}{\tau}^\alpha}f(\tau)\|_{L^2}+Ck^2e^{-\tilde{\lambda}t^\alpha}\sup_{s\leq \tau\leq t}\|\nu^{-1}e^{\tilde{\lambda}\tau^\alpha}wg(\tau)\|_{L^\infty}.
\end{align}
Finally, we estimate $\hat{B}_2$. If $|v|>N$, we have from \eqref{2.40-1} that
\begin{align}\label{4.1.26}
|\hat{B}_2|&\leq C(1+|v|)^{-2}e^{-\tilde{\lambda}t^\alpha}\sup_{s\leq \tau\leq t}\|e^{\tilde{\lambda}\tau^\alpha}h(\tau)\|_{L^\infty}\nonumber\\
&\leq \f{C}{N^2}e^{-\tilde{\lambda}t^\alpha}\sup_{s\leq \tau\leq t}\|e^{\tilde{\lambda}\tau^\alpha}h(\tau)\|_{L^\infty}.
\end{align}
If $|v|\leq N$, we denote the integrand of $B_2$
as $U(\tau',v',v'';\tau,v)$, and split the integral domain 
 with respect to $\dd\tau'\dd v''\dd v'$ into the following four parts:
\begin{align*}
\cup_{i=1}^4\mathcal{O}_i:=&\{|v'|\geq 2N\}\\
&\cup\{|v'|\leq 2N, |v''|>3N\}\\
&\cup\{|v'|\leq 2N, |v''|\leq 3N, \tau-\f1N\leq \tau'\leq \tau\}\nonumber\\
&\cup\{|v'|\leq 2N, |v''|\leq 3N, s\leq \tau'\leq \tau-\f1N\}.
\end{align*}
Over $\mathcal{O}_1$ and $\mathcal{O}_2$, we have either $|v-v'|\geq N$ or $|v'-v''|\geq N $, so that one of the following is valid:
\begin{equation}\left\{
\begin{aligned}
&|k^c_w(v,v')|\leq e^{-\f{N^2}{32}}e^{\f{|v-v'|^2}{32}}|k^{c}_w(v,v')|,\nonumber\\
&|k^c_w(v',v'')|\leq e^{-\f{N^2}{32}}e^{\f{|v'-v''|^2}{32}}|k^{c}_w(v,v')|.
\end{aligned}\right.
\end{equation}
By \eqref{2.40-1}, one has
\begin{equation}\label{4.1.27}
\int_{s}^t\int_{\mathcal{O}_1\cup\mathcal{O}_2}U(\tau',v',v'';\tau,v)\dd v''\dd\tau'\dd v'\dd \tau
\leq Ce^{-\f{N^2}{32}}e^{-\tilde{\lambda}t^\alpha}\sup_{s\leq \tau\leq t}\|e^{\tilde{\lambda}\tau^\alpha}h(\tau)\|_{L^{\infty}}.
\end{equation}
Over $\mathcal{O}_3$, it is direct to obtain
\begin{equation}\label{4.1.28}
\int_{s}^t\int_{\mathcal{O}_3}U(\tau',v',v'';\tau,v)\dd v''\dd\tau'\dd v'\dd \tau
\leq \f{C}{N}e^{-\tilde{\lambda}t^\alpha}\sup_{s\leq \tau\leq t}\|e^{\tilde{\lambda}\tau^\alpha}h(\tau)\|_{L^{\infty}}.
\end{equation}
For $\mathcal{O}_4$, it holds from Holder's inequality that
\begin{align}\label{4.1.29}
&\int_{\mathcal{O}_4}U(\tau',v',v'';\tau,v)\dd v''\dd\tau'\dd v'\nonumber\\
&\quad\leq C_Ne^{-\bar{\nu}_0(t-\tau)}\left(\int_{\mathcal{O}_4}e^{-\bar{\nu}_0(\tau-\tau')}|k_w^c(v,u)k_{w}^c(u,u')|^2\dd\tau'\dd u\dd u'\right)^{1/2}\nonumber\\
&\qquad\times\left(\int_{\mathcal{O}_4}e^{-\bar{\nu}_0(\tau-\tau')}\Fi_{\{y'\in \Omega\}}|h(\tau',y',u')|^2\dd\tau'\dd u\dd u'\right)^{1/2}\nonumber\\
&\quad\leq C_{N}m^{2(\ka-1)}e^{-\bar{\nu}_0(t-\tau)}\left(\int_{\mathcal{O}_4}e^{-\bar{\nu}_0(\tau-\tau')}\Fi_{\{y'\in \Omega\}}|f(\tau',y',u')|^2\dd\tau'\dd u\dd u'\right)^{1/2},
\end{align}
where we have denoted $y':=x'-(\tau-\tau')u.$ Making change of variable $u\rightarrow y'$ and noting that the Jacobian $\left|\f{\dd y'}{\dd u}\right|\geq \f{1}{N^3}>0$ for $s\leq \tau'<\tau-\f1N$, the right-hand side of \eqref{4.1.29} is bounded by
$$ C_{N}m^{2(\ka-1)}e^{-\tilde{\lambda}t^\alpha}e^{-\f{\bar{\nu}_0(t-\tau)}{2}}\cdot\sup_{s\leq \tau\leq t}\|e^{\tilde{\lambda}{\tau}^\alpha}f(\tau)\|_{L^2},
$$
which implies that
\begin{align}
\int_s^t\int_{\mathcal{O}_4}U(\tau',v',v'';\tau,v)\dd v''\dd\tau'\dd v'\dd\tau\leq C_{N}m^{2(\ka-1)}e^{-\tilde{\lambda}t^\alpha}\cdot\sup_{s\leq \tau\leq t}\|e^{\tilde{\lambda}{\tau}^\alpha}f(\tau)\|_{L^2}.\nonumber
\end{align}
Combining this with \eqref{4.1.26}, \eqref{4.1.27}, \eqref{4.1.28} yields that
\begin{align}\label{4.1.30}
|\hat{B}_2|\leq  \f{C}{N}e^{-\tilde{\lambda}t^\alpha}\sup_{s\leq \tau\leq t}\|e^{\tilde{\lambda}\tau^\alpha}h(\tau)\|_{L^\infty}+C_{N}m^{2(\ka-1)}e^{-\tilde{\lambda}t^\alpha}\cdot\sup_{s\leq \tau\leq t}\|e^{\tilde{\lambda}{\tau}^\alpha}f(\tau)\|_{L^2}.
\end{align}
Substituting \eqref{4.1.25} and \eqref{4.1.30} into \eqref{4.1.24}, one has \eqref{4.1.6}. The proof of Lemma \ref{lm4.2} is complete.
\end{proof}

The below lemma gives the $L^2$-decay of the solution.

\begin{lemma}\label{lm4.3}
If
\begin{align}\label{4.1.31}
\int_{\Omega}\int_{\mathbb{R}^3}f_0(x,v)\mu^{\frac{1}{2}}(v)\dd x\dd v=\int_{\Omega}\int_{\mathbb{R}^3}g(t,x,v)\mu^{\frac{1}{2}}(v)\dd x\dd v=0,
\end{align}
and $|\theta-1|_{L^\infty(\pa\Omega)}$ is sufficiently small, then there exists a constant $\lambda_2>0$ such that for any $t\geq 0$,
\begin{align}\label{4.1.32}
\|f(t)\|_{L^2}\leq Ce^{-\lambda_2t^{\alpha}}\|wf_0\|_{L^\infty}+\int_0^te^{-\lambda_2(t-s)^{\alpha}}\|\nu^{-1}wg(s)\|_{L^\infty}\dd s.
\end{align}
\end{lemma}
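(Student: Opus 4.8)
The plan is to obtain \eqref{4.1.32} from the interplay of two $L^2$ energy estimates for the linear problem \eqref{4.1.0} --- one without weight and one with the exponential weight $e^{\frac{\varpi}{4}|v|^\zeta}$ --- followed by a velocity splitting in the spirit of Caflish and Ukai-Asano that turns the degenerate soft-potential dissipation $\|\sqrt{\nu}f\|_{L^2}^2$ into the sub-exponential rate $e^{-\lambda_2 t^\alpha}$. Throughout, the smallness of $\delta=|\theta-1|_{L^\infty(\partial\Omega)}$ and the zero-mass conditions \eqref{4.1.31} are used to keep the boundary and the macroscopic part of $f$ under control.

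First I would derive the unweighted estimate. Multiplying \eqref{4.1.0} by $f$ and integrating over $\Omega\times\mathbb{R}^3$, the coercivity \eqref{1.10-2} gives the microscopic dissipation $\|(I-P)f\|_\nu^2$, while on $\gamma_-$ the extra boundary operator $\frac{\mu_\theta-\mu}{\sqrt{\mu}}\int_{n\cdot u>0}f\sqrt{\mu}\,|n\cdot u|\,\dd u$ is $O(\delta)$, so $|f|_{L^2(\gamma_-)}^2\leq(1+C\delta)|f|_{L^2(\gamma_+)}^2$ and the boundary flux produces the good term $|(I-P_\gamma)f|_{L^2(\gamma_+)}^2$ up to an $O(\delta^2)|f|_{L^2(\gamma_+)}^2$ error. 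To upgrade $\|(I-P)f\|_\nu^2$ to the full $\|\sqrt{\nu}f\|_{L^2}^2$ I would use the time-dependent analogue of the macroscopic estimate Lemma \ref{lemS3.5} (proved along the lines of \cite{EGKM,Guo2,LYa} by testing the weak formulation against moment functions of $f$, with the zero-mass condition eliminating the kernel direction), together with the trace estimate Lemma \ref{lemS1.1} to bound $|f|_{L^2(\gamma_+)}^2$ away from the near-grazing set by the bulk quantities $\|f\|_{L^2}^2+\|(I-P)f\|_\nu^2+\|g\|_{L^2}^2$ plus the total-derivative term $\|[\partial_t+v\cdot\nabla_x](f^2)\|_{L^1}$. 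Choosing $\delta$ small and forming a suitable linear combination then yields an equivalent energy $\mathcal{E}(t)\sim\|f(t)\|_{L^2}^2$ and a constant $\lambda_3>0$ with
$$\frac{d}{dt}\mathcal{E}(t)+\lambda_3\|\sqrt{\nu}f(t)\|_{L^2}^2\leq C\|\nu^{-1/2}g(t)\|_{L^2}^2.$$
In particular, integrating in time and using $\int_0^\infty\|\nu^{-1/2}g(s)\|_{L^2}^2\,\dd s\leq C\int_0^\infty e^{-2\lambda_0 s^\alpha}\,\dd s\cdot B^2<\infty$, where $B:=\|wf_0\|_{L^\infty}+\sup_{s\geq 0}e^{\lambda_0 s^\alpha}\|\nu^{-1}wg(s)\|_{L^\infty}$, one already gets the uniform-in-time bound $\|f(t)\|_{L^2}^2\leq CB^2$.

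Next I would run the same computation on $e^{\frac{\varpi}{4}|v|^\zeta}f$, using the estimate \eqref{2.28-1} to absorb $\langle e^{\frac{\varpi}{2}|\cdot|^\zeta}Kf,f\rangle$ into $\|e^{\frac{\varpi}{4}|\cdot|^\zeta}f\|_\nu^2$ and $C_\eta\|f\|_{L^2}^2$; since $e^{\frac{\varpi}{4}|v|^\zeta}\mu^{1/2}(v)$ is integrable under \eqref{index}, the weighted incoming trace on $\gamma_-$ is controlled by the already-bounded quantities $|P_\gamma f|_{L^2(\gamma_+)}^2$ and $\delta^2|f|_{L^2(\gamma_+)}^2$. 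This produces $\frac{d}{dt}\|e^{\frac{\varpi}{4}|\cdot|^\zeta}f\|_{L^2}^2\leq C\|f\|_{L^2}^2+C\|\nu^{-1/2}e^{\frac{\varpi}{4}|\cdot|^\zeta}g\|_{L^2}^2+(\text{small boundary})$, which upon integration, using $w\geq e^{\varpi|v|^\zeta}$ (hence $\|e^{\frac{\varpi}{4}|\cdot|^\zeta}f\|_{L^2}\leq C\|wf\|_{L^\infty}$) and the uniform $L^2$ bound above, gives $\|e^{\frac{\varpi}{4}|\cdot|^\zeta}f(t)\|_{L^2}^2\leq CB^2(1+t)$. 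Now the elementary splitting, for $R\geq 1$,
$$\|f(t)\|_{L^2}^2\leq C\langle R\rangle^{|\kappa|}\|\sqrt{\nu}f(t)\|_{L^2}^2+e^{-\frac{\varpi}{2}R^\zeta}\|e^{\frac{\varpi}{4}|\cdot|^\zeta}f(t)\|_{L^2}^2,$$
fed into the differential inequality, yields $\frac{d}{dt}\mathcal{E}+c\langle R\rangle^{-|\kappa|}\mathcal{E}\leq Ce^{-\frac{\varpi}{2}R^\zeta}B^2(1+t)+C\|\nu^{-1/2}g(t)\|_{L^2}^2$; for each target time I would fix $\langle R\rangle^{|\kappa|}\sim\epsilon\,t^{1-\alpha}$, so that $\langle R\rangle^{\zeta}\sim t^\alpha$ precisely because $\alpha=\frac{\zeta}{\zeta+|\kappa|}$, and integrate to obtain $\|f(t)\|_{L^2}\leq Ce^{-\lambda_2 t^\alpha}B$, the polynomial factor $(1+t)$ being absorbed by the sub-exponential decay. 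Finally, since the homogeneous ($g\equiv0$) estimate furnishes a decay bound for the solution operator on zero-mass data, Duhamel's formula together with $\|\nu^{-1/2}g(s)\|_{L^2}\leq C\|\nu^{-1}wg(s)\|_{L^\infty}$ and $x^\alpha+y^\alpha\geq(x+y)^\alpha$ converts the source into $\int_0^t e^{-\lambda_2(t-s)^\alpha}\|\nu^{-1}wg(s)\|_{L^\infty}\,\dd s$, giving \eqref{4.1.32} after choosing $\lambda_2$ below $\lambda_0$ and the rate just obtained.

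I expect the main obstacle to be the unweighted step: extracting the full dissipation $\|\sqrt{\nu}f\|_{L^2}^2$ requires the time-dependent macroscopic estimate together with a careful trace control of $|f|_{L^2(\gamma_+)}^2$, in which the $O(\delta)$ boundary source, the near-grazing cutoff, and the total-time-derivative terms must all be tracked simultaneously without circularity --- this is the heaviest bookkeeping of the argument, whereas the exponentially-weighted estimate and the interpolation/optimization in $R$ are comparatively routine once \eqref{2.28-1} is in hand.
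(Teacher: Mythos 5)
Your proposal is correct and uses the same two core ingredients as the paper: the unweighted $L^2$ energy estimate (with the macroscopic and near-grazing trace controls) and the exponentially weighted $L^2$ estimate via \eqref{2.28-1}, followed by a velocity–time interpolation that turns the degenerate $\|\sqrt{\nu}f\|_{L^2}^2$ dissipation into the rate $e^{-\lambda_2 t^\alpha}$. Where you differ from the paper is in the mechanics of the last step. You keep $f$ itself, derive a Gronwall inequality $\frac{d}{dt}\mathcal{E}+c\langle R\rangle^{-|\kappa|}\mathcal{E}\leq Ce^{-\frac{\varpi}{2}R^\zeta}\|e^{\frac{\varpi}{4}|\cdot|^\zeta}f\|_{L^2}^2+\cdots$, and then, for each target time $t$, choose $R\sim t^{1/(\zeta+|\kappa|)}$ to balance the two exponents; the polynomial prefactors are absorbed by the sub-exponential. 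The paper instead pre-weights with $\tilde f=e^{\lambda'(1+t)^\alpha}f$, runs a single energy estimate on $\tilde f$, and handles the cross term $\lambda'(1+s)^{\alpha-1}\|\tilde f\|_{L^2}^2$ by the split $\mathcal{M}(s)=\{(1+|v|)\ge(1+s)^{1/(\zeta+|\kappa|)}\}$: on $\mathcal{M}(s)$ the time weight is dominated by $e^{\lambda'(1+|v|)^\zeta}$ and controlled by the already-closed weighted dissipation, while on $\mathcal{M}^c(s)$ one has $(1+s)^{\alpha-1}\lesssim\nu(v)$ and absorbs into the unweighted dissipation. The two devices are dual descriptions of the same optimization ($R(t)\sim t^{1/(\zeta+|\kappa|)}$ is precisely the boundary of $\mathcal{M}$), and both yield the claimed rate $\alpha=\zeta/(\zeta+|\kappa|)$. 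One small inaccuracy in your write-up: if you retain the weighted dissipation that \eqref{2.28-1} produces on the left (rather than dropping it), the weighted $L^2$ norm is bounded \emph{uniformly} in time by $\|e^{\frac{\varpi}{4}|\cdot|^\zeta}f_0\|_{L^2}^2$ plus the (integrable) weighted source — as the paper obtains in its step before introducing $\tilde f$ — rather than growing like $(1+t)$; your weaker $(1+t)$ bound is still harmless because it is absorbed, but the uniform bound is what the paper actually derives and what makes the paper's $\tilde f$-route bookkeeping cleaner (no separate choice of $R$ and no polynomial factors to track).
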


\begin{proof}
We first consider the case $g\equiv 0$. Multiplying  both sides of \eqref{4.1.0} by $f$, we have
\begin{equation}\label{4.1.33}
\f{1}{2}\|f(t)\|_{L^2}^2+\f12\int_0^t|f(s)|_{L^2(\g_+)}^2\dd s+\int_0^t\langle Lf(s),f(s)\rangle\dd s
=\f{1}{2}\|f_0\|_{L^2}^2+\f12\int_0^t|P_{\g}f+r|_{L^2(\g_-)}^2\dd s,
\end{equation}
where $r$ is defined in \eqref{4.1.5-2}. By the coercivity estimate \eqref{1.10-2}, it holds that
\begin{align}\label{4.1.34}
\int_0^t\langle Lf(s),f(s)\rangle\dd s\geq c_0\int_0^t\|{\nu^{1/2}}(I-P)f(s)\|_{L^2}^2\dd s.
\end{align}
Notice that $P_{\g}r\equiv0$. Therefore, it follows that
\begin{align}\label{4.1.35}
\f12\int_0^t|P_{\g}f+r|_{L^2(\g_-)}^2\dd s&= \f12\int_{0}^t|P_{\g}f(s)|_{L^2(\g_+)}^2\dd s+\f12\int_{0}^t|r(s)|_{L^2(\g_-)}^2\dd s\nonumber\\
&\leq \f12\int_{0}^t|P_{\g}f(s)|_{L^2(\g_+)}^2\dd s+C\delta\int_{0}^t|f(s)|_{L^2(\g_+)}^2\dd s.
\end{align}
To estimate $|P_{\g}f|_{L^2(\g_+)}$, recall the cutoff function $\Fi_{\g_+^{\vep}}$ with respect to the near grazing set $\g_{+}^\vep$ defined in \eqref{S2.1}. Then we have
\begin{align}\label{4.1.36}
|P_{\g}f(s)|_{L^2(\g_+)}^2&=\int_{\g_-}\mu(v)|n(x)\cdot v|\dd \gamma\times\left(\int_{n(x)\cdot v'>0}f(s)\{\Fi_{\g_{+}^{\vep}}+\Fi_{\g_+^\vep\setminus\g_{+}^{\vep}}\}\sqrt{\mu}|n(x)\cdot v'|\dd v'\right)^2\nonumber\\
&\leq C\vep |f(s)|_{L^2(\g_+)}^2+C|e^{-\f{|v|^2}{16}}f(s)\Fi_{\g_{+}\setminus\g_+^\vep}|_{L^2(\g_+)}^2.
\end{align}
Notice that
$$
{\f12}(\pa_t+v\cdot\nabla_x)e^{-\f{|v|^2}{8}}f^2=e^{-\f{|v|^2}{8}}fLf,
$$
which implies that
$$\int_0^t\|(\pa_t+v\cdot\nabla_x)e^{-\f{|v|^2}{8}}f^2(s)\|_{L^1}\dd s\leq C\int_0^t\|e^{-\f{|v|^2}{16}}f(s)\|_{L^2}^2\dd s.
$$
Therefore, by the trace estimate \eqref{S2.3}, we have
\begin{align}
\int_0^t|e^{-\f{|v|^2}{16}}f(s)\Fi_{\g_{+}\setminus\g_+^\vep}|_{L^2(\g_+)}^2\dd s&=\int_0^t|e^{-\f{|v|^2}{8}}f^2(s)\Fi_{\g_{+}\setminus\g_+^\vep}|_{L^1(\g_+)}\dd s\nonumber\\
&\leq C_\vep\|e^{-\f{|v|^2}{16}}f_0\|_{L^2}^2+C_\vep\int_0^t\|e^{-\f{|v|^2}{16}}f(s)\|_{L^2}^2\dd s
\nonumber\\
&\leq C_{\vep}\|f_0\|_{L^2}^2+C_{\vep}\int_0^t\|\nu^{1/2}f(s)\|_{L^2}^2\dd s\nonumber.
\end{align}
Combining this with \eqref{4.1.36}, we have
\begin{align}\label{4.1.37}
&\int_0^t|f(s)|_{L^{2}(\g_+)}^2\dd s\nonumber\\
&\quad=\int_0^t|P_{\g}f(s)|_{L^{2}(\g_+)}^2\dd s+\int_0^t|(I-P_{\g})f(s)|_{L^{2}(\g_+)}^2\dd s\nonumber\\
&\quad\leq C\int_0^t|(I-P_{\g})f(s)|_{L^{2}(\g_+)}^2\dd s+C\vep\int_0^t|f(s)|_{L^{2}(\g_+)}^2\dd s+C_\vep\|f_0\|_{L^2}^2+C_{\vep}\int_0^t\|\nu^{1/2}f(s)\|_{L^2}^2\dd s\nonumber\\
&\quad\leq  C\int_0^t|(I-P_{\g})f(s)|_{L^{2}(\g_+)}^2\dd s+C\|f_0\|_{L^2}^2+C\int_0^t\|\nu^{1/2}f(s)\|_{L^2}^2\dd s.
\end{align}
Here we have taken $\vep>0$ suitably small in the last inequality. For the macroscopic part $Pf$, we multiply $\sqrt{\mu}$ to both sides of the first equation in \eqref{4.1.0} and use \eqref{4.1.31} to get
\begin{align}
\int_{\Omega}\int_{\mathbb{R}^3}f(t,x,v)\mu^{\frac{1}{2}}(v)\dd x\dd v=\int_{\Omega}\int_{\mathbb{R}^3}f_0(x,v)\mu^{\frac{1}{2}}(v)\dd x\dd v=0.\nonumber
\end{align}
Then similar as \cite[Lemma 6.1]{EGKM}, there exists a functional $\mathfrak{e}_{f}(t)$ with $|\mathfrak{e}_{f}(t)|\lesssim\|f(t)\|_{L^2}^2$ such that
\begin{align}
\int_0^t\|\nu^{1/2}Pf(s)\|_{L^2}^2\dd s &\lesssim\bigg(\mathfrak{e}_{f}(t)-\mathfrak{e}_{f}(0)\bigg)+\int_0^t\|\nu^{1/2}(I-P)f(s)\|_{L^2}^2\dd s\nonumber\\
&\qquad
+\int_0^t|r(s)|^2_{L^2(\g_-)}\dd s+\int_0^t|(I-P_{\g})f(s)|_{L^2(\g_+)}^2\dd s.\nonumber\\
&\lesssim\bigg(\mathfrak{e}_{f}(t)-\mathfrak{e}_{f}(0)\bigg)+\int_0^t\|\nu^{1/2}(I-P)f(s)\|_{L^2}^2\dd s\nonumber\\
&\qquad
+\delta\int_0^t|f(s)|^2_{L^2(\g_+)}\dd s+\int_0^t|(I-P_{\g})f(s)|_{L^2(\g_+)}^2\dd s.\nonumber
\end{align}
Suitably combining the estimate above with \eqref{4.1.33}, \eqref{4.1.34}, \eqref{4.1.35} and \eqref{4.1.37} and taking $\delta>0$ suitably small, we have
\begin{align} \label{4.1.38}
\|f(t)\|_{L^2}^2+\int_{0}^t\|\nu^{1/2}f(s)\|_{L^2}^2\dd s+\int_{0}^t|f(s)|_{L^2(\g_+)}^2\dd s\leq C\|f_0\|_{L^2}^2.
\end{align}
Next we need to obtain the weighted $L^2$ estimate in order to obtain $L^2$ decay of $f$. Multiplying $e^{\f{\varpi|v|^\zeta}{2}}\Red{f}
$ to both sides of the first equation in \eqref{4.1.0}, we have
\begin{align}\label{4.1.39}
&\f{1}{2}\|e^{\f{\varpi|\cdot|^\zeta}{4}}f(t)\|_{L^2}^2+\f{1}{2}\int_0^t|e^{\f{\varpi|\cdot|^\zeta}{4}}f(s)|_{L^2(\g_+)}^2\dd s
+\int_0^t\|\nu^{1/2}e^{\f{\varpi|\cdot|^\zeta}{4}}f(s)\|_{L^2}^2\dd s\nonumber\\
&\quad=\f{1}{2}\|e^{\f{\varpi|\cdot|^\zeta}{4}}f_0\|_{L^2}^2+\f{1}{2}\int_0^t|e^{\f{\varpi|\cdot|^\zeta}{4}}f(s)|_{L^2(\g_-)}^2
+\int_0^t\langle Kf(s), e^{\f{\varpi|\cdot|^\zeta}{2}}f(s)\rangle\dd s.
\end{align}
A direct computation shows that
\begin{align}\label{4.1.40}
\int_0^t|e^{\f{\varpi|\cdot|^\zeta}{4}}f(s)|_{L^2(\g_-)}^2&\leq C\int_0^t|e^{\f{\varpi|\cdot|^\zeta}{4}}P_{\g}f(s)|_{L^2(\g_-)}^2\dd s+C\int_0^t|e^{\f{\varpi|\cdot|^\zeta}{4}}r(s)|_{L^2(\g_-)}^2\dd s\nonumber\\
&\leq C\int_0^t|f(s)|_{L^2(\g_+)}^2\dd s.
\end{align}
As for the last term on the right-hand side of \eqref{4.1.39}, we use \eqref{2.28-1} to obtain
\begin{align}\label{4.1.41}
\int_0^t\left|\langle Kf(s), e^{\f{\varpi|\cdot|^\zeta}{2}}f(s)\rangle\right|\dd s\leq \eta\int_0^t\|\nu^{1/2}e^{\f{\varpi|\cdot|^\zeta}{4}}f(s)\|_{L^2}^2\dd s+C_\eta \int_0^t\|\nu^{1/2}f(s)\|_{L^2}^2\dd s.
\end{align}
Therefore, suitably combining \eqref{4.1.38}, \eqref{4.1.39}, \eqref{4.1.40} and \eqref{4.1.41} and taking $\eta>0$ suitably small, we have
\begin{align*}
\|e^{\f{\varpi|\cdot|^\zeta}{4}}f(t)\|_{L^2}^2+\int_{0}^t\|e^{\f{\varpi|\cdot|^\zeta}{4}}\nu^{1/2}f(s)\|_{L^2}^2\dd s+\int_{0}^t|e^{\f{\varpi|\cdot|^\zeta}{4}}f(s)|_{L^2(\g_+)}^2\dd s\leq C\|e^{\f{\varpi|\cdot|^\zeta}{4}}f_0\|_{L^2}^2.
\end{align*}
Now we are ready for obtaining $L^2$ decay of $f$. Let $\tilde{f}=e^{\lambda' (1+t)^{\alpha}}f$, with $\lambda'>0$ is a suitably small constant to be determined later. Then applying the same energy estimate for obtaining \eqref{4.1.38}, we have
\begin{align}\label{4.1.43}
\|\tilde{f}(t)\|_{L^2}^2+\int_0^t\|\nu^{1/2}\tilde{f}(s)\|_{L^2}^2\dd s\leq C\|f_0\|_{L^2}^2+C\lambda'\int_0^t(s+1)^{\alpha-1}\|\tilde{f}(s)\|_{L^2}^2\dd s.
\end{align}
To estimate the last term, we split the $v$-integration domain into
$$\mathbb{R}^3_v=\{1+|v|\geq (1+t)^{\f{1}{\zeta+|\ka|}}\}\cup\{1+|v|< (1+t)^{\f{1}{\zeta+|\ka|}}\}=\CM(t)\cup\CM^c(t).$$
One one hand, we have
$$
\begin{aligned}\int_0^t(s+1)^{\alpha-1}\|\tilde{f}\Fi_{\CM(s)}(s)\|_{L^2}^2\dd s&\leq C\int_0^t\left\|e^{\lambda'(1+|\cdot|)^{\zeta}}f(s)\right\|_{L^2}^2\dd s\leq C\int_0^t\|\nu^{1/2}e^{\f{\varpi|\cdot|^\zeta}{4}}f(s)\|_{L^2}^2\dd s\\
&\leq C\|e^{\f{\varpi|\cdot|^\zeta}{4}}f_0\|_{L^2}^2,
\end{aligned}
$$
by taking $\lambda'>0$ suitably small. On the other hand, in $\CM^c(s)$, we have
$$(1+s)^{\alpha-1}\leq {(1+|v|)^{(\alpha-1)\cdot(\zeta+|\ka|)}\Fi_{\CM^c(s)}}\leq C\nu(v)\Fi_{\CM^c(s)}.
$$
Therefore, we have
$$\int_0^t(s+1)^{\alpha-1}\|\tilde{f}\Fi_{\CM^c(s)}(s)\|_{L^2}^2\dd s\leq C\int_0^t\|\nu^{1/2}\tilde{f}(s)\|_{L^2}^2\dd s.
$$
Combining these estimates with \eqref{4.1.43} and taking $\lambda'>0$ suitably small, we have
\begin{align}\label{4.1.44}
\|\tilde{f}(t)\|_{L^2}^2+\int_0^t\|\nu^{1/2}\tilde{f}(s)\|_{L^2}^2\dd s&\leq C\|e^{\f{\varpi|\cdot|^\zeta}{4}}f_0\|_{L^2}^2+C\lambda'\int_0^t\|\nu^{1/2}\tilde{f}(s)\|_{L^2}^2\dd s\nonumber\\
&\leq C\|e^{\f{\varpi|\cdot|^\zeta}{4}}f_0\|_{L^2}^2.
\end{align}
Then \eqref{4.1.32} for $g\equiv0$ naturally follows. We denote $G(t)$ as the solution operator to the linear homogeneous problem \eqref{4.1.0} with $g\equiv0$. Then for non-trivial $g$, from Duhamel's formula, it holds that
\begin{align}\label{4.1.45}
\|f(t)\|_{L^2}&\leq \|G(t)f_0\|_{L^2}+\int_0^t\|G(t-s)g(s)\|_{L^2}\dd s\nonumber\\
&\leq Ce^{-\lambda't^{\alpha}}\|e^{\f{\varpi|\cdot|^\zeta}{4}}f_0\|_{L^2}+C\int_0^te^{-\lambda'(t-s)^{\alpha}}\|e^{\f{\varpi|\cdot|^\zeta}{4}}g(s)\|_{L^2}\dd s\nonumber\\
&\leq Ce^{-\lambda't^{\alpha}}\|wf_0\|_{L^\infty}+\int_0^te^{-\lambda'(t-s)^{\alpha}}\|\nu^{-1}wg(s)\|_{L^\infty}\dd s.
\end{align}
Here we have used \eqref{4.1.44} in the second inequality. Then \eqref{4.1.32} follows from \eqref{4.1.45} by taking $\lambda_2=\lambda'$. The proof of Lemma \ref{lm4.3} is complete.
\end{proof}

\noindent{\it Proof of Proposition \ref{prop4.1}:} The local existence and uniqueness of solutions to the linear inhomogeneous problem \eqref{4.1.1} can be obtained in a similar way as in Proposition \ref{prop6.1}. We omit the details for brevity. In what follows we will show the decay estimate \eqref{4.1.3}.  Recall the finite-time estimate \eqref{4.1.6}. We define $
\la_0=\min\{\f{\lambda_1}{4},\f{\lambda_2}{4}\}$, and
$$m=\left(\f{\eta}{2C}\right)^{\f{1}{3+\ka}}T_0^{-\f{5}{2(3+\ka)}}$$
with $\eta>0$ suitably small to be determined later. Then we choose $T_0$ suitably large and $\delta>0$ suitably small, and also take $N$ suitably large, such that
$$
CT_0^{5/2}\{m^{3+\ka}+\delta+2^{-T_0}+\f{1}{N}\}\leq \eta,\quad 
CT_0^{5/2}e^{-\f{\lambda_1}{2}T_0^\alpha}\leq 1.
$$
Then it holds from \eqref{4.1.6} with the choice of $\tilde{\lambda}=\la_0$ that for any $s\geq 0$ and any $t\in [s,s+T_0]$,
\begin{equation}\label{4.1.46}
\|h(t)\|_{L^\infty}+|h(t)|_{L^\infty(\g)}\leq e^{-\f{\lambda_1}{2}(t-s)^\alpha}\|h(s)\|_{L^\infty}+e^{-{\lambda_0}t^\alpha}\CD(t,s),
%
\end{equation}
where  we have defined
\begin{align}\label{ads4.defdts}
\CD(t,s)& =\eta \sup_{s\leq \tau\leq t}e^{{\lambda_0}\tau^\alpha}\{\|h(\tau)\|_{L^\infty}+|h(\tau)|_{L^\infty(\g)}\}\nonumber\\
&\quad+C\sup_{s\leq \tau\leq t}\|e^{{\lambda_0}\tau^\alpha}f(\tau)\|_{L^2}
+C\sup_{s\leq \tau\leq t}\|\nu^{-1}e^{{\lambda_0}\tau^\alpha}wg(\tau)\|_{L^\infty}.
\end{align}
For any $t>0$, there exists a positive integer $n\geq 1$, such that $nT_0\leq t<(n+1)T_0$. Then applying \eqref{4.1.46} to $[0,T_0], [T_0,2T_0], \cdots, [(n-1)T_0,nT_0]$ inductively, we have
\begin{align}\label{4.1.47}
\|h(nT_0)\|_{L^\infty}&\leq e^{-\f{\lambda_1}{2}T_0^\alpha}\|h(n-1)T_0\|_{L^\infty}+e^{-{\lambda_0}(nT_0)^\alpha}D(nT_0,[n-1]T_0)\nonumber\\
&\leq e^{-\lambda_1T_0^\alpha}\|h(n-2)T_0\|_{L^\infty}+e^{-{\lambda_0}(nT_0)^\alpha}D(nT_0,[n-1]T_0)\nonumber\\
&\quad+e^{-\f{\lambda_1T_0^\alpha}{2}-{\lambda_0}([n-1]T_0)^\alpha}
D([n-1]T_0,[n-2]T_0)\nonumber\\
&\leq  e^{-\lambda_1T_0^\alpha}\|h(n-2)T_0\|_{L^\infty}+e^{-{\lambda_0}(nT_0)^\alpha}
\left\{1+e^{-\f{\lambda_1T_0^\alpha}{4}}\right\}D(nT_0,[n-2]T_0)\nonumber\\
&\leq \cdots\nonumber\\
&\leq e^{-\f{\lambda_1nT_0^\alpha}{2}}\|h_0\|_{L^\infty}+e^{-{\lambda_0}(nT_0)^\alpha}\left\{1+e^{-\f{\lambda_1T_0^\alpha}{4}}+
\cdots+e^{-\f{(n-1)\lambda_1T_0^\alpha}{4}}\right\}\CD(nT_0,0)\nonumber\\
&\leq Ce^{-\f{\lambda_1(nT_0)^\alpha}{2}}\|h_0\|_{L^\infty}+Ce^{-{\lambda_0}(nT_0)^\alpha}\CD(nT_0,0),
\end{align}
where in the third inequality we have used $0< {\lambda_0}\leq \f{\lambda_1}{4}$. Here we also have used the elementary fact that $x^{\alpha}+y^\alpha\geq (x+y)^\alpha$ for $x$, $y\geq 0$ and $0\leq \alpha\leq 1$. Finally applying \eqref{4.1.46} in $[nT_0, (n+1)T_0]$ and using \eqref{4.1.47}, we have
\begin{align}
&\|h(t)\|_{L^\infty}+|h(t)|_{L^\infty(\g)}\notag\\
&\leq e^{-\f{\lambda_1(t-nT_0)^\alpha}{2}}\|h(nT_0)\|_{L^\infty}+e^{-{\lambda_0}t^\alpha}\CD(t,nT_0)\nonumber\\
&\leq e^{-{\lambda_0}t^\alpha}\CD(t,nT_0)+ e^{-\f{\lambda_1(t-nT_0)^\alpha}{2}}\left\{Ce^{-\f{\lambda_1(nT_0)^\alpha}{2}}\|h_0\|_{L^\infty}+Ce^{-{\lambda_0}(nT_0)^\alpha}D(nT_0,0)\right\}\nonumber\\
&\leq Ce^{-\f{\lambda_1t^\alpha}{2}}\|h_0\|_{L^\infty}+Ce^{-{\lambda_0}t^\alpha}\CD(t,0).\label{ads4.p1}
\end{align}
Recall \eqref{ads4.defdts}. Let $\eta>0$ be suitably small, then \eqref{4.1.3} follows from \eqref{ads4.p1} and \eqref{4.1.32}. Therefore, the proof of Proposition \ref{prop4.1} is complete.\qed

\subsection{Proof of Theorem \ref{thm1.2}.} The local existence and uniqueness of the solution to nonlinear problem \eqref{4.1} is provided in Proposition \ref{prop6.1}. In what follows we will show \eqref{D1.3}. Notice that for any $t>0$, it holds that
$$\int_{\Omega}\int_{\mathbb{R}^3}L_{\sqrt{\mu}f_*}f\dd x\dd v=\int_{\Omega}\int_{\mathbb{R}^3}\Gamma(f,f)\dd x\dd v\equiv0.
$$
Then applying the linear theory Proposition \ref{prop4.1} to $f$, we have
\begin{align}\label{4.2.2}
&\sup_{0\leq s\leq t}e^{\lambda_0 s^\alpha}\{\|wf(s)\|_{L^\infty}+|wf(s)|_{L^\infty(\g)}\}\nonumber\\
&\quad\leq C\|wf_0\|_{L^\infty}+C\sup_{0\leq s\leq t}e^{\lambda_0 s^\alpha}\{\|\nu^{-1}wL_{\sqrt{\mu}f_*}f(s)\|_{L^\infty}+\|\nu^{-1}w\Gamma(f,f)(s)\|_{L^\infty}\}\nonumber\\
&\quad\leq C\|wf_0\|_{L^\infty}+C\{\delta+\sup_{0\leq s\leq t}\|wf(s)\|_{L^\infty}\}\cdot\sup_{0\leq s\leq t}e^{\lambda_0 s^\alpha}\|wf(s)\|_{L^\infty},
\end{align}
where we have used the nonlinear estimate \eqref{2.43}. 
Now we make the a priori assumption that
$$
\sup_{0\leq s\leq t}e^{\lambda_0 s^\alpha}\|wf(s)\|_{L^\infty}\leq 2C\|wf_0\|_{L^\infty}.
$$
Then from \eqref{4.2.2}, we have
\begin{multline*}
\sup_{0\leq s\leq t}e^{\lambda_0 s^\alpha}\{\|wf(s)\|_{L^\infty}+|wf(s)|_{L^\infty(\g)}\}\\
\leq C\|wf_0\|_{L^\infty}+2C^2\|wf_0\|_{L^\infty}\cdot\{\delta+2C\|wf_0\|_{L^\infty}\}
\leq \f{3C}{2}\|wf_0\|,
\end{multline*}
provided that both $\delta>0$ and $\|wf_0\|_{L^\infty}$ are suitably small. This justifies that the a priori assumption can be closed. Then from a standard continuity argument, the global existence together with the estimate \eqref{D1.3} follow.  Therefore, the proof of Theorem \ref{thm1.2} is complete. \qed

\section{Dynamical stability under a class of large perturbations}

The section is devoted to proving Theorem \ref{thm1.3}. Recall $h(t,x,v):=wf(t,x,v)$. In what follows, we make the following a priori assumption:
\begin{align}\label{5.0.1}
\sup_{0\leq s\leq T}\|h(s)\|_{L^\infty}+\sup_{0\leq s\leq T}|h(s)|_{
{L^\infty(\g)}}\leq \bar{M},
\end{align}
where $T>0$ is an arbitrary constant and $\bar{M}$ is a positive constant depending only on $M_0$ as given in \eqref{M0}. We emphasize here that $M_0$ is not necessarily small and will be determined at the end of the proof.

\subsection{$L^p_{x,v}$ estimates}

First of all, we have the following

\begin{lemma}[\cite{GKTT-IM}]
Let $1<p<\infty$. Assume that $f$, $\pa_t+v\cdot \nabla_xf\in L^p([0,T]; L^p)$ and $f\Fi_{\g_-}\in L^p([0,T]; L^p(\g))$. Then $f\in C^{0}([0,T];L^p)$ and $f\Fi_{\g_+}\in L^p([0,T]; L^p(\g))$ and for almost every $t\in[0,T]$:
\begin{equation}\label{5.1.1}
\|f(t)\|_{L^p}^p+\int_0^t|f(s)|_{L^p(\g_+)}^p=\|f_0\|_{L^p}^p+\int_0^t|f(s)|_{L^p(\g_-)}^p+p\int_0^t\int_{\Omega}\int_{\mathbb{R}^3}\{\pa_tf+v\cdot \pa_xf\}|f|^{p-2}f.
\end{equation}
\end{lemma}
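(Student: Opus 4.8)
The identity \eqref{5.1.1} is the Green/renormalization identity for the free-transport operator $\pa_t+v\cdot\na_x$ (with $v$ acting as a frozen parameter) applied to the convex renormalizing function $\beta(s)=|s|^{p}$. The plan is the classical DiPerna--Lions scheme: (i) mollify in $(t,x)$ to reduce to smooth functions, for which the chain rule and the divergence theorem are legitimate; (ii) derive the identity for the mollified function; (iii) pass to the limit, the only delicate point being the convergence of the boundary traces near the grazing set $\g_{0}$.

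\textbf{Step 1: reduction to smooth functions.} Write $g:=\pa_{t}f+v\cdot\na_{x}f\in L^{p}([0,T];L^{p})$ and set $f^{\v}:=f*_{t,x}\rho_{\v}$ for a standard mollifier $\rho_{\v}$ on $\R_{t}\times\R^{3}_{x}$. Then $\pa_{t}f^{\v}+v\cdot\na_{x}f^{\v}=g^{\v}+R^{\v}$, where $g^{\v}\to g$ in $L^{p}$ and the commutator $R^{\v}=[v\cdot\na_{x},*_{t,x}\rho_{\v}]f\to 0$ in $L^{p}_{\mathrm{loc}}$ by the DiPerna--Lions commutator lemma (uniformly on sets where $|v|$ is bounded; the contribution of large $|v|$ is controlled by the hypothesis $g\in L^{p}$). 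For $f^{\v}$ smooth one has the pointwise chain rule
\begin{equation*}
\tfrac1p\big(\pa_{t}+v\cdot\na_{x}\big)|f^{\v}|^{p}=|f^{\v}|^{p-2}f^{\v}\,(g^{\v}+R^{\v}),
\end{equation*}
after approximating $s\mapsto|s|^{p}$ by smooth convex functions with controlled growth and passing to the limit (the term $|f^{\v}|^{p-2}f^{\v}$ lies in $L^{p'}$ since $f^{\v}\in L^{p}$, so the product is integrable).

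\textbf{Step 2: integration by parts.} Integrate the displayed identity over $(s,t)\times\Omega\times\R^{3}$. The time derivative yields $\tfrac1p(\|f^{\v}(t)\|_{L^{p}}^{p}-\|f^{\v}(s)\|_{L^{p}}^{p})$. For the spatial term, the divergence theorem in $x$ (with $v$ fixed, then integrating in $v$) gives
\begin{equation*}
\tfrac1p\int_{s}^{t}\!\!\int_{\Omega}\!\int_{\R^{3}}v\cdot\na_{x}|f^{\v}|^{p}
=\tfrac1p\int_{s}^{t}\!\!\int_{\pa\Omega}\!\int_{\R^{3}}|f^{\v}|^{p}\{v\cdot n(x)\}\,\dd v\,\dd S_{x}
=\tfrac1p\int_{s}^{t}\!\big(|f^{\v}|^{p}_{L^{p}(\g_{+})}-|f^{\v}|^{p}_{L^{p}(\g_{-})}\big),
\end{equation*}
which produces \eqref{5.1.1} for $f^{\v}$ with $g^{\v}+R^{\v}$ on the right. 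The interior terms pass to the limit $\v\to 0$ directly since $f^{\v}\to f$ in $L^{p}([0,T];L^{p})$ and $g^{\v}+R^{\v}\to g$ in $L^{p}$.

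\textbf{Step 3: boundary traces and conclusion.} The incoming trace converges by hypothesis, $f^{\v}\Fi_{\g_{-}}\to f\Fi_{\g_{-}}$ in $L^{p}([0,T];L^{p}(\g))$. For the outgoing trace one first establishes an a priori bound: away from the grazing set, i.e. on $\g_{+}\setminus\g_{+}^{\v'}$, the $L^{p}$-analogue of the trace estimate \eqref{S2.3} (proved exactly as Lemma \ref{lemS1.1}, using $\big(\pa_{t}+v\cdot\na_{x}\big)|f|^{p}=p|f|^{p-2}f\,g\in L^{1}$) gives a uniform-in-$\v$ bound on $\int_{0}^{t}|f^{\v}\Fi_{\g_{+}\setminus\g_{+}^{\v'}}|^{p}_{L^{p}(\g)}$; letting first $\v\to 0$ and then $\v'\to 0$, and using that $|\g_{+}^{\v'}|\to 0$ together with the identity already obtained on $\g_{+}\setminus\g_{+}^{\v'}$, shows $f\Fi_{\g_{+}}\in L^{p}([0,T];L^{p}(\g))$ and that the grazing contribution vanishes; Fatou and the sign of $v\cdot n$ then upgrade the limiting inequality to the equality \eqref{5.1.1}. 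Finally, \eqref{5.1.1} applied on $[s,t]$ shows $t\mapsto\|f(t)\|_{L^{p}}^{p}$ is continuous (the right-hand side is absolutely continuous in $s,t$), and this together with weak-$*$ continuity of $t\mapsto f(t)$ coming from the equation upgrades to $f\in C^{0}([0,T];L^{p})$ by the usual norm-plus-weak-convergence argument.

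\textbf{Main obstacle.} The genuinely delicate point is Step 3: rigorously defining the boundary trace of $f$ in $L^{p}(\g;\dd\g)$ and showing the mollified boundary integrals converge, which requires the near-grazing cutoff analysis and the $L^{p}$ trace estimate near $\g_{0}$; the interior part of the argument is entirely routine.
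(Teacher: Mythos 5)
The paper does not itself prove this lemma; it is imported from \cite{GKTT-IM}, where it is the $L^{p}$ trace/Green identity for free transport in a bounded domain (the $p=2$ version already appears in \cite{Guo2}). Your outline --- mollify, chain rule, divergence theorem, pass the outgoing trace to the limit through the near-grazing cutoff --- is essentially the standard route used in that literature, and you are right that Step~3 is the genuinely delicate part.

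One correction to Step~1: there is no commutator to estimate. The velocity $v$ is a frozen parameter, not a variable coefficient, so mollification $*_{t,x}\rho_{\v}$ commutes \emph{exactly} with $\pa_{t}+v\cdot\na_{x}$, giving $R^{\v}\equiv 0$; invoking the DiPerna--Lions commutator lemma here is superfluous. The genuine obstruction in Step~1 is instead that, in a bounded domain, $f*\rho_{\v}$ is only defined on the shrunk domain $\Omega_{\v}=\{d(x,\pa\Omega)>\v\}$ (or one needs a one-sided, tangentially shifted mollifier adapted to $\pa\Omega$). Consequently the boundary limit $\pa\Omega_{\v}\to\pa\Omega$ is entangled with your Step~3 trace passage and must be handled jointly; it does not survive as a separate, innocuous interior step followed by an independent trace step. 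This is precisely where the near-grazing decomposition has to do its work, and is worth making explicit if you flesh this out.
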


Moreover, we prove that the $L^p$ bound of solutions grows in time exponentially related to $\bar{M}$. Note that $\bar{M}$ is to be chosen depending only on $M_0$,  so that within a finite time interval, the solution can be uniformly small in $L^p$ if it is so initially.

\begin{lemma}\label{lm5.2}
Let $1<p<\infty$. Under the assumption \eqref{5.0.1}, it holds that
\begin{align}\label{5.1.2}
\|f(t)\|_{L^p}\leq e^{C_3\bar{M}t}\|f_0\|_{L^p},
\end{align}
for any $t\in [0,T]$. Here $C_3>1$ is a generic constant depending only on $\ka$ and $p$.
\end{lemma}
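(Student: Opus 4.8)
The plan is to test the equation against $|f|^{p-2}f$ via the $L^p$ energy identity \eqref{5.1.1}. Using $\partial_t f+v\cdot\nabla_x f=-Lf-L_{\sqrt\mu f_*}f+\Gamma(f,f)$ from \eqref{4.1}, that identity reads, for $0\le t\le T$,
\begin{equation*}
\|f(t)\|_{L^p}^p+\int_0^t|f(s)|_{L^p(\gamma_+)}^p\,\dd s=\|f_0\|_{L^p}^p+\int_0^t|f(s)|_{L^p(\gamma_-)}^p\,\dd s+p\int_0^t\!\!\int_\Omega\!\!\int_{\mathbb R^3}\big[-Lf-L_{\sqrt\mu f_*}f+\Gamma(f,f)\big]|f|^{p-2}f,
\end{equation*}
the applicability of \eqref{5.1.1} being guaranteed by \eqref{5.0.1} and the local-in-time theory. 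First I would estimate the interior term. Writing $L=\nu-K$, the dissipative part $-p\int\int\nu|f|^p\le0$ is discarded; for the $K$-part, symmetry of $k(v,u)$, the elementary inequality $ab^{p-1}\le\frac1p a^p+\frac{p-1}p b^p$ and $\int_{\mathbb R^3}|k(v,u)|\,\dd u\le C$ (the case $\beta=0$ of the kernel lemma) give a bound $\le Cp\|f\|_{L^p}^p$. For the term linear around $F_*$, since $L_{\sqrt\mu f_*}f=-\Gamma(f_*,f)-\Gamma(f,f_*)$, Hölder's inequality in $v$ and then in $x$ combined with Lemma \ref{lm2.6} and $\|w_\beta f_*\|_{L^\infty}\le\|wf_*\|_{L^\infty}\le C\delta$ (Theorem \ref{thm1.1}) bound its contribution by $Cp\delta\int_0^t\|f(s)\|_{L^p}^p\,\dd s$. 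The same manipulation applied to $\Gamma(f,f)$, now with $\|w_\beta f\|_{L^\infty}\le\|wf\|_{L^\infty}\le\bar M$ from \eqref{5.0.1}, bounds the nonlinear contribution by $Cp\bar M\int_0^t\|f(s)\|_{L^p}^p\,\dd s$ (here $\nu$ bounded is used to replace $\|\nu^{1/p}f\|_{L^p}$ by $\|f\|_{L^p}$). Altogether the interior term is $\le Cp(\bar M+1)\int_0^t\|f(s)\|_{L^p}^p\,\dd s$.

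The main obstacle is the incoming boundary term $\int_0^t|f(s)|_{L^p(\gamma_-)}^p\,\dd s$: in contrast with the $L^2$ framework of \cite{DW}, the diffuse reflection operator is not an $L^p(\dd\gamma)$-contraction when $p\ne2$, so it cannot simply be cancelled against $\int_0^t|f(s)|_{L^p(\gamma_+)}^p$. The plan is to use the explicit form $f|_{\gamma_-}(x,v)=\dfrac{\mu_\theta(v)}{\sqrt{\mu(v)}}\displaystyle\int_{n(x)\cdot u>0}\sqrt{\mu(u)}f(x,u)\{n(x)\cdot u\}\,\dd u$ and to split both the incoming velocity $v$ and the integration variable $u$ into a near-grazing/large part and a moderate part. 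On the near-grazing/large part, the smallness of the relevant portion of the measure $\sqrt\mu(u)\{n\cdot u\}\,\dd u$ together with the (super-exponential, for $\delta$ small) smallness of $\mu_\theta/\sqrt\mu$ at large $|v|$ yields a contribution $\le\frac12\int_0^t|f(s)|_{L^p(\gamma_+)}^p\,\dd s$, which is absorbed into the left-hand side; the grazing tail of the $u$-integral is controlled using the pointwise bound $|f|_{L^\infty(\gamma_+)}\le\bar M$ from \eqref{5.0.1}. On the moderate part, Hölder reduces the contribution to $C\int_0^t|f(s)\Fi_{\gamma_+\setminus\gamma_+^\varepsilon}|_{L^p(\gamma)}^p\,\dd s$; applying the trace estimate \eqref{S2.3} to $|f|^p$ --- and using $(\partial_s+v\cdot\nabla_x)|f|^p=p|f|^{p-2}f(\partial_s+v\cdot\nabla_x)f$ together with the interior estimates above --- bounds this by $C_\varepsilon\big(\|f_0\|_{L^p}^p+(\bar M+1)\int_0^t\|f(s)\|_{L^p}^p\,\dd s\big)$.

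Collecting everything and choosing $\varepsilon=\varepsilon(p,\kappa)$ small enough to perform the absorption, one obtains $\|f(t)\|_{L^p}^p\le C\|f_0\|_{L^p}^p+C(\bar M+1)\int_0^t\|f(s)\|_{L^p}^p\,\dd s$, and Grönwall's inequality yields \eqref{5.1.2} with $C_3$ depending only on $\kappa$ and $p$ (the constant prefactor being harmlessly absorbed into the exponential since $\bar M\ge1$). The delicate point throughout is the boundary analysis: one must ensure that the portion of the outgoing trace that gets absorbed into the left-hand side carries a constant that is genuinely $\le\frac12$ and, in particular, independent of $\bar M$, which is why the velocity splitting has to be done before, rather than after, invoking the $L^\infty$ a priori bound.
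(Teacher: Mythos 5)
Your overall framework matches the paper's: test against $|f|^{p-2}f$ via \eqref{5.1.1}, bound $K$ by its $L^p$-boundedness (your Schur-test argument is a valid, slightly more elementary variant of the paper's appeal to $\|Kf\|_{L^p}\le C\|f\|_{L^p}$), bound $-L_{\sqrt{\mu}f_*}f+\Gamma(f,f)$ via Lemma \ref{lm2.6} and the a priori bound \eqref{5.0.1}, decompose the incoming boundary term through a near-grazing split, invoke the trace estimate \eqref{S2.3}, and conclude by Gronwall. The substitution of $\|f\|_{L^p}$ for $\|\nu^{1/p}f\|_{L^p}$ via boundedness of $\nu$ from above is fine for soft potentials. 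The extra $v$-splitting you propose is not wrong but is superfluous: $\int_{\gamma_-}(\mu_\theta/\sqrt{\mu})^p|n(x)\cdot v|\,\dd v$ is simply a bounded constant once $\delta$ is small, and no further decomposition of the incoming velocity is needed.

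There is, however, a genuine gap in your treatment of the near-grazing part of the $u$-integral. You propose to control it ``using the pointwise bound $|wf|\le\bar M$ from \eqref{5.0.1}.'' But inserting the $L^\infty$ a priori bound there replaces $|f(u)|$ by a constant of size $\bar M/w(u)$, so the near-grazing contribution to $|f|_{L^p(\gamma_-)}^p$ becomes an \emph{additive} $\bar M$-dependent constant (of order $\vep^{?}\bar M^p$ per unit time), not a small multiple of $|f|_{L^p(\gamma_+)}^p$. After Gronwall one would obtain an estimate of the form $\|f(t)\|_{L^p}^p\le e^{C(\bar M+1)t}\bigl(\|f_0\|_{L^p}^p+\text{(small)}\,\bar M^p\, t\bigr)$, which is strictly weaker than \eqref{5.1.2}: the additive term does not vanish when $\|f_0\|_{L^p}$ is tiny, and this is precisely the regime in which Lemma \ref{lm5.2} is used in the proof of Theorem \ref{thm1.3} (one takes $\|f_0\|_{L^p}\le\vep_1$ small while $M_0$ and hence $\bar M$ are large). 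The paper avoids the $L^\infty$ bound entirely in the boundary step: on $\gamma_+^\vep$, H\"older against the measure $\sqrt{\mu(u)}\,|n(x)\cdot u|\,\dd u$, whose mass there is $O(\vep)$, gives a bound $\lesssim\vep^{p-1}|f|_{L^p(\gamma_+)}^p$, a small multiple of the outgoing trace that is absorbed on the left with $\vep$ depending only on $p$, $\kappa$ and $\Omega$; see \eqref{S1.6}--\eqref{5.1.8}. Your closing remark about ``doing the velocity splitting before invoking the $L^\infty$ bound'' misdiagnoses the subtlety: the $L^\infty$ bound should not be invoked on the boundary term at all -- it is used only in the interior, in the Lemma \ref{lm2.6} estimate \eqref{5.1.5}.
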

\begin{proof}
By Green's identity \eqref{5.1.1}, one has
\begin{align}\label{5.1.3}
&\|f(t)\|_{L^p}^p+\int_0^t|f(s)|_{L^p(\g_+)}^p\dd s+p\int_0^t\|\nu^{1/p}f(s)\|_{L^p}^p\dd s\nonumber\\
&\quad=\|f_0\|_{L^p}^p+\int_0^t|f(s)|_{L^p(\g_-)}^p+p\int_0^t\langle|f(s)|^{p-2}f(s),Kf(s)\rangle\dd s\nonumber\\
&\qquad+p\int_0^t\langle|f(s)|^{p-2}f(s),-L_{\sqrt{\mu} f_*}f(s)+\Gamma(f,f)(s)\rangle\dd s.
\end{align}
It is straightforward to see from \eqref{2.27} that $K$ is bounded from $L^p$ to $L^p$. Therefore, one has
\begin{align}\label{5.1.4}
\left|\int_0^t\langle|f(s)|^{p-2}f(s),Kf(s)\rangle\dd s\right|\leq C\int_0^t\|f(s)\|_{L^p}^{p-1}\cdot\|Kf(s)\|_{L^p}\dd s\leq C\int_0^t\|f(s)\|_{L^p}^{p}\dd s.
\end{align}
As for the last term on the right-hand side of \eqref{5.1.3}, it holds from \eqref{lm2.6-1} that
\begin{align}\label{5.1.5}
\left|\langle|f|^{p-2}f,-L_{\sqrt{\mu} f_*}f+\Gamma(f,f)\rangle\right|&\leq \|\nu^{1/p}f\|_{L^p}^{p-1}\|\nu^{-1/p'}\left|-L_{\sqrt{\mu} f_*}f+\Gamma(f,f)\right|\|_{L^p}\nonumber\\
&\leq C\{\|wf_*\|_{L^\infty}+\sup_{0\leq s\leq t}\|wf(s)\|_{L^\infty}\}\cdot\|\nu^{1/p}f\|_{L^p}^p\nonumber\\
&\leq C(\bar{M}+1)\|\nu^{1/p}f\|_{L^p}^p.
\end{align}
To treat the boundary term $|f|_{L^p(\g_-)}$, the same as before, we introduce the cutoff function $\Fi_{\g_+^\vep}$ near the grazing set $\g_+^\vep$ defined in \eqref{S2.1}. Then by a direct computation, we have
\begin{align}\label{5.1.6}
|f|_{L^p(\g_-)}^p&\leq C|P_\g f|_{L^p(\g_-)}^p+C|r|_{L^p(\g_-)}^p\leq C|P_\g f|_{L^p(\g_-)}^p+C\delta^p|f|_{L^p(\g_+)}^p\nonumber\\
&\leq \int_{\g_-}\mu^{p/2}(v)|n(x)\cdot v|\dd \gamma\left(\int_{\CV(x)}\sqrt{\mu(u)}\{f\Fi_{\g_+^\vep}+f\Fi_{\g_+\setminus\g_+^\vep}\}|n(x)\cdot u|\dd u\right)^p+C\delta^{{p}}|f|_{L^p(\g_+)}^p\nonumber\\
&\leq C\{\vep^p+C\delta^p\}\cdot|f|_{L^p(\g_+)}^p+C|f\Fi_{\g_+\setminus\g_+^\vep}|_{L^p(\g_+)}^p.
\end{align}
From the trace 
estimate \eqref{S2.3}, it holds that
\begin{multline}\label{5.1.7}
\int_0^t|f\Fi_{\g_+\setminus\g_+^\vep}(s)|_{L^p(\g_+)}^p\dd s=\int_0^t\left||f|^p\Fi_{\g_+\setminus\g_+^\vep}(s)\right|_{L^1(\g_+)}\dd s   \\
\leq C_{\vep,\Omega}\left\{\|f_0\|_{L^p}^p+\int_0^t\|f(s)\|_{L^p}^p\dd s+\int_0^t\left\|[\pa_t+v\cdot\nabla_x]|f(s)|^p\right\|_{L^1}\dd s\right\}.
\end{multline}
Notice that
$$[\pa_t+v\cdot\nabla_x]|f|^p=p|f|^{p-2}f\{\pa_tf+v\cdot \nabla_xf\}=p|f|^{p-2}f\left\{-\nu f+Kf+\Gamma(f,f)-L_{\sqrt{\mu}f_*}f\right\}.
$$
Then from \eqref{5.1.4} and \eqref{5.1.5}, it holds that
$$\int_0^t\left\|[\pa_t+v\cdot\nabla_x]|f(s)|^p\right\|_{L^1}\dd s\leq C(\bar{M}+1)\int_0^t\|{f(s)}\|_{L^p}^p\dd s.
$$
Combining this with \eqref{5.1.6} and \eqref{5.1.7}, we obtain that
\begin{equation}\label{5.1.8}
\int_0^t|f(s)|_{L^p(\g_-)}^p\dd s\leq C\{\vep^{p}+\delta^p\}\cdot\int_0^t|f(s)|_{L^p(\g_+)}^p\dd s+C_{\vep}\|f_0\|_{L^p}^p+C_{\vep}(\bar{M}+1)\int_0^t\|f(s)\|_{L^p}^p\dd s.
\end{equation}
Substituting \eqref{5.1.4}, \eqref{5.1.5} and \eqref{5.1.8} into \eqref{5.1.3}, and then taking both $\vep>0$ as well as $\delta>0$ suitably small, we have
\begin{align}
\|f(t)\|_{L^p}^p+\int_0^t|f(s)|_{L^p}^p\dd s+\int_0^t\|\nu^{1/p}f(s)\|_{L^p}^p\dd s\leq C\|f_0\|_{L^p}^p+C(\bar{M}+1)\int_0^t\|f(s)\|_{L^p}^p\dd s. \nonumber
\end{align}
Then \eqref{5.1.2} follows from Gronwall's inequality. Therefore, the proof of Lemma \ref{lm5.2} is complete.
\end{proof}

\subsection{$L^\infty_{x,v}$-estimate}
\begin{lemma}\label{lm5.3-1}
Under the a priori assumption \eqref{5.0.1}, for $t\in [0,\min\{T,T_0\}]$ and for almost every $(x,v)\in \bar{\Omega}\times \mathbb{R}^3\setminus \g_0$, it holds that:
\begin{align}\label{5.4.0}
|h(t,x,v)|\leq &S(t)+ \int_{\max\{t_1,0\}}^tI(t,s)\left|w\Gamma(f,f)(s,x-(t-s)v,v)\right|\dd s,
\end{align}
where
$$\begin{aligned}S(t)=&Ce^{-\lambda t^\alpha}M_0+CT_0^{5/2}\big\{m^{3+\ka}+\delta+2^{-\hat{C}_4T_0^{5/4}}+\f{1}N+\f{1}{N^{\beta-4}}\big\}\cdot\{\bar{M}+\bar{M}^2\}\nonumber\\
&+C_{N,T_0,m}\{\sup_{0\leq s\leq t}\|f(s)\|_{L^p}+\sup_{0\leq s\leq t}\|f(s)\|_{L^p}^2\}.
\end{aligned}$$
Here the positive constants $T_0$ and $N$ can be taken arbitrarily large and $m$ can be taken arbitrarily small.
\end{lemma}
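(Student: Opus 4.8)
The plan is to derive \eqref{5.4.0} by running the same mild-representation and Vidav iteration machinery as in Lemma \ref{lm4.2}, but now applied to the nonlinear problem \eqref{nph} with $s=0$, and tracking the nonlinear source term $w\Gamma(f,f)$ and the "extra" linear term $wL_{\sqrt\mu f_*}f$ separately. First I would write $h(t,x,v)$ via the mild formula \eqref{4.1.5} with $s=0$ and $k=\hat{C}_3T_0^{5/4}$, treating $g:=-L_{\sqrt\mu f_*}f+\Gamma(f,f)$ as the inhomogeneous source, and split every occurrence of the source into its $L_{\sqrt\mu f_*}f$ part and its $\Gamma(f,f)$ part. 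The $L_{\sqrt\mu f_*}f$ part is handled exactly as a small-coefficient linear term: since $\|wf_*\|_{L^\infty}\le C\delta$ by \eqref{S1.2}, one has $\|\nu^{-1}wL_{\sqrt\mu f_*}f\|_{L^\infty}\le C\delta\|wf\|_{L^\infty}\le C\delta\bar M$, so it feeds into the $\{m^{3+\ka}+\delta+\cdots\}\{\bar M+\bar M^2\}$ and $C_{N,T_0,m}$ buckets just like the $K^m$, boundary-$\delta$, and $K^c$ contributions already estimated in Lemma \ref{lm4.2}.

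The genuinely new work is the nonlinear term. The idea is to keep the \emph{first} appearance of $w\Gamma(f,f)$ along the direct characteristic (the term $\CI_2+\CI_3+\CI_4$ with the $\Gamma$ source) un-iterated — that is the explicit integral displayed on the right of \eqref{5.4.0} — and to estimate all the \emph{other} occurrences of $w\Gamma(f,f)$ (those inside $\CI_6$–$\CI_{11}$ on the bounce-back cycles, and the ones produced after one Vidav substitution $\hat B_1$) by Lemma \ref{lm2.5}. Concretely, for those terms I would bound $|w\Gamma^{\pm}(f,f)|$ pointwise by $C\nu(v)\|wf\|_{L^\infty}$ times an $L^p$ quantity: the loss term gives $\|wf\|_{L^\infty}\|f\|_{L^p}$ via \eqref{2.41}, while the gain term gives $\|wf\|_{L^\infty}(\int |wf|^p(1+|u|)^{-p(\beta-4)-4}\,\dd u)^{1/p}$ via \eqref{2.42}; since $\beta>4$ the weight $(1+|u|)^{-p(\beta-4)-4}$ is integrable and, when one restricts to $|u|\le$ some large $N$ the factor is $O(1)$, while on $|u|>N$ it is $O(N^{-(\beta-4)})$ — this is precisely where the $\f1N+\f1{N^{\beta-4}}$ in $S(t)$ comes from. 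Thus on the "large velocity / small time" parts these $\Gamma$-contributions produce either a small constant $\times\{\bar M+\bar M^2\}$ or a $C_{N,T_0,m}\times\{\|f\|_{L^p}+\|f\|_{L^p}^2\}$ term, using $\|wf\|_{L^\infty}\le\bar M$ from \eqref{5.0.1} and $\|f\|_{L^p}\le\sup_{0\le s\le t}\|f(s)\|_{L^p}$.

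After substituting these bounds together with the already-known estimates \eqref{4.1.8-2}–\eqref{4.1.22} (whose constants and structure are unchanged, with $s=0$, $\la_0\leftrightarrow\la$, and the extra $\beta>4$ only affecting the gain-term weight), one Vidav iteration on the $K^c_w$ kernel — exactly as in \eqref{4.1.24}–\eqref{4.1.30} — closes everything except the single retained explicit $\Gamma$-integral, and the $L^2$ pieces appearing in $\hat A,\hat B_1,\hat B_2$ are converted to $L^p$ pieces by Hölder on the bounded velocity/space domains (the Jacobian lower bound $|\partial y_l/\partial v_l|\ge N^{-3}$ is used as before). Collecting the $e^{-\la t^\alpha}$ term from $\CI_1$ and the $\CI_5$-type reflection terms (which carry $\|h_0\|_{L^\infty}=M_0$) gives the $Ce^{-\la t^\alpha}M_0$ in $S(t)$. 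The main obstacle I anticipate is bookkeeping: one must be careful that every nonlinear occurrence of $\Gamma(f,f)$ except the one displayed in \eqref{5.4.0} is actually reached by either a boundary reflection or a Vidav substitution, so that Lemma \ref{lm2.5} genuinely applies to it with a gainable small factor ($m^{3+\ka}$, $1/N$, $1/N^{\beta-4}$, $\delta$, or $2^{-\hat C_4 T_0^{5/4}}$) and not merely a bare $\bar M^2$; keeping the retained term exactly as $\int_{\max\{t_1,0\}}^t I(t,s)|w\Gamma(f,f)(s,x-(t-s)v,v)|\,\dd s$ and nothing more is the delicate point, and it is what makes the subsequent nonlinear iteration in the proof of Theorem \ref{thm1.3} work.
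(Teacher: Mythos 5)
Your proposal is correct but takes a genuinely different route from the paper. The paper applies Duhamel's formula with the already-constructed linear solution operator $\tilde{G}$ from Proposition~\ref{prop4.1}, writing $h=\tilde{G}(t)h_0+\int_0^t\tilde{G}(t-s)[-wL_{\sqrt{\mu}f_*}f+w\Gamma(f,f)](s)\,\dd s$; the terms $\tilde{G}(t)h_0$ and $\int\tilde{G}(t-s)wL_{\sqrt{\mu}f_*}f\,\dd s$ are dispatched in one line each via \eqref{4.1.3}, and only the $\Gamma$-contribution is expanded by applying the mild formula \eqref{4.1.5} to $Z:=\tilde{G}(t-s)w\Gamma(f,f)(s)$, whose $L^\infty$-norm is already under control by $e^{-\lambda_0(\tau-s)^\alpha}\|h(s)\|_{L^\infty}^2$. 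You instead write the mild formula \eqref{4.1.5} directly for the nonlinear $h$ with $g=-L_{\sqrt{\mu}f_*}f+\Gamma(f,f)$, so that the initial data ($\CI_1,\CI_5$), the $K^m_w,K^c_w$ terms, the boundary term $r[h]$, and the residual $\CI_{14}$ must all be re-estimated from scratch, producing extra $\{m^{3+\kappa}+\delta+2^{-\hat C_4T_0^{5/4}}+\tfrac1N\}\bar M$ contributions that the paper never sees explicitly (because they are absorbed into the decay of $\tilde G$); these are still dominated by the target $\{\bar M+\bar M^2\}$ factor so the endgame is the same. The shared essential ingredients are exactly as you identify: the one retained direct-characteristic $\Gamma$-integral, Lemma~\ref{lm2.5} for all $\Gamma$-occurrences along bounce-back cycles and after one Vidav substitution, and the Jacobian change of variables $v_l\mapsto y_l$. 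What the paper's route buys is compactness and reuse of the linear theory; what your route buys is that it does not presuppose Proposition~\ref{prop4.1} as a black box.

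One point deserves more care: the phrase ``the $L^2$ pieces appearing in $\hat A,\hat B_1,\hat B_2$ are converted to $L^p$ pieces by H\"older on the bounded velocity/space domains'' is not literally valid, since $p$ is only required to satisfy $p>\max\{\tfrac32,\tfrac{3}{3+\kappa}\}$ and may be strictly less than $2$, so $\|\cdot\|_{L^2}\lesssim\|\cdot\|_{L^p}$ fails even on a bounded domain. What one must do, and what the paper does in \eqref{5.4.12} and \eqref{5.4.23}, is to run the H\"older inequality from the start with exponents $(p,p')$ rather than $(2,2)$, using the fact that $|k^c_w|^{p'}$ is locally integrable precisely because $p'<3$; the weighted $L^2$-quantities of Lemma~\ref{lm4.2} never enter. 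If you rephrase accordingly, your argument closes correctly.
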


\begin{proof}
We denote $\tilde{G}(t)$ as the solution operator of \eqref{4.1.1} provided by Proposition \ref{prop4.1}. Then the solution $h$ of \eqref{nph} 
is given in terms of Duhamel's formula as
\begin{align}\label{5.4.1}
h(t,x,v)=\left(\tilde{G}(t)h_0\right)(t,x,v)+\int_0^t \left(\tilde{G}(t-s)[-wL_{\sqrt{\mu}f_*}f(s)+w\Gamma(f,f)(s)]\dd s\right)(t,x,v).
\end{align}
Using \eqref{4.1.3} and \eqref{2.43}, we have
\begin{align}\label{5.4.2}
\|\tilde{G}(t)h_0\|_{L^\infty}\leq Ce^{-\lambda_0 t^\alpha}\|h_0\|_{L^\infty},
\end{align}
and
\begin{equation}\label{5.4.3}
\left|\int_0^t \tilde{G}(t-s)wL_{\sqrt{\mu}f_*}f(s)\dd s\right|\leq C\|wf_*\|_{L^\infty}\cdot\int_0^te^{-\lambda_0(t-s)^\alpha}\|h(s)\|_{L^\infty}\dd s\leq C\delta\sup_{0\leq s\leq t}\|h(s)\|_{L^\infty}.
\end{equation}
To estimate the last term on the right-hand side of \eqref{5.4.1}, denoting
$$Z(t,x,v):=\bigg(\tilde{G}(t-s)w\Gamma(f,f)(s)\bigg)(t,x,v)
$$
and then applying the mild formulation \eqref{4.1.5} to $Z(t,x,v)$, we obtain that
\begin{align}\label{5.4.4}
\left(\tilde{G}(t-s)w\Gamma(f,f)\right)(t,x,v)=\sum_{i=1}^3H_i+\Fi_{\{t_1>s\}}\sum_{i=4}^{11}H_i,
\end{align}
where
\begin{align}
&H_1=\Fi_{\{t_1\leq s\}}I(t,s)w\Gamma(f,f)(s,x-(t-s)v,v)\nonumber\\
&H_2+H_3=\int_{\max\{t_{1},s\}}^{t}I(t,\tau)[K_w^m+K_w^c]Z(\tau,x-(t-\tau)v,v)\dd\tau,\quad H_4=I(t,t_{1})wr[Z](t_{1},x_{1},v)\nonumber\\
&H_5=\frac{I(t,t_{1})}{\tilde{w}(v)}\int_{\prod_{j=1}^{k-1}\CV_{j}}\sum^{k-1}_{l=1}\Fi_{\{t_{l+1}\leq s<t_{l}\}}w\Gamma(f,f)(s,x_{l}-(t_{l}-s)v_{l},v_{l})\dd\Sigma_{l}(s)\nonumber
\end{align}
\begin{align}
&H_6+H_7=\frac{I(t,t_{1})}{\tilde{w}(v)}\int_{\prod_{j=1}^{k-1}\CV_{j}}\int_{s}^{t_{l}}\sum^{k-1}_{l=1}\Fi_{\{t_{l+1}\leq s<t_{l}\}}[K_w^m+K^c_w]Z(\tau,x_{l}-(t_{l}-\tau)v_{l},v_{l})\dd\Sigma_{l}(\tau)\dd\tau\nonumber\\
&H_8+H_9=\frac{I(t,t_{1})}{\tilde{w}(v)}\int_{\prod_{j=1}^{k-1}\CV_{j}}\int_{t_{l+1}}^{t_{l}}\sum^{k-1}_{l=1}\Fi_{\{t_{l+1}> s\}}[K_w^m+K^c_w]Z(\tau,x_{l}-(t_{l}-\tau)v_{l},v_{l})\dd\Sigma_{l}(\tau)\dd\tau\nonumber
\end{align}
\begin{align}
&H_{10}=\frac{I(t,t_{1})}{\tilde{w}(v)}\int_{\prod_{j=1}^{k-1}\CV_{j}}\sum^{k-2}_{l=1}\Fi_{\{t_{l+1}> s\}}wr[Z](t_{l+1},x_{l+1},v_{l})\dd\Sigma_{l}(t_{l+1})\nonumber\\
&H_{11}=\frac{I(t,t_{1})}{\tilde{w}(v)}\int_{\prod_{j=1}^{k-1}\CV_{j}}\Fi_{\{t_{k}>s\} }Z(t_{k},x_{k},v_{k-1})\dd\Sigma_{k-1}(t_{k}),\nonumber
\end{align} 
and
$r$ is defined in \eqref{4.1.5-2}. Here the same as before, $k=\hat{C}_3T_0^{5/4}$ such that \eqref{4.1.5-1} holds for $\eta=\f{5}{16}$. We first consider terms $H_2$, $H_5$ and $H_7$ involving $K_w^m$. On one hand, similar for obtaining \eqref{4.1.11}, we have
\begin{align}\label{5.4.5}
|H_2|&\leq \int_{\max\{t_{1},s\}}^{t}I(t,\tau)\left|K_w^mZ(\tau,x-(t-\tau)v,v)\right|\dd\tau\nonumber\\
&\leq \int_{\max\{t_{1},s\}}^{t}I(t,\tau)\{\Fi_{\{|v|\leq d_\Omega\}}+\Fi_{\{|v|>d_\Omega\}}\}\left|K_w^mZ(\tau,x-(t-\tau)v,v)\right|\dd\tau\nonumber\\
&\leq Cm^{3+\ka}\int_{\max\{t_1,s\}}^te^{-\bar{\nu}_0(t-\tau)}\|\tilde{G}(\tau-s)w\Gamma(f,f)(s)\|_{L^\infty}\dd\tau\nonumber\\
&\leq Cm^{3+\ka}\|h(s)\|_{L^\infty}^{{2}}\cdot\int_{\max\{t_1,s\}}^te^{-\bar{\nu}_0(t-\tau)-\lambda_0(\tau-s)^\alpha}\dd\tau\nonumber\\
&\leq  Cm^{3+\ka}e^{-\lambda_0(t-s)^\alpha}\|h(s)\|^2_{L^\infty}.
\end{align}
On the other hand, similar for obtaining \eqref{4.1.12}, we have
\begin{align}\label{5.4.6}
|H_6|\leq& \sum_{l=1}^{k-1}\sum_{m=1}^{l}C\bigg\{\int_{\prod_{j=1}^{l}\CV_{j}}\Fi_{\{t_{l+1}\leq s<t_{l}\}}\times\Fi_{\big\{|v_{m}|= \max\big[|v_{1}|,|v_{2}|,...|v_{l}|\big]\big\}}e^{\frac{5|v_{m}|^{2}}{16}}
\prod_{j=1}^{l}\dd\sigma_{j}\nonumber\\
&\qquad\times \int_s^{t_l}e^{-\lambda_1(t-\tau)^\alpha}\|K_w^mZ(\tau)\|_{L^\infty}\dd\tau\bigg\}\nonumber\\
\leq& Ck^2m^{3+\ka}e^{-\lambda_0(t-s)^{\alpha}}\|h(s)\|^2_{L^\infty}\cdot\sup_{j}\left|\int_{\CV_j}e^{\frac{5|v_{j}|^{2}}{16}}\dd\sigma_j\right|\nonumber\\
\leq& Ck^2m^{3+\ka}e^{-\lambda_0(t-s)^{\alpha}}\|h(s)\|^2_{L^\infty}.
\end{align}
Similarly, it holds that
\begin{align}\label{5.4.7}
|H_8|\leq Ck^2m^{3+\ka}e^{-\lambda_0(t-s)^{\alpha}}\|h(s)\|^2_{L^\infty}.
\end{align}
For the terms $H_4$ and $H_{10}$ involving $r$, we see from \eqref{4.1.3} and \eqref{4.1.5-2} that
$$\begin{aligned}
|r[Z](\tau)|_{L^\infty(\g_-)}\leq C\delta e^{-\lambda_0(\tau-s)^\alpha}|h(s)|^2_{L^\infty(\g_+)}.
\end{aligned}
$$
Therefore, similar for obtaining \eqref{5.4.6}, we have
\begin{align}\label{5.4.8}
|H_{10}|\leq& \sum_{l=1}^{k-1}\sum_{m=1}^{l}C\bigg\{\int_{\prod_{j=1}^{l}\CV_{j}}\Fi_{\{t_{l+1}>s\}}\nonumber\\
&\times\Fi_{\big\{|v_{m}|= \max\big[|v_{1}|,|v_{2}|,...|v_{l}|\big]\big\}}e^{\frac{5|v_{m}|^{2}}{16}}e^{-\lambda_1(t-t_{l+1})^\alpha}|r[Z](t_{l+1})|_{L^\infty(\g_-)}
\prod_{j=1}^{l}\dd\sigma_{j}\bigg\}\nonumber\\
\leq& Ck^2\delta e^{-\lambda_0(t-s)^{\alpha}}|h(s)|^2_{L^\infty(\g_+)}\cdot\sup_{j}\left|\int_{\CV_j}e^{\frac{5|v_{j}|^{2}}{16}}\dd\sigma_j\right|\nonumber\\
\leq& Ck^2\delta e^{-\lambda_0(t-s)^{\alpha}}|h(s)|^2_{L^\infty(\g_+)},
\end{align}
and
\begin{align}\label{5.4.9}
|H_4|&\leq C\Fi_{\{t_1>s\}}I(t,t_{1})\{\Fi_{\{|v|\leq d_\Omega\}}+\Fi_{\{|v|>d_\Omega\}}\}\left|wr[Z](t_{1},x_{1},v)\right|\nonumber\\
&\leq C e^{-\bar{\nu}_0(t-t_1)}\left|wr[Z](t_{1},x_{1},v)\right|\leq C\delta e^{-\lambda_0(t-s)^\alpha}|h(s)|^2_{L^\infty(\g_+)}.
\end{align}
For $H_{11}$, we note that
$$|Z(t_{k},x_k,v_{k-1})|\leq \left|\tilde{G}(t_{k}-s)w\Gamma(f,f)(s)\right|_{L^\infty{(\g_-)}
}\leq e^{-\lambda_0(t_k-s)^\alpha}\|h(s)\|^2_{L^\infty}.
$$
Then by \eqref{4.1.5-1}, we have
\begin{align}\label{5.4.10}
|H_{11}|\leq Ce^{-\lambda_0(t-s)^\alpha}\|h(s)\|^2_{L^\infty}\cdot\left(\f12\right)^{\hat{C}_4T_0^{5/4}}.
\end{align}
For the terms $H_7$ and $H_9$ involving $K_w^c$, similar for obtaining \eqref{4.1.17}, we have
\begin{align*}
|H_8| &\leq  C \sum_{l=1}^{k-1}\sum_{m=1}^{{l}
}\int_{\Pi _{j=1}^{l-1}{\mathcal{V}}_{j}}\dd{\sigma}_{l-1}\cdots \dd{\sigma}_1 \nonumber\\
&\qquad\times \int_{\mathcal{V}_l}\int_{\mathbb{R}^3} \int_s^{{t}_l} e^{-\lambda_1(t-\tau)^{\alpha}} \Fi_{\{{t}_{l+1}\leq s<{t}_l\}} e^{\f{5|v_m|^2}{16}} |k^c_w(v_l,v') Z(\tau,{x}_l-{v}_l({t}_l-\tau),v')|\dd \tau\dd v' \dd{\sigma}_l,
\end{align*}
and further split it as
\begin{align*}
|H_8| 
&\leq C \sum_{l=1}^{k-1}\sum_{m=1}^{l-1}\int_{\Pi _{j=1}^{l-1}{\mathcal{V}}_{j}}e^{\f{5|v_m|^2}{16}} \dd{\sigma}_{l-1}\cdots \dd{\sigma}_1 \int_{\mathcal{V}_l\cap \{|v_l|\geq N\}}\int_{\mathbb{R}^3}\int_s^{{t}_l} \bar{\Delta}\dd \tau\dd v' \dd{\sigma}_l \nonumber\\
&\quad+C \sum_{l=1}^{k-1}\sum_{m=1}^{l-1}\int_{\Pi _{j=1}^{l-1}{\mathcal{V}}_{j}}e^{\f{5|v_m|^2}{16}}  \dd{\sigma}_{l-1}\cdots \dd{\sigma}_1 \int_{\mathcal{V}_l\cap \{|v_l|\leq N\}}\int_{\mathbb{R}^3} \int_{t_l-\f1N}^{{t}_l}  \bar{\Delta}\dd \tau\dd v' \dd{\sigma}_l\nonumber\\
&\quad+ C \sum_{l=1}^{k-1}\sum_{m=1}^{l-1}\int_{\Pi _{j=1}^{l-1}{\mathcal{V}}_{j}} e^{\f{5|v_m|^2}{16}} \dd{\sigma}_{l-1}\cdots \dd{\sigma}_1 \int_{\mathcal{V}_l\cap \{|v_l|\leq N\}}\int_{\{|v'|\geq 2N\}} \int_s^{{t}_l-\f1N}  \bar{\Delta} \dd \tau\dd v' \dd{\sigma}_l\nonumber\\
&\quad+ C \sum_{l=1}^{k-1}\sum_{m=1}^{l-1}\int_{\Pi _{j=1}^{l-1}{\mathcal{V}}_{j}} e^{\f{5|v_m|^2}{16}} \dd{\sigma}_{l-1}\cdots \dd{\sigma}_1 \int_{\mathcal{V}_l\cap \{|v_l|\leq N\}}\int_{\{|v'|\leq 2N\}} \int_s^{{t}_l-\f1N}  \bar{\Delta} \dd \tau\dd v' \dd{\sigma}_l,\nonumber
\end{align*}
where
$$
\bar{\Delta}:=e^{-\lambda_1(t-\tau)^{\alpha}} \Fi_{\{{t}_{l+1}\leq s<{t}_l\}}e^{\f{5|v_l|^2}{16}}|k^c_w(v_l,v') Z(\tau,{x}_l-{v}_l({t}_l-\tau),v')|.
$$
Then it follows that 
\begin{align}\label{5.4.11}
|H_8| 
&\leq \f{Ck^2e^{-\lambda_0(t-s)^\alpha}}{N}\sup_{s\leq \tau\leq t}\|h(\tau)\|^2_{L^\infty}\nonumber\\
&\quad+C  \sum_{l=1}^{k-1}\sum_{m=1}^{l-1}\int_{\Pi _{j=1}^{l-1}{\mathcal{V}}_{j}} e^{\f{5|v_m|^2}{16}} \dd{\sigma}_{l-1}\cdots \dd{\sigma}_1 \int_{\mathcal{V}_l\cap \{|v_l|\leq N\}}\int_{\{|v'|\leq 2N\}} \int_s^{{t}_l-\f1N}  \bar{\Delta} \dd \tau\dd v' \dd{\sigma}_l,
\end{align}
By {H\"older} inequality, it holds that
\begin{equation}\label{5.4.12}\begin{aligned}
&\int_{\mathcal{V}_l\cap \{|v_l|\leq N\}}\int_{\{|v'|\leq 2N\}} \int_s^{{t}_l-\f1N}  \bar{\Delta} \dd \tau\dd v' \dd{\sigma}_l\\
&\quad\leq C\int_s^{t_l-\f1N}e^{-\lambda_1(t-\tau)^\alpha}\dd\tau\left(\int_{\CV_l\cap\{|v_l|\leq N\}}\int_{\{|v'|\leq 2N\}}e^{-\f{|v_l|^2}{8}}|k_w^c(v_l,v')|^{p'}\dd v'\dd v_l\right)^{\f{1}{p'}}\\
&\qquad\times \left(\int_{\CV_l\cap\{|v_l|\leq N\}}\int_{\{|v'|\leq 2N\}}\Fi_{{\{}t_{l+1}\leq s<t_l{\}}}|Z(\tau,x_{l}-(t_l-\tau)v_l,v')|^p\dd v'\dd v_l\right)^{\f{1}{p}}.
\end{aligned}\end{equation}
Since $1<p'=\f{p}{p-1}<3$, then by \eqref{2.33} with $a=1$, it holds that  
$$
\int_{\CV_l\cap\{|v_l|\leq N\}}\int_{\{|v'|\leq 2N\}}e^{-\f{|v_l|^2}{8}}|k_w^c(v_l,v')|^{p'}\dd v'\dd v_l\leq Cm^{p'(\ka-1)}.
$$
Therefore, it holds that
$$
\begin{aligned}
\text{R.H.S. of \eqref{5.4.12}} \leq& C_Nm^{{\ka}
-1}\int_s^{t_l-\f1N}e^{-\lambda_1(t-\tau)^\alpha}\dd\tau\\
&\times\left(\int_{\CV_l\cap\{|v_l|\leq N\}}\int_{\{|v'|\leq 2N\}}\Fi_{{\{}t_{l+1}\leq s<t_l{\}}}\left|\f{Z}{w}(\tau,x_{l}-(t_l-\tau)v_l,v')\right|^p\dd v'\dd v_l\right)^{\f{1}{p}}.
\end{aligned}
$$
Note that $y_l:=x_l-(t_l-\tau)v_l\in\Omega$ for $s\leq \tau\leq t_l-\f1N$. Then making chang of variable $v_l\rightarrow y_l$,
we obtain that
$$\int_{\mathcal{V}_l\cap \{|v_l|\leq N\}}\int_{\{|v'|\leq 2N\}} \int_s^{{t}_l-\f1N}  \bar{\Delta} \dd \tau\dd v' \dd{\sigma}_l\leq C_{N,m}\int_{s}^{t_l-\f1N}e^{-\lambda_1(t-\tau)^\alpha}\left\|\f{Z(\tau)}{w}\right\|_{L^p}\dd\tau.
$$
Similar for obtaining \eqref{5.1.2}, we have
\begin{align}\label{5.4.13}
\left\|\f{Z(\tau)}{w}\right\|_{L^p}&=\left\|\f{\tilde{G}(\tau-s)w\Gamma(f,f)(s)}{w}\right\|_{L^p}\leq Ce^{C (\tau-s)}\|\Gamma(f,f)(s)\|_{L^p}\nonumber\\
&\leq C_{T_0}e^{-\lambda_0 (\tau-s)^\alpha}\|\Gamma(f,f)(s)\|_{L^p}\leq C_{T_0}e^{-\lambda_0 (\tau-s)^\alpha}\|h(s)\|_{L^\infty}\cdot\|f(s)\|_{L^p}\nonumber\\
&\leq \f{Ce^{-\lambda_0 (\tau-s)^\alpha}}{N}\|h(s)\|^2_{L^\infty}+C_{N,T_0}e^{-\lambda_0 (\tau-s)^\alpha}\|f(s)\|_{L^p}^2,
\end{align}
which implies that
$$\int_{\mathcal{V}_l\cap \{|v_l|\leq N\}}\int_{\{|v'|\leq 2N\}} \int_s^{{t}_l-\f1N}  \bar{\Delta} \dd \tau\dd v' \dd{\sigma}_l\leq \f{Ce^{-\lambda_0 (t-s)^\alpha}}{N}\|h(s)\|^2_{L^\infty}+C_{N,T_0,m}e^{-\lambda_0 (t-s)^\alpha}\|f(s)\|_{L^p}^2.
$$
Substituting this into \eqref{5.4.11}, we have
\begin{align}\label{5.4.14}
|H_8|\leq \f{Ck^2e^{-\lambda_0(t-s)^\alpha}}{N}\sup_{0\leq s\leq t}\|h(s)\|^2_{L^\infty}+C_{N,T_0,m}e^{-\lambda_0(t-s)^\alpha}\sup_{0\leq s\leq t}\|f(s)\|^2_{L^p}.
\end{align}
Similarly, we have
\begin{align}\label{5.4.15}
|H_{10}|\leq \f{Ck^2e^{-\lambda_0(t-s)^\alpha}}{N}\sup_{0\leq s\leq t}\|h(s)\|^2_{L^\infty}+C_{N,T_0,m}e^{-\lambda_0(t-s)^\alpha}\sup_{0\leq s\leq t}\|f(s)\|^2_{L^p}.
\end{align}
For $H_5$, similar as \eqref{4.1.12}, we have
\begin{align}
\int_0^t|H_{5}| \dd s&\leq  C\int_0^t e^{-\lambda_1(t-s)^{\alpha}}\dd s\sum_{l=1}^{k-1}\sum_{m=1}^{l}\int_{\Pi _{j=1}^{l-1}{\mathcal{V}}_{j}}\dd{\sigma}_{l-1}\cdots \dd{\sigma}_1 \nonumber\\
&\qquad\times \int_{\mathcal{V}_l}\int_{\mathbb{R}^3}  \Fi_{\{{t}_{l+1}\leq s<{t}_l\}} e^{\f{5|v_m|^2}{16}} \left|w\Gamma(f,f)(s,x_l-(t_l-s)v_l,v_l)\right|\dd \tau\dd v' \dd{\sigma}_l. \nonumber
\end{align}
Then it follows that
\begin{align}\label{5.4.15-1}
\int_0^t|H_{5}| \dd s
&\leq C\int_0^{t}e^{-\lambda_1(t-s)^\alpha}\|h(s)\|_{L^\infty}\dd s \sum_{l=1}^{k-1}\sum_{m=1}^{l-1}\int_{\Pi _{j=1}^{l-1}{\mathcal{V}}_{j}}e^{\f{5|v_m|^2}{16}} \dd{\sigma}_{l-1}\cdots \dd{\sigma}_1\nonumber\\
&\quad\times\int_{\mathcal{V}_l}e^{-\f{|v_l|^2}{8}}\Fi_{\{t_{l+1}\leq s<t_l\}}\dd v_l\left\{\int_{\mathbb{R}^3} \left|h(s,x_{l}-(t_l-s)v_l,v')\right|^p\langle v'\rangle^{-4-p(\beta-4)}\dd v'\right\}^{1/p}\nonumber\\
&\leq  C\int_0^{t}e^{-\lambda_1(t-s)^\alpha}\|h(s)\|_{L^\infty}\dd s\nonumber\\
&\quad \times \bigg(\sum_{l=1}^{k-1}\sum_{m=1}^{l-1}\int_{\Pi _{j=1}^{l-1}{\mathcal{V}}_{j}}e^{\f{5|v_m|^2}{16}} \dd{\sigma}_{l-1}\cdots \dd{\sigma}_1\left\{\int_{\CV_l}\int_{\mathbb{R}^3}\tilde{\Delta}\dd v_l\dd v'\right\}^{1/p}\bigg),
\end{align}
where we have used \eqref{2.41} and \eqref{2.42} in the first  inequality, and also denoted that 
$$
\tilde{\Delta}:=e^{-\f{|v_l|^2}{8}}\Fi_{\{t_{l+1}\leq s<t_l\}}\left|h(s,x_{l}-(t_l-s)v_l,v')\right|^p\langle v'\rangle^{-4-p(\beta-4)}.
$$
Now, we consider the integral in \eqref{5.4.15-1} over either $\{|v_l|\geq N\}$ or $\{|v_l|\leq N, |v'|\geq N\}$ or $\{|v_l|\leq N, |v'|\leq N, t_l-1/N\leq s \leq t_l\}$ or $\{|v_l|\leq N, |v'|\leq N, 0\leq s \leq t_l-1/N\}$. Over $\{|v_l|\geq N\}$ or $\{|v_l|\leq N, |v'|\geq N\}$ or $\{|v_l|\leq N, |v'|\leq N, t_l-1/N\leq s \leq t_l\}$, it is bounded by
$$ Ck^2\left(\f1N+\f1{N^{\beta-4}}\right)\bar{M}^2.
$$
Over $\{|v_l|\leq N, |v'|\leq N, 0\leq s \leq t_l-1/N\}$, it is bounded by
\begin{align}
&C_N\sum_{l=1}^{k-1}\sum_{m=1}^{l-1}\int_0^{t}e^{-\lambda_1(t-s)^\alpha}\|h(s)\|_{L^\infty}\dd s \int_{\Pi _{j=1}^{l-1}{\mathcal{V}}_{j}}e^{\f{5|v_m|^2}{16}} \dd{\sigma}_{l-1}\cdots \dd{\sigma}_1\nonumber\\
&\quad\times\left\{\int_{\CV_l\cap\{|v_l|\leq N\}}\int_{\{|v'|\leq N\}}e^{-\f{|v_l|^2}{8}}\Fi_{\{t_{l+1}\leq s<t_l-\f1N\}}\left|f(s,x_{l}-(t_l-s)v_l,v')\right|^p\dd v_l\dd v'\right\}^{1/p}\nonumber\\
&\leq C_Nk^2\int_0^{t}e^{-\lambda_1(t-s)^\alpha}\|h(s)\|_{L^\infty}\cdot\|f(s)\|_{L^p}\dd s\leq \f{Ck^2}{N}\bar{M}^2+C_Nk^2\sup_{0\leq s\leq t}\|f(s)\|_{L^p}^2,\nonumber
\end{align}
where we have used the change of variable $v_l\rightarrow y_l:=x_l-(t_l-s)v_l$ above. Therefore, for $H_5$, it holds that
\begin{align}\label{5.4.16}
\int_0^t|H_5|\dd s\leq Ck^2\left(\f{1}{N}+\f{1}{N^{\beta-4}}\right)\bar{M}^2+C_Nk^2\sup_{0\leq s\leq t}\|f(s)\|_{L^p}^2.
\end{align}
Substituting \eqref{5.4.5}, \eqref{5.4.6}, \eqref{5.4.7}, \eqref{5.4.8}, \eqref{5.4.9}, \eqref{5.4.10}, \eqref{5.4.14} \eqref{5.4.15} and \eqref{5.4.16} into \eqref{5.4.4}, we have
\begin{align}\label{5.4.17}
&\int_0^t\left|\left(\tilde{G}(t-s)w\Gamma(f,f)(s)\right)(t,x,v)\right|\dd s\nonumber\\
&\leq \int_0^t\Fi_{\{t_1\leq s\}}I(t,s)w\Gamma(f,f)(s,x-(t-s)v,v)\dd s\nonumber\\
&\quad+\int_0^t\dd s\int_{\max\{t_{1},s\}}^{t}I(t,\tau)\dd\tau\int_{\mathbb{R}^3}\left|k_w^c(v,u)\left(\tilde{G}(\tau-s)w\Gamma(f,f)(s)\right)(\tau,x-(t-\tau)v,u)\right|\dd u\nonumber\\
&\quad+CT_0^{5/2}\bar{M}^2\big\{m^{3+\ka}+\delta+2^{-\hat{C}_4T_0^{5/4}}+\f{1}N+\f{1}{N^{\beta-4}}\big\}+C_{N,T_0,m}\sup_{0\leq s\leq t}\|f(s)\|_{L^p}^2\nonumber\\
&:=H_{12}+H_{13}+H_{14}.
\end{align}
To further estimate $H_{13}$, we denote $x'=x-(t-\tau)v$ and $\tau_1'=t_1(\tau,x',u).$
Then by Fubini Theorem and \eqref{5.4.17}, it holds that
\begin{align}\label{5.4.19}
|H_{13}|&=\left|\int_{\max\{t_1,0\}}^t\int_{\mathbb{R}^3}I(t,\tau)k_w^c(v,u)\dd u\dd\tau\int_{0}^\tau\left(\tilde{G}(\tau-s)w\Gamma(f,f)(s)\right)(\tau,x',u)\dd s\right|\nonumber\\
&\leq H_{13,1}+H_{13,2}+H_{13,3},
\end{align}
where
\begin{align}
H_{13,1}&=
\int_{\max\{t_1,0\}}^t\int_0^\tau\int_{\mathbb{R}^3}|k_w^c(v,u)|\Fi_{\{t_1'\leq s\}}I(t,s)\left|w\Gamma(f,f)(s,x'-(\tau-s)u,u)\right|\dd u\dd s\dd\tau,  \nonumber\\
H_{13,2}&=\int_{\max\{t_1,0\}}^t\int_0^\tau\int_{\mathbb{R}^3}\int_{\max\{t_1',s\}}^\tau I(t,\tau')|k_w^c(v,u)|\dd\tau'\dd u\dd s\dd\tau\nonumber\\
&\qquad\times\int_{\mathbb{R}^3} \left|k_{w}^c(u,u')\left(\tilde{G}(\tau'-s)w\Gamma(f,f)(s)\right)(\tau',x'-(\tau-\tau')u,u')\right|\dd u',\nonumber\\
H_{13,3}&=|H_{14}|\cdot\int_{\max\{t_1,0\}}^t\int_{\mathbb{R}^3}I(t,\tau)|k_w^c(v,u)|\dd u\dd\tau.\nonumber
\end{align}
Similar as before, we have
\begin{align}\label{5.4.21}
|H_{13,3}|&\leq |H_{14}|\cdot\int_{\max\{t_1,0\}}^tI(t,\tau)\{\Fi_{\{|v|\leq d_\Omega\}}+\Fi_{\{|v|>d_\Omega\}}\}\dd\tau\int_{\mathbb{R}^3}|k_w^c(v,u)|\dd u\nonumber\\
&\leq C|H_{14}|\cdot\int_{\max\{t_1,0\}}^te^{-\bar{\nu}_0(t-\tau)}\dd\tau\leq CH_{14}.
\end{align}
For $H_{13,1}$, we have from \eqref{2.41} and \eqref{2.42} that
\begin{align*}
|H_{13,1}|&\leq C \int_{\max\{t_1,0\}}^t\dd\tau\int_{\mathbb{R}^3}|k_w^c(v,u)|\dd u\int_{0}^\tau\Fi_{\{t_1'\leq s\}}e^{\Red{-}\bar{\nu}_0(t-s)}\|h(s)\|_{L^\infty}\dd s\nonumber\\
&\qquad\times\left(\int_{\mathbb{R}^3}\left|h(s,x'-(\tau-s)u,u')\right|^p\langle u' \rangle^{-4-p(\beta-4)}\dd u'\right)^{1/p}.
\end{align*}
Now we divide the estimates by the following cases.

\medskip
\noindent{\it Case 1.} $|v|\geq N$. We have from \eqref{2.40-1} that
$$
|H_{13,1}|\leq \f{C}{N}\sup_{0\leq s\leq t}\|h(s)\|_{L^\infty}^2.
$$
{\it Case 2.} $|v|\leq N, |u|\geq 2N$. In this case, we have $|v-u|\geq N$, so that by \eqref{2.40-1},
$$\int_{\{|u|\leq 2N\}}|k_{w}^c(v,u)|\dd u\leq e^{-\f{N^2}{32}}\int_{\{|u|\leq 2N\}}|k_{w}^c(v,u)e^{\f{|v-u|^2}{32}}|\dd u\leq Ce^{-\f{N^2}{32}}.
$$
It then follows that
$$|H_{13,1}|\leq Ce^{-\f{N^2}{32}}\sup_{0\leq s\leq t}\|h(s)\|_{L^\infty}^2.
$$
{\it Case 3.} $|v|\leq N, |u|\leq 2N, |u'|>N$. By $\beta>4$, it holds that
$$|H_{13,1}|\leq \f{C}{N^{\beta-4}}\sup_{0\leq s\leq t}\|h(s)\|_{L^\infty}^2.
$$
{\it Case 4.} $|v|\leq N, |u|\leq 2N, |u'|\leq N, \tau-1/N<s\leq \tau$. It is straightforward to see that
$$|H_{13,1}|\leq \f{C}{N}\sup_{0\leq s\leq t}\|h(s)\|_{L^\infty}^2.
$$
{\it Case 5.} $|v|\leq N, |u|\leq 2N, |u'\leq N, 0\leq s\leq \tau-1/N.$ By H\"older's inequality, we have
$$\begin{aligned}
|H_{13,1}|&\leq C_N\sup_{0\leq s\leq t}\|h(s)\|_{L^\infty}\cdot\int_{\max\{t_1,0\}}^t\int_0^{\tau-\f1N}e^{-\bar{\nu}_0(t-s)}\dd s\dd\tau\left(\int_{\{|u|\leq 2N\}}|k_w^c(v,u)|^{p'}\dd u\right)^{1/p'}\\
&\qquad\cdot\left(\int_{\{|u|\leq 2N\}}\int_{\{|u'|\leq N\}}\Fi_{\{t_1'\leq s\}}\left|f(s,x'-(\tau-s)u,u')\right|^p\dd u\dd u'\right)^{1/p}\\
&\leq C_{N,m}\sup_{0\leq s\leq t}\|h(s)\|_{L^\infty}\cdot \sup_{0\leq s\leq t}\|f(s)\|_{L^p}\leq \f{C}{N}\sup_{0\leq s\leq t}\|h(s)\|^2_{L^\infty}+C_{N,m}\sup_{0\leq s\leq t}\|f(s)\|_{L^p}^2.
\end{aligned}
$$
Collecting the estimates for these cases, we have
\begin{align}\label{5.4.22}
|H_{13,1}|\leq C\left(\f1N+\f1{N^{\beta-4}}\right)\sup_{0\leq s\leq t}\|h(s)\|^2_{L^\infty}+C_{N,m}\sup_{0\leq s\leq t}\|f(s)\|_{L^p}^2.
\end{align}
Similarly, for $H_{13,2}$, we have
\begin{align}\label{5.4.23}
|H_{13,2}|\leq &\int_{\max\{t_1,0\}}^t\int_{0}^\tau\dd s\dd \tau\int_{s}^\tau e^{-\bar{\nu}_0(t-\tau')}\dd\tau'\nonumber\\
&\times\int_{\mathbb{R}^3}\int_{\mathbb{R}^3}\left|k_{w}^c(v,u)k_{w}^c(u,u')\Fi_{\{\max\{t_1', s\}\leq \tau'\leq \tau\}}Z(\tau',x'-(\tau-\tau')u,u')\right|\dd u\dd u'\nonumber\\
\leq &\f{C}{N}\sup_{0\leq s\leq t}\|h(s)\|_{L^\infty}^2+C_N\int_{\max\{t_1,0\}}^t\int_{0}^\tau\dd s\dd \tau\int_{s}^{\tau-\f1N} e^{-\bar{\nu}_0(t-\tau')}\dd\tau'\nonumber\\
&\quad\times\left(\int_{\{|u|\leq 2N\}}\int_{\{|u'|\leq 3N\}}\left|k_{w}^c(v,u)k_{w}^c(u,u')\right|^{p'}\dd u\dd u'\right)^{1/p'}\nonumber\\
&\quad\times \left(\int_{\{|u|\leq 2N\}}\int_{\{|u'|\leq 3N\}} \Fi_{\{\max\{t_1', s\}\leq \tau'\leq \tau\}}\left|\f{Z}{w}(\tau',x'-(\tau-\tau')u,u')\right|^p\dd u\dd u'\right)^{1/p}\nonumber\\
\leq &\f{C}{N}\sup_{0\leq s\leq t}\|h(s)\|_{L^\infty}^2+C_{N,m}\int_{\max\{t_1,0\}}^t\int_{0}^\tau\dd s\dd \tau\int_{s}^{\tau-\f1N} e^{-\bar{\nu}_0(t-\tau')}\left\|\f{Z(\tau')}{w}\right\|_{L^p}\dd\tau'\nonumber\\
\leq &\f{C}{N}\sup_{0\leq s\leq t}\|h(s)\|_{L^\infty}^2+C_{N,{T_0,}m}\sup_{0\leq s\leq t}\|f(s)\|_{L^p}^2.
\end{align}
Here we have used \eqref{5.4.13} in the last inequality. Substituting \eqref{5.4.21}, \eqref{5.4.22} and \eqref{5.4.23} into \eqref{5.4.19}, we have
\begin{align}\label{5.4.24}
&\int_0^t\left|\left(\tilde{G}(t-s)w\Gamma(f,f)(s)\right)(t,x,v)\right|\dd s\nonumber\\
&\quad \leq\int_0^t\Fi_{\{t_1\leq s\}}I(t,s)w\Gamma(f,f)(s,x-(t-s)v,v)\dd s\nonumber\\
&\qquad+  CT_0^{5/2}\big\{m^{3+\ka}+\delta+2^{-\hat{C}_4T_0^{5/4}}+\f{1}N+\f{1}{N^{\beta-4}}\big\}\bar{M}^2+C_{N,T_0,m}\sup_{0\leq s\leq t}\|f(s)\|_{L^p}^2.
\end{align}
Then \eqref{5.4.0} naturally follows from \eqref{5.4.2}, \eqref{5.4.3} and \eqref{5.4.24}. Therefore, the proof of Lemma \ref{lm5.3-1} is complete.
\end{proof}

\begin{lemma}\label{lm5.4}
Under the assumption \eqref{5.0.1}, there exists a constant $C>0$ independent of $t$, such that, for any $0\leq t\leq \min\{T,T_0\}$, it holds that:
\begin{align}\label{5.4.25}
\|h(t)\|_{L^\infty}+|h(t)|_{L^\infty{(\g)}}\leq&   CM_0e^{-\lambda_0t^\alpha}\left(1+\int_{0}^t\|h(\tau)\|_{L^\infty}\dd\tau\right)\nonumber\\
&+CT_0^{5/2}\big\{m^{3+\ka}+\delta+2^{-\hat{C}_4T_0^{5/4}}+\f{1}N+\f{1}{N^{\beta-4}}\big\}\cdot\{\bar{M}+\bar{M}^3\}\nonumber\\
&+C_{N,T_0,m}\{\sup_{0\leq s\leq t}\|f(s)\|_{L^p}+\sup_{0\leq s\leq t}\|f(s)\|_{L^p}^3\}.
\end{align}
Here the positive constants $T_0$ and $N$ can be chosen arbitrarily large and $m$ can be chosen arbitrarily small.
\end{lemma}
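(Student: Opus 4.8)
The plan is to start from the pointwise representation \eqref{5.4.0} furnished by Lemma \ref{lm5.3-1}: it already absorbs the free evolution, the friction $L_{\sqrt{\mu}f_*}$, and one Vidav iteration of the nonlinear source into the term $S(t)$, so the only quantity left to control is the residual nonlinear contribution $\mathcal{N}(t,x,v):=\int_{\max\{t_1,0\}}^{t}I(t,s)\,|w\Gamma(f,f)(s,x-(t-s)v,v)|\,\dd s$ together with the boundary trace. For the latter, the diffuse boundary condition in \eqref{nph} gives $|h(t)|_{L^\infty(\g_-)}\le C(1+\delta)|h(t)|_{L^\infty(\g_+)}$, and since \eqref{5.4.0} holds for almost every $(x,v)$ — including those lying on $\g_+$ via the exit time $t_{\mathbf{b}}$ — the trace is dominated by the same right-hand side as the interior norm. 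Thus everything reduces to estimating $\mathcal{N}$ pointwise.

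For the loss part I would use \eqref{2.41}, that is $|w\Gamma^-(f,f)(s,y,v)|\le C\nu(v)\|h(s)\|_{L^\infty}\|f(s)\|_{L^p}$, together with the geometric facts $t_{\mathbf{b}}(x,v)\le d_\Omega/|v|$ and $I(t,s)\le e^{-\nu(v)(t-s)/2}$: when $|v|>d_\Omega$ the backward trajectory has already exited within unit time, so the $s$-integration is confined to $[\max\{t-1,0\},t]$ and $\int\nu(v)e^{-\nu(v)(t-s)/2}\,\dd s\le 2$; when $|v|\le d_\Omega$ one has $\nu(v)\ge 2\bar{\nu}_0$ and the weight $e^{-\bar{\nu}_0(t-s)}$ is integrable. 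This yields $\mathcal{N}^-\le C\sup_{0\le s\le t}\|h(s)\|_{L^\infty}\sup_{0\le s\le t}\|f(s)\|_{L^p}$, and Young's inequality $\bar{M}\|f\|_{L^p}\le \tfrac1N\bar{M}^2+C_N\|f\|_{L^p}^2$ followed by the elementary bound $X^2\le X+X^3$ places this inside the right-hand side of \eqref{5.4.25}.

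The gain part is the main obstacle. By \eqref{2.42} one has $|w\Gamma^+(f,f)(s,y,v)|\le C\nu(v)\|h(s)\|_{L^\infty}\big(\int_{\R^3}|h(s,y,u)|^p(1+|u|)^{-p(\beta-4)-4}\,\dd u\big)^{1/p}$, and the difficulty is that this weighted $L^p_v$ mass of $h$ is anchored at the running point $y=x-(t-s)v$, with no a priori $L^\infty_xL^p_v$ control at our disposal. The remedy I would use is a second application of \eqref{5.4.0}, substituting it for $h(s,y,u)$ under the $u$-integral. The $S(s)$-contribution carries no residual $u$-dependence, so, multiplied by the prefactor $\|h(s)\|_{L^\infty}$ and the characteristic damping $e^{-\bar{\nu}_0(t-s)}$ (using $e^{-\bar{\nu}_0(t-s)}e^{-\lambda_0 s^\alpha}\le Ce^{-\lambda_0 t^\alpha}$ and $x^\alpha+y^\alpha\ge(x+y)^\alpha$), it produces precisely the term $CM_0e^{-\lambda_0 t^\alpha}(1+\int_0^t\|h(\tau)\|_{L^\infty}\,\dd\tau)$ plus small-prefactor contributions of the form $CT_0^{5/2}\{m^{3+\ka}+\delta+2^{-\hat{C}_4T_0^{5/4}}+\tfrac1N+\tfrac1{N^{\beta-4}}\}(\bar{M}+\bar{M}^2)$. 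The remaining doubly-nonlinear contribution now genuinely contains a $u$-integral: after splitting $|u|\lessgtr N$ and $|v|\lessgtr N$, the high-velocity tails give smallness $\tfrac1N+\tfrac1{N^{\beta-4}}$ — here $\beta>4$ is used to make $\int_{|u|\ge N}(1+|u|)^{-p(\beta-4)-4}\,\dd u$ small — while on the truncated low-velocity region the change of variables $u\mapsto y-(s-s')u$, whose Jacobian is bounded below by $cN^{-3}$ exactly as in \eqref{S3.39}, converts the inner integral into an honest $\|f(s')\|_{L^p}$ at the cost of a constant $C_{N,T_0,m}$. Two powers of $\sup\|h\|_{L^\infty}$ and one of $\|f\|_{L^p}$ (or three powers of $\sup\|h\|_{L^\infty}$) thereby accumulate, and weighted arithmetic–geometric inequalities such as $\bar{M}^2\sup\|f\|_{L^p}\le \tfrac23\bar{M}^3+\tfrac13\sup\|f\|_{L^p}^3$ deliver the cubic structure $\{\bar{M}+\bar{M}^3\}$ and $\{\sup\|f\|_{L^p}+\sup\|f\|_{L^p}^3\}$ claimed in \eqref{5.4.25}.

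Besides the gain term, the one further point requiring care is that the $M_0$-dependence must be kept strictly linear, so that the constant $C_1$ in \eqref{M0} can be chosen independent of $\delta$: this is why the $M_0$-piece of $S(s)$ is retained unsplit and carried through the time integration, yielding the factor $M_0\int_0^t\|h(\tau)\|_{L^\infty}\,\dd\tau$ that is ultimately closed by a Grönwall argument in the proof of Theorem \ref{thm1.3}. The bookkeeping of the constants $C_{N,T_0,m}$ is harmless here, since $N$, $T_0$ and $m$ are all fixed (large, large and small enough, respectively) before $\bar{M}$ — and hence the a priori bound \eqref{5.0.1} — is chosen.
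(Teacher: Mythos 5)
Your overall strategy — start from \eqref{5.4.0}, isolate $\mathcal{N}(t,x,v)=\int_{\max\{t_1,0\}}^t I(t,s)|w\Gamma(f,f)(s,x-(t-s)v,v)|\,\dd s$, and control it by a second application of \eqref{5.4.0} together with Lemma~\ref{lm2.5}, a velocity splitting, the change of variables $u\mapsto y'$, and an arithmetic--geometric inequality to produce the cubic structure — is precisely the paper's route. Your handling of the gain term, of the $M_0$--linearity, and of the boundary trace all match the paper's proof.

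The gap is in the loss term. You read \eqref{2.41} as $|w\Gamma^-(f,f)(s,y,v)|\lesssim\nu(v)\|h(s)\|_{L^\infty}\|f(s)\|_{L^p}$, but \eqref{2.41} only furnishes the pointwise--in--$x$ bound $|w\Gamma^-(f,f)(s,y,v)|\lesssim\nu(v)\|wf(s)\|_{L^\infty_v}\bigl(\int_{\R^3}|f(s,y,u)|^p\,\dd u\bigr)^{1/p}$, i.e.\ an $L^p_v$ norm anchored at the running spatial point $y=x-(t-s)v$, not the full $L^p(\Omega\times\R^3)$ norm $\|f(s)\|_{L^p}$. There is no a~priori $L^\infty_xL^p_v$ control available, so the shortcut $\mathcal{N}^-\lesssim\sup\|h\|_{L^\infty}\sup\|f\|_{L^p}$ does not follow; the only cheap bound one gets from $\beta p>3$ is $\sup_{y}\|f(s,y,\cdot)\|_{L^p_v}\lesssim\|h(s)\|_{L^\infty}$, which yields $\mathcal{N}^-\lesssim\bar M^2$ with no small prefactor — exactly the type of term the lemma is designed to avoid. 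Consequently the loss term requires the same two--layer treatment as the gain: one must substitute \eqref{5.4.0} for $h(s,y,u)$ under the $u$--integral, split $|u|\lessgtr N$, and use the characteristic change of variables before converting to $\|f\|_{L^p}$. The paper anticipates this and treats $\Gamma^\pm$ simultaneously, using that $w(u)^{-p}\lesssim\langle u\rangle^{-\beta p}\le\langle u\rangle^{-4-p(\beta-4)}$ for $p>1$ to subsume the loss estimate \eqref{2.41} into the gain estimate \eqref{2.42}, so a single $\bigl(\int|h(s,y,u)|^p\langle u\rangle^{-4-p(\beta-4)}\dd u\bigr)^{1/p}$ controls both. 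If you replace your separate loss argument with this unification, your proposal becomes the paper's proof.
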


\begin{proof}
We denote $x':=x-(t-s)v$ and $t_1':=t_{1}(s,x',u)$.
It suffices to consider the last term on the right-hand side of \eqref{5.4.0}. By \eqref{2.41} and \eqref{2.42}, it holds that
\begin{align}
&\int_{\max\{t_1,0\}}^tI(t,s)\left|w\Gamma(f,f)(s,x-(t-s)v,u)\right|\dd u\dd s\nonumber\\
&\leq \int_{\max\{t_1,0\}}^te^{-\bar{\nu}_0(t-s)}\|wf(s)\|_{L^\infty}\dd s\left(\int_{\mathbb{R}^3}\left|h(s,x-(t-s)v,u)\right|^p\langle u\rangle^{-4-p(\beta-4)}\right)^{1/p}\nonumber \\
&\leq \f{C}{N^{\beta-4}}\sup_{0\leq s\leq t}\|h(s)\|_{L^\infty}^2\nonumber\\
&\quad+C\int_{\max\{t_1,0\}}^te^{-\bar{\nu}_0(t-s)}\|wf(s)\|_{L^\infty}\dd s\times\left(\int_{\{|u|\leq N\}}\left|h(s,x',u)\right|^p\langle u\rangle^{-4-p(\beta-4)}\right)^{1/p}.\nonumber
\end{align}
Notice that $x':=x-(t-s)v\in \Omega$. Then applying \eqref{5.4.0} to $h(s,x',u)$, we have
\begin{equation}\label{5.4.27}
\int_{\max\{t_1,0\}}^tI(t,s)\left|w\Gamma(f,f)(s,x-(t-s)v,u)\right|\dd u\dd\tau
\leq \f{C}{N^{\beta-4}}\sup_{0\leq s\leq t}\|h(s)\|_{L^\infty}^2+R_1+R_2,
\end{equation}
where we have defined
$$
R_1=C \int_{\max\{t_1,0\}}^te^{-\bar{\nu}_0(t-s)}\|wf(s)\|_{L^\infty}S(s)\dd s,
$$
and
\begin{align*}
R_2&=C \int_{\max\{t_1,0\}}^te^{-\bar{\nu}_0(t-s)}\|wf(s)\|_{L^\infty}\dd s\times\bigg(\int_{\{|u|\leq N\}}\langle u \rangle^{-4-p(\beta-4)}\dd u\nonumber\\
&\quad\qquad\times\bigg\{\int_{\max\{t_1',0\}}^s e^{-\bar{\nu}_0(s-s')}|w\Gamma(f,f)(s',x'-(s-s')u,u)|\dd\tau'\bigg\}^p\bigg)^{1/p}.
\end{align*}
A direct computation shows that
\begin{align}\label{5.4.28}
|R_1|\leq &C\int_{\max\{t_1,0\}}^te^{-\bar{\nu}_0(t-s)}\|wf(s)\|_{L^\infty}e^{-\lambda_0 s^\alpha}\|h_0\|_{L^\infty}\dd s\nonumber\\
&+CT_0^{5/2}\sup_{0\leq s\leq t}\|h(s)\|_{L^\infty}\times \big\{m^{3+\ka}+\delta+2^{-T_0}+\f{1}N+\f{1}{N^{\beta-4}}\big\}\cdot\{\bar{M}+\bar{M}^2\}\nonumber\\
&+C_{N,T_0,m}\sup_{0\leq s\leq t}\|h(s)\|_{L^\infty}\cdot\{\sup_{0\leq s\leq t}\|f(s)\|_{L^p}+\sup_{0\leq s\leq t}\|f(s)\|_{L^p}^2\}\nonumber\\
\leq &CM_0e^{-\lambda_0 t^\alpha}\int_{0}^t\|h(s)\|_{L^\infty}\dd s+CT_0^{5/2}\big\{m^{3+\ka}+\delta+2^{-T_0}+\f1N+\f{1}{N^{\beta-4}}\big\}\cdot\{\bar{M}+\bar{M}^3\}\nonumber\\
&+C_{N,T_0,m}\{\sup_{0\leq s\leq t}\|f(s)\|_{L^p}+\sup_{0\leq s\leq t}\|f(s)\|_{L^p}^3\}.
\end{align}
For $R_2$, using \eqref{2.41} and \eqref{2.42} again, we have
\begin{align}\label{5.4.29}
|R_2|\leq&  C\sup_{0\leq s\leq t}\|h(s)\|_{L^\infty}^2\cdot\int_{\max\{t_1,0\}}^te^{-\bar{\nu}_0(t-s)}\dd s\nonumber\\
&\times \bigg(\int_{|u|\leq N}\int_{\max\{t_1',0\}}^\tau\int_{\mathbb{R}^3}e^{-\bar{\nu}_0(s-s')}|h(s',x'-(s-s')u,u')|^p\big[\langle u\rangle\langle u'\rangle\big]^{-4-p(\beta-4)}\bigg)^{1/p}\nonumber\\
\leq &C\left(\f{1}{N}+\f{1}{N^{\beta-4}}\right)\sup_{0\leq s\leq t}\|h(s)\|_{L^\infty}^3+C_N\sup_{0\leq s\leq t}\|h(s)\|_{L^\infty}^2\cdot\int_{\max\{t_1,0\}}^te^{-\bar{\nu}_0(t-s)}\dd s\nonumber\\
&\times \bigg(\int_{0}^{s-\f1N}e^{-\bar{\nu}_0(s-s')}\dd s'\int_{|u|\leq N}\int_{|u'|\leq N}\Fi_{\max\{t_1',0\}\leq s'\leq s}|f(s',x'-(s-s')u,u')|^p\dd u\dd u'\bigg)^{1/p}\nonumber\\
\leq & C\left(\f{1}{N}+\f{1}{N^{\beta-4}}\right)\sup_{0\leq s\leq t}\|h(s)\|_{L^\infty}^3+C_{N}\sup_{0\leq s\leq t}\|f(s)\|_{L^p}^3.
\end{align}
Here we have used the change of variable $u\rightarrow y':=x'-(s-s')u$. Then from combining \eqref{5.4.0}, \eqref{5.4.27}, \eqref{5.4.28} and \eqref{5.4.29}, \eqref{5.4.25} follows. Therefore, the proof of Lemma \ref{lm5.4} is complete.
\end{proof}

\subsection{Proof of Theorem \ref{thm1.3}.}
Let 
$$
\CE(t):=1+\int_0^t\left(\|h(s)\|_{L^\infty}{+|h(s)|_{L^\infty(\g)}}\right)\dd s.
$$
Then it holds from \eqref{5.4.25} that
\begin{align}\label{5.3.1}
\|h(t)\|_{L^\infty}+|h(t)|_{L^\infty(\g)}=\CE'(t)\leq CM_0e^{-\lambda_0 t^\alpha}\CE(t)+\CD,
\end{align}
where
\begin{multline*}
\CD:=CT_0^{5/2}\{m^{3+\ka}+\delta+\f{1}{N}+\f{1}{N^{\beta-4}}\}\bar{M}^3+2^{-T_0}\bar{M}^3\\
+C_{N,T_0,m}\left\{e^{C_3\bar{M}T_0}\|f_0\|_{L^p}+
\left(e^{C_3\bar{M}T_0}\|f_0\|_{L^p}\right)^3\right\}.
\end{multline*}
From \eqref{5.3.1}, we have
\begin{align}\label{5.3.2}
\CE(t)\leq \CE(0)e^{CM_0\int_{0}^te^{-\lambda_0 s^\alpha}\dd s}+\CD\cdot\int_0^te^{CM_0\int_s^te^{-\lambda_0\tau^\alpha}\dd\tau}\dd s\leq (1+\CD t)e^{CM_0}.
\end{align}
Substituting \eqref{5.3.2} into \eqref{5.3.1}, we have
\begin{align}\label{5.3.3}
\|h(t)\|_{L^\infty}+|h(t)|_{L^\infty(\g)}\leq CM_0e^{CM_0}(1+\CD t)e^{-\lambda_0 t^\alpha}+\CD\leq e^{CM_0}(1+\CD)e^{-\f{\lambda_0 t^\alpha}{2}}+\CD.
\end{align}
Take $\bar{M}=2e^{CM_0}$, $\tilde{\vep}=\min\{\vep_0,(2C_0)^{-1}\}$ where $C_0$ and $\vep_0$ are the same as ones in Theorem \ref{thm1.2}, and 
$$
T_0:=\max\left\{3\left(\log_2\bar{M}+1\right)+|\log_2\tilde{\vep}|, \left(\f{2\left(\log4\bar{M}+|\log\tilde{\vep}|\right)}{\lambda_0}\right)^{1/\alpha}\right\},
$$
such that
$
2^{-T_0}\bar{M}^3\leq \f{\tilde{\vep}}{8}$ and $\bar{M}e^{-\f{\lambda_0 T_0^\alpha}{2}}\leq \f{\tilde{\vep}}{4}$.
Then it holds that
$$CT_0^{5/2}\delta \bar{M}^3\leq C_4\left[|\log\tilde{\vep}|^{\f{5}{2\alpha}}+1\right]e^{C_4M_0}\delta,
$$
for some universal constant $C_4>1$. Let 
$$
0<\delta\leq \delta_0<\left(\f{\tilde{\vep}}{16C_4\left[|\log\tilde{\vep}|^{\f{5}{2\alpha}}+1\right]}\right)^{2},
$$
and $0<M_0\leq \f{|\log\delta|}{2C_4}.$ Then it is straightforward to see that
$CT_0^{5/2}\delta \bar{M}^3\leq \f{\tilde{\vep}}{16}$. 
Now we take $0<m<1$ suitably small and $N$ suitably large, and finally take $\|f_0\|_{L^p}\leq \vep_{1}$, with $\vep_1>0$ sufficiently small, such that
$$CT_0^{5/2}\{m^{3+\ka}+\f{1}{N}+\f{1}{N^{\beta-4}}\}\bar{M}^3+C_{N,T_0,m}\left\{e^{C_3\bar{M}T_0}\|f_0\|_{L^p}+
\left(e^{C_3\bar{M}T_0}\|f_0\|_{L^p}\right)^3\right\}\leq \f{\tilde{\vep}}{16}.
$$
Therefore, we have $\CD\leq \f{\tilde{\vep}}{4}.$ From \eqref{5.3.3}, it holds, for any $t\in[0,T_0]$, that
\begin{align}\label{5.3.4}\|h(t)\|_{L^\infty}+|h(t)|_{L^\infty(\g)}\leq e^{CM_0}(1+\CD)+\CD\leq (1+\f{\tilde{\vep}}{4})e^{CM_0}+\f{\tilde{\vep}}{4}\leq \f{3\bar{M}}{4}.
\end{align}
Notice that at $t=T_0$, we have from \eqref{5.3.3} that
$$\|h(T_0)\|_{L^\infty}\leq e^{CM_0}(1+\CD)e^{-\f{\lambda_0 T_0^{\alpha}}{2}}+\CD\leq \f{\tilde{\vep}}{2}.
$$
Then from \eqref{D1.3}, we have, for $t>T_0$,
\begin{align}\label{5.3.5}
\|h(t)\|_{L^\infty}+|h(t)|_{L^\infty(\g)}\leq C_{{0}}\|h(T_0)\|_{{L^\infty}}\leq C_0\tilde{\vep}\leq\f{3\bar{M}}{4}.
\end{align}
A combination of \eqref{5.3.4} and \eqref{5.3.5} justifies that the a priori assumption \eqref{5.0.1} can be closed by our choice. Notice that the local existence has been established in Proposition \ref{prop6.1}. Then the global existence of the solution follows from a standard continuity argument. For large time behavior, it holds, for $t\in[0,T_0]$, that
\begin{align}\label{5.3.6}
\|h(t)\|_{L^\infty}+|h(t)|_{L^\infty(\g)}\leq \bar{M}\leq e^{2CM_0}e^{\lambda_0 T_0^{\alpha} }e^{-\lambda_0 t^\alpha} \leq C_5e^{C_5M_0}e^{-\lambda_0 t^\alpha},
\end{align}
for some constant $C_5>1$. For $t>T_0$, it holds from \eqref{D1.3} that
\begin{align}\label{5.3.7}
\|h(t)\|_{L^\infty}+|h(t)|_{L^\infty(\g)}\leq C_0e^{-\lambda_0(t-T_0)^\alpha}\|h(T_0)\|_{L^\infty}\leq C_0C_5e^{C_5M_0}e^{-\lambda_0 t^\alpha}.
\end{align}
By taking $C_2=C_0C_5$, \eqref{D1.3a} follows from \eqref{5.3.6} and \eqref{5.3.7}. Therefore, the proof of Theorem \ref{thm1.3} is complete.\qed

\section{Appendix}
\subsection{An iteration lemma}

\begin{lemma}\label{lemA.1}
Consider  a sequence $\{a_i\}_{i=0}^\infty $  with each $a_i\geq0$. 
For any fixed $k\in\mathbb{N}_+$, we denote $$A_i^k=\max\{a_i, a_{i+1},\cdots, a_{i+k}\}.$$

\noindent{(1)} Assume $D\geq0$.  If $a_{i+1+k}\leq \f18 A_i^{k}+D$ for $i=0,1,\cdots$, then it holds that
\begin{equation}\label{A.1}
A_i^k\leq \left(\f18\right)^{\left[\frac{i}{k+1}\right]}\cdot\max\{A_0^k, \ A_1^k, \cdots, \ A_k^k \}+\f{8+k}{7} D,\quad\mbox{for}\quad i\geq k+1.
\end{equation}

\noindent{(2)} 
Let $0\leq \eta<1$ with $\eta^{k+1}\geq\frac14$.  If $a_{i+1+k}\leq \f18 A_i^{k}+C_k \cdot \eta^{i+k+1}$ for $i=0,1,\cdots$, then it holds that
\begin{align}\label{A.1-1}
A_i^k\leq \left(\f18\right)^{\left[\frac{i}{k+1}\right]}\cdot\max\{A_0^k, \ A_1^k, \cdots, \ A_k^k \}+2C_k\f{8+k}{7} \eta^{i+k},\quad\mbox{for}\quad i\geq k+1.
\end{align}
\end{lemma}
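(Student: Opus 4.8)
\textbf{Proof proposal for Lemma \ref{lemA.1}.}

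The plan is to prove both parts by iterating the recursive inequality in blocks of size $k+1$. The key observation is that the quantities $A_i^k$ at indices $i=0, k+1, 2(k+1),\dots$ satisfy a clean geometric recursion once one checks that shifting the window forward by $k+1$ steps does not break the contraction. First I would record the elementary monotonicity-type fact that, given the hypothesis $a_{i+1+k}\le \frac18 A_i^k + (\text{forcing})$ for all $i\ge 0$, any of the $k+1$ entries $a_{i+1+k}, a_{i+2+k},\dots, a_{i+2k+1}$ appearing in $A_{i+k+1}^k$ is bounded by $\frac18 A_j^k + (\text{forcing})_j$ for a suitable $j$ in the range $[i, i+k]$, hence each is $\le \frac18 A_i^k + \max_{i\le j\le i+k}(\text{forcing})_j$ because $A_j^k\le A_i^{k}$ fails in general — so here I must be a bit careful: $A_j^k$ for $i\le j\le i+k$ is not dominated by $A_i^k$, but it \emph{is} dominated by $A_i^{2k}$, or more usefully by $\max\{A_i^k, A_{i+1}^k,\dots,A_{i+k}^k\}$, which is what lets the recursion close. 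The cleanest route is to set $B_n := \max\{A_n^k, A_{n+1}^k,\dots,A_{n+k}^k\}$ (a window of $2k+1$ consecutive $a$'s) and show $B_{n+k+1}\le \frac18 B_n + (\text{forcing at level }n+k+1)$; this is genuinely a self-contained one-step contraction.

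For part (1), with constant forcing $D$, iterating $B_{(m+1)(k+1)}\le \frac18 B_{m(k+1)} + D$ gives $B_{m(k+1)}\le (\frac18)^m B_0 + D\sum_{j\ge 0}(\frac18)^j = (\frac18)^m B_0 + \frac87 D$. Then for general $i\ge k+1$ write $i = m(k+1) + s$ with $0\le s\le k$ and $m=[\frac{i}{k+1}]\ge 1$, and bound $A_i^k\le B_{m(k+1)-k}\le \cdots$; one loses at most a bounded number of extra $D$'s in lining up the windows, and absorbing these into the constant gives the stated $\frac{8+k}{7}D$. I would present the window bookkeeping explicitly but tersely, since that is where an off-by-one error would hide. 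The factor $\frac{8+k}{7}$ rather than $\frac87$ is exactly the slack from needing the window $B$ rather than a single $A_i^k$; the $k$ in the numerator accounts for the $\le k$ partial-window shifts.

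For part (2), the forcing is now $C_k\eta^{i+k+1}$ with $\eta^{k+1}\ge\frac14 > \frac18$. The same blockwise iteration yields $B_{m(k+1)+k}\le (\frac18)^m B_k + C_k\sum_{j=1}^{m}(\frac18)^{m-j}\eta^{(jk+j) + k}$ (schematically), and since $\eta^{k+1}>\frac18$ the geometric-type sum is dominated by its last term up to a constant: $\sum_{j}(\frac18)^{m-j}\eta^{j(k+1)}\le \eta^{m(k+1)}\sum_{j\ge 0}(\frac{1}{8\eta^{k+1}})^{j}\le \eta^{m(k+1)}\cdot\frac{1}{1-\frac{1}{8\eta^{k+1}}}$, and $\eta^{k+1}\ge\frac14$ makes $\frac{1}{8\eta^{k+1}}\le\frac12$, so the series is $\le 2$. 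Converting $\eta^{m(k+1)}$ back to $\eta^{i+k}$ (using $m(k+1)\ge i - k$, hence $\eta^{m(k+1)}\le \eta^{i-k}$, and then re-absorbing the $\eta^{-2k}$ or so into the constant, or more simply noting $m(k+1)+k\ge i$ so $\eta^{m(k+1)+k}\le\eta^i\le\eta^{i+k}\cdot\eta^{-k}$) together with tracking the $\frac{8+k}{7}$ slack as in part (1) gives the claimed bound with the factor $2C_k\frac{8+k}{7}$.

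The main obstacle I anticipate is purely the index bookkeeping: making sure the window $B_n$ is chosen wide enough that the recursion genuinely closes in one step (this forces the $2k+1$-wide window, not the $k+1$-wide one), and then correctly passing from the subsequence $i\in(k+1)\mathbb{Z}$ back to all $i\ge k+1$ without leaking more than a constant multiple of the forcing term. There is no analytic difficulty — the contraction constant $\frac18$ and the condition $\eta^{k+1}\ge\frac14$ are comfortably compatible ($\frac18 < \frac14 \le \eta^{k+1}$ is exactly what makes the geometric sum in part (2) converge with a uniform bound) — so the proof is a careful but routine induction, and I would keep the exposition at the level of the key inequalities rather than writing out every shifted index.
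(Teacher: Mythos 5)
Your high-level plan (establish a one-step contraction, iterate it blockwise, handle the geometric forcing) matches the paper's, but the one step you call ``genuinely a self-contained one-step contraction'' does not actually close, and this is where the real content of the lemma lives. From your own preliminary bound, each entry $A_{n+k+1+\ell}^k$ ($\ell=0,\dots,k$) is controlled by $\tfrac18 B_{n+\ell}+D$; taking the max over $\ell$ therefore yields
\begin{equation*}
B_{n+k+1}\le \tfrac18\max\{B_n,B_{n+1},\dots,B_{n+k}\}+D,
\end{equation*}
and $\max\{B_n,\dots,B_{n+k}\}$ runs over the indices $a_n,\dots,a_{n+3k}$, i.e.\ it equals $A_n^{3k}$, not $B_n=A_n^{2k}$. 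The forward reference keeps extending no matter how wide you make the window; enlarging it never makes the recursion close on its own. What is needed — and what the paper's proof does — is to keep the original window $A_i^k$ and \emph{peel} the forward indices off one at a time using the hypothesis itself: write $A_{i+k}^k\le\max\{a_{i+2k},\,A_{i+k-1}^k\}$, bound $a_{i+2k}\le\tfrac18A_{i+k-1}^k+D$, observe $\max\{\tfrac18 A+D,\,A\}\le A+D$, and iterate $k$ times to obtain $a_{i+1+2k}\le\tfrac18A_i^k+(1+\tfrac{k}{8})D$; doing the same for each of the $k+1$ entries gives the closed single-step recursion $A_{i+k+1}^k\le\tfrac18A_i^k+\tfrac{8+k}{8}D$. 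The factor $\tfrac{8+k}{7}$ in the conclusion is the geometric sum $\tfrac87$ applied to $\tfrac{8+k}{8}D$, i.e.\ it records the $k$ peels at cost $D/8$ each — it is not, as you suggest, alignment slack from ``partial-window shifts''; if the $B$-recursion really closed with constant $D$ as you claim, the answer would be $\tfrac87 D$ up to a single extra $D$, which is not what the lemma states. Your treatment of part (2) (the ratio $1/(8\eta^{k+1})\le\tfrac12$ making the sum converge uniformly) is fine as far as it goes, but it too rests on the unestablished one-step contraction.
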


\begin{proof}
We first show \eqref{A.1}. By iteration in $i\geq 0$,  we obtain that
\begin{align} \label{A.2}
	a_{i+1+2k}&\leq \f18 A_{i+k}^k+D=\f18 \max\{a_{i+2k}, a_{i+2k-1},\cdots,a_{i+k}\}+D\nonumber\\
	&\leq \f18\max\{a_{i+2k}, \ A_{i+k-1}^k\}+D\leq \f18\max\left\{\f18 A_{i+k-1}^k+D, \ A_{i+k-1}^k\right\}+D\nonumber\\
	&\leq \f18 A_{i+k-1}^k+(1+\f18)D \cdots
	\leq  \f18 A_{i}^k+(1+\f{k}8)D.
	\end{align}
Similarly, for all $j=0,1,\cdots,k-1$, we also have
\begin{equation}\label{A.2-1}
a_{i+1+j+k}\leq \f18 A_{i}^k+\f{8+k}{8}D.
\end{equation}
Therefore, for $1\leq \frac{i}{k+1}\in\mathbb{N}_+$, it follows from \eqref{A.2} and \eqref{A.2-1} that
	\begin{align}\label{A.3}
	A_{i+1+k}^k&=\max\big\{a_{i+1+2k}, \ a_{i+2k},\ \cdots,\ a_{i+1+k}\big\}\nonumber\\
	&\leq \f18 A_{i}^k+\f{8+k}{8}D\leq \left(\f18\right)^2A_{i-2(k+1)}^k+\f18 \f{8+k}{8}D+\f{8+k}{8}D\nonumber\\
	&=\left(\f18\right)^2A_{i-2(k+1)}^k+\f{8+k}{8}\left(1+\f18\right)D=\cdots\nonumber\\
	&\leq \left(\f18\right)^{\frac{i}{k+1}}A_0^k+\f{8+k}{8}\left(1+\f18+\big(\f18\big)^2+\cdots\right)D\nonumber\\
	&\leq \left(\f18\right)^{\frac{i}{k+1}}A_0^k+\f{8+k}{7}D.
	\end{align}
If $\frac{i}{k+1}\notin \mathbb{N}_+$ and $i=(k+1) \left[\frac{i}{k+1}\right]+j$ for some  $1\leq j\leq k$, then by similar arguments we have
\begin{align}\label{A.4}
A_{i+1+k}^k&\leq \left(\f18\right)^{\left[\frac{i}{k+1}\right]+1} A_j^k+\f{8+k}{7}D.
\end{align}
Hence, from \eqref{A.3} and \eqref{A.4},  we complete the proof of \eqref{A.1}.



It remains to show \eqref{A.1-1}.
Noting $\eta<1$ and by similar arguments as in \eqref{A.2} and \eqref{A.2-1}, we can get
\begin{align}\label{A.7}
a_{i+j+k+1}\leq \frac18 A_{i}^k +C_k \left(1+\frac{k}{8}\right) \eta^{i+k+1},\  \mbox{for} \  0\leq j\leq k.
\end{align}
Hence, for $1\leq \frac{i}{k+1}\in\mathbb{N}_+$, noting $\frac18\cdot \eta^{-k-1}\leq \frac12$ and using \eqref{A.7}, then we have
\begin{align}\label{A.8}
&A^k_{i+k+1}=\max\{a_{i+2k+1},  \cdots,  a_{i+k+1} \}\leq \frac18 A_{i}^k +C_k \left(1+\frac{k}{8}\right) \eta^{i+k+1}\nonumber\\
&\leq \cdots \leq \left(\f18\right)^{\frac{i}{k+1}} A_0^k+ C_k \left(1+\frac{k}{8}\right) \eta^{i+k+1}\cdot\left\{1+\frac18 \eta^{-k-1}+\left(\frac18 \eta^{-k-1}\right)^2+\cdots\right\}\nonumber\\
&\leq \left(\f18\right)^{\frac{i}{k+1}} A_0^k+2C_k \left(1+\frac{k}{8}\right) \eta^{i+k+1}.
\end{align}
If $\frac{i}{k+1}\notin \mathbb{N}_+$ and $i=(k+1) \left[\frac{i}{k+1}\right]+j$ for some  $1\leq j\leq k$, then by similar arguments as above, we have
\begin{align}\label{A.9}
A^k_{i+k+1}
&\leq \left(\f18\right)^{\left[\frac{i}{k+1}\right]+1} A_j^k+2C_k \left(1+\frac{k}{8}\right) \eta^{i+k+1}.
\end{align}
Thus we prove \eqref{A.1-1} from \eqref{A.8} and \eqref{A.9}.  Therefore the proof of lemma \ref{lemA.1} is complete.
\end{proof}

\subsection{Local-in-time existence}

\begin{proposition}\label{prop6.1}
Let $w(v)$ be the weight function defined in \eqref{WF}. Assume $$
|\theta-\theta_0|_{L^{\infty}(\partial \Omega)}=\delta\ll1,\quad F_0(x,v)=F_*(x,v)+\mu^{\frac{1}{2}}(v)f_0(x,v)\geq0
$$
and
$
\|wf_0\|_{L^\infty}:=M_{0}<\infty.
$
Then there exists a positive time
$$
\hat{t}:=\bigg[\hat{C}\bigg(1+M_{0}\bigg)\bigg]^{-1}
$$
such that the IBVP \eqref{1.1}, \eqref{diffuseB.C} and \eqref{ad.id}
has a unique nonnegative solution $$F(t,x,v)=F_*(x,v)+\mu^{\frac{1}{2}}(v)f(t,x,v)\geq0$$ in $[0,\hat{t}]$ satisfying
\begin{align*}
\sup_{0\leq t\leq \hat{t}}\bigg\{\|wf(t)\|_{L^\infty}+|wf(t)|_{L^{\infty}(\g)}\bigg\}\leq 2\tilde{C}(M_0+1).
\end{align*}
Here $\tilde{C}>0$ and $\hat{C}>1$ are generic constants independent of {$M_0$}. 
Moreover, if the domain $\Omega$ is strictly convex, $\theta(x)$ is continuous over $\partial \Omega$, the initial data $F_0(x,v)$ {is} 
continuous except on $\g_0$ and satisfies \eqref{D1.2a}
then the solution $F(t,x,v)$ is continuous in $[0,\hat{t}]\times\{\Omega\times\mathbb{R}^3\setminus\g_0\}$.
\end{proposition}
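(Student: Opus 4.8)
The plan is to prove Proposition \ref{prop6.1} by a contraction mapping argument in the velocity-weighted $L^\infty$ space, mimicking the standard local existence theory for the Boltzmann equation in bounded domains (cf. \cite{Guo2}) but keeping careful track of the fact that we perturb around the stationary solution $F_\ast$ rather than $\mu$. First I would set up the iteration: given the current iterate $f^m$ (with $f^0\equiv 0$), define $f^{m+1}=F^{m+1}/\sqrt\mu-F_\ast/\sqrt\mu$ as the solution of the linear IBVP
\begin{equation*}
\left\{\begin{aligned}
&\partial_t f^{m+1}+v\cdot\nabla_x f^{m+1}+\nu_\ast(x,v) f^{m+1}=K f^{m+1}-L_{\sqrt\mu f_\ast}f^{m+1}+K f^m-Kf^{m+1}+\Gamma(f^m,f^m),\\
&f^{m+1}|_{t=0}=f_0,\qquad f^{m+1}|_{\gamma_-}=P_\gamma f^{m+1}+r[f^m],
\end{aligned}\right.
\end{equation*}
or, more simply, lag the whole gain-type term: put $f^{m+1}$ solving $\partial_t f^{m+1}+v\cdot\nabla_x f^{m+1}+\nu_\ast f^{m+1}=K f^m-L_{\sqrt\mu f_\ast}^{\mathrm{gain}}f^m+\Gamma(f^m,f^m)$ with boundary datum $P_\gamma f^{m+1}+r[f^m]$. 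Writing $h^{m+1}=wf^{m+1}$, this linear problem is solved explicitly along the backward characteristics and the stochastic cycle exactly as in the mild formulation \eqref{4.1.5}; the transport part contributes a factor $I(t,s)\le 1$ and the boundary reflection, being bounded by $C|h^m|_{L^\infty(\gamma_+)}$ via \eqref{S3.59}-type estimates together with $|wr[f^m]|_{L^\infty(\gamma_-)}\le C\delta(1+|h^m|_{L^\infty(\gamma_+)})$, is controlled on a short time interval because the measure of cycles with $k\ge 2$ collisions is $O(t)$ small. This produces the a priori bound
\begin{equation*}
\sup_{0\le t\le \hat t}\big(\|h^{m+1}(t)\|_{L^\infty}+|h^{m+1}(t)|_{L^\infty(\gamma)}\big)\le \tilde C(M_0+1)+\hat C\,\hat t\,\big(1+\sup_{0\le t\le\hat t}\|h^m(t)\|_{L^\infty}\big)^2,
\end{equation*}
where the quadratic term comes from $\|\nu^{-1}w\Gamma(f^m,f^m)\|_{L^\infty}\le C\|h^m\|_{L^\infty}^2$ by \eqref{2.43}, the linear-in-$h^m$ terms from $\|\nu^{-1}wKf^m\|_{L^\infty}\le C\|h^m\|_{L^\infty}$ (Lemma on $k(v,u)$) and $\|\nu^{-1}wL_{\sqrt\mu f_\ast}f^m\|_{L^\infty}\le C\delta\|h^m\|_{L^\infty}$ using \eqref{S1.2}. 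By induction, if $\hat t=[\hat C(1+M_0)]^{-1}$ is chosen small enough then the bound $2\tilde C(M_0+1)$ is preserved by all iterates.

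Next I would establish contraction. The difference $f^{m+1}-f^m$ solves the same type of linear problem with right-hand side $K(f^m-f^{m-1})-L_{\sqrt\mu f_\ast}^{\mathrm{gain}}(f^m-f^{m-1})+\Gamma(f^m,f^m)-\Gamma(f^{m-1},f^{m-1})$ and boundary datum $P_\gamma(f^{m+1}-f^m)+r[f^m-f^{m-1}]$. Applying the same mild-formulation estimate and using bilinearity of $\Gamma$ together with \eqref{2.43} to bound $\|\nu^{-1}w[\Gamma(f^m,f^m)-\Gamma(f^{m-1},f^{m-1})]\|_{L^\infty}\le C(\|h^m\|_{L^\infty}+\|h^{m-1}\|_{L^\infty})\|h^m-h^{m-1}\|_{L^\infty}\le C(M_0+1)\|h^m-h^{m-1}\|_{L^\infty}$, and absorbing the linear terms which carry a factor $\hat t$, I get
\begin{equation*}
\sup_{0\le t\le\hat t}\big(\|h^{m+1}(t)-h^m(t)\|_{L^\infty}+|h^{m+1}(t)-h^m(t)|_{L^\infty(\gamma)}\big)\le \tfrac12\sup_{0\le t\le\hat t}\|h^m(t)-h^{m-1}(t)\|_{L^\infty},
\end{equation*}
after possibly shrinking $\hat t$ (still of the form $[\hat C(1+M_0)]^{-1}$) and taking $\delta$ small. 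Hence $\{h^m\}$ is Cauchy in $C([0,\hat t];L^\infty)$, the limit $f=\lim f^m$ solves \eqref{4.1} (equivalently the IBVP \eqref{1.1}, \eqref{diffuseB.C}, \eqref{ad.id} for $F=F_\ast+\sqrt\mu f$), uniqueness follows from the same difference estimate applied to two solutions, and the quantitative bound $\sup_{[0,\hat t]}(\|wf\|_{L^\infty}+|wf|_{L^\infty(\gamma)})\le 2\tilde C(M_0+1)$ is inherited in the limit.

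It remains to handle positivity and continuity. For positivity, I would not use the above linearized iteration but instead iterate directly on $F$: define $F^{m+1}$ by $\partial_t F^{m+1}+v\cdot\nabla_x F^{m+1}+\nu(F^m)F^{m+1}=Q^+(F^m,F^m)$ with the diffuse boundary condition, where $\nu(F^m)(t,x,v)=\int B(|v-u|,\omega)F^m(u)\,\dd\omega\dd u\ge 0$; since $F^0=F_\ast\ge 0$ (by Theorem \ref{thm1.1}, granting its positivity, or by starting from any nonnegative profile) and the gain term $Q^+$ and the boundary operator both preserve nonnegativity while the integrating-factor solution formula keeps $F^{m+1}\ge 0$ pointwise, each $F^m\ge 0$; the two iteration schemes have the same limit by uniqueness, so $F=F_\ast+\sqrt\mu f\ge 0$ on $[0,\hat t]$. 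For continuity, when $\Omega$ is strictly convex the backward exit time $t_{\mathbf b}(x,v)$ and position $x_{\mathbf b}(x,v)$ are continuous (indeed smooth) away from $\gamma_0$ by \cite[Lemma 2]{Guo2}, so the mild formula expresses each $f^{m+1}$ as a continuous function of $(t,x,v)$ on $[0,\hat t]\times\{\bar\Omega\times\mathbb{R}^3\setminus\gamma_0\}$ provided $f^m$, $\theta$ and the compatibility condition \eqref{D1.2a} hold; continuity then passes to the uniform limit $f$. The main obstacle I anticipate is the bookkeeping of the boundary contributions: one must verify that, on the short time interval $[0,\hat t]$, the stochastic-cycle terms with two or more bounces contribute a factor proportional to $\hat t$ (not merely a bounded constant), which is what makes both the a priori bound and the contraction work; this uses $t_{\mathbf b}\le \mathrm{diam}(\Omega)/|v|$ together with the exponential-in-$|v|$ gain $e^{-|v|^2/16}$ coming from $1/\tilde w(v)$, exactly as in the proofs of Lemmas \ref{lemS3.3} and \ref{lm4.2}, and it is this estimate that forces $\hat t$ to scale like $[\hat C(1+M_0)]^{-1}$ rather than being uniform in $M_0$.
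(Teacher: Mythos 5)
Your plan is in the right spirit and broadly parallels the paper's Appendix proof (Duhamel for the linear transport problem with diffuse reflection boundary, plus the stochastic-cycle machinery), but it diverges in a way that introduces a real gap, and some of the mechanisms you invoke are stated incorrectly.

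The paper runs a \emph{single} iteration directly on $F$: $\partial_t F^{n+1}+v\cdot\nabla_x F^{n+1}+F^{n+1}R(F^n)=Q^+(F^n,F^n)$ with $F^0=\mu$. The same iterate simultaneously yields the $L^\infty$ bound (after passing to $h^{n+1}=w(F^{n+1}-F_*)/\sqrt{\mu}$ and applying Duhamel with the solution operator $G^n$ of \cite[Lemma~3.4]{DW}) \emph{and} nonnegativity. You instead propose a linearized $f$-iteration for the contraction estimate and a \emph{separate} $F$-iteration for positivity, linked by uniqueness. This link is unjustified: to invoke uniqueness you must first show the $F$-iteration converges to a solution, and you never address that. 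More seriously, your positivity step is incomplete as stated. You write that ``the integrating-factor solution formula keeps $F^{m+1}\ge 0$ pointwise,'' but the diffuse boundary condition makes the mild formula self-referential: $F^{m+1}|_{\gamma_-}$ depends on $F^{m+1}|_{\gamma_+}$, so the explicit mild representation of $F^{m+1}$ (the paper's \eqref{6.9}) carries a $k$-bounce tail that still contains $F^{m+1}$ itself, with unknown sign. The paper's actual argument uses that this tail is bounded by $C(1+\|h_0\|_{L^\infty})(1/2)^{\hat C_6 T_0^{5/4}}$ via the already-established $L^\infty$ bound and the stochastic-cycle estimate, and then sends $T_0\to\infty$; you cannot skip this.

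Two further inaccuracies in your mechanism. First, the claim that ``the measure of cycles with $k\ge 2$ collisions is $O(t)$ small'' is not correct and is not how the short-time estimate works: the boundary/cycle contribution is controlled by a \emph{bounded} constant $C_\rho$ from the linear solution operator on a fixed interval $(0,\rho)$, not by a factor of $t$; the $O(\hat t)$ smallness enters only through the Duhamel time integral of the volume sources. Consequently, your closing remark that the stochastic-cycle bound ``forces $\hat t$ to scale like $[\hat C(1+M_0)]^{-1}$'' is also misattributed — that scaling comes from the quadratic nonlinearity $\Gamma(f^m,f^m)$ in the Duhamel integral. Second (minor), the estimate $\|\nu^{-1}wKf\|_{L^\infty}\le C\|wf\|_{L^\infty}$ fails for $|\kappa|>1$ since $\nu^{-1}\sim(1+|v|)^{|\kappa|}$ outgrows the gain $(1+|v|)^{-1}$ from \eqref{2.40-1}; what you actually need, and what holds, is $\|wKf\|_{L^\infty}\le C\|wf\|_{L^\infty}$ without the $\nu^{-1}$, which suffices because no $\nu^{-1}$ weight is needed when integrating a bounded source over a short time.
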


\begin{proof}
We consider the following iteration scheme:
\begin{equation}\label{6.3}
\left\{\begin{aligned}
&\partial_tF^{n+1}+v\cdot\nabla_x F^{n+1}+F^{n+1}\cdot R(F^{n})=Q^+(F^n,F^n),\\
&F^{n+1}(t,x,v)\Big|_{t=0}=F_0(x,v)\geq0,\\
&F^{n+1}(t,x,v)|_{\g_-}=\mu_{\theta}(x,v)\int_{n(x)\cdot u>0} F^{n+1}(t,x,u)\{u \cdot n(x)\}\,\dd u,\\
&F^0(t,x,v)=\mu(v),
\end{aligned}
\right.
\end{equation}
where
$$
{R}(F^n)(t,x,v)=\int_{\mathbb{R}^{3}\times \mathbb{S}^{2}}{B}(v-u,\omega)F^n(t,x,u)\dd u\dd\omega.
$$
Let
$$
f^{n+1}(t,x,v)=\f{F^{n+1}(t,x,v)-F_{*}(x,v)}{\mu^{\frac{1}{2}}(v)},\quad h^{n+1}(t,x,v)=wf^{n+1}(t,x,v).
$$
Then the equation of $h^{n+1}$ reads as
\begin{equation}\label{6.4}
\left\{\begin{aligned}
&\partial_th^{n+1}+v\cdot\nabla_x h^{n+1}+h^{n+1}\cdot {R}(F^{n})=wK_{*}f^n+w\Gamma^+(f^n,f^n),\\
&h^{n+1}(t,x,v)\Big|_{t=0}=h_0(x,v),\\
&h^{n+1}\Big|_{\g_-}=\f{1}{\tilde{w}(v)}\int_{\{n(x)\cdot u >0\}} h^{n+1} \tilde{w}(u)\dd\s(x)+w(v)\frac{\mu_{\theta}-\mu}{\sqrt{\mu}}\int_{n(x)\cdot u>0}h^{n+1}\tilde{w}(u)\dd\sigma(x),\\
&h^{0}(t,x,v)=-wf_*(x,v),
\end{aligned}
\right.
\end{equation}
where we have denoted
$$K_{*}f^n:=-\sqrt{\mu}^{-1}\left\{R(\sqrt{\mu}f^n)F_*+\Gamma^{+}(\sqrt{\mu}f^{n},F_*)+\Gamma^+(F_*,\sqrt{\mu}f^n)\right\}.$$Now we shall use the induction  on $n=0,1,\cdots$ to show that there exists a positive time $\hat{t}_1>0$, independent of $n$, such that \eqref{6.3} or equivalently \eqref{6.4} admits a unique mild solution on the time interval $[0,\hat{t}_1]$, and the following uniform bound and positivity hold true:
\begin{align}\label{6.5}
\|h^n(t)\|_{L^\infty}+|h^n(t)|_{L^{\infty}(\g)}\leq 2\tilde{C}[\|h_0\|_{L^\infty}+1],
\end{align}
and \begin{align}\label{6.5-1}
F^n(t,x,v)\geq 0,
\end{align}
for $0\leq t\leq \hat{t}=\left(\hat{C}\{1+\|h_0\|_{L^\infty}\}\right)^{-1}$ and suitably chosen constants $\tilde{C}>0$ and $\hat{C}$ independent of $t$. Thanks to the fact
$$
\|h^0(t)\|_{L^\infty}+|h^0(t)|_{L^\infty(\g)}\leq \|wf_*\|_{L^\infty}+|wf_*|_{L^\infty(\g)}\leq C\delta,
$$
we see that \eqref{6.5} is obviously true for $n=0$. To proceed, we assume that \eqref{6.5} holds true up to $n\geq 0$. Since $F^n\geq 0$, it holds that ${R}(F^n)\geq 0$.  Then by using a similar argument as in \cite[Lemma 3.4]{DW}, one can construct the solution operator $G^n(t)$ to the following linear problem
\begin{equation}\nonumber
\left\{
\begin{aligned}
&\partial_th+v\cdot \nabla_x h+{R}(F^n)h=0, \quad t>0,\quad x\in \Omega, \quad v\in \mathbb{R}^3,\\
&h(t,x,v)|_{t=0}=h_0(x,v),\\
&h(t,x,v)\Big|_{\g_-}=\f{1}{\tilde{w}(v)}\int_{n(x)\cdot u>0} h(t,x,u) \tilde{w}(u)\dd\s(x)+w(v)\frac{\mu_{\theta}-\mu}{\sqrt{\mu}}\int_{n(x)\cdot u>0}h(t,x,u)\tilde{w}(u)\dd\sigma(x)
\end{aligned}
\right.
\end{equation}
over $(0,\rho)$ for some universal constant $\rho>0$ independent of $n$, provided that $|\theta-1|_{L^\infty(\pa\Omega)}$ is sufficiently small.
Moreover, $G^n(t)$ satisfies the following estimate
\begin{align}\label{6.6}
\|G^{n}(t)h_0\|_{L^{\infty}}+|G^n(t)h_0|_{L^\infty(\g)}\leq C_{\rho}\|h_0\|_{L^{\infty}}.
\end{align}
Here the constant $C_\rho>0$ is independent of $n$. Then applying Duhamel's formula to \eqref{6.4}, we have, for $0<t<\rho,$ that
\begin{align}\label{6.7}
h^{n+1}(t)= {G}^{n}(t,0)h_0+\int_0^t {G}^{n}(t,s)[w K_*f^n(s)+w\Gamma^+(f^n,f^n)(s)]\dd s.
\end{align}
Taking $L^\infty$-norm on the both sides of \eqref{6.7} and using \eqref{2.43} and \eqref{6.6}, we have
\begin{align}\label{6.8}
&\|h^{n+1}(t)\|_{L^\infty}+|h^{n+1}(t)|_{L^\infty{(\g)}}\notag\\
&\leq C_\rho\|h_0\|_{L^\infty}+C_\rho\int_0^t\|wK_{*}f^n(s)\|_{L^\infty}+\|w\Gamma^+(f^n,f^n)(s)\|_{L^\infty}\dd s\nonumber\\
&\leq {C}
\|h_0\|_{L^\infty}+C\int_0^t\|h^n(s)\|_{L^\infty}+\|h^n(s)\|^2_{L^\infty}\dd s\nonumber\\
&\leq C_{{3}}\|h_0\|_{L^{\infty}}+C_{{3}}t\cdot\{\sup_{0\leq s\leq t}\|h^n(s)\|_{L^{\infty}}+\sup_{0\leq s\leq t}\|h^n(s)\|_{L^\infty}^2\},
\end{align}
for some constants $C_3>1$. Now we take $\tilde{C}=C_3$ and $\hat{C}=8C_3^2$. Then by the induction hypothesis \eqref{6.5}, for any $ 0<t<\hat{t}$, it follows from \eqref{6.8} that
$$\begin{aligned}
\|h^{n+1}(t)\|_{L^\infty}+|h^{n+1}(t)|_{L^\infty(\g)}&\leq C_{3}\{\|h_0\|_{L^{\infty}}+1\}\cdot\{1+2C_3t[1+2C_3]\cdot[1+\|h_0\|_{L^\infty}]\}\\
&\leq 2C_3\{\|h_0\|_{L^\infty}+1\}.
\end{aligned}
$$
This then proves \eqref{6.5} for $n+1$. Next we show the non-negativity \eqref{6.5-1} for $n+1$. We denote that
$$I^{n}(t,s):=\exp\left\{-\int_s^t[R(F^n)](\tau,X_{cl}(\tau),V_{cl}(\tau))\dd\tau\right\}
$$
and$$
\dd\Sigma _{l}^n(\tau):=\{\prod _{j=l+1}^{k-1}\dd\sigma _{j}\}\cdot  I^n(t_l,\tau)[v_l\cdot n(v_l)]\dd v_l\cdot\prod_{j=1}^{l-1}\{I^n(t_{j},t_{j+1})\mu_{\theta}(x_{j+1},v_{j})[v_j\cdot n(x_j)]\dd v_{j}\}.$$
Then we have the following mild formulation for $F^{n+1}$:
\begin{align}\label{6.9}
F^{n+1}(t,x,v)=& \Fi_{\{t_1\leq 0\}} \Big\{I^n(t,0) F_0(x-vt,v)+\int_0^tI^n(t,s) Q^+(F^n,F^n)(s,x-v(t-s),v)\,\dd s\Big\}\nonumber\\
&+  \Fi_{\{t_1>0\}}\mu_{\theta}(x_{1},v) I^n(t,t_1)  \bigg\{\int_{\Pi_{j=1}^{k-1}\mathcal{V}_j} \sum_{l=1}^{k-1} \Fi_{\{t_{l+1}\leq0< t_l\}} F_0(x-vt,v)\,\dd\Sigma_{l}^n(0)\nonumber\\
&+\int_{\Pi_{j=1}^{k-1}\mathcal{V}_j} \sum_{l=1}^{k-1} \int_0^{t_l}\Fi_{\{t_{l+1}\leq0< t_l\}} Q^+(F^n,F^n)(\tau,X_{cl}(\tau),V_{cl}(\tau))\, \dd\Sigma_{l}^{m}(\tau) \dd\tau\nonumber\\
&+\int_{\Pi_{j=1}^{k-1}\mathcal{V}_j} \sum_{l=1}^{k-1} \int_{t_{l+1}}^{t_l}\Fi_{\{t_{l+1}\leq0< t_l\}} Q^+(F^n,F^n)(\tau,X_{cl}(\tau),V_{cl}(\tau)) \,\dd\Sigma_{l}^{m}(\tau) \dd\tau\bigg\}\nonumber\\
&+ \Fi_{\{t_1>0\}}\mu_{\theta}(x_{1},v) I^n(t,t_1)\int_{\Pi_{j=1}^{k-1}\mathcal{V}_j} \Fi_{\{t_k>0\}}F^{n+1}(t_k,x_k,v_{k-1})\,\dd\Sigma_{k-1}^n(t_k),
\end{align}
for $t>0$, $x\in\bar{\Omega}\times \mathbb{R}^3\setminus \gamma_0\cup\gamma_-$ and integer $k\geq 1$. From \eqref{6.5}, it holds that
\begin{align}\label{6.10}
|F^{n+1}(t,x,v)|=\left|F_*(x,v)+\mu^{\frac{1}{2}}(v)\f{h^{n+1}(t,x,v)}{w(v)}\right|\leq C(1+\|h_0\|_{L^{\infty}})\mu^{\frac{1}{2}}(v),
\end{align}
for some constant $C>0$. Furthermore, a direct computation shows that
$$
0<\mu_{\theta}(x_{j+1},v_{j})\leq \f{1}{(1-\delta)^2}\exp\left\{\f{\delta|v_j|^2}{2(1-\delta)}\right\}\mu(v_j).
$$
Then similar as \eqref{4.1.5-1}, we have, for sufficiently large $T_0>0$ and for $k=\hat{C}_5T_0^{5/4}$, that
$$\int_{\prod_{j=1}^{k-2}\mathcal{V}_j}\Fi_{\{t_{k-1}>0\}}\prod_{j=1}^{k-2}\mu_{\theta}(x_{j+1},v_j)\{n(x_j)\cdot v_j\}\dd v_j\leq \left(\f12\right)^{\hat{C}_6T_0^{5/4}},
$$
for some generic constant $\hat{C}_5>0$ and $\hat{C}_6>0$. Then by \eqref{6.9} and \eqref{6.10}, we have
\begin{equation}
\begin{aligned}
F^{n+1}&\geq -C\mu_\theta(x_{1},v)\{1+\|h_0\|_{L^\infty}\} \int_{\prod_{j=1}^{k-2}\mathcal{V}_j}\Fi_{\{t_{k-1}>0\}}\prod_{j=1}^{k-2}\mu_{\theta}(x_{j+1},v_j)\{n(x_j)\cdot v_j\}\dd v_j\nonumber\\
&\geq -C\mu_\theta(x_{1},v)\{1+\|h_0\|_{L^\infty}\}\cdot\left(\frac12\right)^{\hat{C}_6T_0^{5/4}}.
\end{aligned}
\end{equation}
Since $T_0>0$ can be taken arbitrarily large, we have $F^{n+1}\geq 0.$ This then proves \eqref{6.5} and \eqref{6.5-1}. Finally, with the uniform estimates \eqref{6.5} in hand, we can use a similar argument as one in \cite[Theorem 4.1]{DW} to show that $h^n,$ $n=0,1,2\cdots,$ is a Cauchy sequence in $L^\infty$. We omit here for brevity. The solution is obtained by taking the limit $n\rightarrow\infty$. If $\Omega $ is convex and the compatibility condition \eqref{D1.2a} holds, the continuity is a direct consequence of the $L^\infty$-convergence. The uniqueness is standard. The proof of Proposition \ref{prop6.1} is complete.
\end{proof}

\noindent{\bf Acknowledgments.}  Renjun Duan is partially supported by the General Research Fund (Project No.~14302817). Feimin Huang is partially supported by National Center for Mathematics and Interdisciplinary Sciences, AMSS, CAS and NSFC Grant No.11688101. Yong Wang is partly supported by NSFC Grant No. 11401565 and 11688101.

\end{document}